\tikzstyle{thin}=[line width=1.5pt]
\tikzstyle{fat}=[line width=2pt]
\tikzstyle{heavier}=[line width=3pt]
\tikzstyle{ultrafat}=[line width=4pt]
\tikzstyle{ultrafat-pt}=[line width=7pt]
\definecolor{myred}{HTML}{E53935}
\definecolor{myblue}{HTML}{1E88E5}
\definecolor{mygreen}{HTML}{43A047}
\definecolor{myyellow}{HTML}{FDD835}
\definecolor{myorange}{HTML}{FB8C00}
\definecolor{mygold}{HTML}{F9A825}
\definecolor{mypurple}{HTML}{8E24AA}
\definecolor{mygray}{HTML}{BDBDBD}
\definecolor{mybrown}{HTML}{6D4C41}
\definecolor{mynavy}{HTML}{1A237E}
\definecolor{mypink}{HTML}{ffbfca}
\definecolor{myseagreen}{HTML}{26A69A}
\definecolor{myviolet}{HTML}{f07ef0}
\definecolor{mydarkblue}{HTML}{0D47A1}
\definecolor{mydarkcyan}{HTML}{E0FFFF}
\definecolor{darkgray}{rgb}{0.66, 0.66, 0.66}
\definecolor{mydarkgreen}{HTML}{1B5E20}
\definecolor{mydarkmagenta}{HTML}{AD1457}
\definecolor{mydarkorange}{HTML}{EF6C00}
\definecolor{lightblue}{rgb}{0.68, 0.85, 0.9}
\definecolor{lightcyan}{rgb}{0.88, 1.0, 1.0}
\definecolor{lightgray}{rgb}{0.83, 0.83, 0.83}
\definecolor{mylightgreen}{HTML}{81C784}
\definecolor{lightyellow}{rgb}{1.0, 1.0, 0.88}
\definecolor{myshadow}{rgb}{0.5, 0.5, 0.5}
\definecolor{pink}{rgb}{1.0, 0.75, 0.8}
\definecolor{violet}{rgb}{0.93, 0.51, 0.93}
\definecolor{myauxcolor}{RGB}{245, 255, 255}
\definecolor{mylightgreen}{RGB}{193, 225, 159}
\newcommand{\cL}{\mathcal{L}}
\newcommand{\miT}{\mathrm{T}}
\newlist{myitemize}{itemize}{1}
\setlist[myitemize,1]{label=\textbullet,leftmargin=.75cm}
\theoremstyle{plain}
\newtheorem{theorem}{Theorem}[section]
\newtheorem{lemma}[theorem]{Lemma}
\newtheorem{corollary}[theorem]{Corollary}
\newtheorem{proposition}[theorem]{Proposition}
\theoremstyle{definition}
\newtheorem{definition}[theorem]{Definition}
\newtheorem*{assumption}{Assumption}
\theoremstyle{remark}
\newtheorem{remark}[theorem]{Remark}
\numberwithin{equation}{section} %numbers equations within sections.
\newcommand{\mipie}[1]{{\let\thefootnote\relax\footnotetext{\hspace{-.55cm}#1}}}
\renewcommand{\emptyset}{\mbox{\textup{\O}}}
\newcommand{\abs}[1]{\left\lvert #1 \right\rvert}
\newcommand{\norm}[1]{\left\lVert #1 \right\rVert}
\def\XXint#1#2#3{{\setbox0=\hbox{$#1{#2#3}{\int}$ }
		\vcenter{\hbox{$#2#3$ }}\kern-.585\wd0}}
\def\diam{\operatorname{diam}}
\def\div{\operatorname{div}}
\def\dist{\operatorname{dist}}
\def\supp{\operatorname{supp}}
\def\Reg{{\operatorname{Reg}}}
\def\Sing{{\operatorname{Sing}}}
\def\Vol{{\operatorname{Vol}}}
\def\ECC{{\operatorname{ECC}}}
\def\F{\mathcal{F}}
\def\K{{\mathbb{K}}}
\def\tm{{\widetilde{m}}}
\def\N{\mathbb{N}}
\def\R{\mathbb{R}}
\def\Rn{{\mathbb{R}^n}}
\def\Z{\mathbb{Z}}
\newcommand{\pom}{{\partial \Omega}}
\title[Overdetermined boundary problems in large domains]{Positive solutions to general semilinear overdetermined boundary problems}
\author[A. Enciso]{Alberto Enciso}
\address{Alberto Enciso\\Instituto de Ciencias Matemáticas, Consejo Superior de Investigaciones Cien\-tíficas, 28049 Madrid, Spain}
\email{aenciso@icmat.es}
\author[P. Hidalgo-Palencia]{Pablo Hidalgo-Palencia}
\address{Pablo Hidalgo-Palencia\\Instituto de Ciencias Matemáticas, Consejo Superior de Investigaciones Científicas, 28049 Madrid, Spain, \& Departamento de Análisis Matemático y Matemática Aplicada, Facultad de Matemáticas, Universidad Complutense de Madrid, 28040 Madrid, Spain}
\email{pablo.hidalgo@icmat.es}
\author[X. Ros-Oton]{Xavier Ros-Oton}
\address{Xavier Ros-Oton \\ ICREA, Pg. Lluís Companys 23, 08010 Barcelona, Spain \& Universitat de Barcelona, Departament de
	Matemàtiques i Informàtica, Gran Via de les Corts Catalanes 585, 08007 Barcelona, Spain \& Centre de
	Recerca Matemàtica, Barcelona, Spain}
\email{xros@icrea.cat}
\date{\today}
\begin{document}
	
\begin{abstract}
We establish the existence of positive solutions to a general class of overdetermined semilinear elliptic boundary problems on suitable bounded open sets $\Omega\subset\Rn$. Specifically, for $n\leq 4$ and under mild technical hypotheses on the coefficients and the nonlinearity, we show that there exist open sets~$\Omega\subset\R^n$ with smooth boundary and of any prescribed volume where the overdetermined problem admits a positive solution. The proof builds on ideas of Alt and Caffarelli on variational problems for functions defined on a bounded region. In our case, we need to consider functions defined on the whole~$\Rn$, so the key challenge is to obtain uniform bounds for the minimizer and for the diameter of its support. Our methods extend to higher dimensions, although in this case the free boundary $\partial\Omega$ could have a singular set of codimension~5. The results are new even in the case of the Poisson equation $-\Delta v =g(x)$ with constant Neumann data. \end{abstract}
	
\maketitle

\tableofcontents

%%%%%%%%%%%%%%%%%%%%%%%%%%%%%%%%%%%%%%%%%%%%%%%%%%%%%%%%%%%% 

\section{Introduction}

The study of overdetermined boundary problems, that is, problems where
one prescribes both Dirichlet and Neumann data, has grown into a major
field of research. Although this topic had already been considered
in Lord Rayleigh's classic treatise~\cite{Rayleigh}, the breakthrough result was Serrin's symmetry result~\cite{S}. Under mild technical
hypotheses, he showed that if a positive
function satisfies a semilinear equation of the form
\[
-\Delta v = f(v)
\]
in a smooth bounded domain~$\Omega\subset\mathbb R^n$ with the boundary
conditions
\begin{equation*}
	v=0\quad\text{and}\quad |\nabla v|=c\qquad \text{on } \partial\Omega
\end{equation*}
for some constant $c$, then~$\Omega$ is a ball and~$v$ is radial.

There is a large body of literature generalizing Serrin's result. As a sample of the available results, and without attempting to be comprehensive, let us mention the development of alternative approaches to symmetry  that rely on
$P$-functions~\cite{GaLe,Kawohl} or Pohozaev-type integral identities~\cite{Brandolini,MP1,MP2}, quantitative estimates for the stability of these symmetry properties~\cite{ABR}, and extensions to a variety of related contexts: different constant-coefficient elliptic operators of variational form~\cite{Salani};   
degenerate elliptic equations such as the $p$-Laplace
equation~\cite{DPR99}; exterior~\cite{AB,Garo}, unbounded~\cite{Farina} or non-smooth
domains~\cite{FZ, Pr98}; and problems on the hyperbolic space and the hemisphere~\cite{KP98}. The connection between overdetermined problems on unbounded domains and constant mean curvature
surfaces has been studied in~\cite{Traizet,DP,Ros1}. Nontrivial
solutions have been shown to exist in a number of interesting situations, for instance in the case of sign-changing solutions~\cite{Ruiz-arxiv}, for periodic unbounded domains~\cite{RSW1,FMW2,FMW3}, and for problems that are either partially overdetermined or degenerate~\cite{Alessa,Fragala1,Fragala2,Farina2}, and have led to recent progress on various questions in fluid mechanics~\cite{R,ARMA,cpt} and spectral geometry~\cite{Fall-Minlend-Weth-main,DMJ}.

Even though these questions arise naturally in applied contexts~\cite{Toland,ARMA,cpt,Sirakov,Wahlen}, the study of overdetermined problems for position-dependent equations has attracted comparatively little attention. In two remarkable papers, Pacard and Sicbaldi~\cite{PS:AIF}
and Delay and Sicbaldi~\cite{DS:DCDS} proved the existence of smooth extremal
domains with small volume for the first eigenvalue of the
Laplacian in any compact Riemannian manifold. These results were extended in~\cite{Adv19,APDE21} to the case of fairly general overdetermined semilinear problems depending on a free parameter~$\lambda$. Roughly speaking, one can consider a nonlinear eigenvalue problem of the form
\begin{equation} \label{E.semilinearlambda}
	\begin{cases}
		-\div(A(x)\nabla v)  = \lambda f(x, v) \quad &\text{ in } \Omega\subset\R^n, \\
		v = 0 \quad \text{and}\quad |\nabla v|^2 = c\,{q(x)} &\text{ on } \pom.\end{cases}
\end{equation}
Here $q$ is a positive function, $c$ is a free constant as above, $A$ is a uniformly elliptic matrix-valued function, and $\lambda$ is an additional free constant. In all these results,  the key idea is to study the high-frequency regime $\lambda\gg1$. After a careful asymptotic analysis, and under suitable technical assumptions, one can show that for all large enough~$\lambda$, the overdetermined problem~\eqref{E.semilinearlambda} admits a positive solution on a family of smooth domains constructed as suitably small perturbations of small balls, of radius of order $\lambda^{-1/2}$ and centered at specific points depending on~$f(\cdot,0)$. Thus these are perturbation results, asserting that, in local coordinates adapted to the matrix~$A$ around a judiciously chosen point, one can perturb a positive radial solution to the limiting problem to obtain a solution to~\eqref{E.semilinearlambda}. Similar results can be proven for overdetermined nonlinear eigenvalue problems on compact manifolds.

Our objective in this paper is to remove the additional freedom granted by the free parameter $\lambda$. That is, we aim to study the existence of positive solutions to fairly general classes of semilinear overdetermined boundary value problems that do {\em not}\/ necessarily have a free parameter, and to consider domains that are {\em not}\/ necessarily deformations of small balls. Since these conditions are crucially used in the aforementioned papers to frame the problem in the context of  perturbation theory, our results must therefore be based on an entirely different set of ideas, coming from the theory of elliptic free boundary problems.

\subsection{Main results}

We will present our results for positive solutions to the model semilinear problem
\begin{equation} \label{eq:fbp}
	\begin{cases}
		-\div(A(x)\nabla v) = f(x, v) \quad &\text{ in } \Omega\subset\R^n, \\
		v = 0 \quad \text{and}\quad \nabla v \, A(x)\, \nabla v^{\miT} = c\,{q(x)} &\text{ on } \pom.\end{cases}
\end{equation}
Note we are using the natural Neumann boundary condition for this equation,
which reduces to $\partial_\nu v=-\sqrt{c\,q(x)}$ when $A$ is the identity matrix. One can also consider similar problems on compact manifolds, but for concreteness we will relegate the discussion to Section~\ref{S.manifolds} in the main text. In \eqref{eq:fbp}, the functions~$A$, $f$ and~$q$ are fixed, while the constant~$c$ is not specified a priori. We assume that these functions are {\em admissible and $x$-periodic}\/. Roughly speaking, this  means that $f(x,v)$, $A(x)$, and~$q(x)$ are sufficiently smooth and periodic in~$x$, that $f(x,v)$ is nonnegative and grows at most linearly in~$v$, that $q(x)$ is positive, and that the matrix $A(x)$ is uniformly elliptic. Details given in Section~\ref{sec:assumptions}.

 To control the size of the (possibly disconnected) open set~$\Omega$, we will fix a constant $m>0$ and restrict our attention to sets $\Omega\subset\R^n$ such that
\begin{equation}\label{E.volume}
	\Vol_q(\Omega) := \int_\Omega q(x)\, dx=m\,.
\end{equation}
Since $q$ is both upper bounded and bounded away from zero, this quantity is always comparable with the volume of~$\Omega$, and coincides with it in the classical case $q\equiv 1$.

Our main result can the be stated as follows. As we will discuss later in this section, the result is new even for the Poisson equation $-\Delta v=g(x)$ with constant Neumann data.

\begin{theorem}\label{th:main_low}
	Let $n\leq 4$. For any constant $m>0$ and any smooth, admissible, $x$-periodic functions~$q$, $A$ and~$f$, there exists a bounded open set $\Omega\subset\Rn$ with smooth boundary, satisfying the $q$-volume constraint~\eqref{E.volume}, and a constant $c>0$, for which the overdetermined boundary value problem~\eqref{eq:fbp} admits a positive solution $v\in C^\infty(\overline{\Omega})$.
\end{theorem}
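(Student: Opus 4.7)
The plan is to reformulate the overdetermined problem~\eqref{eq:fbp} as a one-phase free boundary problem of Alt--Caffarelli type. Writing $F(x, s) := \int_0^s f(x, t) \, dt$, I would introduce, for each $c > 0$, the functional
\[
J_c(v) := \int_{\Rn} \left[ \nabla v \cdot A(x) \, \nabla v^{\miT} - 2 F(x, v) \right] dx + c \int_{\Rn} q(x) \, \chi_{\{v > 0\}} \, dx
\]
on nonnegative $v \in H^1(\Rn)$. A formal first variation shows that any minimizer $v_c$ weakly solves $-\div(A \nabla v_c) = f(x, v_c)$ in $\Omega_c := \{v_c > 0\}$, together with the Bernoulli-type condition $\nabla v_c \cdot A \nabla v_c^{\miT} = c\, q(x)$ on $\partial \Omega_c$. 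To match the $q$-volume constraint~\eqref{E.volume}, I would then tune $c$ by a monotonicity/continuity argument so that $\Vol_q(\Omega_c) = m$; equivalently, one minimizes under the hard volume constraint and obtains $c$ as the Lagrange multiplier.

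The first obstacle is existence. Since the functional lives on all of $\Rn$, a minimizing sequence could spread out indefinitely or translate to infinity. The $x$-periodicity of $A$, $f$ and $q$ lets one precompose with integer translates without changing the energy, so by a concentration-compactness argument the energy may be assumed to concentrate in a bounded region; the linear upper bound on $f$ together with the uniform ellipticity of $A$ and the positivity of $q$ provide coercivity and yield a nontrivial weak $H^1$ limit $v_c$.

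The technically hardest step, and the genuinely new content of the argument, is to establish \emph{uniform} bounds on $\|v_c\|_{L^\infty(\Rn)}$ and on $\diam \Omega_c$. An $L^\infty$ bound follows from truncation and a De~Giorgi-type iteration exploiting the linear growth of $f$ against the quadratic gradient energy. The diameter bound, which substitutes for the bounded container of the classical Alt--Caffarelli framework, is the core of the argument: one combines the interior nondegeneracy estimate for one-phase minimizers---any unit cube meeting $\Omega_c$ must carry a definite amount of $v_c$ in $H^1$---with a cut-and-compare argument that excises an outlying component of $\Omega_c$. The linear growth of $F$ together with the periodic positivity of $q$ forces the term $c \int q\, \chi_{\{v_c > 0\}}$ to dominate the possible gain from $-2F$ on the excised region, so far-apart components are energetically forbidden and $\diam \Omega_c$ is controlled purely in terms of $m$ and the structural constants.

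With such a minimizer in hand, the standard Alt--Caffarelli regularity theory, extended to the present variable-coefficient semilinear setting, gives Lipschitz regularity of $v_c$, $C^{1,\alpha}$ flatness of the free boundary, and smoothness of $\partial \Omega_c$ away from a possible singular set of codimension at least $5$. In the range $n \leq 4$ this set is empty, so $\partial \Omega_c$ is a smooth hypersurface; interior elliptic bootstrapping then promotes $v_c$ to $C^\infty(\overline{\Omega_c})$, the overdetermined Neumann condition is attained pointwise, and the theorem follows. The main obstacle throughout is the diameter bound: it is precisely what forces the solution to live in a single bounded domain rather than leaking out over arbitrarily large scales.
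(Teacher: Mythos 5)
Your overall architecture matches the paper's: a variational Alt--Caffarelli formulation with the Neumann condition arising as a Lagrange multiplier/Bernoulli condition, uniform $L^\infty$ and support bounds as the new difficulties, and the standard free boundary regularity theory (with singular set of dimension at most $n-5$, hence empty for $n\leq 4$) to conclude. However, the step you yourself identify as the core of the argument --- the diameter bound --- is wrong as proposed. You claim that ``the linear growth of $F$ together with the periodic positivity of $q$ forces the term $c\int q\,\chi_{\{v_c>0\}}$ to dominate the possible gain from $-2F$ on the excised region, so far-apart components are energetically forbidden.'' This cannot work: under the periodicity hypothesis the energy is invariant under integer translations of a connected component of the support, so a minimizer with two components a distance $R$ apart has \emph{exactly the same energy} as one with the components adjacent; far-apart configurations are never energetically forbidden, and excising a component either violates the volume constraint or, if the component carries negative energy (which it must, else it would not appear in a minimizer), strictly increases the energy. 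Indeed, the paper's Appendix~\ref{S.appendix} exhibits admissible non-periodic data for which every minimizer has two components separated by an arbitrarily large distance, so no purely energetic comparison of the type you describe can yield a diameter bound. The correct mechanism is different: one first shows (via non-degeneracy plus the bound $|\{u>0\}|\leq m/\underline{q}$) that the support consists of at most $N$ ``enlarged connected components,'' each of uniformly bounded diameter, and then uses periodicity \emph{constructively} to translate these components by integer multiples of the period into a fixed bounded region, producing a (possibly different) minimizer with bounded support.

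A related gap is your existence step. You invoke concentration-compactness to rule out mass escaping to infinity, but dichotomy cannot be excluded here precisely because minimizers genuinely may have several disjoint components; strict subadditivity of the infimum in the volume parameter is not available. The paper circumvents this by first minimizing a truncated energy $\F_{0,R}$ with $F_R=F\varphi_R$ supported in $B_{2R}$ (where the direct method works), establishing all estimates with constants independent of $R$, and only then using the translation argument above to place the support inside $B_R$ so that $F_R=F$ there. Your $L^\infty$ bound via De Giorgi iteration is a reasonable alternative to the paper's potential-theoretic comparison, and your remark that one may either tune $c$ or work with the hard constraint is fine in spirit, though the paper takes the hard-constraint route and needs a separate Hopf-lemma argument to show the multiplier is strictly positive.
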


To prove Theorem~\ref{th:main_low}, we follow a variational approach, motivated by the seminal work of Alt and Caffarelli \cite{AC}. To this end, let us define a function $F : \R^n \times [0, +\infty) \to \R$  by 
\begin{equation} \label{eq:F_f}
F(x, u) := \int_0^u f(x, t) \, dt\,,
\end{equation}
so that $f(x, u) = F'(x, u)$. Here and in what follows, $F'(x,u)$ denotes the derivative of $F(x,u)$ with respect to the second variable, $u$. In view of the constraint~\eqref{E.volume}, we aim to minimize the energy $\F_0(u) $, 
among the functions in the space
\begin{equation*}
	\K_{\leq m} := \big\{ u \in H^1(\Rn) : \; u \geq 0, \;\; \Vol_q(\{u>0\}) \leq m \big\},
\end{equation*}
with the hope that the minimizer actually belongs to the smaller space
\begin{equation*}
	\K_{=m} := \big\{ u \in \K_{\leq m} : \Vol_q(\{u>0\}) = m \big\}.
\end{equation*}
For $D \subset \Rn$, we are denoting by
\begin{equation}\label{E.cF0}
	\F_0(u, D)
	:=
	\int_D \nabla u (x) \, A(x) \, \nabla u(x)^{\miT} dx - 2 \int_D F(x, u(x)) \, dx
	\end{equation}
the energy functional associated with Equation~\eqref{eq:fbp}, and we use the shorthand notation $\F_0(u):= \F_0(u, \Rn)$.

Setting $\Omega:=\{u>0\}$ and $v:=u|_\Omega$, Theorem~\ref{th:main_low} is proven a consequence of the following result on the existence and regularity of minimizers to the above variational problem. For the definition of the Neumann boundary trace of~$u$ in the viscosity sense, we refer to Definition~\ref{def:viscosity} in the main text.

\begin{theorem} \label{th:main}
	Let $m > 0$ and assume that $F$, $A$ and $q$ are admissible and $x$-periodic in the sense of Definition~\ref{def:admissible}. Then there exists a Lipschitz continuous minimizer $u$ of $\F_0$ in $\K_{\leq m}$, which actually belongs to $\K_{=m}$. The set $\{u > 0\}$ is open and bounded, and the following equation is satisfied
	\begin{equation} \label{eq:bvp}
		\begin{cases}
			 -\div(A \nabla u) = F'(x, u) \quad & \text{in } \{u > 0\}, \\
			\nabla u \,{A} \,\nabla u^{\miT} = c\, {q}(x) \quad & \text{on } \partial \{u > 0\} \text{ in the viscosity sense}
		\end{cases}
	\end{equation}
	for some constant $c>0$. Furthermore, the free boundary can be decomposed as a disjoint union 
	$$
	\partial \{u > 0\} = \Reg(\{u > 0\}) \cup \Sing(\{u > 0\})\,,
	$$
	where: 
	\begin{enumerate}
		\item The regular part $\Reg(\{u > 0\})$ is a $C^{1, \alpha}$ manifold of dimension $n-1$ for some $\alpha > 0$, it is open within $\partial \{u > 0\}$, and on it the Neumann condition holds in the classical sense: $\nabla u \, A\, \nabla u^{\miT} = c\, q$. If $A,q$ and~$f$ are smooth, $\Reg(\{u > 0\})$ is smooth as well.
		\item The singular part $\Sing(\{u > 0\})$ is a closed set within $\partial \{u >0\}$ of Hausdorff dimension at most $n-5$. Moreover, $\Sing(\{u > 0\})$ is empty if $n \leq 4$, and consists at most of countably many points if $n = 5$.
	\end{enumerate}
\end{theorem}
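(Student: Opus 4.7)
The plan is to adapt the Alt--Caffarelli variational framework to our setting, where the two main new difficulties are the absence of a fixed container domain and the need to control minimizing sequences on all of $\R^n$.

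\emph{Existence.} I would take a minimizing sequence $\{u_k\}\subset\K_{\leq m}$ and use the $x$-periodicity of $A$, $q$, $F$ to translate each $u_k$ by a lattice vector (which leaves $\F_0(u_k)$ unchanged), so that, say, the barycenter of $\{u_k>0\}$ lies in a fixed fundamental cell. The crucial step is then to establish two uniform bounds: an $L^\infty$ bound $\|u_k\|_\infty\leq M$, and a diameter bound $\diam(\{u_k>0\})\leq D$. The first follows from De Giorgi--Moser iteration, using that $f(x,v)$ grows at most linearly in $v$ together with $\Vol_q(\{u_k>0\})\leq m$. The second is the main obstacle, since a priori the support might fragment into arbitrarily far-apart pieces. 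I would argue that, if two subclusters of $\{u_k>0\}$ were at distance $\gg 1$, translating one by an appropriate lattice vector to bring them close preserves the energy by periodicity, while a cutoff-and-glue competitor strictly decreases it, contradicting minimality. Once both bounds are in place, $\{u_k\}$ is uniformly bounded in $H^1(\R^n)$ with supports in a fixed ball; Rellich--Kondrachov extracts a limit $u$, and lower semicontinuity of the Dirichlet term and of the $q$-volume of $\{u>0\}$, together with continuous convergence of $\int F(x,u_k)$ (using the quadratic growth of $F$ and $L^2$-strong convergence), show $u \in \K_{\leq m}$ is a minimizer.

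\emph{Saturation, interior equation, and Lipschitz regularity.} To show $u\in\K_{=m}$, I would argue by contradiction: if $\Vol_q(\{u>0\})<m$, a small nonnegative bump placed in the complement of $\{u>0\}$ at a location chosen via admissibility would strictly decrease $\F_0$, because the linear-in-amplitude contribution from $-2\int F(x, u+\varphi)$ dominates the quadratic Dirichlet contribution for small amplitudes. Testing against arbitrary $C^\infty_c(\{u>0\})$ variations, which respect the volume constraint to first order, yields the Euler--Lagrange equation $-\div(A\nabla u)=F'(x,u)$ in $\{u>0\}$, and standard elliptic regularity then upgrades $u$ to $C^\infty(\{u>0\})$ when $A$, $q$, $f$ are smooth. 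Lipschitz regularity of $u$ across the free boundary follows via the standard Alt--Caffarelli argument: comparing $u$ in $B_r(x_0)$ with its $A$-harmonic replacement, combined with the volume penalty implicit in minimality, produces a density estimate for $\{u=0\}$, whence Lipschitz continuity.

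\emph{Neumann condition and free boundary regularity.} The Neumann condition $\nabla u \, A\, \nabla u^{\miT}=c\,q$ arises as a Lagrange multiplier for the $q$-volume constraint: for a compactly supported vector field $X$, Hadamard's formula applied to $\F_0$ (using the interior PDE to cancel bulk terms) gives a boundary variation $-\int_{\partial\{u>0\}}\nabla u \, A\, \nabla u^{\miT}\,X\cdot\nu \,d\mathcal{H}^{n-1}$, while the $q$-volume varies as $\int_{\partial\{u>0\}} q\,X\cdot\nu\, d\mathcal{H}^{n-1}$; stationarity under volume-preserving deformations forces these two expressions to be proportional, giving the identity in the viscosity sense of Definition~\ref{def:viscosity}. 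Positivity $c>0$ follows from the non-degeneracy of $u$ near the free boundary, itself a consequence of minimality. Finally, the regular/singular decomposition of $\partial\{u>0\}$ is obtained by adapting the modern free boundary theory for one-phase Bernoulli problems (Alt--Caffarelli, Caffarelli, De Silva) to variable coefficients with nonzero right-hand side: blow-ups of $u$ at $x_0\in\partial\{u>0\}$ are global minimizers of the frozen-coefficient one-phase problem with data $A(x_0)$, $q(x_0)$ (the lower-order nonlinearity $F'$ rescales away); at points with half-plane blow-up, improvement-of-flatness yields $C^{1,\alpha}$ regularity, with smoothness under smooth data. The Hausdorff dimension bound $\dim_{\mathcal{H}}\Sing(\{u>0\})\leq n-5$ and its emptiness for $n\leq 4$ follow from Federer dimension reduction combined with the Jerison--Savin result, which rules out nontrivial singular minimizing one-phase cones in $\R^n$ for $n\leq 4$.
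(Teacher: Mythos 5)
Your overall architecture (Alt--Caffarelli scheme, blow-up classification, De Silva improvement of flatness, dimension reduction for the singular set) matches the paper, but the existence step contains a genuine gap that the paper is specifically designed to avoid. You propose to prove the uniform bounds $\|u_k\|_\infty\leq M$ and $\diam(\{u_k>0\})\leq D$ \emph{for the elements of a minimizing sequence}. Neither bound is available at that stage. The $L^\infty$ bound via De Giorgi--Moser requires a subsolution inequality $\cL u_k\leq F'(x,u_k)$, which is a consequence of minimality (Lemma~\ref{lem:EL}) and does not hold for arbitrary competitors in $\K_{\leq m}$. The diameter bound is even more delicate: in the paper it rests on the non-degeneracy estimate (Proposition~\ref{prop:non_degeneracy}) and the Lipschitz bound, both of which are again properties of minimizers. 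For a generic $u_k\in\K_{\leq m}$ the positivity set can fragment into arbitrarily many, arbitrarily small and arbitrarily distant pieces; your ``translate one subcluster closer and glue'' step produces no strict energy decrease (lattice translations leave the energy \emph{exactly} invariant by periodicity, and $u_k$ is not a minimizer, so there is no contradiction to exploit), and a cutoff-and-glue competitor can \emph{increase} the Dirichlet energy. This is precisely the loss-of-compactness issue the paper flags at the start of Section~\ref{sec:basic}.

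The paper's route around it is different and worth internalizing: it first develops the entire theory \emph{a priori}, assuming a minimizer exists; then, in Section~\ref{sec:proof}, it truncates the nonlinearity to $F_R:=F\varphi_R$ (compactly supported in $B_{2R}$), for which the direct method does work --- the only term that could leak mass at infinity, $\int F_R(x,u_k)\,dx$, now lives in a fixed ball where Rellich--Kondrachov applies, while the Dirichlet term and the volume constraint are handled by weak lower semicontinuity and Fatou. The a priori diameter bound (Proposition~\ref{prop:Omega_bounded}) then holds for the truncated minimizer with constants \emph{independent of $R$}, and periodicity is used a posteriori to translate the enlarged connected components into $B_R$, where $F_R\equiv F$, recovering a minimizer for the original functional. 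A secondary issue: your saturation argument (adding a small bump where the ``linear contribution of $F$ dominates'') fails if $F(x,\cdot)$ vanishes on the region available for the bump; the paper instead shows that $\Vol_q(\Omega_u)<m$ would force $\Omega_u$ to be open and closed, hence trivial, contradicting \eqref{hip:sol_not_zero}. Likewise, your Hadamard-formula derivation of the Neumann condition presupposes boundary regularity you have not yet established; the viscosity formulation must be obtained directly from the classification of blow-ups, as in Proposition~\ref{prop:overdetermined}.
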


As is well known, determining the optimal dimension of the singular part of a free boundary is a major open problem in the area, and at the moment $n-5$ is the best known bound. We refer to~\cite[Section 1.4]{V} for an up to date discussion of the state of the art on the regularity of free boundaries. 
%In the statement, note that the square root $\sqrt{A}$ in \eqref{eq:bvp} exists (and is symmetric and elliptic) for any admissible matrix~$A$.

\subsection{Some comments about the proof and connection with previous work}

To prove Theorem~\ref{th:main}, we use ideas introduced by Alt and Caffarelli in \cite{AC}, and expanded on by many authors. Velichkov's recent book~\cite{V} provides an authoritative presentation of these methods, with a number of extensions and refinements.

The proof of Theorem~\ref{th:main} involves two main differences with the existing literature. These differences are both key for our objective of constructing smooth solutions to the overdetermined problem~\eqref{eq:fbp} in low dimensions, and essential in that they present challenges that cannot be overcome through straightforward modifications of previous results.

The first difference is that in \cite{AC} and most of the works based on it such as~\cite{V}, one considers the minimization problem for functions defined on a fixed regular bounded open set $D \subset \Rn$, and impose  some boundary conditions on $\partial D$. These conditions typically yields uniform Poincaré inequalities on any $\Omega \subset D$, as well as bounds for the solution $u$ in terms of the boundary values on $D$. And of course, every $\Omega \subset D$ is automatically bounded. 

In contrast, our minimization problem is posed on the whole $\Rn$. Proving that both the minimizer~$u$ and the set $\{u>0\}$ are bounded when the ambient space is unbounded, as in our case,  requires rather non-trivial arguments. In fact, a key step in our proof is to obtain  {\em uniform}\/ estimates for $\|u\|_{L^\infty}$ and for the diameter of the set~$\{u>0\}$ which one can effectively control in terms of bounds for the functions~$A$, $f$ and~$q$.

The second difference is related to the kind of functions~$f$ that we can consider\footnote{There are also differences in the kind of functions~$A$ and~$q$ one can consider, but we shall focus our discussion of the role of~$f$ because the effect of~$f$ on the minimization problem is usually much more dramatic than that of~$A$ and~$q$.} in the right hand side of~\eqref{eq:fbp}. Indeed, there are several previous papers using a variational approach to study problems for certain kind of functions~$f$. Most of them only deal with some concrete part of the minimization problem, like  the analysis of the improvement of flatness performed in~\cite{DS}. 

To the best of our knowledge, there are only two cases in which the minimization problem has been carried out completely. First, in~\cite{GS}, Gustafsson and Shahgholian consider the minimization problem on~$\R^n$ for the Poisson equation $-\Delta v= h(x)$. A key difference here is not only that the equation is linear, but also and most fundamentally the sign of~$h$. Indeed, these authors assume that $h\leq -c_0<0$ outside a fixed ball, which basically forces minimizers to concentrate around the origin, which is crucially used to show that minimizers have compact support. In fact, the key steps of their proof rely strongly on this choice of sign. In particular, they extensively use comparisons with radially decreasing rearragements, which decrease their energy because they concentrate the mass around the origin. These ideas cannot be adapted to handle any class of functions~$h(x)\geq0$, as we do in this paper. As is well known, overdetermined problems with a nonnegative right hand side are classical in the literature, starting with Serrin's breakthrough paper~\cite{S}, but we shall see that handling this sign condition will require much work.

The second case concerns the torsion problem, $-\Delta u =1$, in which the right hand side has the same sign as in our problem~\eqref{eq:fbp}. In this case, Lederman~\cite{L} carried out the full minimization program on the unbounded ambient space $\Rn \setminus H$, where $H$ is a fixed ``hole''. One then looks for minimizers among function whose Dirichlet trace on $\partial H$ is a fixed positive function. This boundary condition forces solutions to concentrate around $\partial H$, which is the key observation used to show the key property that that the support of minimizers is uniformly bounded. As the right hand side is constant, this property then is used to equivalently formulate the minimization problem on~$B\backslash H$ for some fixed ball~$B$, so one can effectively avoid working on an unbounded space. This is certainly not the case when one considers more general functions on the right hand side, as we do in this paper. Aguilera, Alt and Caffarelli~\cite{AAC} addressed a problem closely related to Lederman's using a penalized energy functional.

The admissibility and periodicity conditions we impose on the functions $f$, $A$ and~$q$ are easy to interpret in the context of the minimization program. First, we require that these functions are smooth and positive (which, in the case of the matrix-valued function~$A$, corresponds to a uniform ellipticity condition). Then we assume that $F$~grows at most quadratically in~$u$ (which is optimal, as shown in Appendix~\ref{S.appendix}), and that it is sufficiently positive somewhere so that nontrivial minimizers exist. Lastly, we assume that these functions are periodic in~$x$. This does not immediately imply that minimizers have uniformly bounded supports because the background space is still all of~$\Rn$ (it only does when the whole minimization program is carried out on the compact quotient space $(\R/2\pi\Z)^n$ instead, as we do when we consider overdetermined problem on compact manifolds in Section~\ref{S.manifolds}, and there the analysis is much simpler). But periodicity is crucially used to establish the uniform estimates for~$u$ and its support and, in fact, in Appendix~\ref{S.appendix} we show that without periodicity in~$x$, minimizers with uniformly bounded supports do not necessarily exist.

The paper is organized as follows. In Section~\ref{sec:assumptions} we state the admissibility and periodicity assumptions in our main theorems, introduce some notation, and prove some auxiliary estimates that we will use throughout the paper. Then we move on to prove some basic properties of minimizers in Section~\ref{sec:basic}. In Section~\ref{sec:first_variation} we consider a penalized functional, which is subsequently used to show that minimizers are Lipschitz continuous in Section~\ref{sec:lip}. In Section~\ref{sec:compact_support} we obtain bounds for the diameter of the support of minimizers. After analyzing the regularity of the free boundary in Section~\ref{sec:boundary}, the proof of Theorem~\ref{th:main} is presented in Section~\ref{sec:proof}. Section~\ref{S.manifolds} is devoted to an analogue of our main results for overdetermined problems on compact manifolds. To conclude, in Appendix~\ref{S.appendix} we show that, without the periodicity assumption, admissible overdetermined problems do not generally have minimizers with uniformly bounded supports.

%%%%%%%%%%%%%%%%%%%%%%%%%%%%%%%%%%%%%%%%%%%%%%%%%%%

%\newpage 

\section{Assumptions and auxiliary results} \label{sec:assumptions}

Our goal in this section is to discuss the assumptions of the functions $f$, $A$ and~$q$ that we use, and to prove some basic estimates that will be used throughout the paper.

\subsection{The admissibility and periodicity assumptions}

Let us now state carefully the assumptions we need in Theorem~\ref{th:main}. As we anticipated in the Introduction, if these are relaxed, several of our key estimates can fail, as we show in Appendix~\ref{S.appendix}.

Before stating the assumptions, let us introduce some notation that we will use throughout the paper. We denote by $C$ constants whose value may be different in every appearance. They may depend on the dimension $n$, $m$ and the parameters appearing in Definition~\ref{def:admissible} below. We sometimes make dependencies explicit using brackets, e.g. $C(n, M')$. In turn, $X \lesssim Y$ is a equivalent notation to $X \leq C Y$, and $X \lesssim_m Y$ means $X \leq C(m) Y$. Similarly, $X \approx Y$ means $X \lesssim Y$ and $Y \lesssim X$.

Euclidean balls of radius $r > 0$ are denoted $B_r(x)$ if centered at $x \in \Rn$, or simply $B_r$ if centered at the origin. We also denote $B^m$ the ball with volume $m$. The $n-1$ dimensional Hausdorff measure is denoted $\mathcal{H}^{n-1}$. Averages will be denoted by the sign $\fint$. Also, $\mathbf{1}_E$ denotes the indicator function of the set $E \subset \Rn$ and $\partial_\nu$ denotes the outer normal derivative. We will also denote by $\lambda_1$ the first Dirichlet eigenvalue of different sets.

We will make frequent use of matrix notation when multiplying vectors (which will be thought as row vectors) and matrices, mixing it with the notation $x\cdot y$ for the inner product in $\Rn$. In Sections~\ref{sec:first_variation} and \ref{sec:boundary}, it will also appear $\nabla A \cdot x$, where $A$ is a matrix: this is the matrix whose entries are $\nabla a_{ij} \cdot x$, which amounts to taking the gradient componentwise. With some abuse of notation, in this definition we denote by $\R^{n^2}$ the space of $n\times n$ matrices.

We are ready to state the admissibility and periodicity assumptions in Theorem~\ref{th:main}. Here we are introducing several constants that will appear later on in various steps of the proof.

\begin{definition}[Admissibility and periodicity conditions] \label{def:admissible}
Consider the functions $f, F:\Rn\times\R\to\R$, $A:\Rn\to\R^{n^2}$ and $q:\Rn\to\R$ appearing in~\eqref{eq:fbp}-\eqref{E.volume}-\eqref{E.cF0}.
\begin{enumerate}
	\item These functions are {\em admissible}\/ if:\smallskip
	\begin{enumerate}
	\item The functions~$f(x, u)$ and $F(x,u)$, where $F$ is defined as in \eqref{eq:F_f}, satisfy, for (almost) every $x\in\Rn$ and $u\geq0$,
	\begin{align}
		\tag{HF1} \label{hip:F_regular}
		\bullet\quad &F(x, u) \text{ is $C^{1,1}$ in both variables}, \\
		\tag{HF2} \label{hip:zero}
		\bullet\quad &F(x, 0) = 0\,,\\
		\bullet\quad &\tag{HF3} \label{hip:negative}
		F'(x, u) = f(x, u) \geq 0,\\
		\bullet\quad &\tag{HF4} \label{hip:F_quadratic}
		F(x, u) \leq N + bu^2,\\
		\tag{HF5} \label{hip:F'_F''}
		\bullet\quad &\begin{cases}
			F'(x, u) = f(x, u) \leq N' + M' u, 	\\ 
			F''(x, u) = f'(x, u) \leq M_2,
		\end{cases}\\[2mm]
			\bullet\quad &\tag{HF6} \label{hip:sol_not_zero}
		\text{There exists } u_0 \in \K_{\leq m} \text{ such that } \F_0(u_0) < 0.
	\end{align}
Here $0 \leq b < \lambda \, \frac{\lambda_1(B^m)}{2}$  and  $N, N', M', M_2 \geq 0$ are constants. We note that \eqref{hip:zero} is automatically satisfied because of \eqref{eq:F_f}, but it will useful to keep it in mind. \medskip

\item The function~$A(x)$ satisfies, for every $x,\xi\in\Rn$,
\begin{align}
	\bullet\quad &\tag{HA1} \label{hip:A_regular}
		A \in C^{1, 1}(\Rn),\\
			\bullet\quad &\tag{HA2} \label{hip:A_symmetric}
		\text{$A(x)$ is a symmetric matrix,}\\
			\bullet\quad &\tag{HA3} \label{hip:A_elliptic_bounded}
		\lambda \abs{\xi}^2\leq \xi A(x) \xi^{\miT} \leq \lambda^{-1} \abs{\xi}^2
\end{align}
for some constant $\lambda\in(0,1)$.\medskip

\item The function $q(x)$ satisfies, for every $x\in\Rn$,
\begin{align}
		\bullet\quad &\tag{Hq1} \label{hip:q_regular}
		q \in C^{1, 1}(\Rn),\\
			\bullet\quad &\tag{Hq2} \label{hip:q_positive}
		\underline q  \leq q(x)\leq   \overline q
\end{align}
for some positive constants $\underline q, \overline q$.\bigskip

	\end{enumerate}
	\item These functions are {\em $x$-periodic}\/ if there is some $T>0$ such that
	\begin{equation}\tag{HPer} \label{hip:periodic}
		F(x, u)=F(x+Te, u),\quad  A(x+Te)=A(x), \quad q(x+Te)=q(x)
	\end{equation}
	hold for every $e \in \Z^n$.
\end{enumerate}
	With some abuse, we will also say that the functions $f(x,v)$, $A(x)$, $q(x)$ appearing in~\eqref{eq:fbp} are {\em admissible and $x$-periodic}\/ when $F(x,v):=\int_0^vf(x,t)\,dt$, $A(x)$ and $q(x)$ (which are the functions appearing in the variational formulation) are admissible and periodic in the above sense.
\end{definition}

\begin{remark}
	The hypothesis \eqref{hip:sol_not_zero} is obviously satisfied for many reasonable functions $F(x, u)$. In fact, it is satisfied as soon as $F$ is positive somewhere in the sense that $F(x, u) \geq c_0 u$ for all $x$ a small ball $B_\varepsilon$, with $c_0, \varepsilon > 0$. Indeed, we can take $\varphi$ the first eigenfunction of the Laplacian in the ball $B_\varepsilon$, and compute, for $\tau >0$,
	\begin{equation*}
		\F_0(\tau \varphi)
		\leq 
		\lambda^{-1} \lambda_1(B_\varepsilon) \, \tau^2 \int_{B_\varepsilon} \varphi^2 \, dx 
		- 2 c_0 \tau \int_{B_\varepsilon} \varphi \, dx
		 .
	\end{equation*}
	Therefore, by the positivity of $\varphi$, $\F_0(\tau \varphi)<0$ provided that $\tau $ is sufficiently small. 
\end{remark}

\begin{remark}
	In Assumption~\eqref{hip:periodic}, one could have considered periodicity conditions defined by any other lattice on~$\Rn$. All the arguments in the paper work in this case too, but we have stated the results only in terms of square lattices for concreteness.
\end{remark}

Let us discuss these assumptions. 
First, in order not to worry about technicalities which are not the focus of this work, we assume \eqref{hip:F_regular}, \eqref{hip:A_regular} and \eqref{hip:q_regular}, where here and everywhere, $\nabla F$ and $F'$ respectively denote the derivatives of $F$ with respect to the first and second variables. The structural assumptions \eqref{hip:A_symmetric} and \eqref{hip:A_elliptic_bounded} are natural if we want to obtain an elliptic PDE for our solutions, and \eqref{hip:q_positive} is natural if one desires to obtain the overdetermined Neumann boundary condition. 
Then, we also make the natural assumption that~$u$ only contributes to the energy on its support, which is~\eqref{hip:zero}.
{As explained in the Introduction, the sign condition \eqref{hip:negative} is a key novelty, and will require much work.} The 
Assumptions \eqref{hip:F_quadratic}--\eqref{hip:sol_not_zero} and~\eqref{hip:periodic} will arise naturally in certain estimates, so we will comment on them later.

\subsection{An approximate mean value formula for inhomogeneous equations}

Let us now establish analogues of the  Harnack inequality and the mean value formula for the equation $\cL v=g$. Here and in what follows, we will use the notation
\begin{equation}\label{E.L}
	\cL v:=-\div(A(x) \nabla v)
\end{equation}
for the elliptic operator in divergence form associated with the matrix-valued function~$A$.

\begin{lemma} \label{lem:harnack}
	Assume that $A$ is admissible in the sense of Definition~\ref{def:admissible}. Let $r \in (0, 1)$ and suppose that $u \in H^1(B_r)$ satisfies $u \geq 0$ on $\partial B_r$. For any $M \geq 0$, the following hold: 
	\begin{enumerate}
		\item If $\cL u \leq M$ in $B_r$, then 
		\begin{equation} \label{eq:harnack}
			u(x) 
			\lesssim_{n} \;
			\fint_{\partial B_r} u \, d\mathcal{H}^{n-1} + Mr^2, 
			\qquad 
			x \in B_{r/2}.
		\end{equation}
		\item If $\abs{\cL u} \leq M$ in $B_r$, then 
		\begin{equation} \label{eq:harnack_gradient}
			\abs{\nabla u(0)} 
			\lesssim_{n} \;
			\frac{1}{r} \fint_{\partial B_r} u \, d\mathcal{H}^{n-1} + Mr.
		\end{equation}
	\end{enumerate}
\end{lemma}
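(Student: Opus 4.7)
The plan is the classical harmonic-replacement splitting. I would write $u=h+w$, where $h\in H^1(B_r)$ is the $\cL$-harmonic function with $h=u$ on $\partial B_r$ in the sense of traces, and $w:=u-h$ carries the inhomogeneity and has zero boundary trace. This reduces the lemma to separately estimating $h$ from its boundary values and $w$ from its right-hand side, each by standard interior estimates for divergence-form operators with $C^{0,1}$ coefficients.

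To control $h$, I would first note that $h\geq 0$ in $B_r$ by the weak maximum principle for $\cL$, since $u\geq 0$ on $\partial B_r$. Rescaling via $\tilde h(y):=h(ry)$ produces an $\tilde\cL$-harmonic function on $B_1$, where $\tilde\cL$ is built from the rescaled matrix $\tilde A(y):=A(ry)$, which shares the ellipticity constants of \eqref{hip:A_elliptic_bounded} and has $C^{0,1}$ norm no larger than that of $A$. On $B_1$ the Harnack inequality combined with the Poisson representation for divergence-form operators on smooth domains yields $\sup_{B_{1/2}}\tilde h\lesssim_n \fint_{\partial B_1}\tilde u\,d\mathcal H^{n-1}$, and the interior $C^1$ estimate (Schauder, using \eqref{hip:A_regular}) gives $|\nabla\tilde h(0)|\lesssim_n\fint_{\partial B_1}\tilde u\,d\mathcal H^{n-1}$. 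Undoing the scaling returns the corresponding bounds for $h$ on $B_r$, with the extra factor $r^{-1}$ in the gradient estimate coming from the chain rule $\nabla\tilde h(0)=r\,\nabla h(0)$.

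For $w$, I would compare with a nonnegative barrier: let $\Phi$ solve $\cL\Phi=1$ in $B_r$ with $\Phi|_{\partial B_r}=0$; then $\Phi\geq 0$ by the minimum principle applied at a would-be interior negative minimum, and $\Phi\lesssim r^2$ in $B_r$ by the same rescaling argument (the rescaled $\tilde\Phi(y):=\Phi(ry)/r^2$ solves $\tilde\cL\tilde\Phi=1$ on $B_1$ and is bounded by a constant depending only on $n$ and $\lambda$). Since $\cL w=\cL u$, the weak maximum principle applied to $M\Phi-w$ gives $w\leq C M r^2$ under the one-sided bound $\cL u\leq M$ of part (i), and additionally applied to $M\Phi+w$ gives $|w|\leq C M r^2$ under the two-sided bound of part (ii). The standard interior gradient estimate for $\cL$ with bounded right-hand side then yields $|\nabla w(0)|\lesssim r^{-1}\sup_{B_{r/2}}|w|+rM\lesssim Mr$. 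Summing the estimates for $h$ and $w$ produces \eqref{eq:harnack} and \eqref{eq:harnack_gradient} respectively.

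The only real obstacle, though largely bookkeeping, is ensuring that all constants depend only on $n$ (and the admissibility parameters of $A$) and remain uniform for every $r\in(0,1)$ despite the $x$-dependence of the coefficients. This is precisely what the rescaling to $B_1$ accomplishes: the rescaled matrix $\tilde A$ inherits the ellipticity \eqref{hip:A_elliptic_bounded} and the $C^{0,1}$ bound \eqref{hip:A_regular} of $A$ independently of $r$, so all the classical interior estimates on $B_1$ apply with $r$-independent constants. The periodicity hypothesis \eqref{hip:periodic} plays no role in this lemma.
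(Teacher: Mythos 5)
Your decomposition $u=h+w$ (harmonic replacement plus a barrier-controlled remainder) is sound and, in the end, equivalent in content to the paper's argument, which instead adds the correction $M\phi$ to $u$ to make it $\cL$-subharmonic and compares with its $\cL$-harmonic extension. Your treatment of $w$ via the barrier $\Phi$ with $\cL\Phi=1$, $0\le\Phi\lesssim r^2$, correctly handles the one-sided hypothesis in part (i) and the two-sided one in part (ii). For the gradient bound your route is actually cleaner than the paper's: you use the interior $C^1$ (Schauder-type) estimate for $h$ and the interior gradient estimate for $\cL w=f$ with bounded $f$, whereas the paper introduces the auxiliary $\cL$-harmonic function with boundary datum $x_1$ and exploits the cancellation $\fint_{\partial B_r}x_1\,d\mathcal{H}^{n-1}=0$ together with a lower barrier for $u$. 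The rescaling to $B_1$ to get $r$-uniform constants is exactly as in the paper.

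The one step you assert rather than prove is precisely where the whole content of the lemma sits: the inequality $\sup_{B_{1/2}}\tilde h\lesssim_n\fint_{\partial B_1}\tilde u\,d\mathcal H^{n-1}$ for a nonnegative $\cL$-harmonic $\tilde h$. Harnack plus the representation $\tilde h(x)=\int_{\partial B_1}\tilde u\,d\omega_\cL^x$ only reduces this to showing that the elliptic measure satisfies $\omega_\cL^{0}\lesssim \mathcal{H}^{n-1}|_{\partial B_1}/\mathcal{H}^{n-1}(\partial B_1)$, i.e.\ that the Poisson kernel of $\cL$ on $B_1$ is bounded above by a dimensional constant; this is false for merely bounded measurable (even continuous) coefficients, and for $C^\alpha$ coefficients it is a nontrivial boundary estimate. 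The paper proves it by combining the CFMS estimate $\omega_\cL^x(B_\rho(y)\cap\partial B_1)\approx\rho^{n-2}G_\cL((1-\rho)y,x)$ with the linear decay $G_\cL((1-\rho)y,x)\approx\rho$ coming from Hopf's lemma. If you either reproduce that argument or cite the kernel bound explicitly for divergence-form operators with Hölder coefficients in smooth domains, your proof is complete; as written, "Harnack combined with the Poisson representation" does not by itself yield the comparison of $\omega_\cL^0$ with surface measure.
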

\begin{proof}
	Let us prove this in an elementary way for the Laplacian, and then sketch the differences to obtain the result for general $\cL$. 
	
	\subsubsection*{Proof for the Laplacian, $A \equiv I$.}
	To show \eqref{eq:harnack}, note that $v(x) := u(x) + M \abs{x}^2 / (2n)$ is subharmonic in $B_r$, i.e. $-\Delta v \leq 0$ in $B_r$. Therefore, by the maximum principle, $v \leq w$ in $B_r$, where $w$ is the harmonic extension of $v$ to $B_r$, that is, $w$ is the function satisfying $\Delta w = 0$ in $B_r$, and coincides with $v$ at $\partial B_r$. By the  Poisson formula,
	\begin{equation*}
		w(x)
		=
		r^{n-2} \fint_{\partial B_r} \frac{r^2 - \abs{x}^2}{\abs{y-x}^n} \left(u(y) + M \frac{\abs{y}^2}{2n} \right) \, d \mathcal{H}^{n-1}(y),
		\qquad 
		x \in B_r.
	\end{equation*}
	Therefore, if we take $x \in B_{r/2}$, a simple triangle inequality (recalling also $u \geq 0$ on $\partial B_r$) shows 
	\begin{equation} \label{eq:mean_value_2}
		v(x) 
		\leq 
		w(x)
		\leq 
		r^{n-2} \frac{r^2}{(r - r/2)^n} \left( \fint_{\partial B_r} u \, d\mathcal{H}^{n-1} + M \frac{r^2}{2n} \right),
	\end{equation}
	from which \eqref{eq:harnack} readily follows. 
	
	To obtain \eqref{eq:harnack_gradient} as a consequence of \eqref{eq:harnack}, we can do the following. 
	Define $\widehat{v}(x) := u(x) + \abs{\nabla u(0)} x_1$ (where $x = (x_1, x_2, \ldots, x_n)$ are just the coordinates of points in $\Rn$), and note that still $\abs{-\Delta \widehat{v}} \leq M$ in $B_r$. Applying \eqref{eq:harnack} to $\widehat{v}$ (actually, to $\widehat{v} + \abs{\nabla u(0)} r$, which is non-negative on $\partial B_r$ because so is $u$; but this extra term will cancel out momentarily), we obtain 
	\begin{equation*}
		u(x) + \abs{\nabla u(0)} x_1 
		\lesssim 
		\fint_{\partial B_r} u \, d\mathcal{H}^{n-1} + M r^2,
		\qquad 
		x \in B_{r/2},
	\end{equation*} 
	because of the cancellation $\fint_{\partial B_r} x_1 \, d\mathcal{H}^{n-1}(x) = 0$. Then, note that $u \geq -Mr^2/(2n)$ in $B_r$: if we define $\widehat{w}(x) := u(x) - M \abs{x}^2 / (2n)$, it is superharmonic (i.e. $-\Delta \widehat{w} \geq 0$) in $B_r$, and $\widehat{w} \geq -M r^2/(2n)$ on $\partial B_r$ (because $u \geq 0$ on $\partial B_r$); whence we finish the minimum principle for superharmonic functions. Therefore, by taking any $x \in \partial {B_{r/4}}$ in the previous display, we obtain \eqref{eq:harnack_gradient}.
	
	\subsubsection*{Proof for general $\cL$.} % 8-10, 16-10
	To generalize the above proof to general $\cL$, the first modification is to use $v(x) := u(x) + M\phi(x)$ (and the same for $\widehat{w}$ just changing signs) and $\widehat{v}(x) := u(x) + \abs{\nabla u(0)} \widehat{\phi}$, where $\phi$ and $\widehat{\phi}$ solve 
	\begin{equation*}
		\begin{cases}
		 \cL\phi = -1 \quad & \text{in } B_r \\
		 \phi = \dfrac{\abs{x}^2}{2n} = \dfrac{r^2}{2n} \quad & \text{on } \partial B_r,
		\end{cases}
		\qquad \text{and} \qquad 
		\begin{cases}
			\cL\widehat{\phi} = 0 \quad & \text{in } B_r \\
			\widehat{\phi} = x_1 \quad & \text{on } \partial B_r.
		\end{cases}
	\end{equation*}
	These are the natural analogues to $v$ and $\widehat{v}$ in this more general context. By rescaling $\phi$ to $B_1$ by $\phi_r(x) := \phi(rx)$, which satisfies $\cL_r \phi_r = r^2$ in $B_1$ (where $A_r(x) := A(rx)$, whence it has the same ellipticity constant as $A$) and $\phi_r = r^2/(2n)$ on $\partial B_1$, the a priori bounds for elliptic equations (e.g., \cite[Th. 3.7]{GT}, where we use \eqref{hip:A_regular} and $r \leq 1$ to have uniform bounds on the moduli of continuity of the coefficients of $\cL_r$) % 5-11
	yield $\norm{\phi}_\infty \lesssim r^2$. This is used to deduce \eqref{eq:harnack} from the analogue estimate in our setting to \eqref{eq:mean_value_2}, and to bound $\widehat{w}$ from below.

	The second main point is to use a Poisson formula for $w$ (which in this case, solves $\cL w=0$ in $B_r$). It turns out that this can be done for fairly general operators, and for that, we may use the $\cL $-harmonic measure. Indeed, given $g \in C(\partial B_r)$, let us denote by $u_g$ the solution of the problem $\cL u_g = 0$ in $B_r$, $u_g =g$ on $\partial B_r$. Then, given $x \in B_r$, the map $g \mapsto u_g(x)$ is a linear and bounded (by the maximum principle, and also positive) operator from $C(\partial B_r)$ to $\R$. Therefore, by the Riesz Representation Theorem, it can be represented by a (positive) Radon measure $\omega_\cL ^x$, the so-called {\em $\cL $-harmonic measure}\/, or elliptic measure associated to $\cL $: 
	\begin{equation*}
		u_g(x) = \int_{\partial B_r} g(y) \, d\omega_\cL ^x(y), 
		\qquad 
		g \in C(\partial B_r).
	\end{equation*}

	It is known that if $A(x) \in C^\alpha$, then $\omega_\cL ^x$ is absolutely continuous with respect to $\mathcal{H}^{n-1}|_{\partial B_r}$ (at least in regular sets like the ball, see e.g.~\cite{MM})\footnote{
	The Hölder continuity of the coefficients is necessary, because if we only let $A(x) \in C(\overline{B_r})$, then there are counterexamples, as those in \cite{CFK, MM}.
	}. Therefore, we can represent $w$ in the proof above as
	\begin{equation*}
		w(x)
		=
		\int_{\partial B_r} w(y) \, d\omega_\cL ^x(y)
		=
		\int_{\partial B_r} w(y) \, \frac{d \omega_\cL ^x}{d\mathcal{H}^{n-1}} (y) \, d\mathcal{H}^{n-1}(y).
	\end{equation*}
	Given this representation, to be able to draw the same conclusions as in the proof for the Laplacian above, we just need to show that 
	\begin{equation} \label{eq:poisson_kernel_constant}
		\frac{d \omega_\cL ^x}{d\mathcal{H}^{n-1}} (y)
		\approx 
		1, 
		\qquad x \in B_{r/2}, y \in \partial B_r.
	\end{equation}
	
	To prove this, fix $x \in B_{r/2}$, $y \in \partial B_r$, and $\rho > 0$ small, see Figure~\ref{fig:cfms} for a sketch. Then, the estimate from \cite[Lemma 2.2]{CFMS} yields 
	\begin{equation*}
		\omega_\cL ^x(B_\rho(y) \cap \partial B_r)
		\approx 
		\rho^{n-2} \, G_\cL ((1-\rho) y, x), 
	\end{equation*}
	where $G_\cL (\cdot, \cdot)$ is the Green's function in $B_r$ for the operator $\cL $, i.e., the function satisfying $\cL  G_\cL (\cdot, x) = \delta_x$ in $B_r$ and $G_\cL  (\cdot, x) = 0$ on $\partial B_r$. Therefore, since near $\partial B_r$, $G_\cL $ is $\cL $-harmonic, and it vanishes identically at the boundary, Hopf's lemma (for a reference under the only assumption that $A(x) \in \cL ^\infty$, see \cite{Safonov}) yields some linear growth, namely $G_\cL ((1-\rho)y, x) \approx \rho$ for $\rho > 0$ small enough. Putting all the above together, we obtain 
	\begin{equation*}
		\frac{d \omega_\cL ^x}{d\mathcal{H}^{n-1}} (y)
		=
		\lim_{\rho \to 0} \,\frac{\omega_\cL ^x(B_\rho(y) \cap B_r)}{\mathcal{H}^{n-1}(B_\rho(y) \cap B_r)}
		\approx 
		\lim_{\rho \to 0} \,\frac{\rho^{n-2} G_\cL ((1-\rho) y, x)}{\rho^{n-1}}
		\approx 
		1,
	\end{equation*}
	which finally shows \eqref{eq:poisson_kernel_constant}.
	
	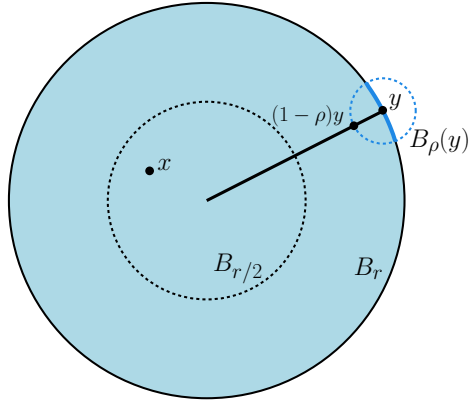
\begin{figure}[t!]
		\centering 
		\resizebox{0.4\textwidth}{!}{
			\begin{tikzpicture}[scale=1]
				\filldraw[fat, fill=lightblue] 
				(6.773, 6.773) circle[radius=6.773];
				\draw[fat, dashed] 
				(6.773, 6.773) circle[radius=3.387];
				\draw[heavier] (6.773, 6.773) -- (12.81, 9.844) -- (12.81, 9.844);
				\node[circle, fill, inner sep=3pt] at (4.818, 7.791) {};
				\node[anchor=center, font=\Huge] at (5.33, 8.002) {$x
					$};
				\node[anchor=base, font=\Huge] at (13.238, 10.05) {$y$};
				\node[anchor=center, font=\huge] at (10.232, 9.671) {$(1-\rho) y $};
				\node[anchor=center, font=\Huge] at (14.752, 8.793) {$B_\rho(y)$};
				\node[anchor=center, font=\Huge] at (12.314, 4.44) {$B_r$};
				\node[anchor=center, font=\Huge] at (7.84, 4.455) {$B_{r/2}$};
				\draw[myblue, fat, dashed] 
				(12.81, 9.844) circle[radius=1.12];
				\draw[myblue, ultrafat] (12.234, 10.805) .. controls (12.295, 10.697) and (12.327, 10.65) .. (12.364, 10.596) .. controls (12.401, 10.542) and (12.442, 10.479) .. (12.479, 10.421) .. controls (12.516, 10.364) and (12.549, 10.311) .. (12.579, 10.26) .. controls (12.61, 10.209) and (12.638, 10.161) .. (12.667, 10.109) .. controls (12.697, 10.057) and (12.727, 10.002) .. (12.761, 9.937) .. controls (12.795, 9.872) and (12.833, 9.798) .. (12.871, 9.718) .. controls (12.91, 9.637) and (12.949, 9.552) .. (12.987, 9.465) .. controls (13.025, 9.378) and (13.061, 9.289) .. (13.098, 9.192) .. controls (13.135, 9.095) and (13.174, 8.99) .. (13.23, 8.806);
				\node[circle, fill, inner sep=3pt] at (12.799, 9.867) {};
				\node[circle, fill, inner sep=3pt] at (11.812, 9.337) {};
			\end{tikzpicture}
		}
		\caption{To compute the Poisson kernel at a point $x \in B_{r/2}$, one may use the Green's function at points close to the boundary.}
		\label{fig:cfms}
	\end{figure}
	
	With these properties in hand, it is straightforward to follow the proof for the Laplacian, extending the result to a general $\cL $ as in~\eqref{E.L} with $A(x) \in C^\alpha$. The lemma then follows.
\end{proof}

%%%%%%%%%%%%%%%%%%%%%%%%%%%%%%%%%%%%%%%%%%%%%%%%%%%%%%%%%%%% 

%\newpage 

\section{Some basic properties of minimizers} \label{sec:basic}

Over the next sections we will establish the key intermediate results that we will need to prove Theorem~\ref{th:main}. For this purpose, let us fix some $m > 0$.

As is well known, instead of minimizing $\F_0$ over $\K_{=m}$, it is more convenient to minimize it over the larger family $\K_{\leq m}$, which is better behaved from the perspective of the calculus of variations. However, an important first observation is that if one tries to apply the direct method in the calculus of variations (as e.g.\ in \cite[Prop~11.1]{V}) to find a minimizer of $\F_0$ in $\K_{\leq m}$, one encounters a key difficulty: even with he admissibility and periodicity assumptions in Definition~\ref{def:admissible}, it may be the case that the mass of the minimizing sequences escapes to infinity. At this stage, we still have no information about the shapes that minimize $\F_0$, so it would be difficult to discard that the mass escapes to infinity. 

We will  control this difficulty a posteriori in Section~\ref{sec:proof}, once we have gathered enough information about possible minimizers. So for the moment, and until Section~\ref{sec:proof}, we will work {a priori}, under the assumption of the existence of a minimizer, and infer as much information about it as possible. Therefore, until Section~\ref{sec:proof} we will make the following

\begin{assumption}
	In Sections~\ref{sec:basic} to~\ref{sec:boundary}, we assume that:
	\begin{equation}
		\tag{H*} \label{hip:setting}
		\begin{cases}
			m>0,\\
			\text{$F, A$ and $q$ are admissible  in the sense of Definition~\ref{def:admissible}},\\
			\text{$u$ is a minimizer of $\F_0$ in $\K_{\leq m}$}.
		\end{cases}
	\end{equation}
\end{assumption}

We note that here we do not assume the periodicity condition in \eqref{hip:periodic}. We will use this in Section~\ref{sec:proof}, but for now it is not necessary.

%%%%%%%%%%%%%%%%%%%%%%%%%%%%%%%%%%%%%%%%%%%%%%%%%%%%%%%%%%%%

%%%%%%%%%%%%%%%%%%%%%%%%%%%%%%%%%%%%%%%%%%%%%%%%%%%%%%%%%%%%

A first property that a minimizer~$u$ must satisfy is the following. Recall that the operator~$\cL $ was introduced in~\eqref{E.L}. Note that in this lemma we show that if
$$\Omega_u:=\{u>0\}$$
is an open set (which we have not proved yet), then the minimizer~$u$ does satisfy the PDE we want:

\begin{lemma}[Euler--Lagrange (I)] \label{lem:EL}
	Assume \eqref{hip:setting}. Then
	$\cL  u \leq F'(x, u)$ in $\Rn$, in the weak sense. Furthermore, if $B_r(x_0) \subset \{u>0\}$ for certain $x_0 \in \Rn$ and $r > 0$, then $\cL  u = F'(x, u)$ in $B_r(x_0)$.  
\end{lemma}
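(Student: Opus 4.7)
The plan is a standard Euler--Lagrange calculation for two different admissible variations, the main subtlety being the volume constraint $\Vol_q(\{u>0\})\le m$ and the sign constraint $u\ge 0$, which force us to use one-sided perturbations.

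For the global inequality $\cL u \le F'(x,u)$, I take an arbitrary nonnegative $\varphi\in C_c^\infty(\R^n)$ and consider the truncated perturbation
\[
u_t := (u - t\varphi)_+, \qquad t>0.
\]
Since $\varphi\ge 0$, $\{u_t>0\}\subset\{u>0\}$, so $u_t\in\K_{\leq m}$; also $u_t\in H^1$ by the standard closure of $H^1$ under positive parts. By minimality, $\F_0(u_t)-\F_0(u)\ge 0$. I split the integrals over $\{u>t\varphi\}$, where $\nabla u_t=\nabla u-t\nabla\varphi$, and over $E_t:=\{0<u\le t\varphi\}$, where $u_t\equiv 0$ and $F(x,0)=0$ by \eqref{hip:zero}. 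Expanding gives
\[
0 \le -\tfrac{1}{t}\!\int_{E_t}\!\nabla u\,A\,\nabla u^{\miT} - 2\!\int_{\{u>t\varphi\}}\!\nabla u\,A\,\nabla\varphi^{\miT} + t\!\int_{\{u>t\varphi\}}\!|\nabla\varphi|_A^2 -\tfrac{2}{t}\!\int_{\{u>t\varphi\}}\![F(x,u{-}t\varphi){-}F(x,u)] + \tfrac{2}{t}\!\int_{E_t}\!F(x,u).
\]
The first term is $\le 0$, so dropping it preserves the inequality. Letting $t\to 0^+$, the third term vanishes; the bound $F(x,u)\le N'u+\tfrac12 M'u^2$ on $E_t$ (from \eqref{hip:F'_F''} and $F(x,0)=0$) together with $|E_t|\to 0$ (since if $u(x)>0$ then $u(x)>t\varphi(x)$ for small $t$) makes the last term go to $0$; and dominated convergence (using \eqref{hip:F'_F''} to obtain the dominant) gives $-2\int\nabla u\,A\,\nabla\varphi^{\miT}+2\int F'(x,u)\,\varphi\ge 0$, which is the claim.

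For the equality on $B_r(x_0)\subset\{u>0\}$, I use the opposite one-sided perturbation. Take $\varphi\in C_c^\infty(B_r(x_0))$ with $\varphi\ge 0$ and set $u_t:=u+t\varphi$ for $t\ge 0$. Now $u_t\ge 0$ automatically, and since $\supp\varphi\subset B_r(x_0)\subset\{u>0\}$ and $\varphi\ge 0$, one has $\{u_t>0\}=\{u>0\}$, so again $u_t\in\K_{\leq m}$. This variation is smooth (no positive-part issues), so minimality gives
\[
0 \;\le\; \frac{d}{dt}\bigg|_{t=0^+}\!\!\F_0(u+t\varphi) \;=\; 2\!\int \nabla u\,A\,\nabla\varphi^{\miT} \;-\; 2\!\int F'(x,u)\,\varphi.
\]
Combining with the previous step yields equality for $\varphi\ge 0$; decomposing a general $\varphi\in C_c^\infty(B_r(x_0))$ as $\varphi^+-\varphi^-$ gives $\cL u=F'(x,u)$ weakly in $B_r(x_0)$.

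The main obstacle is the first step: handling the truncation $(u-t\varphi)_+$ on the set $E_t$. The critical point is verifying that the interference term $\tfrac{1}{t}\int_{E_t}F(x,u)$ vanishes as $t\to 0^+$ --- this is where assumption~\eqref{hip:F'_F''} (which gives the linear growth of $F$ near $0$) is essential, since a slower decay would not compensate the $1/t$ prefactor. The second step is easier: no truncation is needed because the support of~$\varphi$ is a compact subset of~$\{u>0\}$ and $\varphi\ge 0$, so the volume constraint and the sign constraint are trivially preserved. Note that \eqref{hip:periodic} is nowhere used, consistent with the claim that periodicity is unnecessary at this stage.
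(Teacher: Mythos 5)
Your proof is correct and follows essentially the same route as the paper's: the global inequality via the one-sided truncation $(u-t\varphi)_+$ with $\varphi\ge 0$, and the interior equality via a perturbation supported in $B_r(x_0)\subset\{u>0\}$ (the paper uses $u-\varepsilon\phi$ with both signs of $\varepsilon$, which is equivalent to your combination of $u+t\varphi$ with the first step, and your variant is if anything more careful about preserving $u\ge 0$). The only cosmetic point is the final decomposition: $\varphi^{+},\varphi^{-}$ are not smooth, so one should instead write $\varphi=(\varphi+C\chi)-C\chi$ with $0\le\chi\in C^\infty_c(B_r(x_0))$, $\chi\ge 1$ on $\supp\varphi$ and $C=\norm{\varphi}_{L^\infty}$, or invoke density.
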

\begin{proof} % 8-5/1 
	Testing the minimizing property of $u$ against $v_\varepsilon := (u - \varepsilon \phi)_+$ for $\phi \in C^\infty_c(\Rn)$, we obtain (recalling that $A$ is symmetric, elliptic, and uniformly bounded, see \eqref{hip:A_elliptic_bounded} and \eqref{hip:A_symmetric})
	\begin{align*}
		0
		\leq 
		\F_0(v_\varepsilon) - \F_0(u)
		& \leq 
		\int_{\{u>0\}} \nabla (u - \varepsilon \phi) \, A \, \nabla (u - \varepsilon \phi)^{\miT} \, dx
		- \int_{\{u>0\}} \nabla u \, A \, \nabla u^{\miT} \, dx
		\\ & \qquad -
		2 \int_\Rn \big(F(x, v_\varepsilon) - F(x, u)\big) \, dx
		\\ & =
		-2\varepsilon \int_\Rn \nabla u \, A \, \nabla \phi^{\miT} \, dx
		+ 2 \int_\Rn F'(x, u) (u-v_\varepsilon) \, dx
		+ O(\varepsilon^2)
		\\ & \leq 
		-2\varepsilon \int_\Rn \nabla u \, A \, \nabla \phi^{\miT} \, dx
		+2 \int_\Rn F'(x, u) \, \varepsilon \phi \, dx
		+ O(\varepsilon^2).
	\end{align*}
	The third step is simply a Taylor expansion using \eqref{hip:F'_F''}, and in the last step we used $u - v_\varepsilon \leq \varepsilon \phi$ along with \eqref{hip:negative}. Dividing by $\varepsilon$ and letting $\varepsilon \to 0$ gives the first part of the result. 
	
	Furthermore, if $B_r(x_0) \subset \{u>0\}$ and $0 \leq \phi \in C^\infty_c(B_r(x_0))$, we have that $u - \varepsilon \phi\in \K_{\leq m}$ for all~$\varepsilon\in\R$ close enough to~0, so the above argument yields
	\begin{align*}
		0
		\leq 
		\F_0(u - \varepsilon \phi) - \F_0(u)
		& =
		-2\varepsilon \int_\Rn \nabla u \, A \, \nabla \phi^{\miT} \, dx
		+ 2 \varepsilon \int_\Rn F'(x, u) \,  \phi \, dx
		+ O(\varepsilon^2).
		\end{align*}
		Therefore, $\cL u=F'(x,u)$ in $B_r(x_0)$, as claimed.
\end{proof}

Our next goal is to show that $u$ is uniformly bounded (that is, $\|u\|_{L^\infty(\Rn)}<C$ for some~$C$ depending only on the admissibility constants, see Definition~\ref{def:admissible}).

\begin{proposition}[Boundedness of minimizers] \label{prop:bounded} % 26-6, 10-9, 8-10
	Assume \eqref{hip:setting}. Then $u$ is uniformly bounded.
\end{proposition}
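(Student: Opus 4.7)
The plan is to combine the a priori $H^1$ bounds coming from the coercivity of $\F_0$ on $\K_{\leq m}$ with a De Giorgi--Stampacchia truncation argument applied to the weak subsolution property $\cL u\leq f(x,u)$ from Lemma~\ref{lem:EL}. The first task is to obtain an a priori $H^1$ bound. Since $0\in\K_{\leq m}$ and $\F_0(0)=0$ by \eqref{hip:zero}, one has $\F_0(u)\leq 0$. Combining \eqref{hip:A_elliptic_bounded} with \eqref{hip:F_quadratic} gives
\[
\F_0(u)\;\geq\; \lambda\int_{\Rn}|\nabla u|^2\,dx\;-\;2N\,|\{u>0\}|\;-\;2b\int_{\Rn}u^2\,dx.
\]
The volume constraint and \eqref{hip:q_positive} yield $|\{u>0\}|\leq m/\underline q$, and by Faber--Krahn together with the Poincaré inequality, $\int u^2\leq \lambda_1(B^{m/\underline q})^{-1}\int|\nabla u|^2$. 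The smallness of $b$ in \eqref{hip:F_quadratic} produces a positive coercivity constant, yielding $\|u\|_{H^1(\Rn)}\leq C$ with $C$ depending only on the admissibility parameters and $m$. By Chebyshev, this forces the super-level sets $A_K:=\{u>K\}$ to satisfy $|A_K|\leq C/K^2$.

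\textbf{Truncation inequality and iteration.} For $K>0$, set $w_K:=(u-K)_+$. Since $w_K\in H^1(\Rn)$ is nonnegative and supported in $A_K\subset\{u>0\}$, I would test the weak inequality from Lemma~\ref{lem:EL} against $w_K$ (noting $\nabla u=\nabla w_K$ on $\{u>K\}$) and use \eqref{hip:F'_F''} to obtain
\[
\lambda\int_{\Rn}|\nabla w_K|^2\,dx\;\leq\;(N'+M'K)\int_{A_K}w_K\,dx\;+\;M'\int_{A_K}w_K^2\,dx.
\]
Since $w_K$ vanishes outside $A_K$, Sobolev embedding combined with Hölder gives (for $n\geq 3$; the analogous inequalities for $n=1,2$ are standard)
\[
\int w_K^2\,\leq\,C(n)\,|A_K|^{2/n}\int|\nabla w_K|^2,\qquad \int w_K\,\leq\,C(n)\,|A_K|^{1/2+1/n}\|\nabla w_K\|_{L^2}.
\]
Thanks to the first step, $|A_K|$ is uniformly small for $K$ large, so I would fix $K_0$ large enough that $C(n)M'|A_{K_0}|^{2/n}\leq\lambda/2$ and absorb the quadratic term, obtaining $\|\nabla w_K\|_{L^2}\lesssim (N'+M'K)|A_K|^{1/2+1/n}$ for all $K\geq K_0$. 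Combined with $(h-K)|A_h|^{1/2}\leq\|w_K\|_{L^2}\leq C|A_K|^{1/n}\|\nabla w_K\|_{L^2}$ for any $h>K\geq K_0$, this yields the Stampacchia-type inequality
\[
|A_h|\;\leq\;\frac{C\,(N'+M'K)^2}{(h-K)^2}\,|A_K|^{1+4/n}.
\]
A standard iteration (for instance, with $h=2K$ and $K$ starting at $K_0$) then produces a finite threshold $K^\ast$, depending only on the admissibility parameters, such that $|A_{K^\ast}|=0$, giving the sought-after uniform bound $u\leq K^\ast$ almost everywhere.

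\textbf{Main obstacle.} The delicate point is the absorption step: the coefficient $M'$ in \eqref{hip:F'_F''} is not assumed to be small, so the quadratic term $M'\int w_K^2$ cannot be absorbed for arbitrary $K$. The absorption must instead be driven by the smallness of $|A_K|$, which is precisely why the a priori $H^1$ bound coming from the coercivity of $\F_0$ (and hence from the upper bound on $b$ in \eqref{hip:F_quadratic}) is indispensable. This is the point at which the minimization property of $u$ is genuinely used, beyond what is provided by the weak subsolution property of Lemma~\ref{lem:EL}; note in particular that the periodicity assumption \eqref{hip:periodic} plays no role at this stage, consistent with the remark in the paper that this section does not require periodicity.
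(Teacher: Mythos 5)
Your argument is correct in substance, but it takes a genuinely different route from the paper. The paper's proof is potential-theoretic: it writes $f(x):=F'(x,u(x))\mathbf 1_{B_{1/4}}$, compares the Newtonian-type potential $w=\Gamma_{\cL}*f$ with $\Gamma_{-\Delta}*f$ using the comparability of fundamental solutions from Littman--Stampacchia--Weinberger, observes that $\cL(u-w)\le 0$ and $w\le 0$, and then applies the Harnack-type mean value estimate of Lemma~\ref{lem:harnack} together with Sobolev embedding and Riesz-transform bounds to get the pointwise estimate $u(x_0)\lesssim \norm{u}_{H^1(\Rn)}+\norm{u}_{L^2(\Rn)}+1$ at every center $x_0$. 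Your De Giorgi--Stampacchia truncation avoids all of that machinery (no fundamental solutions, no elliptic measure, no Lemma~\ref{lem:harnack}) and instead exploits only the weak subsolution property $\cL u\le F'(x,u)$ from Lemma~\ref{lem:EL}, the linear bound \eqref{hip:F'_F''}, and the finite measure of $\{u>0\}$; the coercivity computation you carry out at the start is exactly the one the paper performs later (in Step~1 of Section~\ref{sec:proof}) to get the uniform $H^1$ bound, so nothing circular is happening. What the paper's approach buys is that the mean-value/Harnack toolkit it invokes is needed anyway for the Lipschitz estimates and non-degeneracy later on; what yours buys is a more elementary, self-contained proof whose constants are visibly tracked through the admissibility parameters.

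One detail to fix: the parenthetical iteration scheme ``$h=2K$ with $K$ starting at $K_0$'' does not by itself yield a finite threshold, since the levels $2^jK_0$ diverge and you would only recover $|A_K|\to 0$ as $K\to\infty$, which you already know from Chebyshev. You must run the classical Stampacchia lemma on a bounded range of levels, e.g.\ $K_j=K_0+d(1-2^{-j})\nearrow K_0+d$; there the $K$-dependent prefactor $(N'+M'K)^2$ is harmless because it is uniformly bounded by $(N'+M'(K_0+d))^2$ on $[K_0,K_0+d]$, and the required smallness of $|A_{K_0}|^{4/n}$ relative to $d^2/(N'+M'(K_0+d))^2$ is achieved by taking $d=K_0$ and $K_0$ large, using $|A_{K_0}|\le C/K_0^2$. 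With that correction the proof is complete.
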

\begin{proof}
	
	Let us distinguish cases depending on the profile of the fundamental solution in $\Rn$. 
	
	\subsubsection*{Case 1: $n>2$.} This case is more delicate, and we will follow a series of steps.
	
	% 30-6
	\subsubsection*{Step 1: Newtonian potentials.}
	First, write $f(x) := F'(x, u(x))$ (one should {\em not}\/ mistake this function for $f(x,u)$ in~\eqref{eq:fbp}). By the linear bounds for $F'$ in \eqref{hip:zero}, \eqref{hip:negative}, \eqref{hip:F'_F''}, and the estimate on the size of $\Omega_u$ in \eqref{eq:mtilde},
	one easily estimates
	\begin{equation} \label{eq:f_u}
		\norm{f}_{L^r(\Rn)} \lesssim_r \norm{u}_{L^r(\Rn)} + 1, 
		\qquad 
		\text{for any } 1 \leq r \leq +\infty.
	\end{equation}
	Concretely, since $u \in H^1(\Rn)$, we have $u \in L^2(\Rn)$, so $f \in L^2(\Rn)$, too.
	
	On the other hand, we denote 
	\[w(x) := \int_\Rn \Gamma_\cL (x, y) \, f(y) \, dy
	\quad \text{and} \quad w_{-\Delta} (x) := \int_\Rn \Gamma_{-\Delta}(x, y) \, f(y) \, dy,
	\qquad x \in \Rn,\] where $\Gamma_\cL $ and $\Gamma_{-\Delta}$ are, respectively, the fundamental solutions of the operators $\cL $ and $-\Delta$ in $\Rn$ (see \cite[Chapter 2.C]{Folland} or \cite[Chapter 4]{GT} for $-\Delta$\footnote{We warn the reader that both references have a different sign convention to ours, for they work out the fundamental solution for $\Delta$ instead of $-\Delta$.}, and \cite[Section 6]{LSW} or \cite{HK} for general $\cL $ satisfying \eqref{hip:A_elliptic_bounded}). The fact that $w_{-\Delta}$ is well-defined (at least for a.e. $x \in \Rn$), by an absolute convergent integral, is shown in \cite[Theorem V.1.1]{Stein}\footnote{Note that $w_{-\Delta} = I_2f$ adopting the classical notation of Riesz potentials, as used in \cite{Stein}.}. In turn, $w$ is also well-defined (via an absolutely convergent integral) because $\Gamma_\cL  \approx \Gamma_{-\Delta}$ (see \cite[(7.9)]{LSW}). Actually, this last fact also implies, noting that $\Gamma_{\cL}, \Gamma_{-\Delta} > 0$ and $f \geq 0$ (by \eqref{hip:negative}),
	\begin{equation} \label{eq:w_comparable}
		w \approx w_{-\Delta}, \qquad \text{ in } \Rn.
	\end{equation}

	\subsubsection*{Step 2: A pointwise estimate.} We claim that it holds 
	\begin{equation} \label{eq:ptwise_uw}
		u(z) 
		\lesssim 
		w(z) + \norm{u}_{H^1(\Rn)} + 1, 
		\qquad 
		z \in \Rn.
	\end{equation}
	
	Indeed, since $\cL w = f$ in $B_1(z)$ (see again \cite{Folland, GT, LSW, HK}), it holds $\cL (u-w) \leq 0$ in $B_1(z)$ by Lemma~\ref{lem:EL}. Then, it also holds $\cL (u-w)^+ \leq 0$ in $B_1(z)$ (the pointwise maximum of $\cL$-subharmonic functions is again $\cL$-subharmonic, see e.g. \cite[Lemma 7.2]{HKM}). Therefore, applying Lemma~\ref{lem:harnack} for radii between $1/2$ and $1$, and then averaging, we get
	\begin{equation*}
		(u-w)^+(z)
		\!
		\lesssim 
		\fint_{B_1(z) \setminus B_{1/2}(z)} \!\!\!\!\!\! (u-w)^+ \, dx
		\leq
		\left( \fint_{B_1(z) \setminus B_{1/2}(z)} \!\!\!\!\!\!\!\! \abs{u-w}^{2^*} \!\! dx \right)^{1/2^*}
		\!\!\!\!\!
		\lesssim 
		\norm{u}_{L^{2^*}(\Rn)} + \norm{w}_{L^{2^*}(B_1(z))},
	\end{equation*}
	which jointly with \eqref{eq:w_comparable} and the boundedness of Riesz potentials \cite[Theorem V.1.1]{Stein} yield
	\begin{multline*}
		u(z)
		\leq 
		w(z) + (u-w)^+(z)
		\lesssim 
		w(z) + \norm{u}_{L^{2^*}(\Rn)} + \norm{w}_{L^{2^*}(B_1(z))}
		\\ \approx  
		w(z) + \norm{u}_{L^{2^*}(\Rn)} + \norm{w_{-\Delta}}_{L^{2^*}(B_1(z))}
		\lesssim 
		w(z) + \norm{u}_{L^{2^*}(\Rn)} + \norm{f}_{L^{2_*}(B_1(z))},
	\end{multline*}	
	where $1 < 2_* < 2$ satisfies $(2_*)^* = 2$. This easily yields \eqref{eq:ptwise_uw} after using \eqref{eq:f_u} and the fact that $\norm{u}_{L^{2_*}(\Rn)} \lesssim \norm{u}_{L^2(\Rn)}$ because $2_* < 2$ and \eqref{eq:mtilde}.
	
	\subsubsection*{Step 3: Self-improvement of integrability.} We claim that 
	\begin{equation} \label{eq:self_improvement}
		\norm{u}_{L^r(\Rn)} 
		\lesssim 
		\norm{u}_{L^{r_{**}}(\Rn)}
		+ \norm{u}_{H^1(\Rn)} + 1, 
		\qquad 
		\text{for any } \frac{n}{n-2} < r < +\infty.
	\end{equation}

	Indeed, fix one such $r$. Integrating \eqref{eq:ptwise_uw} and using the bound for $\abs{\Omega_u}$ in \eqref{eq:mtilde}, we obtain 
	\begin{equation*}
		\norm{u}_{L^r(\Rn)} 
		\lesssim 
		\norm{w}_{L^r(\Rn)} + \norm{u}_{H^1(\Rn)} + 1,
	\end{equation*}
	so to get \eqref{eq:self_improvement} it suffices to obtain a bound for $\norm{w}_{L^r(\Rn)}$. For that, just use \eqref{eq:w_comparable}, \cite[Theorem V.1.1]{Stein} and \eqref{eq:f_u}	
	\begin{equation*}
		\norm{w}_{L^r(\Rn)}
		\approx 
		\norm{w_{-\Delta}}_{L^r(\Rn)} 
		\lesssim 
		\norm{f}_{L^{r_{**}}(\Rn)}
		\lesssim 
		\norm{u}_{L^{r_{**}}(\Rn)} + 1, 
	\end{equation*}
	where the application of \cite[Theorem V.1.1]{Stein} is justified because $r > \frac{n}{n-2} = 1^{**}$ implies that $r_{**} > 1$. This finishes the proof of \eqref{eq:self_improvement}.
	
	Since $u \in H^1(\Rn)$, by the Sobolev embedding we have $u \in L^{2^*}(\Rn)$ (note that it already holds $2^* > \frac{n}{n-2}$, which makes \eqref{eq:self_improvement} available). Therefore, noting that $\norm{u}_{L^p(\Rn)} \lesssim \norm{u}_{L^q(\Rn)}$ for $p < q$ because of \eqref{eq:mtilde}, after iterating \eqref{eq:self_improvement} we obtain that $u \in L^r(\Rn)$ for any $1 < r < +\infty$, and the constants only depend on $r$, $\norm{u}_{H^1(\Rn)}$ and our admissibility conditions.
	
	%%%%%%%%%%%%%%%%%%%%
	
	\subsubsection*{Step 4: Estimate at the endpoint.} To achieve the $L^\infty$ bound, we will localize our estimates. For a fixed $z \in \Rn$ (in the sequel, it is important that we do not allow constants to depend on $z$), we consider the localized Newtonian potentials 
	\[\widehat{w}(x) := \int_\Rn \!\! \Gamma_\cL (x, y) \, f(y) \, \mathbf{1}_{B_1(z)}(y) \, dy
	\; , \;\; \widehat{w}_{-\Delta} (x) := \int_\Rn \!\! \Gamma_{-\Delta}(x, y) \, f(y) \, \mathbf{1}_{B_1(z)}(y) \, dy,
	\;\; x \in \Rn.\]
	These are again well-defined and satisfy $\cL \widehat{w} = f$ and $-\Delta \widehat{w}_{-\Delta} = f$ in $B_1(z)$. Thus, we can apply the same argument as in Step 2 to obtain 
	\[
	u(z) 
	\lesssim 
	\widehat{w}(z) + \norm{u}_{L^{2^*}(\Rn)} + \norm{\widehat{w}}_{L^{2^*}(B_1(z))}.
	\]
	Noting that clearly $\abs{\widehat{w}} \leq \abs{w}$ (remember that $\Gamma_{\cL}, \Gamma_{-\Delta}, f \geq 0$), we have $\norm{\widehat{w}}_{L^{2^*}(B_1(z))} \leq \norm{w}_{L^{2^*}(B_1(z))}$, so that the very same computations from Step 2 end up giving us 
	\begin{equation} \label{eq:ptwise_w0}
		u(z) 
		\lesssim 
		\widehat{w}(z) + \norm{u}_{H^1(\Rn)} + 1.
	\end{equation}

	On the other hand, if $r$ is large (and finite), it holds by \cite[Theorem V.1.1]{Stein}, \eqref{eq:f_u} and \eqref{eq:mtilde}
	\begin{equation*}
		\norm{\widehat{w}_{-\Delta}}_{L^r(B_1(z))} 
		\lesssim 
		\norm{f}_{L^{r_{**}}(B_1(z))}
		\lesssim 
		\norm{u}_{L^{r_{**}}(B_1(z))} + 1
		\lesssim 
		\norm{u}_{L^r(\Rn)} + 1,
	\end{equation*}
	and \cite[Theorem 9.9]{GT} yields 
	\begin{equation*}
		\norm{\nabla^2 \widehat{w}_{-\Delta}}_{L^r(B_1(z))} 
		\lesssim 
		\norm{f}_{L^r(B_1(z))}
		\lesssim 
		\norm{u}_{L^r(\Rn)} + 1.
	\end{equation*}
	Putting both estimates together with the well-known fact of Sobolev spaces that one can interpolate the remaining intermediate derivatives (see e.g. \cite[Theorem 5.2]{AF}), we have 
	\begin{equation*}
		\norm{\widehat{w}_{-\Delta}}_{W^{2, r}(B_1(z))}
		\lesssim 
		\norm{u}_{L^r(\Rn)} + 1,
	\end{equation*}
	which together with the supercritical Sobolev embeddings (put, e.g., $r := 2n > n$) yields that 
	\begin{equation*}
		\norm{\widehat{w}_{-\Delta}}_{L^\infty(B_1(z))}
		\lesssim 
		\norm{u}_{L^{2n}(\Rn)} + 1.
	\end{equation*}
	Jointly with \eqref{eq:w_comparable} and \eqref{eq:ptwise_w0}, we have proved 
	\begin{equation*}
		u(z) 
		\lesssim 
		\norm{u}_{L^{2n}(\Rn)} + \norm{u}_{H^1(\Rn)} + 1, 
	\end{equation*} 
	and we already obtained uniform bounds for the first term in the second half of Step 3 before. One may readily check that the bounds do not depend on $z$ (all our estimates were translation invariant), which readily gives the desired result.
	
	\subsubsection*{Case 2: $n=2$.} This case is easier because of the fundamental solution exhibiting a logarithmic profile (see \cite[(4.1)]{GT}, \cite[Th. 2.17]{Folland}). Define, similarly as above,
	\[\widetilde{w}(x) := \int_\Rn \!\! \Gamma_\cL (x, y) \, f(y) \, \mathbf{1}_{B_{1/4}}(y) \, dy
	\; , \;\; \widetilde{w}_{-\Delta} (x) := \int_\Rn \!\! \Gamma_{-\Delta}(x, y) \, f(y) \, \mathbf{1}_{B_{1/4}}(y) \, dy,
	\;\; x \in \Rn,\]
	where the reader may note that we now use the origin as our center, instead of an arbitrary $z$; but constants will be independent of this choice. Thus, as in Steps 2 and 4 before, we obtain 
	\begin{equation} \label{eq:dim_2}
		(u-\widetilde{w})^+(0)
		\lesssim 
		\left( \fint_{B_{1/4} \setminus B_{1/8}} \abs{u-\widetilde{w}}^2 \, dx \right)^{1/2} 
		\lesssim 
		\norm{u}_{L^2(\Rn)} + \norm{\widetilde{w}}_{L^2(B_{1/4})}.
	\end{equation}
	To estimate the last term, an easy computation recalling $\Gamma_\cL  \approx \Gamma_{-\Delta}$ yields, for $z \in B_{1/4}$,
	\begin{multline*}
		\abs{\widetilde{w}(z)}
		\approx 
		\abs{\widetilde{w}_{-\Delta}(z)}
%		=
%		\frac1{2\pi}
%		\int_{B_{1/4}} -\log|z-y| \, f(y) \,dy
		\leq
		\frac1{2\pi}
		\int_{B_{1/4}} \abs{\log|z-y|} \, |f(y)| \,dy
		\\ \lesssim
		\left( \int_{B_{1/2}(z)} \abs{\log|z-y|}^2 \, dy \right)^{1/2}
		\left( \int_{B_{1/4}} (1 + u(y))^2 \, dy \right)^{1/2}
		\lesssim 
		\norm{u}_{L^2(\Rn)} + 1,
	\end{multline*}
	where in the second-to-last estimate we have used Cauchy-Schwarz and the bounds \eqref{hip:negative} and \eqref{hip:F'_F''}, and we have estimated the first term in last step using polar coordinates. Both estimates together yield
	\[
		u(0)
		\leq 
		\widetilde{w}(0) + (u-\widetilde{w})^+(0)
		\lesssim 
		\norm{u}_{L^2(\Rn)} + 1,
	\]
	which already gives the bound for $\norm{u}_{L^\infty(\Rn)}$ because the constants do not depend on our balls being centered at the origin.
\end{proof}
	
Combining Proposition~\ref{prop:bounded} and \eqref{hip:F'_F''}, we infer that
\begin{equation} \label{eq:def_M_1}
	M_1 
	:=
	\sup_{x \in \Rn} \abs{F'(x, u(x))}
	\leq 
	N' + M' \norm{u}_{L^\infty(\Rn)}
	< +\infty
\end{equation}
is also uniformly controlled by the constants in Definition~\ref{def:admissible}.

\begin{corollary} \label{corol:Lap_bdd}
	Assume \eqref{hip:setting}. Then, with $M_1$ defined in \eqref{eq:def_M_1}, it holds
	\begin{equation*} 
		\cL  u \leq M_1 
		\qquad 
		\text{in } \Rn.
	\end{equation*} 
	Moreover, $u$ has a representative defined pointwise by 
	\begin{equation} \label{eq:u_pointwise}
		u(y) = \lim_{\rho \to 0} \fint_{D_\rho(y)} u(x) \, dx, 
		\qquad y \in \Rn,
	\end{equation}
	where the sets $D_\rho(y)$ satisfy $B_{c_D \rho}(y) \subset D_\rho(y) \subset B_{C_D \rho}(y)$ for $c_D, C_D > 0$ only depending on $n$ and $\lambda$. 
\end{corollary}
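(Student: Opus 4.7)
Part (i) is essentially a corollary of what has already been proved. Lemma~\ref{lem:EL} gives the weak inequality $\cL u \leq F'(x,u)$ in $\Rn$, and combining the $L^\infty$ bound from Proposition~\ref{prop:bounded} with the linear growth condition in~\eqref{hip:F'_F''}, we have $F'(x,u(x)) \leq N' + M'\|u\|_{L^\infty(\Rn)}$, which is precisely $M_1$ by~\eqref{eq:def_M_1}. Hence $\cL u \leq M_1$ weakly in $\Rn$.

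For part~(ii), the plan is to use the mean value sets of Caffarelli adapted to the divergence-form uniformly elliptic operator $\cL$. For each $y\in\Rn$ and $\rho>0$, one produces (via an obstacle problem built from the fundamental solution of $\cL$) a set $D_\rho(y)\ni y$ on which every $\cL$-harmonic function $h$ in a neighborhood of $\overline{D_\rho(y)}$ satisfies the exact mean value identity $h(y) = \fint_{D_\rho(y)} h \, dx$. The sandwich condition $B_{c_D\rho}(y) \subset D_\rho(y) \subset B_{C_D\rho}(y)$ with $c_D, C_D$ depending only on $n$ and $\lambda$ is a consequence of the Green's function comparison $\Gamma_\cL \approx \Gamma_{-\Delta}$ together with the boundary Harnack and Hopf-type estimates already marshalled in the proof of Lemma~\ref{lem:harnack}.

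To convert this into a pointwise definition of $u$, I would fix $y$ and let $\phi_\rho$ solve $\cL\phi_\rho = 1$ in $D_\rho(y)$ with zero boundary data; by the sandwich condition and standard a priori bounds, $\|\phi_\rho\|_{L^\infty(D_\rho(y))} \lesssim \rho^2$. Part~(i) then implies that $u - M_1\phi_\rho$ is $\cL$-subharmonic in $D_\rho(y)$, so the sub-mean-value property applied to its $\cL$-harmonic majorant yields
\begin{equation*}
	u(y) \leq \fint_{D_\rho(y)} u \, dx + O(\rho^2).
\end{equation*}
Moreover, $\cL u \leq M_1$ with bounded right-hand side implies, via De~Giorgi--Nash--Moser for subsolutions, that $u$ admits a Hölder continuous representative; hence the limit $\lim_{\rho\to0}\fint_{D_\rho(y)} u\, dx$ exists for every $y\in\Rn$, agrees a.e.\ with $u$ by Lebesgue differentiation, and thus produces the canonical pointwise representative.

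The main technical hurdle is isolating the construction of $D_\rho(y)$ with the advertised sandwich condition and constants depending only on $n$ and $\lambda$ (not on higher regularity of $A$), but this is by now standard once the Green's function estimates from the proof of Lemma~\ref{lem:harnack} are in place; the rest is routine.
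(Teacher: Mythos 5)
Part (i) of your argument is correct and identical to the paper's: combine Lemma~\ref{lem:EL} with Proposition~\ref{prop:bounded} and \eqref{hip:F'_F''}. For part (ii) you invoke the right tool (Caffarelli's mean value sets $D_\rho(y)$ for divergence-form operators, with the sandwich condition coming from $\Gamma_\cL\approx\Gamma_{-\Delta}$), but the way you convert the sub-mean-value property into the two-sided identity \eqref{eq:u_pointwise} has a genuine gap. Your closing step asserts that $\cL u\leq M_1$ implies, ``via De~Giorgi--Nash--Moser for subsolutions,'' that $u$ has a H\"older continuous representative. This is false: a one-sided inequality only makes $u$ an $\cL$-subsolution, and De~Giorgi--Nash--Moser for subsolutions yields local upper bounds (the local maximum principle), not continuity. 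Bounded subsolutions can be genuinely discontinuous (e.g.\ the negative of a capacitary potential of a compact set with empty interior). Without continuity, your argument produces only the one-sided inequality $u(y)\leq\fint_{D_\rho(y)}u\,dx+O(\rho^2)$ — and even there the left-hand side $u(y)$ is not yet defined pointwise, which is circular — so the existence of the limit at \emph{every} $y$, which is the whole content of \eqref{eq:u_pointwise}, is not established.

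The repair is the decomposition the paper uses: let $\phi$ solve $\cL\phi=-1$ in $B_1(y)$ with $\phi=1/(2n)$ on $\partial B_1(y)$, and write $u=(u+M_1\phi)-M_1\phi$. Both $u+M_1\phi$ and $M_1\phi$ are $\cL$-subharmonic, and for $\cL$-subharmonic functions the averages $\rho\mapsto\fint_{D_\rho(y)}v\,dx$ over Caffarelli's mean value sets are monotone in $\rho$, so their limits as $\rho\to0$ exist at \emph{every} point and define the canonical (upper semicontinuous) representatives. Subtracting the two limits — the one for $M_1\phi$ converges to $M_1\phi(y)$ since $\phi$ is continuous — gives \eqref{eq:u_pointwise}. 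Your $\phi_\rho$ plays a similar role but is only used to get an inequality, not to exhibit $u$ as a difference of two functions for which the limits are known to exist.
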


\begin{proof}
	The first assertion follows from Lemma~\ref{lem:EL} and the definition of $M_1$. For the second, note that, fixed $y \in \Rn$, if we find the solution to $\cL \phi = -1$ in $B_1(y)$ with $\phi = 1/(2n)$ on $\partial B_1(y)$ (similarly as in Lemma~\ref{lem:harnack}, so that $\phi(x) = \abs{x-y}^2 / (2n)$ for $\cL  = -\Delta$), then $x \mapsto u(x) + M_1 \phi$ and $x \mapsto M_1 \phi$ are both $\cL $-subharmonic functions. Therefore, they can be defined at $y$ with limits of mean values as in \eqref{eq:u_pointwise} (for general $\cL $, see \cite[p.9]{C}, relying on the pointwise equivalence of any fundamental solution to that of the Laplacian, proved in \cite{LSW} and \cite[Section 6]{BH}). Subtracting both formulas, we obtain \eqref{eq:u_pointwise} at $y$, as desired. 
\end{proof}

\begin{remark}
	When $\cL  = -\Delta$, one can also take $D_\rho(y) = \partial B_r(y)$ and integrate with respect to $d\mathcal{H}^{n-1}$ instead of~$dx$, thereby recovering the classical mean value formulas.
\end{remark}

The following result (and the generalization which we will prove in Lemma~\ref{lem:harmonic_replacement}) will be key in many  arguments: we will compare $u$ with the function solving the exact PDE that $u$ should solve in $\Omega_u$ (given by Lemma~\ref{lem:EL}). It is a natural analogue in our setting of various lemmas on harmonic extensions that can be found in the literature (e.g., \cite[Lemma 3.2]{AC} and \cite[Lemma 3.8]{V}).

\begin{lemma} \label{lem:harmonic_replacement_easy}
	Assume \eqref{hip:setting}. Fix a small enough ball $B_r(x_0)$ and define $h$ as the solution of $\cL  h = F'(x, u)$ in $B_r(x_0)$, and $h = u$ in $B_r(x_0)^{\mathrm{c}}$. If  $\Vol_q(\{h>0\}) \leq m$, then $u \equiv h$.
\end{lemma}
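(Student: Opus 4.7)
The strategy is to use $h$ as a competitor in the minimization problem defining $u$. The plan is to first check that $h\in\K_{\leq m}$, then expand the energy difference $\F_0(u,B_r(x_0))-\F_0(h,B_r(x_0))$ using that $h$ solves the linear PDE $\cL h = F'(x,u)$, and finally absorb the resulting remainder term via Poincaré's inequality when $r$ is small.

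\textbf{Step 1: $h$ is admissible.} Standard elliptic theory gives $h\in H^1(B_r(x_0))$ with trace $u$ on $\partial B_r(x_0)$, so gluing yields $h\in H^1(\Rn)$ with $h=u$ outside $B_r(x_0)$. Since $\cL h = F'(x,u)\geq 0$ by \eqref{hip:negative} and $h=u\geq 0$ on $\partial B_r(x_0)$, the weak minimum principle for $\cL$ gives $h\geq 0$ in $B_r(x_0)$. Combined with the hypothesis $\Vol_q(\{h>0\})\leq m$, we obtain $h\in\K_{\leq m}$.

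\textbf{Step 2: Energy comparison.} By minimality, $\F_0(u)\leq \F_0(h)$, and since $u=h$ outside $B_r(x_0)$ this gives $\F_0(u,B_r(x_0))\leq \F_0(h,B_r(x_0))$. Writing $w:=u-h$ (which vanishes on $\partial B_r(x_0)$), using symmetry of $A$ to split
\[
\nabla u\, A\,\nabla u^\miT-\nabla h\, A\,\nabla h^\miT
= \nabla w\, A\,\nabla w^\miT + 2\,\nabla w\, A\,\nabla h^\miT,
\]
and integrating the cross-term by parts against $\cL h=F'(x,u)$, I would derive the identity
\[
\F_0(u,B_r(x_0))-\F_0(h,B_r(x_0))
=\int_{B_r(x_0)}\!\!\nabla w\, A\,\nabla w^\miT\,dx - 2\int_{B_r(x_0)} R(x)\,dx,
\]
where $R(x):=F(x,u)-F(x,h)-F'(x,u)\,(u-h)$ is the Taylor remainder.

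\textbf{Step 3: Absorption for small $r$.} By \eqref{hip:F_regular} (and consistent with \eqref{hip:F'_F''}), the second derivative $F''(x,\cdot)$ is bounded in absolute value by some constant $L$, so $|R(x)|\leq \tfrac{L}{2} w^2$. Combining the previous display with $\F_0(u,B_r(x_0))\leq \F_0(h,B_r(x_0))$, ellipticity \eqref{hip:A_elliptic_bounded}, and the Dirichlet Poincaré inequality on $B_r(x_0)$, I obtain
\[
\lambda \int_{B_r(x_0)} |\nabla w|^2\,dx
\leq L\int_{B_r(x_0)} w^2\,dx
\leq L\,C(n)\,r^2 \int_{B_r(x_0)} |\nabla w|^2\,dx.
\]
For $r$ small enough that $L\,C(n)\,r^2<\lambda$ (a smallness depending only on the admissibility constants), this forces $\nabla w\equiv 0$ in $B_r(x_0)$, and hence $w\equiv 0$ by the zero trace on $\partial B_r(x_0)$, giving $u\equiv h$.

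The main (minor) obstacle is that $F(x,\cdot)$ is not assumed convex, so one cannot drop the remainder term $R$ with a one-sided estimate. The $C^{1,1}$-regularity of $F$ makes $|R|$ a pure $L^2$ quantity in $w$, and the scaling $r^2$ from Poincaré is precisely what lets us beat the fixed constant $L$ once $r$ is small.
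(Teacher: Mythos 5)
Your proposal is correct and follows essentially the same route as the paper: use $h$ as a competitor after checking $h\in\K_{\leq m}$ via the minimum principle, expand $\F_0(u)-\F_0(h)$ and integrate the cross term by parts against $\cL h=F'(x,u)$ to isolate the Taylor remainder, then absorb it with Poincaré for $r$ small. The only (cosmetic) difference is that the paper deduces nonnegativity by proving the stronger fact $h\geq u$ from $\cL(u-h)\geq 0$, and your remark that one needs the two-sided bound on $F''$ coming from the $C^{1,1}$ assumption (rather than the one-sided bound in \eqref{hip:F'_F''}) is a fair and slightly more careful reading of the absorption step.
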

\begin{proof} 
	We may assume $x_0 = 0$. By Lemma~\ref{lem:EL}, $\cL  (u-h) \geq 0$ in $B_r$. And clearly $u - h = 0$ on $\partial B_r$. Therefore, the maximum principle implies that $u-h \leq 0$ in $B_r$, i.e., $h \geq u \; (\geq 0)$. Then, our assumptions imply that $h \in \K_{\leq m}$, so since $u$ is a minimizer we infer $\F_0(u) \leq \F_0(h)$. With some simple computations using the definition of $h$ (recall also \eqref{hip:A_symmetric} and \eqref{hip:A_elliptic_bounded}), this means 
	\begin{align*} 
		0 
		& \geq 
		\F_0(u) - \F_0(h)
		\nonumber
		\\ & =
		\int_{B_r} \nabla (u-h) \, A \, \nabla (u-h)^{\miT} \, dx
		- 2 \int_{B_r} \nabla h \, A \, \nabla (h-u)^{\miT} \, dx
		-
		2 \int_{B_r} \big( F(x, u) - F(x, h) \big) \, dx
		\nonumber
		\\ & \geq 
		\lambda \int_{B_r} \abs{\nabla (u-h)}^2 dx
		- 2 \int_{B_r} \left( F(x, u) - F(x, h) - F'(x, u)(u-h) \right) dx
		=:
		\mathrm{I} + \mathrm{II}.
	\end{align*}
	The first term is obviously nonnegative. For the second, note that, using \eqref{hip:F'_F''} and Poincaré, 	\begin{equation*}
		\mathrm{II}
		\geq 
		-2 M_2 \int_{B_r} \abs{u-h}^2 dx
		\geq 
		- C M_2 \, r^2 \int_{B_r} \abs{\nabla (u-h)}^2 dx.
	\end{equation*}
		Thus, if $r$ is small enough (so that $CM_2r^2 \leq \lambda/2$, say), we can absorb this contribution and get $0 \geq \frac{\lambda}{2} \int_{B_r} \abs{\nabla (u-h)}^2 dx$ from the first display, which shows that $u \equiv h$.	
\end{proof}

Now we observe that the (very natural) assumption \eqref{hip:sol_not_zero} ensures that $u\equiv 0$ is not a minimizer. With this, we can prove the following:

\begin{lemma}[Saturation of the measure] \label{lem:saturation}
	Assume \eqref{hip:setting}. Then $\Vol_q(\Omega_u) = m$, i.e., $u \in \K_{=m}$.
\end{lemma}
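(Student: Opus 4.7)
The plan is to argue by contradiction: assume $\Vol_q(\Omega_u) < m$, and use the harmonic replacement from Lemma~\ref{lem:harmonic_replacement_easy} to force $u$ to become an $\cL$-super-solution on a small ball straddling a free boundary point, which the strong minimum principle then rules out.

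First I would verify that $\partial \Omega_u \neq \emptyset$. On the one hand, \eqref{hip:sol_not_zero} together with the minimality of $u$ gives $\F_0(u) \leq \F_0(u_0) < 0$, so $u \not\equiv 0$; taking the pointwise representative from Corollary~\ref{corol:Lap_bdd}, the set $\Omega_u = \{u > 0\}$ is genuinely nonempty (it contains the density points of the $\{u>0\}$-equivalence class). On the other hand, \eqref{hip:q_positive} yields $|\Omega_u| \leq m/\underline q < \infty$, so $\Omega_u \neq \Rn$. Hence $\partial \Omega_u$ (as a topological subset of $\Rn$) is nonempty, and I can pick a point $x_0 \in \partial \Omega_u$.

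Next I would fix a radius $r > 0$ small enough to satisfy two requirements simultaneously: (i)~the smallness condition $CM_2 r^2 \leq \lambda/2$ needed to invoke Lemma~\ref{lem:harmonic_replacement_easy}, and (ii)~the quantitative gap $\overline q\,|B_r| < m - \Vol_q(\Omega_u)$. Then I would define the harmonic replacement $h$ on $B_r(x_0)$ by $\cL h = F'(x,u)$ with $h = u$ on $\partial B_r(x_0)$. Since $F'(x,u) \geq 0$ by \eqref{hip:negative} and the boundary data $u \geq 0$, the minimum principle gives $h \geq 0$ in $B_r(x_0)$, so
\[
	\{h > 0\} \subset \Omega_u \cup B_r(x_0),
	\qquad
	\Vol_q(\{h > 0\}) \leq \Vol_q(\Omega_u) + \overline q\, |B_r| < m.
\]
The hypothesis of Lemma~\ref{lem:harmonic_replacement_easy} is then met, so $u \equiv h$ in $B_r(x_0)$. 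In particular $u$ coincides in $B_r(x_0)$ with the continuous function $h$, and this continuous representative agrees with the pointwise one from Corollary~\ref{corol:Lap_bdd}.

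Finally, the equality $u \equiv h$ says that $u$ is a nonnegative weak super-solution of the uniformly elliptic divergence-form equation $\cL u = F'(x,u) \geq 0$ in $B_r(x_0)$. Because $x_0 \in \partial \Omega_u$, every neighborhood of $x_0$ meets $\{u=0\}$, so this super-solution attains its minimum value $0$ at an interior point of $B_r(x_0)$; the strong minimum principle then forces $u \equiv 0$ in $B_r(x_0)$. But $x_0 \in \partial \Omega_u$ also gives $B_r(x_0) \cap \Omega_u \neq \emptyset$, which is the desired contradiction. The main obstacle I anticipate is the first step — namely, making the topological argument that $\partial \Omega_u$ is nonempty rigorous using the precise representative from Corollary~\ref{corol:Lap_bdd}; once that is set up, the remaining steps are a clean application of the harmonic-replacement lemma and the strong minimum principle.
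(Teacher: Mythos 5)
Your proof is correct and follows essentially the same route as the paper: under the contradiction hypothesis $\Vol_q(\Omega_u)<m$ there is enough slack to apply Lemma~\ref{lem:harmonic_replacement_easy} on any small ball, and the strong minimum principle for nonnegative $\cL$-supersolutions then rules out the existence of free boundary points. The paper packages the same ingredients as ``$\Omega_u$ is both open and closed in the connected space $\Rn$, hence $\emptyset$ or $\Rn$'', which is equivalent to your observation that a nonempty proper subset of $\Rn$ must have nonempty topological boundary.
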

\begin{proof} % 8-5
	With the results already obtained, the proof follows as in \cite[Prop. 11.1, step 4]{V}. 
	Assume, for the sake of a contradiction, that $\Vol_q(\Omega_u) < m$.
	Then, using \eqref{hip:q_positive}, find $r > 0$ with $\Vol_q(\Omega_u) + \Vol_q(B_r(x)) \leq m$ for any $x \in \Rn$. 
	
	Let us show that $\Omega_u$ is closed. Indeed, let $x_j \in \Omega_u$ converge to $x_0$. Then, for $j$ large, $x_j \in B_r(x_0)$. Now, Lemma~\ref{lem:harmonic_replacement_easy} implies that $u \equiv h$ (its $F'$-harmonic extension) in $B_r(x_0)$. Concretely, \eqref{hip:negative} implies that $u$ is $\cL $-superharmonic (i.e. $\cL  u \geq 0$) in $B_r(x_0)$. Thus, the strong minimum principle (and $u(x_j) > 0$) implies that $u > 0$ in $B_r(x_0)$. Concretely $u(x_0) > 0$. 
	
	Next, we show that $\Omega_u$ is open, too. Pick $x_0 \in \Omega_u$, i.e., $u(x_0) > 0$. Then, by \eqref{eq:u_pointwise}, $\int_{B_{C_Dr}(x_0)} u \, dx > 0$ if $r > 0$ is small, whence $\fint_{\partial B_r(x_0)} u \, d\mathcal{H}^{n-1} > 0$ for (a possibly different) small enough $r > 0$. That means that $u$ does not vanish identically on $\partial B_r(x_0)$. Hence, the $F'$-harmonic extension $h$ of $u$ in $B_r(x_0)$, which again coincides with $u$ in $B_r(x_0)$ by Lemma~\ref{lem:harmonic_replacement_easy}, does not vanish in $B_r(x_0)$ by the strong minimum principle for $\cL $-superharmonic functions (as in the last paragraph). This shows that $B_r(x_0) \subset \Omega_u$. 
	
	Therefore, $\Omega_u$ is both open and closed, so either $u \equiv 0$ or $\Omega_u = \Rn$. Both give the desired contradiction because of \eqref{hip:sol_not_zero} and $\Vol_q(\Omega_u) \leq m$, respectively.
\end{proof}

%%%%%%%%%%%%%%%%%%%%%%%%%%%%%%%%%%%%%%%%%%%%%%%%%%%%%%%%%%%%%%
%%%%%%%%%%%%%%%%%%%%%%%%%%%%%%%%%%%%%%%%%%%%%%%%%%%%%%%%%%%%%%
%%%%%%%%%%%%%%%%%%%%%%%%%%%%%%%%%%%%%%%%%%%%%%%%%%%%%%%%%%%%%%

%\newpage 

\section{Asymptotic minimality at small scales for a penalized energy} \label{sec:first_variation}

To follow the original blueprint of \cite{AC}, and ultimately obtain the overdetermined boundary condition, it will be convenient to use the energy functional
\[
\F_\Lambda(v):=\F_0(v)+\Lambda \Vol_q(\{v>0\})
\]
instead of $\F_0$, for some~$\Lambda>0$. This corresponds to considering a Lagrange multiplier $\Lambda$ for our constrained minimization problem. The basic idea in this section is to show that our  minimizer~$u$ of $\F_0$ under the constraint~\eqref{E.volume} is, in a way, ``almost'' an unconstrained minimizer of $\F_\Lambda$ for some~$\Lambda>0$. This introduces some more difficulties, which were already addressed in~\cite{V} in a simpler context. 

Our goal here is to adapt those results to our setting. For this, we can mostly follow \cite[Section 11]{V}, and only few things need to be modified. There is one substantial difference, though: we have not managed to extend~\cite[Subsection 11.3]{V} to our setting: in the presence of $F(x, u)$, with the positive derivative \eqref{hip:negative}, it is not clear to us how to establish the key Almgren-like monotonicity formulas as in \cite[Lemma 11.7]{V} and Caccioppoli inequalities as in the proof of \cite[Proposition 11.4]{V}. The way we circumvent this difficulty is by not showing in this section that~$\Lambda>0$ (we only obtain $\Lambda\geq0$). We will be able to fix this issue in later sections.

The first result we will establish is the following. Note that the first variation of~$\F_\Lambda$ is $$\delta \F_{\Lambda} (u)[\xi] := \delta \F_0 (u) [\xi] + \Lambda \int_{\Omega_u} \div (q\xi)  \, dx,$$ where $\xi \in C^\infty_c(\Rn; \Rn)$ and 
	\begin{multline} \label{eq:first_variation}
		\delta \F_0 (u) [\xi]
		:=
		\int_\Rn 
		\Big( 
		-2\nabla u \, \nabla  \xi \, A \, \nabla u^{\miT}
		 + \nabla u \, A \, \nabla u^{\miT} \, \div \xi 
		+ \nabla u \, (\nabla A \cdot \xi) \, \nabla u^{\miT}	 
		\\- 2 \nabla F(x, u) \cdot \xi 
		- 2 F(x, u) \div \xi 
		\Big) 
		\, dx.
	\end{multline}

\begin{lemma} \label{lem:first_variation}
	Assume \eqref{hip:setting}.
	Then, there exists $\Lambda \geq 0$ such that $u$ is stationary for $\F_{\Lambda}$, i.e.
	\begin{equation} \label{eq:stationary}
		\delta \F_{\Lambda} (u) [\xi] = 0 
		\qquad 
		\forall \; \xi \in C^\infty_c(\Rn; \Rn).
	\end{equation} 
\end{lemma}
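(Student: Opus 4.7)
The approach is the classical inner-variation plus Lagrange-multiplier argument in the Alt--Caffarelli / Velichkov style, with an extra one-sided test to read off the sign of $\Lambda$ from the inequality constraint $\Vol_q\leq m$.

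First I would set up inner variations. Given $\xi\in C^\infty_c(\Rn;\Rn)$, for $|t|$ small the map $\Phi_t(x):=x+t\xi(x)$ is a smooth diffeomorphism of $\Rn$, and $u_t:=u\circ\Phi_t^{-1}\in H^1(\Rn)$ is nonnegative with $\Omega_{u_t}=\Phi_t(\Omega_u)$. A direct change of variables, expanding $[D\Phi_t]^{-1}$, $\det D\Phi_t$, $A(\Phi_t(y))$ and $F(\Phi_t(y),u(y))$ to first order in $t$ (and using $A=A^{\miT}$ from \eqref{hip:A_symmetric}), shows that $t\mapsto\F_0(u_t)$ and $t\mapsto\Vol_q(\Omega_{u_t})$ are $C^1$ at $0$, with derivatives $\delta\F_0(u)[\xi]$ as in \eqref{eq:first_variation} and $G(\xi):=\int_{\Omega_u}\div(q\xi)\,dx$ respectively.

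Next I would produce a distinguished field $\xi_0$ with $G(\xi_0)<0$. If $G$ vanished on every $\phi\,e_j$ with $\phi\in C^\infty_c(\Rn)$, then $\partial_j(q\,\mathbf{1}_{\Omega_u})=0$ in $\mathcal{D}'(\Rn)$ for every $j$; since $q$ is positive and continuous, this would force $\mathbf{1}_{\Omega_u}$ to be constant a.e., contradicting $0<\Vol_q(\Omega_u)=m<\infty$ (Lemma~\ref{lem:saturation}). A sign flip if needed supplies $\xi_0$ with $G(\xi_0)<0$. For an arbitrary test field $\xi$, the implicit function theorem then yields $s\in C^1((-\varepsilon,\varepsilon))$ with $s(0)=0$ such that $\Psi_t:=I+t\xi+s(t)\xi_0$ satisfies $\Vol_q(\Psi_t(\Omega_u))=m$ for all small $t$, with $s'(0)=-G(\xi)/G(\xi_0)$. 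Since $u\circ\Psi_t^{-1}\in\K_{=m}\subset\K_{\leq m}$ for both signs of $t$, minimality yields
\[
0=\delta\F_0(u)[\xi]+s'(0)\,\delta\F_0(u)[\xi_0]=\delta\F_0(u)[\xi]+\Lambda\,G(\xi),\qquad \Lambda:=-\frac{\delta\F_0(u)[\xi_0]}{G(\xi_0)},
\]
which is exactly \eqref{eq:stationary}.

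The sign $\Lambda\geq 0$ comes for free from a one-sided test: since $G(\xi_0)<0$, the family $u\circ(I+t\xi_0)^{-1}$ has $q$-volume strictly less than $m$ for small $t>0$, hence lies in $\K_{\leq m}$; minimality then gives the one-sided inequality $\delta\F_0(u)[\xi_0]\geq 0$, which combined with $G(\xi_0)<0$ in the denominator yields $\Lambda\geq 0$. The main subtlety---and one that the authors explicitly flag---is that this argument does \emph{not} give strict positivity $\Lambda>0$, because nothing above forces $\delta\F_0(u)[\xi_0]$ to be strictly positive; obtaining $\Lambda>0$ (which will translate into the constant $c>0$ in the overdetermined Neumann condition) will require finer information on~$u$ and is postponed to later sections.
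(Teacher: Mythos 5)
Your proof is correct and follows essentially the same route as the paper: the same inner-variation computation yields \eqref{eq:first_variation}, and the Lagrange-multiplier step via a distinguished field with nonzero $\int_{\Omega_u}\div(q\xi)\,dx$ plus the one-sided test for the sign is exactly the argument the paper delegates to \cite[Lemma 11.3]{V} and \cite[Prop.~11.2]{V}. The only difference is that you spell out self-contained versions (the distributional argument for $\xi_0$ and the implicit-function-theorem construction of the volume-preserving family) of what the paper handles by citation.
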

\begin{proof} % 9-5
	First, let us deduce the expression \eqref{eq:first_variation} for the first variation of $\F_0$. We use the perturbations $u_t := u \circ \Phi_t(x)$, where $\Phi_t(x) := \Psi_t(x)^{-1}$ and $\Psi_t(x) := (I + t\xi)(x)$ for some $\xi \in C^\infty_c(\Rn; \Rn)$. Note that, using the chain rule and a well-known expansion of the determinant around the identity,
	\begin{equation*}
		\nabla \Psi_t(x) = I + t\nabla \xi(x), 
		\quad 
		\nabla \Phi_t(\Psi_t(x)) = I - t\nabla \xi(x) + o(t), 
		\quad 
		\det \nabla \Psi_t (x) = 1 + t\div\xi(x) + o(t).
	\end{equation*}
	With this, the change of variables $y = \Psi_t(x)$ (equiv. $x = \Phi_t(y)$), and some Taylor expansions using \eqref{hip:A_regular} to neglect lower order terms, we get
	\begin{align*} % 11-10
		& \int_\Rn \!\!\! \nabla u_t(x) A(x) \nabla u_t(x)^{\miT} dx 
		\! = 
		\!\! \int_\Rn \!\!\! \nabla u(y) \nabla \Phi_t(\Psi_t(y)) A(\Psi_t(y)) \big( \nabla u(y) \nabla \Phi_t(\Psi_t(y)) \big)^{\miT} \! \abs{ \text{det} \nabla \Psi_t (y)} dy
		\\ & =
		\int_\Rn \!\! \nabla u(y) \Big( I - t\nabla \xi(y) + o(t)\Big) \Big(A(y) + t \nabla A(y) \cdot \xi(y) + o(t)\Big) 
		\\ & \qquad\qquad\qquad\qquad\qquad\qquad\qquad\qquad 
		\cdot \Big( I - t\nabla \xi(y) + o(t)\Big)^{\miT} \! \nabla u(y)^{\miT} \Big( 1 + t\div\xi(x) + o(t) \Big) \, dy
		\\ & =
		\int_\Rn \! \nabla u A \nabla u^{\miT} \, dy
		+ t \int_\Rn \! \Big( \! -2\nabla u \, \nabla  \xi \, A \, \nabla u^{\miT} 
		 + \nabla u \, A \, \nabla u^{\miT} \div \xi + \nabla u (\nabla A \cdot \xi) \nabla u^{\miT} \Big) \, dy 
		+ o(t),
	\end{align*}
	Similarly, using \eqref{hip:F_regular} to neglect lower order terms	% 6-11 
	\begin{align*}
		\int_\Rn F(x, u_t(x)) &\, dx
		=
		\int_\Rn F(\Psi_t(y), u(y)) \abs{\text{det} \nabla \Psi_t(y)} \, dy
		\\ &= 
		\int_\Rn \Big( F(y, u(y)) + \nabla F(y, u(y)) \cdot t\xi(y) + o(t) \Big) \Big( 1 + t\div \xi + o(t) \Big) \, dy
		\\ &= 
		\int_\Rn F(y, u(y)) \, dy
		+ t \int_\Rn \Big( \nabla F(y, u(y)) \cdot \xi(y) + F(y, u(y)) \div \xi \Big) \, dy
		+ o(t),
	\end{align*}
	Lastly, using \eqref{hip:q_regular} to neglect lower order terms,
	\begin{align*} % 17-10
		\Vol_q(\Omega_{u_t})
		& =
		\int_\Rn q(x) \, \mathbf{1}_{\Omega_u}(\Phi_t(x)) \, dx
		=
		\int_\Rn q(\Psi_t(y)) \, \mathbf{1}_{\Omega_u}(y) \abs{\text{det} \nabla \Psi_t(y)} \, dy
		\\ & =
		\int_{\Omega_u} \Big( q(y) + t\nabla q(y) \cdot \xi(y) + o(t) \Big) \Big( 1 + t\div \xi(y) + o(t) \Big) \, dy
		\\ & =
		\Vol_q(\Omega_u)
		+ t \int_{\Omega_u} \div (q\xi) \, dy + o(t).
	\end{align*}
	All these give our desired formulas in the statement, like \eqref{eq:first_variation}.
	We are left to show \eqref{eq:stationary}. One can essentially argue as in \cite[Section 11.2]{V}. First, note that there is a vector field $\xi \in C^\infty_c(\Rn; \Rn)$ for which $\int_{\Omega_u} \div(q\xi) \, dx = 1$ (indeed, one may simply multiply by $q^{-1}$ the vector field given by \cite[Lemma 11.3]{V}). And this allows us to invoke \cite[Prop. 11.2]{V}  (because its proof does not rely on the explicit form of $\F_0$ or $\Vol_q$) to obtain \eqref{eq:stationary}. % 17-10
\end{proof}

Let us now show that the assumption that $u$ is a critical point for $\F_{\Lambda}$ implies that $u$ is almost a minimizer of $\F_{\Lambda}$ at small scales. The following lemma will be the last time that we will use \eqref{hip:F_quadratic} explicitly, until Section~\ref{sec:proof}.

%%%%%%%%%%%%%%%%%%%%%%%%%%%%%%%%%%%%%%%%%%%%%%%%%%%%%%%%%%%%%%
%%%%%%%%%%%%%%%%%%%%%%%%%%%%%%%%%%%%%%%%%%%%%%%%%%%%%%%%%%%%%%
%%%%%%%%%%%%%%%%%%%%%%%%%%%%%%%%%%%%%%%%%%%%%%%%%%%%%%%%%%%%%%

\begin{lemma} \label{lem:almost_minimality}
	Assume \eqref{hip:setting}. Let $B$ be a ball such that $0 < \abs{\Omega_u \cap B} < \abs{B}$. Then, for every $\varepsilon > 0$, there exists $r > 0$ such that for any $B_r(x_0) \subset B$ we have 
	\begin{equation} \label{eq:outwards}
		\F_{\Lambda + \varepsilon}(u) \leq \F_{\Lambda + \varepsilon}(v)
		\quad \text{ for every $v \in H^1(\Rn)$ such that }
		\begin{cases}
			v-u \in H^1_0(B_r(x_0)), \\
			\Vol_q(\Omega_u) \leq \Vol_q(\Omega_v).
		\end{cases}
	\end{equation}
	\begin{equation} \label{eq:inwards}
		\F_{\Lambda - \varepsilon}(u) \leq \F_{\Lambda - \varepsilon}(v)
		\quad \text{ for every $v \in H^1(\Rn)$ such that }
		\begin{cases}
			v-u \in H^1_0(B_r(x_0)), \\
			\Vol_q(\Omega_u) \geq \Vol_q(\Omega_v).
		\end{cases}
	\end{equation}
\end{lemma}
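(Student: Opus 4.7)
The plan is to adapt \cite[Prop.~11.4]{V} to our setting, proving \eqref{eq:outwards} in detail (the inward estimate \eqref{eq:inwards} will follow by reversing the sign of the adjusting vector field). The key idea is that the stationarity of $u$ for $\F_\Lambda$ lets us trade $q$-volume for $\F_0$-energy at the marginal rate $\Lambda$; applying this through a compensating deformation supported away from $B_r(x_0)$ converts the failure of a competitor to satisfy the volume constraint into a small, controllable correction to $\F_0$.

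First I would construct a volume-adjusting vector field. Since $0 < |\Omega_u \cap B| < |B|$, both $\Omega_u \cap B$ and $B \setminus \overline{\Omega_u}$ have positive Lebesgue measure; hence for any $r$ below a threshold $r_0 = r_0(u, B)$ and any $B_r(x_0) \subset B$, one can produce $\xi \in C^\infty_c(B \setminus \overline{B_r(x_0)}; \Rn)$ with $\int_{\Omega_u} \div(q\xi)\,dx = -1$ and with $\|\xi\|_{C^{1,1}}$ bounded by a constant depending only on $u$ and $B$ (the construction is essentially \cite[Lemma~11.3]{V} rescaled by $q^{-1}$; the key point is that removing the small ball $B_r(x_0)$ from $B$ still leaves positive measure inside and outside $\Omega_u$). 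Setting $u_t := u \circ (I + t\xi)^{-1}$ and expanding to first order as in Lemma~\ref{lem:first_variation}, I would obtain
\[
\Vol_q(\Omega_{u_t}) = \Vol_q(\Omega_u) - t + O(t^2),
\qquad
\F_0(u_t) = \F_0(u) + t\Lambda + O(t^2),
\]
where the second identity uses \eqref{eq:stationary} to evaluate $\delta\F_0(u)[\xi] = -\Lambda \int_{\Omega_u}\div(q\xi)\,dx = \Lambda$, and the $O(t^2)$ constants are uniform in $x_0$ thanks to the uniform bound on $\|\xi\|_{C^{1,1}}$ and on $u$.

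Given a competitor $v$ as in \eqref{eq:outwards}, I would set $\eta := \Vol_q(\Omega_v) - \Vol_q(\Omega_u) \geq 0$ and define the hybrid $w_t := v$ on $B_r(x_0)$, $w_t := u_t$ on $\Rn \setminus B_r(x_0)$. Because $\supp \xi \cap B_r(x_0) = \emptyset$, $u_t \equiv u$ on $B_r(x_0)$, so $w_t \in H^1(\Rn)$ with $w_t - u \in H^1_0(B_r(x_0))$. A direct splitting, using that $v = u$ outside $B_r(x_0)$, yields
\[
\F_0(w_t) = \F_0(v) + \F_0(u_t) - \F_0(u) = \F_0(v) + t\Lambda + O(t^2)
\]
and $\Vol_q(\Omega_{w_t}) = \Vol_q(\Omega_v) - t + O(t^2)$. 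Choosing $t$ so that $\Vol_q(\Omega_{w_t}) = \Vol_q(\Omega_u) = m$ forces $t = \eta + O(\eta^2)$ and $w_t \in \K_{\leq m}$, so the minimality of $u$ gives $\F_0(u) \leq \F_0(w_t) = \F_0(v) + \Lambda\eta + O(\eta^2)$.

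Finally, adding $(\Lambda + \varepsilon)\eta$ to both sides produces
\[
\F_{\Lambda + \varepsilon}(v) - \F_{\Lambda + \varepsilon}(u) \geq \varepsilon\eta - C\eta^2.
\]
Since $\eta \leq \overline q\,|B_r(x_0)| \lesssim r^n$, the error $C\eta^2 \leq Cr^n\eta$ is absorbed by $\varepsilon\eta$ once $r$ is smaller than a threshold depending on $\varepsilon$, the admissibility constants, and the uniform $C^{1,1}$-bound on $\xi$. This gives \eqref{eq:outwards}; the inward case \eqref{eq:inwards} is entirely analogous with a vector field satisfying $\int_{\Omega_u}\div(q\xi) = +1$ (whence $\delta\F_0(u)[\xi] = -\Lambda$), which produces the $-\varepsilon$ correction. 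The main technical obstacle is the uniformity of $\|\xi\|_{C^{1,1}}$, and hence of the $O(t^2)$ errors, across \emph{every} admissible choice of $B_r(x_0) \subset B$: it is exactly this uniformity that lets a single threshold $r$ serve simultaneously for all such balls, and it is what forces the construction of $\xi$ to rely only on measure-theoretic information about $\Omega_u$ inside $B$ rather than on the specific position of $x_0$.
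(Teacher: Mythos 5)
Your argument takes a genuinely different route from the paper. The paper proves this lemma by adapting the compactness/contradiction scheme of \cite[Sections 11.4--11.5]{V}: one assumes the almost-minimality fails along a sequence of shrinking balls and competitors, studies auxiliary constrained minimizers in $B$, and derives a contradiction with the first variation; the paper's proof consists of the four modifications (uniform $H^1$ bounds, weak lower semicontinuity of the $A$-energy, the measure $q\,dx$, and the upper bound on $\F_0(u,B)-\F_0(v_j,B)$) needed for that scheme to survive the presence of $F$, $A$ and $q$. You instead run a direct first-order deformation argument in the spirit of Aguilera--Alt--Caffarelli: glue the competitor with a volume-compensating deformation of $u$ supported away from $B_r(x_0)$, use stationarity to evaluate the exchange rate $\delta\F_0(u)[\xi]=\Lambda$, and absorb the quadratic error using $\eta\lesssim r^n$. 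The skeleton of this argument is sound (including the reduction of \eqref{eq:inwards} to the opposite sign of $\xi$, which is genuinely needed there since plain minimality of $u$ does not give \eqref{eq:inwards} when $\Lambda>\varepsilon$), and it is arguably more transparent than the compactness route. Two housekeeping points you should add: the competitor $v$ is not assumed nonnegative, so the glued function must be replaced by its positive part before invoking minimality in $\K_{\leq m}$ (this only decreases $\F_0$ and preserves $\Omega_v$); and Lemma~\ref{lem:saturation} is needed to know $\Vol_q(\Omega_u)=m$ so that restoring the volume to $\Vol_q(\Omega_u)$ actually lands the competitor in $\K_{\leq m}$.

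The genuine gap is exactly the point you flag but do not resolve: the existence, for \emph{every} $B_r(x_0)\subset B$ with $r$ below a single threshold, of $\xi\in C^\infty_c(B\setminus\overline{B_r(x_0)};\Rn)$ with $\int_{\Omega_u}\div(q\xi)=-1$ and a $C^1$ bound independent of $x_0$ and $r$. This does not follow from ``\cite[Lemma~11.3]{V} rescaled by $q^{-1}$'': that lemma produces one fixed field $\xi_0$ on $B$, and the obvious way to make it vanish on $B_r(x_0)$ --- multiplying by a cutoff adapted to $B_r(x_0)$ --- produces a field with $\|\nabla\xi\|_\infty\sim r^{-1}$ after renormalization. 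Since the quadratic remainders in your Taylor expansions are controlled by $\|\xi\|_{C^1}^2$, the error becomes $O(r^{-2}\eta^2)\lesssim r^{n-2}\eta$, which is \emph{not} absorbed by $\varepsilon\eta$ when $n\le 2$ and is only borderline when $n=3,4$ without tracking constants; so the uniformity you rely on is not automatic. A correct construction exists but must be supplied: for instance, fix $\xi_0$ with $\int_{\Omega_u}\div(q\xi_0)=-1$, decompose $\xi_0=\sum_i\psi_i\xi_0$ via a partition of unity at a \emph{fixed} small scale $\delta$ (chosen once, depending only on $n$ and $\|\xi_0\|_{C^1}$, so that each piece contributes at most $C\delta^{n-1}$ to the integral), and for a given $B_r(x_0)$ with $r\ll\delta$ discard the boundedly many pieces whose supports meet $B_r(x_0)$ and renormalize; this keeps $\|\xi\|_{C^1}$ bounded in terms of $\delta$ alone. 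Until something of this sort is written down, the single threshold $r$ valid for all $x_0\in B$ --- which is the whole content of the lemma --- is not established.
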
 
\begin{proof} % 16-5, 3-6/7
	This is an the analogue of \cite[Prop. 11.10]{V}. The proof, given in \cite[Sections 11.4 and 11.5]{V}, is very robust, so most of it works for fairly general energy functionals. However, there are a couple of arguments that do not work directly in our setting because of the presence of $F, A$ and $q$. We will next discuss the modifications that are needed:
	
	\begin{enumerate}
	
	\item First, at the beginning of the proof of \cite[Lemma 11.9]{V}, it is deduced that $\F_0(u_j, B) \leq C$ implies that $u_j$ are uniformly bounded in $H^1(B)$ (we are adopting the notation in \cite{V} here). Since $u_j$ are constructed as minimizers of the problem \cite[(11.18)]{V}, posed in the bounded domain $B$, the property above follows by standard arguments using the direct method of Calculus of Variations. In any case, let us make the details more explicit, re-using some computations from Step 1 in Section~\ref{sec:proof}, where we obtain minimizers to a more difficult problem, because it is not posed in a bounded domain.
	
	Note that $\Vol_q(\Omega_{u_j}) = m_j$ for some sequence $m_j \to m$. Indeed, since $\Vol_q(B) > m$, for $j$ large enough we have $\Vol_q(B^{m_j}) < \Vol_q(B)$ and we can prove a Poincaré inequality for $u_j$ as in \eqref{eq:poincare}, with constant $\lambda_1(B^{\widetilde{m_j}})^{-1}$, where, following \eqref{eq:mtilde}, we denote $\widetilde{m_j} := m_j / \underline{q}$. And we can continue the proof of Step 1 in Section~\ref{sec:proof} noting that
	\begin{equation*}
		\lambda - \frac{2b}{\lambda_1(B^{\widetilde{m_j}})}
		=
		\lambda - \frac{2b}{\lambda_1(B^\tm)} \frac{\lambda_1(B^\tm)}{\lambda_1(B^{\widetilde{m_j}})}
		\geq 
		c 
		>
		0
	\end{equation*}
	for some uniform constant $c$, as soon as we choose $j$ large enough, because ${2b}/{\lambda_1(B^\tm)} < \lambda$ by \eqref{hip:F_quadratic}, and $\lambda_1(B^{\widetilde{m_j}}) \longrightarrow \lambda_1(B^\tm)$ because $m_j \to m$. This allows us to re-run the proof in Step 1 of Section~\ref{sec:proof} for the problem \cite[(11.18)]{V} and deduce the existence of $u_j$, and get a uniform bound for $\norm{\nabla u_j}_{L^2(B)}$ as in \eqref{eq:bound_norm_gradient}, using the uniform constant $c$ in the previous display. With that, since $u_j - u \in H^1_0(B)$ for every $j$, we obtain
	\begin{equation*}
		\norm{u_j}_{L^2(B)}
		\leq 
		\norm{u_j - u}_{L^2(B)} + \norm{u}_2
		\lesssim_B 
		\norm{\nabla (u_j - u)}_2 + \norm{u}_2
		\leq 
		\norm{\nabla u_j}_2 + \norm{\nabla u}_2 + \norm{u}_2,
	\end{equation*}
	by using Poincaré in $B$. Since this bound does not depend on $j$, this finally shows that $u_j$ are uniformly bounded in $H^1(B)$, after passing to a subsequence.\smallskip
	
	\item In the middle of the proof of property (iii) in \cite[Lemma 11.9]{V}, it is asserted that the weak convergence of $u_j$ to $u_\infty$ in $H^1(B)$ implies $\F_0 (u_\infty, B) \leq \liminf_{j \to +\infty} \F_0(u_j, B)$. In our setting, there appears a new problematic term to handle: we need to show
		\begin{equation} \label{eq:less_liminf_gradient_A_0}
		\int_{B} \nabla u_\infty \, A \, \nabla u_\infty^{\miT} \, dx
		\leq 
		\liminf_{j \to +\infty} \int_{B} \nabla u_j \, A \, \nabla u_j^{\miT} \, dx.
	\end{equation}
	But it is not hard to see that this estimate holds.
	Indeed, one can compute 
	\begin{multline*} 
		\int_{B} \left( \nabla u_j \, A \, \nabla u_{j}^{\miT} - \nabla u_\infty \, A \, \nabla u_\infty^{\miT} \right) \, dx
		\\ =
		\int_{B} \nabla (u_{j} - u_\infty) \, A \, \nabla (u_{j} - u_\infty)^{\miT} \, dx
		+ 2 \int_{B} \nabla (u_{j} - u_\infty) \, A \, \nabla u_\infty^{\miT} \, dx, 
	\end{multline*}
	and the first term is non-negative by ellipticity \eqref{hip:A_elliptic_bounded}, whereas the second converges to 0 as $j \to +\infty$ because $\nabla u_j \rightharpoonup \nabla u_\infty$ in $L^2(B)$ (just test against $A \nabla u_\infty^{\miT} \in L^2(B)$ because $u_\infty \in H^1(B_r)$ and $A$ is bounded, see \eqref{hip:A_elliptic_bounded}).	\smallskip
	
	\item Next, in the proof of property (iv) in \cite[Lemma 11.9]{V}, we should use $q\, dx$ instead of the Lebesgue measure. The proof can be readily adapted because, by \eqref{hip:q_positive}, convergence in $L^2(dx)$ is obviously equivalent to convergence in $L^2(q\, dx)$: 
	\begin{equation} \label{eq:iff_convergences_q}
		\int_\Rn \abs{f_j - f}^2 \,dx \underset{j \to \infty}{\longrightarrow} 0 
		\quad \iff \quad 
		\int_\Rn \abs{f_j - f}^2 q \,dx \underset{j \to \infty}{\longrightarrow} 0.
	\end{equation}
	The case of weak convergence is analogous.\smallskip % 18-10
	
	\item Finally, in Step 1 in the proof of \cite[Prop. 11.16]{V}, the author uses $\F_0(v_j, B) \geq 0$, which is not true for us because of the presence of $F$. To fix this, we note that actually it suffices that $\F_0(u, B) - \F_0(v_j, B)$ is bounded above uniformly on $j$, which does hold in our setting. Indeed, by Poincaré (because $u-v_j \in H^1_0(B_r)$)
	\begin{multline*}
		\int_{B_r} (F(x, u) - F(x, v_j)) dx
		\lesssim_{M_1}
		\int_{B_r} \abs{u-v_j} dx
		\\ \lesssim_r 
		\left( \int_{B_r} \abs{\nabla (u-v_j)}^2 dx \right)^{1/2}
		\lesssim
		\norm{\nabla u}_{L^2(B_r)} + \norm{\nabla v_j}_{L^2(B_r)}.
	\end{multline*}
	And using this, we conclude
	\begin{equation*}
		\F_0(u, B) - \F_0(v_j, B)
		\leq 
		C(u) - \norm{\nabla v_j}_{L^2(B_r)}^2 + C \norm{\nabla v_j}_{L^2(B_r)}
		\leq 
		C(u) + C.
	\end{equation*}
	
	\end{enumerate}

	With these modifications, the proofs in \cite[Sections 11.4 and 11.5]{V} carry over to the present setting.
\end{proof}

For future reference, let us record here the following immediate consequence of Lemma~\ref{lem:almost_minimality}:

\begin{corollary}
	Assume \eqref{hip:setting}. For every $\varepsilon > 0$ there exists $r_0 > 0$ such that, for every $0 < r \leq r_0$,
	\begin{equation}\label{eq:almost_minimality}
		\F_{\Lambda} (u) \leq \F_{\Lambda} (v) + \varepsilon \abs{B_r}
	\quad  \text{ for every $v \in H^1(\Rn)$ such that }
		v-u \in H^1_0(B_r(x_0)).
	\end{equation}
\end{corollary}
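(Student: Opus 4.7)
The plan is to combine the two one-sided almost-minimality statements \eqref{eq:outwards} and \eqref{eq:inwards} from Lemma~\ref{lem:almost_minimality} into the single unconstrained bound, via a case split on whether a competitor increases or decreases the $q$-volume.

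First, I would apply Lemma~\ref{lem:almost_minimality} with $\varepsilon$ replaced by $\varepsilon' := \varepsilon/\overline q$, obtaining a radius $r_0 > 0$ such that both \eqref{eq:outwards} and \eqref{eq:inwards} hold with parameter $\Lambda \pm \varepsilon'$ for every $B_r(x_0)$ with $0 < r \leq r_0$. The elementary observation that drives the proof is that any competitor $v \in H^1(\Rn)$ with $v - u \in H^1_0(B_r(x_0))$ satisfies $v \equiv u$ outside $B_r(x_0)$, so the symmetric difference $\Omega_u \triangle \Omega_v$ is contained in $B_r(x_0)$; together with \eqref{hip:q_positive} this forces $\abs{\Vol_q(\Omega_u) - \Vol_q(\Omega_v)} \leq \overline q\,\abs{B_r}$.

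Next I would split into cases. If $\Vol_q(\Omega_u) \leq \Vol_q(\Omega_v)$, then \eqref{eq:outwards} applies; using the identity $\F_{\Lambda+\varepsilon'}(w) = \F_\Lambda(w) + \varepsilon'\Vol_q(\Omega_w)$ and rearranging gives
\[
\F_\Lambda(u) \leq \F_\Lambda(v) + \varepsilon'\bigl(\Vol_q(\Omega_v) - \Vol_q(\Omega_u)\bigr) \leq \F_\Lambda(v) + \varepsilon\abs{B_r},
\]
the last step using the volume bound from the previous paragraph together with the choice $\varepsilon' = \varepsilon/\overline q$. The opposite case $\Vol_q(\Omega_u) \geq \Vol_q(\Omega_v)$ is handled identically using \eqref{eq:inwards}, with the sign of the resulting error term flipping so that the conclusion is preserved.

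I do not anticipate any substantive obstacle here: the argument is purely algebraic once Lemma~\ref{lem:almost_minimality} is in hand. The only mild subtlety is that Lemma~\ref{lem:almost_minimality} presupposes an ambient ball $B$ with $0 < \abs{\Omega_u \cap B} < \abs{B}$, so in principle one must either restrict attention to balls straddling $\partial\Omega_u$ (which is the regime in which the corollary is subsequently used) or dispose of the degenerate cases separately. Those degenerate cases, however, are easier: when $B_r(x_0) \subset \Omega_u$, Lemma~\ref{lem:EL} gives $\cL u = F'(x,u)$ throughout $B_r(x_0)$ and the harmonic-replacement argument of Lemma~\ref{lem:harmonic_replacement_easy} yields a sharp (error-free) minimality comparison; when $B_r(x_0)$ is disjoint from $\Omega_u$, one has $u \equiv 0$ in $B_r(x_0)$ and the desired bound reduces to a trivial Dirichlet-energy comparison together with the sign condition \eqref{hip:negative} on $F'$.
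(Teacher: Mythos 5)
Your argument is correct and is exactly the paper's: the proof given there is a one-line appeal to Lemma~\ref{lem:almost_minimality} together with the identity $\F_{\Lambda\pm\varepsilon'}(w)=\F_{\Lambda}(w)\pm\varepsilon'\,\Vol_q(\Omega_w)$, which your case split on $\Vol_q(\Omega_v)\gtrless\Vol_q(\Omega_u)$ and the bound $\abs{\Vol_q(\Omega_u)-\Vol_q(\Omega_v)}\leq \overline q\,\abs{B_r}$ (with the rescaling $\varepsilon'=\varepsilon/\overline q$) merely spell out. Your closing worry about degenerate balls is not needed: the hypothesis $0<\abs{\Omega_u\cap B}<\abs{B}$ in Lemma~\ref{lem:almost_minimality} concerns the fixed ambient ball $B$ (which exists because $\Omega_u$ is nonempty of finite measure), not the small balls $B_r(x_0)\subset B$, which may lie entirely inside or outside $\Omega_u$.
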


\begin{proof}
	It follows directly from Lemma~\ref{lem:almost_minimality} and the fact that $\F_{\Lambda + \varepsilon}(w) = \F_{\Lambda}(w) + \varepsilon \abs{\{w>0\}}$.
\end{proof}

%%%%%%%%%%%%%%%%%%%%%%%%%%%%%%%%%%%%%%%%%%%%%%%%%%%%%%%%%%%%%%
%%%%%%%%%%%%%%%%%%%%%%%%%%%%%%%%%%%%%%%%%%%%%%%%%%%%%%%%%%%%%%
%%%%%%%%%%%%%%%%%%%%%%%%%%%%%%%%%%%%%%%%%%%%%%%%%%%%%%%%%%%%%%

%\newpage

\section{Lipschitz continuity and non-degeneracy} \label{sec:lip}

Using that our constrained minimizer $u$ is ``almost'' a minimizer of some energy $\F_{\Lambda}$ by~\eqref{eq:almost_minimality}, our goal in this section is to show that~$u$ is Lipschitz and non-degenerate, which will be the key properties in the forthcoming analysis. To do so, we will extend the basic strategy used in~\cite{AC} and~\cite[Sections 3 and 4]{V}.

Let us start by a crucial extension of Lemma~\ref{lem:harmonic_replacement_easy}:

\begin{lemma} \label{lem:harmonic_replacement}
	Assume \eqref{hip:setting}. Fix $\varepsilon > 0$ and a small enough ball $B_r(x_0)$ as in Lemma~\ref{lem:almost_minimality}. Define $h$ as the only solution of the equation $\cL  h = F'(x, u)$ in $B_r(x_0)$ with $h = u$ in $B_r(x_0)^{\mathrm{c}}$. Then $h \geq u$ in $B_r(x_0)$ and 
	\begin{equation} \label{eq:AC_3.2_estimate}
		\int_{B_r(x_0)} \abs{\nabla (u-h)}^2 dx \leq 2 \, \overline{q} \, \frac{\Lambda + \varepsilon}{\lambda} \abs{\{u = 0\} \cap B_r(x_0)}.
	\end{equation}
\end{lemma}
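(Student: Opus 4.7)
\smallskip

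\textbf{Plan of proof.} The inequality $h\geq u$ will follow exactly as in Lemma~\ref{lem:harmonic_replacement_easy}: since Lemma~\ref{lem:EL} gives $\cL u\leq F'(x,u)=\cL h$ in $B_r(x_0)$ and $u=h$ on $\partial B_r(x_0)$, the maximum principle applied to $u-h$ yields $h\geq u\geq 0$ in $B_r(x_0)$, so in particular $\Omega_u\subset\Omega_h$ and $\Omega_h\setminus\Omega_u\subset\{u=0\}\cap B_r(x_0)$. The nontrivial content is then the Caccioppoli-type estimate \eqref{eq:AC_3.2_estimate}, which I would obtain by combining the almost-minimality \eqref{eq:outwards} applied to $v:=h$ (which is admissible because $h-u\in H^1_0(B_r(x_0))$ and $\Vol_q(\Omega_h)\geq \Vol_q(\Omega_u)$) with the natural energy identity for the difference $u-h$.

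\smallskip

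\textbf{Key steps.} First I would expand $\F_0(u)-\F_0(h)$. Using the polarization identity $aAa^{\miT}-bAb^{\miT}=-(b-a)A(b-a)^{\miT}+2\,aA(a-b)^{\miT}$ (valid because $A$ is symmetric by \eqref{hip:A_symmetric}) with $a=\nabla h$, $b=\nabla u$, and then testing the equation $\cL h=F'(x,u)$ against the admissible test function $h-u\in H^1_0(B_r(x_0))$, I obtain the identity
\begin{equation*}
\F_0(u)-\F_0(h)=\int_{B_r(x_0)}\nabla(u-h)\,A\,\nabla(u-h)^{\miT}\,dx+2\int_{B_r(x_0)}\bigl[F(x,h)-F(x,u)-F'(x,u)(h-u)\bigr]dx.
\end{equation*}
Next, I would use \eqref{hip:F_regular} together with \eqref{hip:F'_F''}, which give a uniform bound $|F''(x,s)|\leq M_2$ on the relevant range of $s$ (note $h$ is bounded in $B_r(x_0)$ by the maximum principle applied to $\cL h=F'(x,u)$ with bounded data, using $\|u\|_\infty\lesssim 1$ from Proposition~\ref{prop:bounded} and $M_1$ from \eqref{eq:def_M_1}), to obtain the second-order Taylor bound $|F(x,h)-F(x,u)-F'(x,u)(h-u)|\leq \tfrac{M_2}{2}(h-u)^2$. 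On the other hand, the almost-minimality \eqref{eq:outwards} of Lemma~\ref{lem:almost_minimality}, together with $\Omega_h\setminus\Omega_u\subset\{u=0\}\cap B_r(x_0)$ and \eqref{hip:q_positive}, yields
\begin{equation*}
\F_0(u)-\F_0(h)\leq (\Lambda+\varepsilon)\bigl(\Vol_q(\Omega_h)-\Vol_q(\Omega_u)\bigr)\leq (\Lambda+\varepsilon)\,\overline{q}\,\bigl|\{u=0\}\cap B_r(x_0)\bigr|.
\end{equation*}

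\smallskip

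\textbf{Absorbing the lower-order term.} Combining the two previous displays with ellipticity \eqref{hip:A_elliptic_bounded} produces
\begin{equation*}
\lambda\int_{B_r(x_0)}|\nabla(u-h)|^2\,dx-M_2\int_{B_r(x_0)}(h-u)^2\,dx\leq (\Lambda+\varepsilon)\,\overline{q}\,\bigl|\{u=0\}\cap B_r(x_0)\bigr|.
\end{equation*}
Since $h-u\in H^1_0(B_r(x_0))$, the Poincar\'e inequality gives $\int_{B_r(x_0)}(h-u)^2\leq Cr^2\int_{B_r(x_0)}|\nabla(h-u)|^2$, so choosing $r$ small enough (depending only on $M_2$ and $\lambda$, and compatibly with the smallness required by Lemma~\ref{lem:almost_minimality}) we can arrange $M_2 C r^2\leq \lambda/2$. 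Absorbing the Poincar\'e error on the left, we end up with $\tfrac{\lambda}{2}\int_{B_r(x_0)}|\nabla(u-h)|^2\leq (\Lambda+\varepsilon)\,\overline{q}\,|\{u=0\}\cap B_r(x_0)|$, which after rearrangement is exactly \eqref{eq:AC_3.2_estimate}.

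\smallskip

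\textbf{Main obstacle.} The only genuinely new difficulty compared with the classical proofs in \cite{AC, V} is the presence of the nonlinearity $F$: one must show that the discrepancy $F(x,h)-F(x,u)-F'(x,u)(h-u)$ generates only a harmless $L^2$ error of size $(h-u)^2$, and then absorb it via Poincar\'e. This is where the assumption that $F$ is $C^{1,1}$ (hence $F'$ is Lipschitz in $u$ on bounded ranges) is used in an essential way, and where the smallness of $r$ must be quantified. Once this is handled, the rest is a direct combination of almost-minimality with the energy identity coming from the Euler--Lagrange equation for $h$.
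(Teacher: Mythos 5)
Your proof is correct and follows essentially the same route as the paper: maximum principle for $h\geq u$, almost-minimality \eqref{eq:outwards} applied to the competitor $h$ (admissible since $\Omega_u\subset\Omega_h$), the energy identity obtained by testing $\cL h=F'(x,u)$ against $h-u\in H^1_0(B_r(x_0))$, the second-order Taylor bound on the $F$-discrepancy, and absorption via Poincar\'e for $r$ small. The only difference is cosmetic: you spell out the Taylor/Poincar\'e absorption explicitly, whereas the paper refers back to the identical computation in Lemma~\ref{lem:harmonic_replacement_easy}.
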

\begin{proof} % 20-5/3
	We may assume $x_0 = 0$, and write $\Lambda' := \Lambda + \varepsilon$. By Lemma~\ref{lem:EL}, $\cL  (u-h) \leq 0$ in $B_r$; and clearly $u - h = 0$ on $\partial B_r$. Therefore, the maximum principle implies that $h \geq u$. Then, by \eqref{eq:outwards}, $\F_{\Lambda'}(u) \leq \F_{\Lambda'}(h)$. This implies, just as in the proof of Lemma~\ref{lem:harmonic_replacement_easy},
	\begin{multline*}
		\!\!\! 0 
		\geq 
		\lambda \! \int_{B_r} \!\! \abs{\nabla (u-h)}^2 dx
		\, - \, 2 \int_{B_r} \!\!\!\! \left( F(x, u) - F(x, h) - F'(x, u)(u-h) \right) dx
		\,-\, \Lambda' \Vol_q(\{u = 0\} \cap B_r)
		\\
		\geq 
		\lambda \int_{B_r} \abs{\nabla (u-h)}^2 dx
		- \frac{\lambda}{2} \int_{B_r} \abs{\nabla (u-h)}^2 dx
		- \Lambda' \,\overline{q} \abs{\{u = 0\} \cap B_r},
	\end{multline*}
	where we have used \eqref{hip:F'_F''} and Poincaré to hide (for $r$ small enough) the second term. The lemma follows.
	\end{proof}

%%%%%%%%%%%%%%%%%%%%%%%%%%%%%%%%%%%%%%%%%%%

As a consequence, we can show some thickness of $\Rn \setminus \Omega_u$, where we recall the notation $\Omega_u:=\{u>0\}$:

\begin{lemma} \label{lem:exterior_positive_measure}
	Assume \eqref{hip:setting}. If $x_0 \in \pom_u$, then $\abs{\{u=0\} \cap B_r(x_0)} > 0$ for every $r > 0$. 
\end{lemma}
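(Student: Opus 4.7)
My plan is to argue by contradiction. Suppose there exists some $r > 0$ with $\abs{\{u=0\}\cap B_r(x_0)} = 0$. Since this property is preserved by shrinking $r$, I may assume $r$ is small enough that Lemma~\ref{lem:almost_minimality} and hence Lemma~\ref{lem:harmonic_replacement} apply on $B_r(x_0)$ (noting that $x_0 \in \partial\Omega_u$ gives a ball $B$ on which both $\Omega_u$ and its complement have positive measure, so $B_r(x_0)$ can be chosen inside $B$).

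Fix any $\varepsilon>0$ and let $h$ be the replacement from Lemma~\ref{lem:harmonic_replacement}, so $h\geq u$ in $B_r(x_0)$ and $h=u$ on $\partial B_r(x_0)$. Estimate~\eqref{eq:AC_3.2_estimate} then gives
\[
\int_{B_r(x_0)} \abs{\nabla(u-h)}^2\, dx \;\leq\; 2\,\overline{q}\,\frac{\Lambda+\varepsilon}{\lambda}\, \abs{\{u=0\}\cap B_r(x_0)} \;=\; 0.
\]
Since $u-h\in H^1_0(B_r(x_0))$, Poincar\'e's inequality forces $u\equiv h$ a.e.\ in $B_r(x_0)$.

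Next, I would exploit that $h$ is $\cL$-superharmonic: by construction $\cL h = F'(x,u) \geq 0$ in $B_r(x_0)$ thanks to \eqref{hip:negative}. Moreover, the hypothesis $\abs{\{u=0\}\cap B_r(x_0)}=0$ together with $h=u$ a.e.\ means $h>0$ on a set of full measure in $B_r(x_0)$, while $h\geq u\geq 0$ everywhere. The strong minimum principle for $\cL$-supersolutions (available thanks to \eqref{hip:A_elliptic_bounded} and \eqref{hip:A_regular}) therefore yields $h>0$ pointwise in all of $B_r(x_0)$. In particular, $h(x_0)>0$, and by elliptic regularity $h$ is continuous at $x_0$.

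Finally, I would upgrade the a.e.\ identity $u=h$ to pointwise equality at $x_0$ using the representative from Corollary~\ref{corol:Lap_bdd}: since
\[
u(x_0) = \lim_{\rho\to 0}\fint_{D_\rho(x_0)} u\, dx = \lim_{\rho\to 0}\fint_{D_\rho(x_0)} h\, dx = h(x_0) > 0,
\]
we conclude $x_0 \in \Omega_u$. But $\Omega_u$ is open (as shown in the proof of Lemma~\ref{lem:saturation}), contradicting $x_0\in\partial\Omega_u$.

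The argument is essentially a direct consequence of Lemma~\ref{lem:harmonic_replacement}; there is no substantial obstacle. The one delicate point is the final identification $u(x_0) = h(x_0)$, which requires selecting the precise pointwise representative provided by Corollary~\ref{corol:Lap_bdd} rather than working only with the a.e.\ equivalence class, but this is handled by the averaged definition in~\eqref{eq:u_pointwise} and continuity of~$h$.
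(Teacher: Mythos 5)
Your argument is essentially the paper's: assume $\abs{\{u=0\}\cap B_r(x_0)}=0$, use \eqref{eq:AC_3.2_estimate} to force $u\equiv h$, then use $\cL$-superharmonicity (via \eqref{hip:negative}) and the strong minimum principle to obtain positivity near $x_0$, contradicting $x_0\in\pom_u$. One caveat concerns your last step: you appeal to the openness of $\Omega_u$ ``as shown in the proof of Lemma~\ref{lem:saturation}'', but that openness is only derived there under the for-contradiction hypothesis $\Vol_q(\Omega_u)<m$ (it relies on Lemma~\ref{lem:harmonic_replacement_easy}, which needs the volume slack), and unconditional openness is only established later, in Proposition~\ref{prop:lipschitz}, whose proof uses the present lemma --- so, as written, the final contradiction is circular: knowing only $u(x_0)>0$ does not by itself contradict $x_0\in\pom_u$ unless $\Omega_u$ is known to be open. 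The fix is already contained in your own argument: the strong minimum principle gives $h>0$ at \emph{every} point of $B_{r/2}(x_0)$, and the pointwise representative \eqref{eq:u_pointwise} together with the continuity of $h$ identifies $u(y)=h(y)>0$ for every $y\in B_{r/2}(x_0)$, not just at $x_0$; hence $B_{r/2}(x_0)\subset\Omega_u$, so $x_0$ is an interior point of $\Omega_u$, which directly contradicts $x_0\in\pom_u$ with no appeal to global openness. This is exactly how the paper closes the argument (there via $C^2$ regularity of $u$ itself and the classical minimum principle applied to $u$ rather than to $h$).
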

\begin{proof} % 20-5/6
	Fix $x_0 \in \pom_u$. Suppose, on the contrary, that $\abs{\{u=0\} \cap B_r(x_0)} = 0$ for some $r > 0$. Then (possibly using a smaller $r$), by \eqref{eq:AC_3.2_estimate}, $\nabla u \equiv \nabla h$ a.e. in $B_r(x_0)$. Since $u = h$ on $\partial B_r(x_0)$, that translates into $u = h$ a.e. in $B_r$. Therefore $\cL  u = F'(x, u) \geq 0$ in the weak sense in $B_r(x_0)$ (using \eqref{hip:negative}), whence by elliptic regularity (e.g. Theorems 8.8 and 9.19 from \cite{GT}, recalling \eqref{hip:A_regular}, \eqref{hip:F_regular} and \eqref{eq:def_M_1}) $u$ is actually $C^2$ in $B_{r/2}(x_0)$, and hence $\cL $-superharmonic in a classical sense. On the other hand, since $x_0 \in \pom_u$, $u$ is not constantly zero in $B_{r/2}(x_0)$. Thus, by the classical strong minimum principle for $\cL $-superharmonic functions, $u > 0$ in $B_{r/2}(x_0)$. But this clearly contradicts $x_0 \in \pom_u$.
\end{proof}

%%%%%%%%%%%%%%%%%%%%%%%%%%%%%%%%%%%%%%%%%%%

After these preliminaries, we are ready to proceed to prove the optimal regularity of $u$.

\begin{proposition}[Lipschitz continuity] \label{prop:lipschitz}
	Assume \eqref{hip:setting}. Then $\Omega_u$ is open and  $$L := \norm{\nabla u}_{L^\infty(\Rn)} < + \infty.$$ 
\end{proposition}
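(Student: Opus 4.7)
The plan is to follow the Alt--Caffarelli blueprint: first reduce the global Lipschitz bound to a linear-growth estimate at free-boundary points, and then derive that estimate from almost-minimality via comparison with the $F'$-harmonic replacement. Openness of $\Omega_u$ will come for free once $u$ is known to be continuous, since then $\Omega_u = u^{-1}((0,\infty))$. To bound $|\nabla u|$ at an interior point $x_0 \in \Omega_u$, set $d := \dist(x_0, \partial\Omega_u)$. If $d = +\infty$ then $\Omega_u = \Rn$, $u$ is bounded by Proposition~\ref{prop:bounded}, and $|\cL u|\leq M_1$ everywhere by Lemma~\ref{lem:EL} and Corollary~\ref{corol:Lap_bdd}; Lemma~\ref{lem:harnack}(ii) applied in unit balls then yields a uniform gradient bound. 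Otherwise, Lemma~\ref{lem:EL} gives $\cL u = F'(x,u)$ in $B_d(x_0)$ with $|F'(\cdot,u)|\leq M_1$, and Lemma~\ref{lem:harnack}(ii) in $B_{d/2}(x_0)$ produces
\[
|\nabla u(x_0)| \;\lesssim\; \frac{1}{d}\sup_{B_{d/2}(x_0)} u \;+\; M_1\,d.
\]
Hence it suffices to produce universal constants $C_0, r_0 > 0$ with $\sup_{B_r(y_0)} u \leq C_0 r$ for every $y_0 \in \partial\Omega_u$ and every $r \leq r_0$: picking $y_0 \in \partial\Omega_u$ with $|y_0-x_0|=d$ places $B_{d/2}(x_0)\subset B_{3d/2}(y_0)$ and closes the reduction.

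The substance of the proof lies in this boundary linear-growth estimate. Fix $y_0 \in \partial\Omega_u$ and take $r$ small enough to apply Lemma~\ref{lem:harmonic_replacement}, and let $h$ be the corresponding $F'$-harmonic replacement of $u$ in $B_r(y_0)$. Since $h \geq u \geq 0$, $h = u$ on $\partial B_r(y_0)$, and $|\cL h| = |F'(\cdot,u)| \leq M_1$, Lemma~\ref{lem:harnack}(i) yields
\[
\sup_{B_{r/2}(y_0)} u \;\leq\; \sup_{B_{r/2}(y_0)} h \;\lesssim\; \fint_{\partial B_r(y_0)} u\,d\mathcal{H}^{n-1} \;+\; M_1\,r^2,
\]
so linear growth reduces to the sphere-average estimate $\fint_{\partial B_r(y_0)} u \lesssim r$. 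I would attack this by the classical Alt--Caffarelli dichotomy: suppose the average is $\geq Kr$ for a large constant $K$ to be fixed. Then Harnack applied to $h$ forces $h \gtrsim Kr$ throughout $B_{r/2}(y_0)$; on the other hand, the Caccioppoli-type inequality \eqref{eq:AC_3.2_estimate} gives
\[
\int_{B_r(y_0)} |\nabla(u-h)|^2\,dx \;\leq\; 2\,\overline q\,\tfrac{\Lambda+\varepsilon}{\lambda}\,|\{u=0\}\cap B_r(y_0)|,
\]
and since $u-h \in H^1_0(B_r(y_0))$, Poincar\'e converts this into an $L^2$ bound for $h-u$. For $K$ chosen large relative to the admissibility constants, this forces $u > 0$ throughout $B_{r/2}(y_0)$, contradicting the thickness property (Lemma~\ref{lem:exterior_positive_measure}) at $y_0$.

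The main obstacle is turning this dichotomy into a clean quantitative statement in the presence of three features absent from the homogeneous Alt--Caffarelli setting: the inhomogeneity $F'(x,u)$, the variable-coefficient operator $\cL$, and the weight $q$. Each introduces lower-order corrections (of respective sizes $M_1 r^2$, modifications of the Poisson representation that must be handled through $\cL$-harmonic measure as in Lemma~\ref{lem:harnack}, and factors involving $\overline q/\underline q$) that need to be uniformly controlled. Moreover, at this stage one only has the almost-minimality \eqref{eq:almost_minimality}--\eqref{eq:inwards} with $\varepsilon|B_r|$ slack, and the sign of $\Lambda$ is still unknown, so the plan is to fix $K$ first and then choose $r_0$ small enough that all accumulated error terms are absorbed, adapting the template of \cite[Chapter~3]{V}. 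A symmetric argument using \eqref{eq:inwards} with a competitor of the form $v := u - \kappa(r-|\cdot-y_0|)_+$ will simultaneously yield non-degeneracy $\sup_{B_r(y_0)} u \gtrsim r$, which will be essential in Section~\ref{sec:boundary}.
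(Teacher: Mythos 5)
Your overall architecture matches the paper's: reduce the global gradient bound to a linear-growth estimate $\fint_{\partial B_r(y_0)} u \lesssim r$ at free boundary points, obtain it by comparing $u$ with the $F'$-harmonic replacement $h$ and the energy estimate \eqref{eq:AC_3.2_estimate}, then propagate to interior points via Lemma~\ref{lem:harnack}. The reduction step and the interior estimate are fine.

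However, the mechanism you propose for the boundary linear-growth estimate has a genuine gap. You assume $\fint_{\partial B_r(y_0)} u \geq Kr$, deduce $h \gtrsim Kr$ on $B_{r/2}(y_0)$, and then combine \eqref{eq:AC_3.2_estimate} with the Poincar\'e inequality to get $\|h-u\|_{L^2(B_r)}^2 \lesssim r^2\,\Lambda'\,|\{u=0\}\cap B_r(y_0)| \lesssim r^{n+2}$. Restricting to the zero set, where $h-u=h\geq cKr$, this yields only
\begin{equation*}
|\{u=0\}\cap B_{r/2}(y_0)| \;\lesssim\; \frac{r^n}{K^2}\,,
\end{equation*}
i.e.\ the zero set has \emph{small} measure in $B_{r/2}$ — not zero measure. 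This does not force $u>0$ throughout $B_{r/2}(y_0)$, and it does not contradict Lemma~\ref{lem:exterior_positive_measure}, which is purely qualitative ($|\{u=0\}\cap B_\rho(y_0)|>0$). A quantitative lower density bound for $\{u=0\}$ at boundary points is not available at this stage (the paper only proves it later, in Lemma~\ref{lem:density_upper_bound}, using non-degeneracy, which itself relies on Lipschitz continuity — so invoking it here would be circular). The loss occurs precisely in the Poincar\'e step: it discards the information that $h-u$ is large exactly on the set whose measure also controls the right-hand side of \eqref{eq:AC_3.2_estimate}.

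The fix — and what the paper actually does — is to replace Poincar\'e by the sharper Alt–Caffarelli trace-along-rays inequality (\cite[Lemma 3.7]{V}), which for the $\cL$-superharmonic replacement $h\geq u$ gives
\begin{equation*}
|\{u=0\}\cap B_r(y_0)|\left(\frac1r\fint_{\partial B_r(y_0)} u\,d\mathcal{H}^{n-1}\right)^2 \;\lesssim\; \int_{B_r(y_0)}|\nabla(u-h)|^2\,dx\,.
\end{equation*}
Here the measure of the zero set appears on the \emph{left}, so after bounding the right-hand side by \eqref{eq:AC_3.2_estimate} the factor $|\{u=0\}\cap B_r(y_0)|$ cancels on both sides; the only role of Lemma~\ref{lem:exterior_positive_measure} is to guarantee this factor is nonzero so that one may divide by it. This yields $\fint_{\partial B_r(y_0)} u \lesssim \sqrt{\Lambda+\varepsilon}\,r$ directly, with no dichotomy and no need for a density estimate. (Also note that the concluding paragraph on non-degeneracy belongs to a separate proposition and is not needed here.)
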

\begin{proof} % 20-5, 28-5/5
		We can mostly follow~\cite[Th.~3.2]{AC} or~\cite[Section 3.1]{V}. Fix $\varepsilon > 0$ small, and set $\Lambda' := \Lambda + \varepsilon$ to use \eqref{eq:outwards}. 
	
	First, given $x_0 \in \Rn$ and small $r > 0$, let $h$ be as in Lemma~\ref{lem:harmonic_replacement}. Since $h \geq u$ and it is $\cL $-superharmonic in $B_r(x_0)$ by \eqref{hip:negative}, we may follow the proof of \cite[Lemma 3.7]{V} to obtain
	\begin{equation*}
		\abs{ \{u = 0\ \cap B_r(x_0)} \left( \frac1r \fint_{\partial B_r(x_0)} u \, d\mathcal{H}^{n-1} \right)^2
		\lesssim 
		\int_{B_r(x_0)} \abs{\nabla (u-h)}^2 \, dx.
	\end{equation*}
	Dominating the right hand side with \eqref{eq:AC_3.2_estimate}, and cancelling terms (recall Lemma~\ref{lem:exterior_positive_measure}), we obtain
	\begin{equation} \label{eq:integral_linear_growth}
		\fint_{\partial B_r(x_0)} u \, d\mathcal{H}^{n-1} \leq C(n, \lambda, \overline{q}) \sqrt{\Lambda'} \, r 
		\qquad  \forall \, x_0 \in \pom_u, \; r \text{ small enough}.
	\end{equation}
	Recalling \eqref{eq:u_pointwise}, this implies that $u \equiv 0$ on $\pom_u$ just letting $r \to 0$. Hence $\Omega_u$ is open. 

	It only remains to adapt~\cite[Lemma 3.5]{V}.
	Fix $x \in \Omega_u$ sufficiently close to $\pom_u$, and find $x_0 \in \pom_u$ its closest point on the boundary. Set $r := \abs{x-x_0}$, so $r > 0$ because $\Omega_u$ is open.
	Then, by Lemma~\ref{lem:EL}, $\cL  u = F'(x, u)$ in $B_r(x)$, whence $\abs{\cL  u} \leq M_1$ in $B_r(x)$ (using also \eqref{hip:negative} and Corollary~\ref{corol:Lap_bdd}). This allows us to use Lemma~\ref{lem:harnack} to get
	\begin{equation} \label{eq:gradient_estimate}
		\abs{\nabla u(x)} 
		\lesssim
		\frac{1}{r} \norm{u}_{L^\infty(\partial B_{r/2}(x))} + r,
	\end{equation}
	Now, for any fixed $y \in \partial B_{r/2}(x)$, again using Lemma~\ref{lem:harnack} and also \eqref{eq:integral_linear_growth}, we obtain
	\begin{equation} \label{eq:gradient_estimate_aux}
		u(y) 
		\lesssim 
		\fint_{B_{r/4}(y)} u \, dz + r^2
		\lesssim
		\fint_{B_{2r}(x_0)} u \, dz + r^2
		\lesssim
		\sqrt{\Lambda'} \, r + r^2,
	\end{equation}
	which back in \eqref{eq:gradient_estimate} finally yields $\abs{\nabla u(x)} \leq C$ if $x$ is sufficiently close to $\pom_u$ (i.e. when $r$ is small). Outside of a neighborhood of $\pom_u$, standard regularity for elliptic PDE yields that $u$ is actually $C^2$ there, which is sufficient to finish the proof. 
\end{proof}

Recalling Lemma~\ref{lem:EL}, we immediately obtain the following:

\begin{corollary}[Euler--Lagrange (II)] \label{corol:EL2}
	Assume \eqref{hip:setting}. Then $\cL  u = F'(x, u)$ in $\Omega_u$.   
\end{corollary}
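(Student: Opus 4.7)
The plan is very short because this is essentially an immediate combination of two results already in hand. The key observation is that Lemma~\ref{lem:EL} already gave us the equation $\cL u = F'(x,u)$ on any open ball $B_r(x_0) \subset \{u>0\}$, and Proposition~\ref{prop:lipschitz} has just upgraded $\Omega_u = \{u>0\}$ from a merely measurable set to an \emph{open} set.

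Concretely, I would argue as follows. Fix an arbitrary point $x_0 \in \Omega_u$. Since Proposition~\ref{prop:lipschitz} tells us that $\Omega_u$ is open, there exists some $r > 0$ such that $B_r(x_0) \subset \Omega_u = \{u>0\}$. The second assertion of Lemma~\ref{lem:EL} then applies verbatim and yields
\[
\cL u = F'(x,u) \quad \text{in } B_r(x_0),
\]
in the weak sense. Since this identity holds in a neighborhood of every point of $\Omega_u$, it holds weakly in all of $\Omega_u$, which is exactly the claim.

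There is no real obstacle: the technical content (the one-sided inequality $\cL u \leq F'(x,u)$ in all of $\Rn$ and the two-sided equality on balls where $u>0$) was already established in Lemma~\ref{lem:EL} by testing the minimality of $u$ against the perturbations $(u-\varepsilon\phi)_+$ and $u-\varepsilon\phi$ respectively, and the only missing ingredient, namely the openness of $\{u>0\}$, was just supplied by Proposition~\ref{prop:lipschitz} (as a byproduct of the fact~\eqref{eq:integral_linear_growth} forcing $u$ to vanish on $\partial\Omega_u$ via the pointwise mean-value representation~\eqref{eq:u_pointwise}). So the corollary is a direct bookkeeping consequence, and there is nothing further to verify.
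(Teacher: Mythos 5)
Your proposal is correct and matches the paper's argument exactly: the corollary is stated immediately after Proposition~\ref{prop:lipschitz} precisely because, once $\Omega_u$ is known to be open, the second assertion of Lemma~\ref{lem:EL} applies on a ball around each point, and a partition-of-unity localization gives the weak identity on all of $\Omega_u$. Nothing further is needed.
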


As a consequence of the Lipschitz estimate at the boundary (Proposition~\ref{prop:lipschitz}) and the exact equation that $u$ solves in $\Omega_u$ (Corollary~\ref{corol:EL2}), we can now ensure that $\Lambda \neq 0$  using only elementary PDE techniques. It should be emphasized that, up to this moment, we never needed the strict inequality $\Lambda > 0$ in our proofs.

\begin{proposition}[Positivity of Lagrange multiplier]
	Assume \eqref{hip:setting}, and let $\Lambda \geq 0$ be the number obtained in Lemma~\ref{lem:first_variation}. Then actually $\Lambda > 0$.
\end{proposition}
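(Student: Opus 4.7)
The plan is to argue by contradiction: assume $\Lambda = 0$ and derive that $u \equiv 0$ locally near a free boundary point, contradicting its existence. As a preliminary step, I would note that $\partial \Omega_u \neq \emptyset$: by Lemma~\ref{lem:saturation}, $\Omega_u \neq \emptyset$, and by \eqref{hip:q_positive} combined with $\Vol_q(\Omega_u) = m < \infty$, the set $\Omega_u$ has finite Lebesgue measure, hence $\Omega_u \subsetneq \Rn$. Pick any $x_0 \in \partial \Omega_u$; the continuity of $u$ guaranteed by Proposition~\ref{prop:lipschitz}, together with the openness of $\Omega_u$, yields $u(x_0) = 0$.

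The key step is to exploit the harmonic replacement from Lemma~\ref{lem:harmonic_replacement} on a small ball $B_r(x_0)$. Every small enough such ball satisfies the hypothesis of Lemma~\ref{lem:almost_minimality}, since $\Omega_u$ has positive Lebesgue measure in every neighborhood of $x_0$ (as $x_0 \in \partial \Omega_u$ and $\Omega_u$ is open) and so does $\Rn \setminus \Omega_u$ (by Lemma~\ref{lem:exterior_positive_measure}). Let $h$ denote the $F'(x,u)$-harmonic replacement of $u$ in $B_r(x_0)$, which does not depend on $\varepsilon$. Taking $\Lambda = 0$ in estimate \eqref{eq:AC_3.2_estimate} yields
$$
\int_{B_r(x_0)} |\nabla (u-h)|^2 \, dx \leq \frac{2\,\overline{q}\,\varepsilon}{\lambda}\,|B_r|
$$
for every $\varepsilon > 0$. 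Since the left-hand side is independent of $\varepsilon$, letting $\varepsilon \to 0$ forces $\nabla u = \nabla h$ a.e.\ in $B_r(x_0)$. Because $u - h \in H^1_0(B_r(x_0))$, Poincaré's inequality yields $u \equiv h$ there. Therefore, using \eqref{hip:negative},
$$
\cL u = \cL h = F'(x, u) \geq 0 \quad \text{in } B_r(x_0).
$$

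To conclude, I would apply the strong minimum principle for $\cL$-superharmonic functions, valid under \eqref{hip:A_regular} and \eqref{hip:A_elliptic_bounded}, in the connected open set $B_r(x_0)$: since the nonnegative supersolution $u$ attains its minimum value $0$ at the interior point $x_0$, it follows that $u \equiv 0$ throughout $B_r(x_0)$. But this contradicts $x_0 \in \partial \Omega_u$, because every neighborhood of a free boundary point must meet $\Omega_u = \{u > 0\}$. The main subtlety is verifying that the machinery of Section~\ref{sec:first_variation} (in particular almost-minimality and the harmonic replacement estimate) applies at $\Lambda = 0$; but the proofs there were carefully designed to allow $\Lambda \geq 0$, so nothing needs to be changed.
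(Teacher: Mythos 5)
There is a genuine gap at the central step of your argument, namely the passage ``the bound holds for every $\varepsilon>0$; since the left-hand side is independent of $\varepsilon$, letting $\varepsilon\to0$ forces $\nabla u=\nabla h$ a.e.\ in $B_r(x_0)$.'' The estimate \eqref{eq:AC_3.2_estimate} is not valid for every pair $(\varepsilon,r)$: Lemma~\ref{lem:harmonic_replacement} requires $B_r(x_0)$ to be ``small enough as in Lemma~\ref{lem:almost_minimality}'', and in Lemma~\ref{lem:almost_minimality} the admissible radius $r=r(\varepsilon)$ depends on $\varepsilon$ (the proof, following \cite[Prop.~11.10]{V}, is by compactness and gives no uniformity as $\varepsilon\to0$; one only knows $r(\varepsilon)$ is nondecreasing in $\varepsilon$, so it may well shrink to $0$). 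Hence, for a \emph{fixed} ball $B_r(x_0)$, the inequality
\[
\int_{B_r(x_0)} \abs{\nabla(u-h)}^2\,dx \;\leq\; \frac{2\,\overline q\,\varepsilon}{\lambda}\,\abs{B_r}
\]
is only guaranteed for those $\varepsilon$ with $r\leq r(\varepsilon)$, a set that may be bounded away from $0$. You therefore cannot send $\varepsilon\to0$ at a fixed scale, and the conclusion $u\equiv h$ on $B_r(x_0)$ does not follow. Note the contrast with Lemma~\ref{lem:exterior_positive_measure}, where the right-hand side of \eqref{eq:AC_3.2_estimate} is killed by the measure factor $\abs{\{u=0\}\cap B_r(x_0)}$ at a \emph{fixed} admissible pair $(\varepsilon,r)$; your argument instead needs to kill it by sending $\varepsilon\to0$, which is exactly what destroys the scale. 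This is also why the subtlety you flag at the end (``nothing needs to be changed'') is precisely the point that fails.

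The paper circumvents this with a diagonal argument that only requires a qualitative statement: combining \eqref{eq:gradient_estimate} and \eqref{eq:gradient_estimate_aux} with $\Lambda'=\varepsilon$, one gets $\abs{\nabla u(x)}\lesssim \sqrt\varepsilon + \dist(x,\pom_u)$ for $\dist(x,\pom_u)\leq r(\varepsilon)$; for each target accuracy $\delta$ one chooses $\varepsilon=\varepsilon(\delta)$ first and then shrinks the neighborhood of $\pom_u$ accordingly, concluding only that $\nabla u$ extends continuously by $0$ to $\pom_u$. The contradiction is then obtained from Hopf's lemma applied to the $\cL$-superharmonic function $u$ on an interior tangent ball, which forces $\partial_\nu u<0$ at a boundary point where $\nabla u=0$. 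If you want to keep the flavor of your argument, you would need to replace the fixed-scale harmonic replacement by a similar $\varepsilon$-then-$r$ ordering of quantifiers, and at that point you essentially recover the paper's proof; as written, the step where $u\equiv h$ on a fixed ball is not justified.
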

\begin{proof} % 28-5
	Let us assume, on the contrary, that $\Lambda = 0$. Then, the estimates \eqref{eq:gradient_estimate} and \eqref{eq:gradient_estimate_aux} that we obtained in the proof of Proposition~\ref{prop:lipschitz} self-improve to $\abs{\nabla u(x)} \leq C r = C \dist(x, \pom_u)$ for $x$ close to $\pom_u$, simply using $\Lambda = \Lambda + \varepsilon = \varepsilon$ and letting $\varepsilon \to 0$. On the other hand, by elliptic regularity, we may suppose that $u$ is (qualitatively) regular in $\Omega_u$, at least $C^2$ (one may use Theorems 8.8 and 9.19 from \cite{GT} taking \eqref{hip:F_regular} and \eqref{hip:A_regular} into account). Therefore, $\nabla u$ is a continuous function in $\Omega_u$, which converges to 0 as we approach the boundary; so we can extend it by 0 in a continuous way. This means that, defining $\nabla u := 0$ on $\pom_u$, we have $u \in C^1(\overline{\Omega_u})$. 
	
	Now pick any $x \in \Omega_u$ and denote $B := B_{\dist(x, \pom_u)}(x)$. Clearly $B \subset \Omega_u$, so Corollary~\ref{corol:EL2} and $F' \geq 0$ (see \eqref{hip:negative}) imply that $u$ is $\cL $-superharmonic in $B$. Therefore, if $x_0 \in \pom_u$ is the closest point on the boundary to $x$, the classical strong minimum principle for $\cL $-superharmonic functions implies that $u(x_0) = 0 < u(y)$ for every $y \in \partial (\frac12 B)$. Moreover, recall that we already discussed in the previous paragraph that $u \in C^2(B) \cap C^1(\overline{B})$.  All these allow us to apply the classical Hopf's lemma in $B$ (because $\partial B$ is smooth) from \cite[p.330]{E} to deduce that $\partial_\nu u(x_0) < 0$, which contradicts the fact that $\nabla u = 0$ on $\pom$. 
\end{proof}

Let us finish the section with a classical non-degeneracy estimate, which shows that around any point of the free boundary $\pom_u$, $u$ must exhibit some linear growth. The proof uses ideas of
 David--Toro (see e.g.~\cite[Lemma 4.5]{V}):

\begin{proposition}[Non-degeneracy] \label{prop:non_degeneracy}
	Assume \eqref{hip:setting}. Then there exist small enough $\kappa_0, r_0 > 0$ such that, for any $x_0 \in \Rn$ and $0 < r < r_0$, it holds
	\begin{equation*}
		\fint_{\partial B_r(x_0)} u \, d\mathcal{H}^{n-1}
		\leq 
		\kappa_0 \, r
		\qquad \implies \qquad 
		u \equiv 0 \;\text{ in } B_{r/8}(x_0).
	\end{equation*}
\end{proposition}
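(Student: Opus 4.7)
The plan is to argue by contradiction, following the David--Toro strategy (cf.\ \cite[Lemma 4.5]{V}). Suppose that the hypothesis $\fint_{\partial B_r(x_0)} u \, d\mathcal{H}^{n-1} \leq \kappa_0 r$ holds but $u(y_0) > 0$ for some $y_0 \in B_{r/8}(x_0)$. First, I would invoke Corollary~\ref{corol:Lap_bdd} together with the subharmonic-type mean value estimate of Lemma~\ref{lem:harnack} to obtain the pointwise smallness
\[
M := \sup_{B_{r/2}(x_0)} u \leq C_*(\kappa_0 r + M_1 r^2) \leq 2 C_* \kappa_0 r,
\]
provided that $r \leq r_0$ is chosen with $M_1 r_0 \leq \kappa_0$.

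Next, I would construct an Alt--Caffarelli-style competitor $v := \min(u, M\psi)$, where $\psi$ is the $\cL$-harmonic function in the annulus $B_{r/4}(x_0) \setminus \overline{B_{r/8}(x_0)}$ equal to $0$ on the inner boundary and $1$ on the outer one, extended by $\psi \equiv 0$ in $B_{r/8}(x_0)$ and $\psi \equiv 1$ outside $B_{r/4}(x_0)$. Standard interior barrier estimates give $|\nabla \psi| \lesssim 1/r$. By the sup bound above, $v = u$ outside $B_{r/4}(x_0)$ and $v \equiv 0$ in $B_{r/8}(x_0)$, so $v - u \in H^1_0(B_{r/4}(x_0))$ and $\{v > 0\} \subset \{u > 0\}\setminus B_{r/8}(x_0)$.

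Setting $w := u - M\psi$ so that $v = u - w_+$, an integration by parts exploiting $\cL \psi = 0$ in the annulus and the fact that $w_+ \equiv 0$ on $\partial B_{r/4}(x_0)$ (since $u \leq M$ there) yields the identity
\[
\int \nabla u \, A \, \nabla u^{\miT} \, dx - \int \nabla v \, A \, \nabla v^{\miT} \, dx = \int \nabla w_+ \, A \, \nabla w_+^{\miT} \, dx - 2 M \!\!\int_{\partial B_{r/8}(x_0)}\!\! u \, \bigl( A \nabla \psi^{\miT} \cdot \nu \bigr) \, d\mathcal{H}^{n-1},
\]
whose boundary contribution is bounded by $C M^2 r^{n-2} \leq C \kappa_0^2 r^n$. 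In parallel, the nonlinearity contributes at most $2 \bigl|\int (F(x,u) - F(x,v)) \, dx \bigr| \leq M_1 \int_{B_{r/4}(x_0)} (u - v) \, dx \leq C \kappa_0^2 r^n$ for $r$ small. Substituting into \eqref{eq:inwards} and letting $\varepsilon \to 0$ then produces the combined density-and-gradient bound
\[
\Lambda \, \Vol_q \bigl( \Omega_u \cap B_{r/8}(x_0) \bigr) + \lambda \int |\nabla w_+|^2 \, dx \leq C \kappa_0^2 r^n.
\]

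The hard step will be closing the contradiction from this bound, since a priori it only yields smallness of the volume density and of the Dirichlet energy of $w_+$. The plan is to use that $w_+ \equiv u$ in $B_{r/8}(x_0)$: combining the gradient bound with Poincaré (applied to $w_+$, which vanishes on $\partial B_{r/4}(x_0)$) yields $\int_{B_{r/8}(x_0)} u^2 \, dx \leq C \kappa_0^2 r^{n+2}$, which, together with the density bound, allows one to iterate the argument at the smaller scale $r/8$ around nearby points with a strictly improved smallness parameter. Choosing $\kappa_0$ small enough (depending only on $\Lambda$, $\lambda$, $\underline{q}$, and the admissibility constants), this self-improvement across scales forces the density of $\Omega_u$ in $B_{r/8}(x_0)$ to vanish, contradicting $u(y_0) > 0$ through the Lipschitz continuity and openness of $\Omega_u$ (Proposition~\ref{prop:lipschitz}). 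The main obstacle is to keep track of the smallness parameter quantitatively through the iteration, which is the delicate part of the David--Toro scheme.
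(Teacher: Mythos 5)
Your proposal is correct in outline and runs the same David--Toro self-improvement scheme as the paper; the genuine difference is the competitor. The paper truncates directly, taking $v=(u-\kappa_1 r\phi)_+$ for a smooth cutoff $\phi\equiv1$ on $B_{r/2}(x_0)$ supported in $B_r(x_0)$: the energy comparison is then a one-line expansion (at the price of invoking the global Lipschitz bound $L$) and immediately yields the density estimate $|\Omega_u\cap B_{r/2}(x_0)|\lesssim \kappa_1|B_r|$. You instead use the classical Alt--Caffarelli harmonic truncation $\min(u,M\psi)$, which costs an integration by parts against $\cL\psi=0$ but buys, besides a (quadratically small) density bound, the Dirichlet-energy estimate on $w_+$. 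Both versions close the iteration identically, and the quantitative bookkeeping you worry about is not actually delicate: the point is that $\int_{B_{r/8}(x_0)}u$ is controlled by the \emph{product} of the sup bound ($\lesssim\kappa_0 r$) and the density bound ($\lesssim\kappa_0^2 r^n$), so an averaging in polar coordinates reproduces the hypothesis at a comparable scale around any $y_0\in B_{r/8}(x_0)$ with constant $C\kappa_0^2\le\kappa_0$ once $\kappa_0$ is small, and then $u(y_0)=\lim_j\fint_{\partial B_{\rho_j}(y_0)}u\,d\mathcal{H}^{n-1}=0$ by continuity. Be aware that your Poincar\'e estimate $\int_{B_{r/8}}u^2\lesssim\kappa_0^2 r^{n+2}$ \emph{alone} gives only $\fint_{\partial B_\rho} u\lesssim\kappa_0\rho$, with no gain; the self-improvement comes from pairing it, via Cauchy--Schwarz restricted to $\Omega_u$, with the density bound (or, more simply, from $\sup u\times|\Omega_u\cap B_{r/8}|$ as in the paper). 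The density estimate does the work; the gradient bound is dispensable.

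Two small repairs. First, as written $v=\min(u,M\psi)$ with $\psi\equiv1$ outside $B_{r/4}(x_0)$ need not equal $u$ outside $B_{r/2}(x_0)$, where the sup bound is unavailable; define $v:=u$ outside $B_{r/4}(x_0)$ and $v:=\min(u,M\psi)$ inside (the two definitions match on $\partial B_{r/4}(x_0)$ since $u\le M$ on $B_{r/2}(x_0)$), so that $v-u\in H^1_0(B_{r/4}(x_0))$. Second, you cannot let $\varepsilon\to0$ in \eqref{eq:inwards} at fixed $r$, because the admissible radius in Lemma~\ref{lem:almost_minimality} depends on $\varepsilon$; since $\Lambda>0$ has already been established at this stage, fix $\varepsilon=\Lambda/2$ as the paper does, which only costs a factor of $2$.
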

\begin{proof} % 21-5
	First, using $h$ from Lemma~\ref{lem:harmonic_replacement} (adapted to $B_r(x_0)$), Lemma~\ref{lem:harnack} (because $\cL  h \leq M_1$) implies, for any $y \in B_{r/2}(x_0)$,
	\begin{equation*} %\label{eq:non_deg_1}
		u(y) 
		\leq 
		h(y)
		\leq 
		C \fint_{\partial B_r(x_0)} h \, d\mathcal{H}^{n-1}
		+ C r^2
		=
		C \fint_{\partial B_r(x_0)} u \, d\mathcal{H}^{n-1}
		+ C r^2
		\leq 
		C (\kappa_0
		+ r_0) r
		=:
		\kappa_1 r.
	\end{equation*}

	Then, consider the cut-off function 
	\[
	\phi \in C^\infty_c(B_r(x_0)), 
	\quad 
	0 \leq \phi \leq 1, 
	\quad 
	\phi \equiv 1 \text{ on } B_{r/2}(x_0), 
	\quad 
	\abs{\nabla \phi} \leq 3r^{-1}.
	\]
	Defining the competitor $v := (u-\kappa_1r\phi)_+$, the computation above implies that $v \equiv 0$ in $B_{r/2}(x_0)$. Therefore, noting that $v \leq u$, we get 
	\begin{multline*}
	\Vol_q(\Omega_u \cap B_{r/2}(x_0)) + \Vol_q(\Omega_v \cap B_r(x_0))
	\\ =
	\Vol_q(\Omega_u \cap B_{r/2}(x_0)) + \Vol_q(\Omega_v \cap B_r(x_0) \setminus B_{r/2}(x_0))
	\leq 
	\Vol_q (\Omega_u \cap B_r(x_0)).
	\end{multline*}
	This, the optimality condition \eqref{eq:inwards} (since $\Lambda > 0$, we can use $\varepsilon := \Lambda/2 > 0$), and the fact that $\abs{F'} \leq M_1$ (from \eqref{eq:def_M_1} and \eqref{hip:negative}), yield 
	\begin{align*}
		\frac{\Lambda}{2} & \, \underline{q} \abs{\Omega_u \cap B_{r/2}(x_0)}
		\leq 
		\frac{\Lambda}{2} \, \Vol_q(\Omega_u \cap B_{r/2}(x_0))
		\leq 
		\frac{\Lambda}{2} \, \Vol_q(\Omega_u \cap B_r(x_0)) - \frac{\Lambda}{2} \, \Vol_q(\Omega_v \cap B_r(x_0))
		\\ & \leq 
		\int_{B_r(x_0)} \nabla v \, A \, \nabla v^{\miT} \, dx 
		- \int_{B_r(x_0)} \nabla u \, A \, \nabla u^{\miT} \, dx
		- 2 \int_{B_r(x_0)} \big( F(x, v) - F(x, u) \big) \, dx
		\\ & \leq 
		\int_{B_r(x_0)} \nabla (u-\kappa_1 r \phi) \, A \, \nabla (u-\kappa_1 r \phi)^{\miT} \, dx 
		- \int_{B_r(x_0)} \nabla u \, A \, \nabla u^{\miT} \, dx
		+ 2 M_1 \abs{B_r} \norm{u - v}_\infty
		\\ & \leq
		2 \kappa_1 r \norm{A}_\infty \int_{B_r(x_0)} \abs{\nabla u} \abs{\nabla \phi} \, dx
		+ \kappa_1^2 r^2 \norm{A}_\infty \int_{B_r(x_0)} \abs{\nabla \phi}^2 \, dx 
		+ 2 M_1 \abs{B_r} \kappa_1 r
		\\ & \leq 
		(6 \kappa_1 \lambda^{-1} L + 9 \kappa_1^2 \lambda^{-1} + 2M_1\kappa_1 r_0) \abs{B_r},
	\end{align*}
	which implies that 
	\[
	\abs{\Omega_u \cap B_{r/2}(x_0)}
	\leq 
	\kappa_2 \abs{B_r}, 
	\quad \text{where } 
	\kappa_2 
	:=
	2 \kappa_1 \frac{6 \lambda^{-1} L + 9 \kappa_1 \lambda^{-1} + 2M_1 r_0}{\Lambda \underline{q}}.
	\]
	
	To finish the proof, we may just follow the remaining part of \cite[Lemma 4.5]{V} after choosing $\kappa_0$ and $r_0$ small enough. Let us include the details for completeness. Using the estimates already obtained, we compute 
	\begin{equation*}
		\int_{B_{r/2}(x_0)} u \, dx 
		\leq 
		\norm{u}_{L^\infty(B_{r/2}(x_0))} \abs{\Omega_u \cap B_{r/2}(x_0)}
		\leq 
		\kappa_1 \kappa_2 r \abs{B_r}.
	\end{equation*}
	Now, given any $y_0 \in B_{r/8}(x_0)$, it clearly holds $B_{r/4}(y_0) \subset B_{r/2}(x_0)$. This, and an easy integration in polar coordinates, allow us to find some $\rho = \rho(y_0) \in [r/8, r/4]$ for which 
	\begin{equation*}
		\int_{\partial B_\rho(y_0)} u \, d\mathcal{H}^{n-1}
		\leq 
		\fint_{r/8}^{r/4} \int_{\partial B_s(y_0)} u \, d\mathcal{H}^{n-1} \, ds 
		\leq 
		\frac{1}{r/8} \int_{B_{r/2}(x_0)} u \, dx
		\leq 
		8 \, \kappa_1 \kappa_2 \abs{B_r},
	\end{equation*}
	which trivially implies that (recalling that $\rho \geq r/8$ and that $n \geq 1$)
	\begin{equation*}
		\fint_{B_\rho(y_0)} u \, d\mathcal{H}^{n-1} 
		\leq 
		\kappa_3 \, \rho, 
		\qquad \text{where }
		\kappa_3 
		:=
		8^{n+1} \kappa_1 \kappa_2.
	\end{equation*} % algunas cuentas detrás de 7-10/9
	Recalling the definitions of $\kappa_1$ and $\kappa_2$, we can choose $\kappa_0$ and $r_0$ small enough (depending on $n, \lambda, \Lambda, L, M_1$ and $\underline{q}$) so that $\kappa_3 \leq \kappa_0$. And this means that the property in the assumption of the proposition translates to smaller scales.
	
	Let us iterate this. Indeed, fix $y_0 \in B_{r/8}(x_0)$. Starting at $y_0, \rho_0 := \rho \in [r/8, r/4]$ from last paragraph, use the method above (iteratively) to find a sequence of $\rho_j$ (for $j \geq 1$) satisfying 
	\begin{equation*}
		\frac{\rho_j}{8} \leq \rho_{j+1} \leq \frac{\rho_j}{4}, 
		\quad \text{ and } \quad 
		\fint_{\partial B_{\rho_j} (y_0)} u \, d\mathcal{H}^{n-1}
		\leq 
		\kappa_0 \rho_j, 
		\qquad 
		\text{for every } j \geq 0.
	\end{equation*}
	Since $u$ is Lipschitz (Proposition~\ref{prop:lipschitz}) and $\rho_j \to 0$, we finally obtain the desired result:
	\begin{equation*}
		0
		\leq 
		u(y_0) 
		=
		\lim_{j \to +\infty} \fint_{\partial B_{\rho_j}(y_0)} u \, d\mathcal{H}^{n-1}
		\leq 
		\lim_{j \to +\infty} \kappa_0 \rho_j
		= 
		0, 
		\qquad 
		y_0 \in B_{r/8}(x_0).
	\end{equation*}
	The proposition then follows.
\end{proof}

%%%%%%%%%%%%%%%%%%%%%%%%%%%%%%%%%%%%%%%%%%%%%%%%%%%%%%%%%%%%%%%%

%\newpage 

\section{Compact support of minimizers} \label{sec:compact_support}

The volume of the support of the minimizer $u$ is obviously bounded because $u\in \K_{\leq m}$, so
\begin{equation}\label{eq:mtilde}
|\{u>0\}|\leq m/\underline q=:\tm\,.
\end{equation}
But it is not clear a priori whether the support $\Omega_u = \{u > 0\}$ is bounded itself. 
In this section, we prove that this is actually the case: $\Omega_u$ is always bounded. Moreover, we provide uniform bounds for the number and diameter of its connected components. The fact that the support of the minimizers is not bounded a priori is one of the main differences of our work with respect to \cite{AC} and most of the subsequent works on this topic. 

We start  by partitioning $\Omega_u$ into a family of {\em enlarged connected components}\/, by which one means a union of connected components which are close to each other. More precisely, let us make the following definition, where ``c.c.'' is short for  ``connected component'':
 
\begin{definition} \label{def:ecc}
	Given a connected component $V$ of $\Omega_u$, we define the \emph{enlarged connected component}\/ to which it belongs as
	\begin{equation*}
		\ECC(V)
		:=
		\bigcup 
		\left\{\!\begin{aligned}
			&
			\exists \;r\geq0  \text{ and $V_j$ ($0 \leq j \leq r$) c.c.'s of $\Omega_u$,} \\ 
			 \text{$U$ c.c.\ of $\Omega_u$} : \;\; &
			\text{such that } V =: V_0, V_1, \ldots, V_{r-1}, V_r := U, \text{ and }\\  & \text{$\dist(\overline{V_j}, \overline{V_{j+1}}) \leq 1/4$ for every $0 \leq j \leq r-1$}
		\end{aligned}\right\}.
	\end{equation*}
\end{definition}

Let us now show that~$\Omega_u$ is bounded:

\begin{proposition}[Boundedness of supports] \label{prop:Omega_bounded} % 9-9
	Assume \eqref{hip:setting}. Then $\Omega_u$ is a bounded set.
	In fact, $\Omega_u$ has at most $N$ enlarged connected components, each of them having diameter bounded above by $D$, where $N, D > 0$ are uniform constants. 
\end{proposition}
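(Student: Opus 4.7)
The strategy proceeds in two parts: first I would establish a uniform lower density bound for $\Omega_u$, and then exploit it via a packing and covering argument that interacts nicely with Definition~\ref{def:ecc}.

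\emph{Step 1: Uniform lower density of $\Omega_u$.} The aim is to show that there exist constants $c>0$ and $\rho_0\in(0,1/8]$, depending only on the admissibility parameters and the constants $L$, $\kappa_0$ furnished by Propositions~\ref{prop:lipschitz} and~\ref{prop:non_degeneracy}, such that
$$
|\Omega_u \cap B_r(x_0)| \geq c\, r^n
\qquad \text{for every } x_0\in\overline{\Omega_u}\text{ and } r\in(0,\rho_0].
$$
For $x_0\in\partial\Omega_u$, every neighborhood of $x_0$ meets $\Omega_u$, so the contrapositive of Proposition~\ref{prop:non_degeneracy} forces $\fint_{\partial B_r(x_0)} u\,d\mathcal H^{n-1}>\kappa_0 r$ whenever $r$ is sufficiently small. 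Pick any $y\in\partial B_r(x_0)$ with $u(y)>\kappa_0 r$; Lipschitz continuity then yields $u>\kappa_0 r/2$ on $B_{\kappa_0 r/(2L)}(y)\subset\Omega_u$, and (after ensuring $\kappa_0/L\le 1$ by shrinking $\kappa_0$ if necessary) this ball is contained in $B_{2r}(x_0)$, producing volume $\gtrsim r^n$ in $B_{2r}(x_0)\cap\Omega_u$. The interior case $x_0\in\Omega_u$ is either trivial (if $B_{r/2}(x_0)\subset\Omega_u$) or reduces to the boundary case by considering the nearest point of $\partial\Omega_u$.

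\emph{Step 2: Reformulation of ECCs and bound on their number.} The enlarged connected components are precisely the connected components of the open thickened set $\Omega_u^{1/8}:=\{x:\dist(x,\Omega_u)<1/8\}$, since two connected components $V_1,V_2$ of $\Omega_u$ lie in the same component of $\Omega_u^{1/8}$ exactly when $\dist(\overline{V_1},\overline{V_2})<1/4$, in agreement with Definition~\ref{def:ecc}. Fix now $\rho_0$ from Step~1 with $\rho_0\le 1/8$. For a given ECC $E$, pick any $x\in E\cap\Omega_u$: any connected component of $\Omega_u$ meeting $B_{\rho_0}(x)$ is at distance $<1/4$ from the one containing $x$ and therefore sits inside $E$. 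Combined with Step~1 this gives
$$
|E|\;\ge\; |\Omega_u\cap B_{\rho_0}(x)|\;\ge\; c\,\rho_0^n,
$$
so the total number of ECCs is at most $\widetilde m/(c\,\rho_0^n)=:N$.

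\emph{Step 3: Uniform diameter bound on each ECC.} Fix an ECC $E$ and choose a maximal $\rho_0$-separated set $\{x_i\}_{i=1}^{k}\subset E\cap\Omega_u$, so that the balls $B_{\rho_0/2}(x_i)$ are pairwise disjoint. The same containment argument as in Step~2 shows $\bigcup_i(\Omega_u\cap B_{\rho_0/2}(x_i))\subset E\cap\Omega_u$, whence Step~1 yields
$$
k\cdot c\,\rho_0^n\;\le\;\sum_{i=1}^k|\Omega_u\cap B_{\rho_0/2}(x_i)|\;\le\;|E\cap\Omega_u|\;\le\;\widetilde m,
$$
giving $k\le \widetilde m/(c\,\rho_0^n)$. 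By maximality of the separated set, $E\cap\Omega_u\subset\bigcup_i B_{\rho_0}(x_i)$, and thickening by $1/8$ gives $E\subset\bigcup_i B_{\rho_0+1/8}(x_i)$. Since $E$ is connected and every $x_i$ lies in $E$, the nerve graph of this cover (with $x_i\sim x_j$ iff $|x_i-x_j|<2(\rho_0+1/8)$) must be connected: otherwise $E$ would split as a disjoint union of two non-empty relatively open pieces. Thus any two centers are joined by a chain of at most $k$ edges each of length less than $2(\rho_0+1/8)$, so $\operatorname{diam}(E)\le 2k(\rho_0+1/8)\le D$ for a uniform constant $D$. Combining Steps~2 and~3 completes the proof.

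\emph{Main obstacle.} The conceptual crux lies in Step~1: Proposition~\ref{prop:non_degeneracy} controls only an $L^1$-average of $u$ on a sphere, and this must be upgraded into a genuine volume bound. The Lipschitz estimate of Proposition~\ref{prop:lipschitz} is what allows us to convert the pointwise value $u(y)>\kappa_0 r$ into a ball of radius comparable to $r$ contained in $\Omega_u$. Once the density is in place, the packing and nerve arguments are standard, the only care needed being to choose $\rho_0$ simultaneously compatible with the non-degeneracy radius, with $\rho_0\le 1/8$, and with $\kappa_0/L\le 1$.
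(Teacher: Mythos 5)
Your proof is correct and follows essentially the same strategy as the paper's: the corkscrew property obtained by combining non-degeneracy (Proposition~\ref{prop:non_degeneracy}) with the Lipschitz bound (Proposition~\ref{prop:lipschitz}) produces a uniform interior ball near every point of $\overline{\Omega_u}$, and the constraint $|\Omega_u|\leq \tm$ then limits both the number and the spread of the enlarged components — the paper packages the counting with a unit-cube lattice instead of your maximal net and nerve graph, but the mechanism is identical. One cosmetic caveat: since Definition~\ref{def:ecc} uses the non-strict inequality $\dist(\overline{V_j},\overline{V_{j+1}})\leq 1/4$, identifying the ECCs with the connected components of the open set $\{x:\dist(x,\Omega_u)<1/8\}$ may split a single ECC into several pieces; thickening by any $t>1/8$, or observing that the bounds transfer to the coarser decomposition with worse (still uniform) constants, resolves this.
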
 
\begin{proof}
	This follows from the non-degeneracy estimate from Proposition~\ref{prop:non_degeneracy} and $\abs{\Omega_u} \leq \tm$ (recall \eqref{eq:mtilde}). To spell out the details, let us fix  a connected component $V$ of $\Omega_u$. 
	Denote $\mathcal{J} := \big\{ \mathbf{j} \in \Z^n : \ECC(V) \cap (\mathbf{j} + [0, 1)^n) \neq \emptyset \big\}$. Then, for a given $\mathbf{j} \in \mathcal{J}$, find $x_{\mathbf{j}} \in \ECC(V) \cap (\mathbf{j} + [0, 1)^n)$. 
	
	We claim (and will prove later) that there exists some uniform $\rho > 0$, and some point $z_{\mathbf{j}}$ such that $B_\rho(z_{\mathbf{j}}) \subset \Omega_u \cap B_{1/8}(x_{\mathbf{j}})$. Assuming this claim for the moment, we compute
	\begin{equation*}
		\tm
		\geq 
		|\Omega_u|
		\geq 
		\bigg\lvert \bigcup_{\mathbf{j} \in \mathcal{J}} B_\rho(z_{\mathbf{j}}) \bigg\rvert
		\geq
		C(n)
		\sum_{\mathbf{j} \in \mathcal{J}} \abs{B_\rho(z_{\mathbf{j}})}
		=
		C(n) \rho^n \abs{\mathcal{J}}, 
	\end{equation*}
	where the third inequality follows simply by bounded overlap of the $B_\rho(z_{\mathbf{j}})$, because each of them lives in $B_{1/8}(x_{\mathbf{j}})$, which are themselves balls with bounded overlap because $x_{\mathbf{j}} \in \mathbf{j} + [0, 1)^n$. This computation  shows that $\abs{\mathcal{J}} < +\infty$, and actually provides a uniform bound. Since $\bigcup_{\mathbf{j} \in \mathcal{J}} (\mathbf{j} + [0, 1]^n)$ is connected by the definition of $\ECC(V)$, and since it contains $\ECC(V)$, we conclude that $\diam(\ECC(V))$ is uniformly bounded. 
	
	Moreover, given any $x_{\mathbf{j}} \in \ECC(V) \cap (\mathbf{j} + [0, 1)^n)$ for $\mathbf{j} \in \mathcal{J}$, we have found $z_{\mathbf{j}}$ such that $B_\rho(z_{\mathbf{j}}) \subset \Omega_u \cap B_{1/8}(x_{\mathbf{j}})$. Clearly, by definition of $\ECC(V)$, we have $B_\rho(z_{\mathbf{j}}) \subset \ECC(V)$. Therefore, $|\ECC(V)| \gtrsim \rho^n$. Since the enlarged connected components are disjoint, this implies that the amount of them is $\lesssim \widetilde{m} / \rho^n$.
	
	Therefore, to finish the proof of the proposition, it only remains to prove the claim. First, note that there exists a uniform $0 < \rho < 1/40$ such that for any $y \in \pom_u$, there exists $z \in \Omega_u$ with $B_\rho(z) \subset \Omega_u \cap B_{1/20}(y)$ (this is the so-called ``Corkscrew condition'', see Figure~\ref{fig:corkscrew}). This was already spotted in \cite[Lemma 3.7]{AC}: for $r$ small, by non-degeneracy (see Proposition~\ref{prop:non_degeneracy}) $\norm{u}_{L^\infty(B_r(y))} > \kappa_0 r$, so there exists $z \in B_r(y)$ with $u(z) \geq \kappa_0 r$, and then by Lipschitz continuity (see Proposition~\ref{prop:lipschitz}) we have $u \geq \kappa_0 r / 2$ in $B_{\kappa_0r/(2L)}(z)$. Thus, writing $\rho := \kappa_0 r / (2L)$, we have $B_\rho(z) \subset \Omega_u \cap B_{1/20}(y)$ (because we can always take $r, \rho < 1/40$).
	
	\begin{figure}[b!]		
		\centering 
		\resizebox{0.5\textwidth}{!}{
			\begin{tikzpicture}[scale=1]
				\filldraw[thin, fill=lightblue] (12.889, 6.2) .. controls (12.91, 4.277) and (12.597, 3.128) .. (12.196, 2.309) .. controls (11.796, 1.49) and (11.308, 1.002) .. (10.914, 0.964) .. controls (10.521, 0.926) and (10.221, 1.337) .. (10.308, 1.88) .. controls (10.395, 2.424) and (10.869, 3.1) .. (10.932, 3.782) .. controls (10.994, 4.465) and (10.646, 5.155) .. (10.28, 5.486) .. controls (9.914, 5.817) and (9.531, 5.789) .. (9.496, 5.702) .. controls (9.462, 5.615) and (9.775, 5.469) .. (10.092, 5.172) .. controls (10.409, 4.876) and (10.73, 4.43) .. (10.59, 4.249) .. controls (10.451, 4.068) and (9.852, 4.152) .. (9.298, 3.842) .. controls (8.744, 3.532) and (8.235, 2.828) .. (7.929, 2.762) .. controls (7.622, 2.696) and (7.518, 3.267) .. (7.573, 3.64) .. controls (7.629, 4.012) and (7.845, 4.187) .. (7.66, 4.486) .. controls (7.476, 4.786) and (6.891, 5.211) .. (6.455, 5.273) .. controls (6.02, 5.336) and (5.734, 5.037) .. (5.671, 4.817) .. controls (5.609, 4.598) and (5.769, 4.458) .. (6.013, 4.298) .. controls (6.257, 4.138) and (6.584, 3.957) .. (6.612, 3.904) .. controls (6.64, 3.852) and (6.368, 3.929) .. (6.26, 3.859) .. controls (6.152, 3.789) and (6.208, 3.573) .. (6.365, 3.431) .. controls (6.521, 3.288) and (6.779, 3.218) .. (6.905, 3.026) .. controls (7.03, 2.835) and (7.023, 2.521) .. (6.946, 2.365) .. controls (6.87, 2.208) and (6.723, 2.208) .. (6.563, 2.309) .. controls (6.403, 2.41) and (6.229, 2.612) .. (6.121, 2.786) .. controls (6.013, 2.96) and (5.971, 3.107) .. (5.797, 3.222) .. controls (5.623, 3.337) and (5.316, 3.42) .. (5.058, 3.445) .. controls (4.8, 3.469) and (4.591, 3.434) .. (4.334, 3.438) .. controls (4.076, 3.441) and (3.769, 3.483) .. (3.564, 3.598) .. controls (3.358, 3.713) and (3.254, 3.901) .. (3.174, 4.166) .. controls (3.093, 4.43) and (3.038, 4.772) .. (2.881, 4.988) .. controls (2.724, 5.204) and (2.466, 5.294) .. (2.212, 5.291) .. controls (1.958, 5.287) and (1.707, 5.19) .. (1.471, 5.108) .. controls (1.235, 5.025) and (1.013, 4.959) .. (0.755, 5.033) .. controls (0.497, 5.107) and (0.202, 5.322) .. (0.014, 5.782) .. controls (0, 9.602) and (0.008, 9.602) .. (0.012, 9.602) .. controls (0.016, 9.602) and (0.016, 9.602) .. (2.161, 9.605) .. controls (4.307, 9.608) and (8.597, 9.613) .. (10.742, 9.616) .. controls (12.887, 9.618) and (12.887, 9.618) .. (12.895, 9.61) .. controls (12.903, 9.602) and (12.92, 9.586) .. (12.89, 6.191);
				\draw[red, fat] 
				(6.051, 2.843) circle[radius=2.258];
				\node[circle, fill, inner sep=3pt] at (6.051, 2.843) {};
				\draw[fat] 
				(4.585, 4.56) circle[radius=0.7];
				\node[circle, fill, inner sep=3pt] at (4.561, 4.539) {};
				\node[anchor=center, font=\huge, text=black] at (1.358, 8.834) {$\{u>0\}$};
				\node[anchor=center, font=\LARGE, text=black] at (3.706, 5.391) {$B_\rho(z)$};
				\node[anchor=center, font=\LARGE] at (4.376, 4.785) {$z$};
				\node[anchor=center, font=\LARGE, text=black] at (5.742, 2.52) {$y$};
				\node[anchor=center, font=\LARGE, text=black] at (3.114, 2.024) {$B_r(y)$};
				\node[anchor=center, font=\LARGE, text=black] at (7.44, 7.606) {$B_{1/20}(y)$};
				\draw[myblue, fat] (1.175, 0) .. controls (1.088, 0.154) and (1.024, 0.279) .. (0.978, 0.372) .. controls (0.932, 0.466) and (0.904, 0.528) .. (0.872, 0.602) .. controls (0.84, 0.677) and (0.805, 0.762) .. (0.773, 0.849) .. controls (0.74, 0.935) and (0.71, 1.022) .. (0.669, 1.162) .. controls (0.628, 1.303) and (0.577, 1.497) .. (0.542, 1.643) .. controls (0.508, 1.789) and (0.49, 1.887) .. (0.474, 1.989) .. controls (0.459, 2.09) and (0.446, 2.195) .. (0.435, 2.341) .. controls (0.424, 2.487) and (0.415, 2.674) .. (0.412, 2.816) .. controls (0.409, 2.959) and (0.412, 3.057) .. (0.418, 3.159) .. controls (0.424, 3.261) and (0.433, 3.367) .. (0.451, 3.507) .. controls (0.469, 3.646) and (0.497, 3.817) .. (0.522, 3.95) .. controls (0.546, 4.083) and (0.568, 4.177) .. (0.603, 4.3) .. controls (0.637, 4.423) and (0.684, 4.575) .. (0.733, 4.716) .. controls (0.782, 4.858) and (0.834, 4.987) .. (0.893, 5.122) .. controls (0.953, 5.256) and (1.021, 5.395) .. (1.092, 5.526) .. controls (1.162, 5.657) and (1.236, 5.78) .. (1.323, 5.913) .. controls (1.411, 6.047) and (1.512, 6.189) .. (1.616, 6.322) .. controls (1.72, 6.455) and (1.826, 6.578) .. (1.932, 6.693) .. controls (2.039, 6.807) and (2.147, 6.913) .. (2.255, 7.013) .. controls (2.364, 7.112) and (2.474, 7.204) .. (2.59, 7.295) .. controls (2.707, 7.385) and (2.831, 7.474) .. (2.965, 7.56) .. controls (3.098, 7.647) and (3.242, 7.733) .. (3.416, 7.823) .. controls (3.59, 7.912) and (3.794, 8.007) .. (4.02, 8.092) .. controls (4.246, 8.178) and (4.493, 8.256) .. (4.668, 8.305) .. controls (4.842, 8.354) and (4.944, 8.376) .. (5.064, 8.396) .. controls (5.183, 8.417) and (5.322, 8.436) .. (5.501, 8.451) .. controls (5.681, 8.466) and (5.902, 8.477) .. (6.082, 8.478) .. controls (6.261, 8.48) and (6.399, 8.473) .. (6.568, 8.456) .. controls (6.737, 8.438) and (6.937, 8.41) .. (7.122, 8.375) .. controls (7.308, 8.34) and (7.481, 8.297) .. (7.643, 8.25) .. controls (7.805, 8.203) and (7.958, 8.15) .. (8.12, 8.086) .. controls (8.282, 8.021) and (8.454, 7.943) .. (8.612, 7.864) .. controls (8.769, 7.784) and (8.913, 7.702) .. (9.034, 7.628) .. controls (9.156, 7.553) and (9.256, 7.486) .. (9.361, 7.409) .. controls (9.466, 7.332) and (9.577, 7.246) .. (9.654, 7.184) .. controls (9.73, 7.122) and (9.772, 7.086) .. (9.843, 7.02) .. controls (9.913, 6.955) and (10.011, 6.86) .. (10.11, 6.757) .. controls (10.209, 6.654) and (10.308, 6.544) .. (10.385, 6.452) .. controls (10.463, 6.361) and (10.52, 6.289) .. (10.577, 6.211) .. controls (10.635, 6.133) and (10.693, 6.05) .. (10.762, 5.945) .. controls (10.83, 5.839) and (10.908, 5.711) .. (10.976, 5.591) .. controls (11.043, 5.47) and (11.101, 5.358) .. (11.144, 5.27) .. controls (11.187, 5.183) and (11.215, 5.12) .. (11.255, 5.02) .. controls (11.296, 4.919) and (11.348, 4.78) .. (11.398, 4.629) .. controls (11.448, 4.478) and (11.495, 4.316) .. (11.531, 4.171) .. controls (11.567, 4.026) and (11.593, 3.898) .. (11.611, 3.8) .. controls (11.629, 3.701) and (11.639, 3.633) .. (11.649, 3.551) .. controls (11.659, 3.469) and (11.669, 3.374) .. (11.676, 3.267) .. controls (11.684, 3.16) and (11.689, 3.042) .. (11.691, 2.917) .. controls (11.692, 2.792) and (11.689, 2.66) .. (11.683, 2.538) .. controls (11.677, 2.416) and (11.667, 2.304) .. (11.657, 2.208) .. controls (11.647, 2.111) and (11.635, 2.029) .. (11.621, 1.942) .. controls (11.607, 1.854) and (11.589, 1.76) .. (11.567, 1.661) .. controls (11.546, 1.561) and (11.52, 1.456) .. (11.497, 1.368) .. controls (11.474, 1.281) and (11.454, 1.211) .. (11.43, 1.136) .. controls (11.406, 1.061) and (11.378, 0.981) .. (11.348, 0.9) .. controls (11.318, 0.818) and (11.285, 0.734) .. (11.253, 0.657) .. controls (11.221, 0.58) and (11.189, 0.509) .. (11.15, 0.425) .. controls (11.11, 0.342) and (11.062, 0.245) .. (10.94, 0.022);
			\end{tikzpicture}
		}
		\caption{Given any free boundary point $y \in \partial \{u > 0\}$ there is always an interior ball $B_\rho(z) \subset \Omega_u \cap B_1(y)$, with $\rho > 0$ uniform.}
		\label{fig:corkscrew}
	\end{figure}
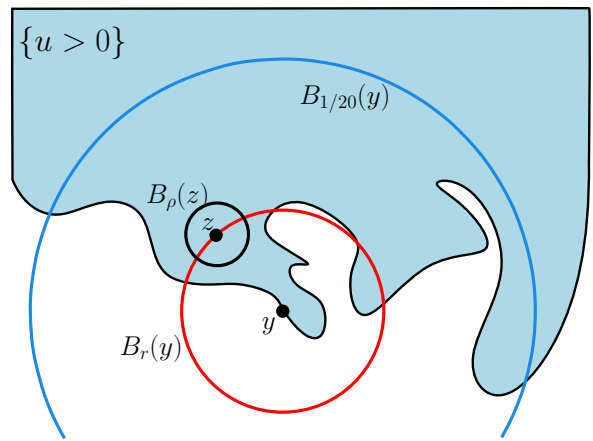
	
	Then, fix $\mathbf{j} \in \mathcal{J}$, and $\rho > 0$ as in last paragraph. If $B_\rho(x_{\mathbf{j}}) \subset \Omega_u$, we can simply take $z_{\mathbf{j}} := x_{\mathbf{j}}$ and finish. Otherwise, there exists $y_{\mathbf{j}} \in B_\rho(x_{\mathbf{j}}) \cap \pom_u$. Applying the method in the previous paragraph, we find $z_{\mathbf{j}}$ such that $B_\rho(z_\mathbf{j}) \subset \Omega_u \cap B_{1/20}(y_{\mathbf{j}}) \subset \Omega_u \cap B_{1/8}(x_{\mathbf{j}})$, where the last inclusion holds because $|y_{\mathbf{j}}-x_{\mathbf{j}}| < \rho < 1/40$. This finishes the proof of the claim, and therefore of the proposition.
\end{proof}

Note that this proposition does {\em not}\/ provide a uniform bound for the diameter of $\Omega_u$. In fact, without the periodicity assumption \eqref{hip:periodic}, which we will not use until Section~\ref{sec:proof}, uniform bounds do not exist in general, as discussed in Appendix~\ref{S.appendix}.

%%%%%%%%%%%%%%%%%%%%%%%%%%%%%%%%%%%%%%%%%%%%%%%%%%%%%%%%%%%%%%%%

%\newpage

\section{The free boundary} \label{sec:boundary}

In Section~\ref{sec:lip}, we showed the Lipschitz continuity and the non-degeneracy estimates for our minimizer $u$, and also that it behaves {almost} like a minimizer of $\F_\Lambda$ for some $\Lambda > 0$ (more precisely, it is a critical point and it satisfies some ``asymptotic minimality'' as in \eqref{eq:almost_minimality}). Therefore, we are in the position to mostly follow the classical blueprint of~\cite{AC} or~\cite[Sections 6--10]{V} to show the regularity of the free boundary $\pom_u$, along with estimating the size of its singular set.  The arguments will be of local nature, so Hypothesis~\eqref{hip:periodic} will not be needed.

%%%%%%%%%%%%%%%%%%%%%%%%%%%%%%%%%%%%%%%%%%%%%%%%%%%%%%%%%%%%%%%%

\subsection{Analysis of blow-up limits}

In this subsection, we will construct appropriate blow-up limits at free boundary points, and will prove that they are 1-homogeneous. We will need to introduce some changes in the classical strategy  to deal with the terms introduced by $F$; for instance, we will need to introduce a modified version of Weiss formula for it to become (almost) monotone in our setting. We will show that $F$ basically disappears when blowing up around free boundary points, which will morally take us back to the classical setting.

In this subsection, we will fix a free boundary point. Translating the origin, we can take this point as $0 \in \pom_u$ without any loss of generality, which makes the notation less cumbersome. With this in mind, we define, for $r > 0$,
\begin{equation*}
	u_r(x) := \frac{u(rx)}{r}, 
	\qquad 
	F_r(x, u) := F(rx, ru), 
	\qquad 
	F_0(x, u) := 0,
\end{equation*}
where the last definition is coherent because $F(\cdot, 0) \equiv 0$, see \eqref{hip:zero}. Similarly,
\begin{equation*}
	A_r(x) := A(rx), 
	\qquad 
	A_0(x) := A(0), 
	\qquad 
	q_r(x) := q(rx), 
	\qquad 
	q_0(x) := q(0).
\end{equation*} 

With all these in mind, we set, for $r \geq 0$ and $D \subset \Rn$,
\begin{equation*}
	\F_{\Lambda, r}(v, D)
	:=
	\int_D \nabla v \, A_r \, \nabla v^{\miT} \, dx - 2 \int_D F_r(x, v) \, dx + \Lambda \Vol_{q_r}(\Omega_v \cap D),
\end{equation*}
and we will omit $D$ when $D = \Rn$. Here and in what follows, $\Omega_v:=\{v>0\}$.
Thus, 
\begin{equation*}
	\F_{\Lambda, 0}(v, D)
	:=
	\int_D \nabla v(x) \, A(0) \, \nabla v(x)^{\miT} \, dx + \Lambda q(0) \abs{\Omega_v \cap D}.
\end{equation*}
Since  $u(0) = 0$ because $0 \in \pom_u$, we note that the rescalings of the minimizer~$u$ yield, for $r > 0$,
\begin{equation} \label{eq:rescaling_bounds_u}
	\norm{\nabla u_r}_\infty = \norm{\nabla u}_\infty = L, 
	\quad \text{whence} \quad 
	u_r(x) = u_r(x) - u_r(0) \leq L\abs{x}, 
	\quad x \in \Rn.
\end{equation}

We start with the following auxiliary results:

\begin{lemma} \label{lem:convergence_parameters}
	Assume \eqref{hip:setting} and $0 \in \pom_u$. Fix $R > 0$. Then, as $r \to 0$, $F_r(\cdot, u_{r}(\cdot)) = O(r)$, $\abs{A_r - A_0} = O(r)$, and $\abs{q_r - q_0} = O(r)$. Concretely, $F_r(\cdot, u_{r}(\cdot)) \to 0$, $A_r \to A_0$ and $q_r \to q_0$ in $L^\infty(B_R)$, as $r \to 0$.
\end{lemma}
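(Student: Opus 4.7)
The plan is to exploit the regularity assumptions on $A$, $q$ and $F$ together with the Lipschitz continuity of $u$ (Proposition~\ref{prop:lipschitz}) and the fact that $u(0)=0$ since $0\in\pom_u$. All three estimates will follow from a direct Taylor/Lipschitz expansion once one notices that each of the quantities $A_r-A_0$, $q_r-q_0$ and $F_r(\cdot,u_r(\cdot))$ measures the deviation from a value at a point where the relevant function is known (either the value at the origin, or zero for $F$).

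For the matrix-valued function $A$, the assumption \eqref{hip:A_regular} gives in particular that $A$ is Lipschitz, so for any $x\in B_R$,
\[
|A_r(x)-A_0| = |A(rx)-A(0)| \le \|\nabla A\|_{L^\infty(B_{Rr})}\, r|x| \le C\, R\, r,
\]
which is $O(r)$ uniformly on $B_R$. The same argument applied using \eqref{hip:q_regular} gives $|q_r(x)-q_0|\le CRr$ for $x\in B_R$, establishing the second and third claims.

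For the estimate on $F_r(\cdot,u_r(\cdot))$, write $F_r(x,u_r(x)) = F(rx, r u_r(x)) = F(rx, u(rx))$ by the definitions of $u_r$ and $F_r$. Now use \eqref{hip:zero}, i.e.\ $F(rx,0)=0$, together with the bound on $F'$ from \eqref{hip:F'_F''} to write
\[
|F(rx,u(rx))| = \bigl|F(rx,u(rx)) - F(rx,0)\bigr| \le \bigl(N' + M'\|u\|_{L^\infty(\Rn)}\bigr)\, u(rx).
\]
Since $0\in\pom_u$ and hence $u(0)=0$, the Lipschitz bound \eqref{eq:rescaling_bounds_u} yields $u(rx) = u(rx)-u(0) \le L r|x| \le L R r$ for every $x\in B_R$. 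Combining these two estimates gives $|F_r(x,u_r(x))|\le C(R)\,r$ for all $x\in B_R$, which is $O(r)$ as $r\to 0$. No step is really an obstacle here; the only observation worth emphasizing is that the lemma relies crucially on the vanishing of $u$ at the blow-up point together with \eqref{hip:zero}, which together allow $F$ to behave like a Lipschitz perturbation of zero at small scales.
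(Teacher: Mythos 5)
Your proof is correct and follows essentially the same route as the paper's: Lipschitz bounds on $A$ and $q$ from \eqref{hip:A_regular} and \eqref{hip:q_regular}, and for $F$ the combination of $F(\cdot,0)=0$, the linear bound on $F'$ (the paper packages $N'+M'\|u\|_{L^\infty}$ as the constant $M_1$ from \eqref{eq:def_M_1}), and the Lipschitz continuity of $u$ together with $u(0)=0$. No gaps.
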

\begin{proof}
	Using \eqref{hip:zero}, \eqref{hip:negative}, \eqref{eq:def_M_1} and Proposition~\ref{prop:lipschitz}, we get, for $x \in B_R$, 
	\begin{equation*}
		\abs{F_r(x, u_{r_j}(x))}
		=
		\abs{F(rx, u(rx)) - F(rx, 0)}
		\leq 
		M_1 \abs{u(rx) - u(0)}
		\leq 
		M_1 L \, r R
		\underset{r \to 0}{\longrightarrow}
		0.
	\end{equation*}
	And then, using \eqref{hip:A_regular}, we compute $\abs{A_r(x) - A_0(x)} = \abs{A(rx) - A(0)} \leq \norm{\nabla A}_\infty r R \to 0$ as $r \to 0$. And similarly for $q$, this time using \eqref{hip:q_regular}.
\end{proof}

\begin{lemma}[Convergence to blow-ups] \label{lem:convergence_blowups}
	Assume \eqref{hip:setting} and $0 \in \pom_u$. Fix $R > 0$. Let $r_j \to 0$. Then, up to taking a subsequence, there exists $u_0 \in H^1(B_R)$ such that, for every $0 < r < R$,
	\begin{enumerate}
		\item $u_{r_j} \to u_0$ in $H^1(B_r)$ and uniformly, as $j \to +\infty$,
		
		\item \label{item:conv_indicator} $\mathbf{1}_{\Omega_{u_{r_j}}} \to \mathbf{1}_{\Omega_{u_0}}$ in $L^1(B_r)$ and pointwise a.e., as $j \to +\infty$.
	\end{enumerate}
\end{lemma}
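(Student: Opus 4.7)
The plan is to obtain items (i) and (ii) from three standard ingredients combined with the tools already developed: the uniform Lipschitz bound \eqref{eq:rescaling_bounds_u}, the asymptotic minimality \eqref{eq:almost_minimality}, and the non-degeneracy estimate of Proposition~\ref{prop:non_degeneracy}. Compactness gives equicontinuity and uniform/weak $H^1$ convergence for free; the genuine work is to upgrade weak $H^1$ convergence to \emph{strong} $H^1$ convergence and to pass to the limit in the positivity sets. Lemma~\ref{lem:convergence_parameters} will be used systematically to discard the rescaled terms $F_r$ and to replace $A_r, q_r$ by their constant limits.

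\textbf{Equicontinuity and weak $H^1$ compactness.} By \eqref{eq:rescaling_bounds_u}, each $u_{r_j}$ is $L$-Lipschitz on $\R^n$ with $u_{r_j}(0) = 0$, hence bounded by $LR$ on $\overline{B_R}$. Arzelà--Ascoli together with a diagonal argument over an exhaustion $r \uparrow R$ yields a subsequence (not relabelled) converging uniformly on every $\overline{B_r}$ with $r < R$ to some $L$-Lipschitz limit $u_0 \in H^1(B_R)$ with $u_0(0) = 0$ and $u_0 \geq 0$. The uniform Lipschitz bound also gives $\|u_{r_j}\|_{H^1(B_r)} \leq C(r)$, so $u_{r_j} \rightharpoonup u_0$ in $H^1(B_r)$ up to a further subsequence (the weak limit is identified with $u_0$ thanks to the uniform convergence).

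\textbf{Strong $H^1$ convergence (main obstacle).} I would rescale the asymptotic minimality \eqref{eq:almost_minimality}: changing variables $y = rx$ in \eqref{eq:almost_minimality}, the minimizer $u$ gives that $u_{r_j}$ is an almost-minimizer on $B_r$ (for every $r < R$ and all $j$ large) of the rescaled functional $\F_{\Lambda, r_j}$ against competitors agreeing with $u_{r_j}$ on $\partial B_r$, with a defect of order $\varepsilon |B_{r}|$ that can be sent to zero after passing to the limit in $j$. Using $u_0$ itself (glued to $u_{r_j}$ near $\partial B_r$ via a thin cutoff in an annulus $B_r \setminus B_{r-\delta}$, in the spirit of the capacity construction in Lemma~\ref{lem:almost_minimality}) as a competitor and invoking Lemma~\ref{lem:convergence_parameters} to replace $A_{r_j}, q_{r_j}, F_{r_j}$ by their limits yields
\[
\limsup_{j\to\infty} \int_{B_{r-\delta}} \nabla u_{r_j}\,A_{r_j}\,\nabla u_{r_j}^{\miT}\,dx
\;\leq\;
\int_{B_r} \nabla u_0\,A_0\,\nabla u_0^{\miT}\,dx + o_\delta(1).
\]
Sending $\delta \to 0$ and combining with the weak lower semicontinuity argument \eqref{eq:less_liminf_gradient_A_0} gives equality of the $A_0$-weighted Dirichlet energies, which together with the weak convergence of $\nabla u_{r_j}$ and the uniform ellipticity \eqref{hip:A_elliptic_bounded} yields strong convergence $\nabla u_{r_j} \to \nabla u_0$ in $L^2(B_r)$.

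\textbf{Convergence of the positivity sets.} For pointwise a.e.\ convergence, I would split $B_R$ into three parts. On $\{u_0 > 0\}$, uniform convergence gives $u_{r_j}(x) > u_0(x)/2 > 0$ for large $j$, so $\mathbf{1}_{\Omega_{u_{r_j}}}(x) = 1 = \mathbf{1}_{\Omega_{u_0}}(x)$. On the open set $\{u_0 \equiv 0 \text{ on some } B_\rho(x)\}$, uniform convergence makes $\fint_{\partial B_\rho(x)} u_{r_j}\,d\mathcal{H}^{n-1}$ arbitrarily small, so Proposition~\ref{prop:non_degeneracy} applied to $u_{r_j}$ forces $u_{r_j} \equiv 0$ on $B_{\rho/8}(x)$ for large $j$, giving $\mathbf{1}_{\Omega_{u_{r_j}}}(x) = 0 = \mathbf{1}_{\Omega_{u_0}}(x)$. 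What remains is the set $\partial\{u_0 > 0\} \cap B_R$, which has Lebesgue measure zero: indeed, the non-degeneracy and Lipschitz continuity inherited by $u_0$ from the $u_{r_j}$ (via the strong $H^1$ and uniform convergence) yield, as in Lemma~\ref{lem:exterior_positive_measure} and the corkscrew argument of Proposition~\ref{prop:Omega_bounded}, that $\partial\{u_0>0\}$ is a lower-dimensional set. The a.e.\ convergence combined with the uniform bound $\mathbf{1}_{\Omega_{u_{r_j}}} \leq 1$ and dominated convergence gives convergence in $L^1(B_r)$. The hard step here is transferring non-degeneracy from $u_{r_j}$ to $u_0$, which is where the strong $H^1$ convergence obtained in the previous step is essential.
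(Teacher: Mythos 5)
Your proposal is correct. For part (i) it follows essentially the paper's own route (rescale the almost-minimality \eqref{eq:almost_minimality} so that $u_{r_j}$ almost minimizes $\F_{\Lambda,r_j}$, test against $u_0$ glued to $u_{r_j}$ across a thin annulus, kill the $F$-terms via Lemma~\ref{lem:convergence_parameters}, and upgrade weak to strong $H^1$ convergence by energy comparison); one bookkeeping point is that the competitor inequality controls the \emph{full} functional, including $\Lambda\,\Vol_{q_{r_j}}(\Omega_{\cdot}\cap B_r)$, so the volume terms cannot simply be omitted from your displayed estimate — you must either carry them along (as the paper does) or absorb the one attached to $u_{r_j}$ using the Fatou-type bound $\Vol(\Omega_{u_0}\cap B_{r-\delta})\leq\liminf_j\Vol(\Omega_{u_{r_j}}\cap B_{r-\delta})$; both work, but the step should be explicit. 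Where you genuinely diverge is part (ii). The paper keeps the volume term in the competitor inequality, notes that the Dirichlet energy and the volume are each weakly lower semicontinuous while their sum converges, hence each converges separately, and deduces the $L^1$ convergence of the indicators from $\mathbf{1}_{\Omega_{u_0}}\leq\liminf_j\mathbf{1}_{\Omega_{u_{r_j}}}$ together with convergence of the integrals. You instead argue pointwise: uniform convergence handles $\{u_0>0\}$, the (scale-invariant) non-degeneracy of Proposition~\ref{prop:non_degeneracy} applied to $u_{r_j}$ handles the interior of $\{u_0=0\}$, and the leftover set $\partial\{u_0>0\}$ is Lebesgue-null by the corkscrew/Lebesgue-density argument once non-degeneracy is transferred to $u_0$. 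This is valid — though the relevant ingredient is the corkscrew construction from Proposition~\ref{prop:Omega_bounded}, not Lemma~\ref{lem:exterior_positive_measure}, and the transfer of non-degeneracy to $u_0$ needs only uniform convergence, so strong $H^1$ convergence is not actually "essential" there as you claim. Your route makes (ii) logically independent of the variational argument and gives a.e.\ convergence directly; the paper's route yields, as a by-product it reuses later, the convergence of $\Vol_{q_0}(\Omega_{u_{r_j}}\cap B_r)$ to $\Vol_{q_0}(\Omega_{u_0}\cap B_r)$.
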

\begin{proof} % 22-5, 6-10, 18-10
	Fix $0 < r < R$ as in the statement.
	
	\subsubsection*{Step 1: Weak convergence.} First note that \eqref{eq:rescaling_bounds_u} easily yields that te sequence $\{u_{r_j}\}_j$ is uniformly bounded in $H^1(B_R)$. Therefore, by standard compactness results, there exists $u_0 \in H^1(B_R)$ such that (up to taking a subsequence) as $j \to \infty$, we have $u_{r_j} \longrightarrow u_0$ both in $L^2(B_R)$ and uniformly in $B_R$ (by Arzel\`a--Ascoli, because of the uniform Lipschitz constant in \eqref{eq:rescaling_bounds_u}), and $\nabla u_{r_j} \rightharpoonup \nabla u_0$ in $L^2(B_R)$. Clearly, that implies 
	\begin{equation} \label{eq:less_liminf_gradient}
		\norm{\nabla u_0}_{L^2(B_r)} 
		\leq 
		\liminf_{j \to +\infty} \norm{\nabla u_{r_j}}_{L^2(B_r)}.
	\end{equation}
	Also, similarly as in \eqref{eq:less_liminf_gradient_A_0},
	\begin{equation} \label{eq:less_liminf_gradient_A}
		\int_{B_r} \nabla u_0 \, A_0 \, \nabla u_0^{\miT} \, dx
		\leq 
		\liminf_{j \to +\infty} \int_{B_r} \nabla u_{r_j} \, A_0 \, \nabla u_{r_j}^{\miT} \, dx.
	\end{equation}

	Moreover, by the uniform pointwise convergence $u_j \to u_0$, if $x \in \Omega_{u_0}$ (that is, $u_0(x) > 0$), necessarily $u_{r_j}(x) > 0$ for sufficiently large $j$. This means 
	\begin{equation} \label{eq:less_liminf_volume}
		\mathbf{1}_{\Omega_{u_0}} 
		\leq 
		\liminf_{j \to +\infty} \mathbf{1}_{\Omega_{u_{r_j}}}, 
		\quad \text{so by Fatou} \quad 
		\Vol_q(\Omega_{u_0} \cap B_r) \leq \liminf_{j \to +\infty} \Vol_q(\Omega_{u_{r_j}} \cap B_r).
	\end{equation}

	Therefore, to obtain the desired convergences, it suffices to obtain 
	\begin{equation} \label{eq:goal_convergence}
		\norm{\nabla u_0}_{L^2(B_r)} 
		\geq 
		\liminf_{j \to +\infty} \norm{\nabla u_{r_j}}_{L^2(B_r)}
		\quad \text{and} \quad 
		\Vol_q(\Omega_{u_0} \cap B_r) \geq \liminf_{j \to +\infty} \Vol_q(\Omega_{u_{r_j}} \cap B_r).
	\end{equation}
	Indeed, the second condition and \eqref{eq:less_liminf_volume}, along with \eqref{hip:q_positive}, will imply the convergence in \ref{item:conv_indicator} similarly as in \eqref{eq:iff_convergences_q}. Moreover, for simplicity in the notation, we may just assume that the $\liminf$ are actually $\lim$ by passing to a further subsequence.
	
	\subsubsection*{Step 2: Rescaling the energy.} 
	We will prove \eqref{eq:goal_convergence} using variational methods.  Indeed, the key fact is that minimization properties of $u$ with respect to $\F_\Lambda$ translate directly into minimization properties of $u_{r_j}$ with respect to $\F_{\Lambda, r_j}$: an easy change of variables shows that for any $\rho > 0$ we have $\F_{\Lambda, r_j}(u_{r_j}, B_\rho) = r_j^{-n} \F_\Lambda (u, B_{r_j\rho})$. Therefore, if we fix $\varepsilon > 0$, rescaling \eqref{eq:almost_minimality} for $j \gg 1$ (so that $R r_j \leq r_0$) yields 
	\begin{equation} \label{eq:rescaled_minimizing}
		\F_{\Lambda, r_j} (u_{r_j}, B_R) \leq \F_{\Lambda, r_j}(v, B_R) + C(\overline{q}, R, n) \, \varepsilon, 
		\quad 
		\forall \, v \in H^1(\Rn) \text{ with } v-u_{r_j} \in H^1_0(B_R).
	\end{equation}

To be able to use \eqref{eq:rescaled_minimizing} and still obtain convergences in $B_r$ as in \eqref{eq:goal_convergence}, we define the competitor 
	\begin{equation*}
		\widetilde{u}_{r_j} 
		:=
		\eta \, u_{r_j} + (1-\eta) \, u_0,
	\end{equation*}
	where $\eta$ is a cut-off function satisfying 
	\begin{equation*}
		\eta \in C^\infty(B_R), 
		\quad 
		0 \leq \eta \leq 1, 
		\quad 
		\eta \equiv 0 \text{ in } B_r, 
		\quad 
		\eta = 1 \text{ on } \partial B_R,
	\end{equation*}
	so that $\widetilde{u}_{r_j} \equiv u_0$ in $B_r$ and $\widetilde{u}_{r_j} - u_{r_j} \in H^1_0(B_R)$. Therefore, using \eqref{eq:rescaled_minimizing} with $v = \widetilde{u}_{r_j}$, we obtain 
	\begin{equation} \label{eq:minimizing_rearranged}
		\F_{\Lambda, r_j} (u_{r_j}, B_r)
		\leq 
		\F_{\Lambda, r_j} (u_0, B_r)
		+ C \varepsilon
		+ \left( \F_{\Lambda, r_j}(\widetilde{u}_{r_j}, B_R \setminus B_r) - \F_{\Lambda, r_j}(u_{r_j}, B_R \setminus B_r) \right).
	\end{equation}
	Our goal now is to disregard the last term and take limits.
	
	\subsubsection*{Step 3: Terms with $F$ vanish in the limit.} Before that, let us show that, after taking limits, all the terms with $F$ disappear. Indeed, if $x \in B_R$, \eqref{eq:rescaling_bounds_u} yields
	\begin{equation*}
		\abs{F_{r_j}(x, \widetilde{u}_{r_j}(x))} 
		+ \abs{F_{r_j}(x, u_{r_j}(x))}
		\lesssim_{M_1} 
		\abs{r_j  \widetilde{u}_{r_j}(x)} 
		+ \abs{r_j u_{r_j}(x)}
		\leq 
		r_j (2LR + |u_0(x)|). 
	\end{equation*}
	Therefore, the fact that $u_0 \in L^2(B_R)$ and the Dominated Convergence Theorem imply that 
	\begin{equation} \label{eq:F_disappears_1}
		\lim_{j \to \infty} \int_{B_R \setminus B_r} \big(F_{r_j}(x, \widetilde{u}_{r_j}) - F_{r_j}(x, u_{r_j})\big) \, dx
		=
		0.
	\end{equation} 
	In fact, similar computations yield 
	\begin{equation} \label{eq:F_disappears_2}
		\lim_{j \to \infty} \int_{B_r} F_{r_j}(x, u_{r_j}) \, dx = 0, 
		\quad \text{ and } \quad 
		\lim_{j \to \infty} \int_{B_r} F_{r_j}(x, u_0) \, dx = 0.
	\end{equation}
	
	With this in mind, our new goal is to show that for every $\varepsilon > 0$ it holds
	\begin{multline} \label{eq:goal_convergence_new}
		\text{LHS}
		:=
		\lim_{j \to +\infty} 
		\int_{B_r} \!\! \nabla u_{r_j} \, A_0 \, \nabla u_{r_j}^{\miT} \, dx
		+ \Lambda \, \Vol_{q_0}(\Omega_{u_{r_j}} \cap B_r)
		\\ \leq 
		\int_{B_r} \!\! \nabla u_0 \, A_0 \, \nabla u_0^{\miT} \, dx
		+ \Lambda \, \Vol_{q_0}(\Omega_{u_0} \cap B_r)
		+ C\varepsilon
		=: 
		\text{RHS}.
	\end{multline}
	Indeed, assuming this momentarily, we can let $\varepsilon \to 0$. Then, noting that every term is non-negative (recall \eqref{hip:A_elliptic_bounded} and \eqref{hip:q_positive}) and that \eqref{eq:less_liminf_gradient_A} and \eqref{eq:less_liminf_volume} hold, we obtain 
	\begin{equation*}
		\lim_{j \to +\infty} \! \int_{B_r} \!\!\!\! \nabla u_{r_j} \, A_0 \, \nabla u_{r_j}^{\miT} \, dx 
		= \!\!
		\int_{B_r} \!\!\!\! \nabla u_0 \, A_0 \, \nabla u_0^{\miT} \, dx, 
		\;\; \text{and} \;\;
		\lim_{j \to +\infty} \!\!\! \Vol_{q_0}(\Omega_{u_{r_j}} \cap B_r) 
		=
		\Vol_{q_0}(\Omega_{u_0} \cap B_r).
	\end{equation*} 
	Which clearly implies \eqref{eq:goal_convergence} because 	\begin{align*}
		0
		& \leq 
		\lim_{j \to +\infty} \int_{B_r} \left( \abs{\nabla u_{r_j}}^2 - \abs{\nabla u_0}^2 \right) \, dx
		=
		\lim_{j \to +\infty} \int_{B_r} \left( \nabla u_{r_j} \nabla u_{r_j}^{\miT} - \nabla u_0 \nabla u_0^{\miT} \right) \, dx
		\\ & =
		\lim_{j \to +\infty} \int_{B_r} \nabla (u_{r_j} - u_0) \nabla (u_{r_j} - u_0)^{\miT} \, dx
		+ 2 \lim_{j \to +\infty} \int_{B_r} \nabla (u_{r_j} - u_0) \, \nabla u_0^{\miT} \, 
		\\ & =
		\lim_{j \to +\infty} \int_{B_r} \nabla (u_{r_j} - u_0) \nabla (u_{r_j} - u_0)^{\miT} \, dx
		\leq 
		\lambda^{-1} \int_{B_r} \nabla (u_{r_j} - u_0) \, A_0 \, \nabla (u_{r_j} - u_0)^{\miT} \, dx
		=
		0,
	\end{align*}
	where we have used \eqref{eq:less_liminf_gradient} and the weak convergence $\nabla u_{r_j} \rightharpoonup \nabla u_0$ in $L^2(B_r)$.
	
	\subsubsection*{Step 4: Analysis of the error terms.}
	Starting from $\text{LHS}$ in \eqref{eq:goal_convergence_new}, approximating $A_0$ by $A_{r_j}$ and $q_0$ by $q_{r_j}$, then applying the inequality \eqref{eq:minimizing_rearranged} noting that the terms with $F$ disappear by \eqref{eq:F_disappears_1} and \eqref{eq:F_disappears_2}, and finally reintroducing $A_0$ and $q_0$ by approximation from $A_{r_j}$ and $q_{r_j}$, we can compute
	\begin{align} \label{eq:long_approximation}
		\text{LHS} 
		& \leq 
		\text{RHS} 
		+ \text{Errors $A$} 
		+ \text{Errors $q$} 
		\nonumber
		\\ & \qquad 
		+ \lim_{j \to +\infty} \int_{B_R \setminus B_r} \left( \nabla \widetilde{u}_{r_j} \, A_{r_j} \, \nabla \widetilde{u}_{r_j}^{\miT} - \nabla u_{r_j} \, A_{r_j} \, \nabla u_{r_j}^{\miT} \right) \, dx 
		\\ & \qquad 
		+ \Lambda \lim_{j \to +\infty} \int_{B_R \setminus B_r} q_{r_j} \left( \mathbf{1}_{\Omega_{\widetilde{u}_{r_j}}} - \mathbf{1}_{\Omega_{u_{r_j}}} \right) \, dx.
		\nonumber
	\end{align}
	Since $\abs{A_{r_j}(x) - A_0(x)} = \abs{A(r_jx) - A(0)} \leq r_j \, r \norm{\nabla A}_\infty$ for $x \in B_r$ by \eqref{hip:A_regular}, we can bound the errors in the approximations between $A_{r_j}$ and $A_0$ as follows: 
	\begin{multline*}
		\abs{\text{Errors $A$}}
		=
		\abs{
			\lim_{j \to +\infty} \int_{B_r} \nabla u_{r_j} \, (A_{r_j} - A_0) \, \nabla u_{r_j} \, dx 
			+ \int_{B_r} \nabla u_0 \, (A_{r_j} - A_0) \, \nabla u_0 \, dx
		}
		\\  
		\leq 
		\lim_{j \to +\infty} r_j \, r \norm{\nabla A}_\infty \left( \norm{\nabla u_{r_j}}_{L^2(B_r)} + \norm{\nabla u_0}_{L^2(B_r)} \right)
		=
		0.
	\end{multline*}
	We have used that $u_{r_j}$ have a uniform bound in $H^1(B_R)$, and $u_0 \in H^1(B_R)$. Similarly, using \eqref{hip:q_regular},
	\begin{multline*}
		\abs{\text{Errors $q$}}
		=
		\Lambda \abs{ \lim_{j \to +\infty}  
		\int_{B_r} (q_{r_j} - q_0) \mathbf{1}_{\Omega_{u_{r_j}}} \, dx
		+ \int_{B_r} (q_{r_j} - q_0) \mathbf{1}_{\Omega_{u_0}} \, dx }
		\\ \leq 
		2\Lambda \lim_{j \to +\infty} r_j r \norm{\nabla q}_\infty  \abs{B_r}
		= 
		0.
	\end{multline*}
	
	Therefore, to obtain \eqref{eq:goal_convergence_new} from \eqref{eq:long_approximation}, we are left with the last two terms in \eqref{eq:long_approximation}. 
	Recalling the definition of $\widetilde{u}_{r_j}$, we easily see that $\nabla \widetilde{u}_{r_j} = (u_{r_j} - u_0) \nabla \eta + \eta \nabla u_{r_j} + (1-\eta) \nabla u_0$. Thus, expanding the products in the second-to-last term in \eqref{eq:long_approximation}, and neglecting the terms that converge to zero because $u_{r_j} \to u_0$ in $L^2(B_r)$ (for that, note that $\eta$ is smooth, $A_{r_j}$ is uniformly bounded, $u_0 \in H^1(B_r)$ and $u_{r_j}$ have a uniform bound in $H^1(B_r)$), we obtain 
	\begin{align*}
		\lim_{j \to +\infty} & \int_{B_R \setminus B_r} \left( \nabla \widetilde{u}_{r_j} \, A_{r_j} \, \nabla \widetilde{u}_{r_j}^{\miT} - \nabla u_{r_j} \, A_{r_j} \, \nabla u_{r_j}^{\miT} \right) \, dx 
		\\ & =
		\lim_{j \to +\infty} \int_{B_R \setminus B_r} \Big( 
		(\eta^2 - 1) \nabla u_{r_j} \, A_{r_j} \, \nabla u_{r_j}^{\miT} 
		+ 2\eta (1-\eta) \nabla u_{r_j} \, A_{r_j} \, \nabla u_0^{\miT}
		\\ & \qquad \qquad \qquad \qquad 
		+ (1-\eta)^2 \nabla u_0 \, A_{r_j} \, \nabla u_0^{\miT}
		\Big) \, dx
		\\ & =
		\lim_{j \to +\infty} \int_{B_R \setminus B_r} 
		(1 - \eta^2) \left( \nabla u_0 \, A_{r_j} \, \nabla u_0^{\miT} - \nabla u_{r_j} \, A_{r_j} \, \nabla u_{r_j}^{\miT} \right) \, dx
		\\ & \leq 
		\lim_{j \to +\infty} \int_{B_R \setminus B_r \cap \{\eta = 0\}} \left( \nabla u_0 \, A_{r_j} \, \nabla u_0^{\miT} - \nabla u_{r_j} \, A_{r_j} \, \nabla u_{r_j}^{\miT} \right) \, dx 
		\\ & \qquad + \lim_{j \to +\infty} \int_{\{\eta > 0\}} \nabla u_0 \, A_{r_j} \, \nabla u_0^{\miT} \, dx
		\\ & \leq 
		\lambda^{-1} \int_{\{\eta>0\}} \abs{\nabla u_0}^2 \, dx.
	\end{align*} 
	In the second equality we have used the weak convergence $\nabla u_{r_j} \rightharpoonup \nabla u_0$ in $L^2(B_r)$; in the next inequality we have used ellipticity of $A_{r_j}$ \eqref{hip:A_elliptic_bounded} to disregard the term with $u_{r_j}$ in the region where $\eta > 0$; and in the last estimate we have argued as in \eqref{eq:less_liminf_gradient_A}, and used the boundedness of $A_{r_j}$ \eqref{hip:A_elliptic_bounded}.

	In turn, the last term in \eqref{eq:long_approximation} is easier to deal with:
	\begin{multline*}
		\lim_{j \to +\infty} \int_{B_R \setminus B_r} q_{r_j} \big( \mathbf{1}_{\Omega_{\widetilde{u}_{r_j}}} - \mathbf{1}_{\Omega_{u_{r_j}}} \big) \, dx
		=
		\lim_{j \to +\infty} \int_{B_R \setminus B_r \cap \{\eta = 0\}} q_{r_j} \big( \mathbf{1}_{\Omega_{u_0}} - \mathbf{1}_{\Omega_{u_{r_j}}} \big) \, dx
		\\ + 
		\int_{\{\eta > 0\}} q_{r_j} \big( \mathbf{1}_{\Omega_{\widetilde{u}_{r_j}}} - \mathbf{1}_{\Omega_{u_{r_j}}} \big) \, dx
		\leq 
		2 \, \overline{q} \abs{\{\eta > 0\}}, 
	\end{multline*}
	simply because the first term in the second expression is non-positive by \eqref{eq:less_liminf_volume}.
	
	\subsubsection*{Step 4: Conclusion of the proof.} Plugging these estimates into \eqref{eq:long_approximation} yields 
	\begin{equation*}
		\text{LHS}
		\leq 
		\text{RHS}
		+ \lambda^{-1} \int_{\{\eta>0\}} \abs{\nabla u_0}^2 \, dx
		+ 2 \, \overline{q} \abs{\{\eta > 0\}}.
	\end{equation*}
	Now, since $\eta$ is arbitrary, this yields \eqref{eq:goal_convergence_new}, hence finishing the proof.
\end{proof}

Let us state a few interesting consequences of the convergence result we have just established:

\begin{corollary} \label{corol:blowup_minimizer}
	Assume \eqref{hip:setting} and $0 \in \pom_u$. Then, for any $R > 0$, if $u_0$ is a blow-up limit as in Lemma~\ref{lem:convergence_blowups}, then:
	\begin{enumerate}
		\item $u_0$ is stationary for $\F_{\Lambda, 0}$, i.e. $\delta \F_{\Lambda, 0}(u_0) [\xi] = 0$ for every $\xi \in C^\infty_c(\Rn; \Rn)$,
		
		\item Furthermore, $u_0$ is a local minimizer of $\F_{\Lambda, 0}$ in $B_R$, that is, for every $v \in H^1(B_R)$ so that $u_0 - v \in H^1_0(B_R)$ we have $\F_{\Lambda, 0}(u_0) \leq \F_{\Lambda, 0} (v)$.
	\end{enumerate}
\end{corollary}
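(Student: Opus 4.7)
\emph{Stationarity.} My plan is to translate the stationarity of $u$ for $\F_\Lambda$ given by Lemma~\ref{lem:first_variation} into stationarity of $u_{r_j}$ for $\F_{\Lambda, r_j}$ at every scale, and then pass to the limit $j\to\infty$. Given $\xi\in C^\infty_c(\Rn;\Rn)$, setting $\eta(y):=r\,\xi(y/r)$ and carrying out the change of variables $y=rx$ in~\eqref{eq:first_variation} (together with the analogous calculation for the $\Lambda$-term) will produce the scaling identity $\delta\F_{\Lambda, r}(u_r)[\xi]=r^{-n}\,\delta\F_{\Lambda}(u)[\eta]$, whose right-hand side vanishes. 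To pass $j\to\infty$, the lower-order terms $\nabla A_{r_j}\cdot\xi=r_j(\nabla A)(r_j\cdot)\cdot\xi$, $\nabla_x F_{r_j}(\cdot,u_{r_j})=r_j(\nabla_x F)(r_j\cdot,u(r_j\cdot))$ and $F_{r_j}(\cdot,u_{r_j})$ will all vanish uniformly on $\supp\xi$ by Lemma~\ref{lem:convergence_parameters} (combined with Proposition~\ref{prop:bounded} and~\eqref{hip:F_regular}, \eqref{hip:A_regular}), while the remaining quadratic terms converge via the strong $H^1$ convergence in Lemma~\ref{lem:convergence_blowups}(i) with $A_{r_j}\to A_0$ uniformly, and the volume term by the $L^1$ convergence of indicators in Lemma~\ref{lem:convergence_blowups}(ii) with $q_{r_j}\to q_0$. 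This yields $\delta\F_{\Lambda,0}(u_0)[\xi]=0$.

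\emph{Local minimality.} Let $v\in H^1(B_R)$ with $v-u_0\in H^1_0(B_R)$. Since $u_0\geq 0$, the trace of $v$ on $\partial B_R$ is nonnegative, so replacing $v$ by $v_+$ preserves the boundary data and cannot increase $\F_{\Lambda,0}$; I may therefore assume $v\geq 0$. Fix $r\in(0,R)$ and a cutoff $\eta\in C^\infty_c(B_R)$ with $0\leq\eta\leq 1$ and $\eta\equiv 1$ on $B_r$, and define the competitor $v_j:=\eta\,v+(1-\eta)\,u_{r_j}$ on $B_R$, with $v_j:=u_{r_j}$ on $\Rn\setminus B_R$, so that $v_j\geq 0$ and $v_j-u_{r_j}\in H^1_0(B_R)$. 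Rescaling the asymptotic minimality~\eqref{eq:almost_minimality} of $u$ to $u_{r_j}$ at scale $R$ will give, for every $\varepsilon>0$ and $j$ large, $\F_{\Lambda,r_j}(u_{r_j},B_R)\leq\F_{\Lambda,r_j}(v_j,B_R)+C\varepsilon$. Splitting both sides into $B_r$ and $B_R\setminus B_r$ (using $v_j\equiv v$ on $B_r$) and rearranging, I obtain
\[
\F_{\Lambda,r_j}(u_{r_j},B_r)-\F_{\Lambda,r_j}(v,B_r)\leq\F_{\Lambda,r_j}(v_j,B_R\setminus B_r)-\F_{\Lambda,r_j}(u_{r_j},B_R\setminus B_r)+C\varepsilon.
\]
As $j\to\infty$, the LHS tends to $\F_{\Lambda,0}(u_0,B_r)-\F_{\Lambda,0}(v,B_r)$ via the strong $H^1$ and $L^1$-indicator convergences of Lemma~\ref{lem:convergence_blowups} on $B_r$, combined with the uniform vanishings $F_{r_j}\to 0$, $A_{r_j}\to A_0$, $q_{r_j}\to q_0$ from Lemma~\ref{lem:convergence_parameters}. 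For the RHS, expanding $\nabla v_j=(v-u_{r_j})\nabla\eta+\eta\nabla v+(1-\eta)\nabla u_{r_j}$ and arguing exactly as in Step~4 of the proof of Lemma~\ref{lem:convergence_blowups} (the weak $H^1$ convergence on $B_R$ kills the cross terms involving $\nabla u_{r_j}$, and ellipticity~\eqref{hip:A_elliptic_bounded} absorbs the $\nabla u_{r_j}\,A_{r_j}\,\nabla u_{r_j}^{\miT}$ contribution on $\{\eta>0\}$), I will obtain
\[
\limsup_{j\to\infty}\big[\F_{\Lambda,r_j}(v_j,B_R\setminus B_r)-\F_{\Lambda,r_j}(u_{r_j},B_R\setminus B_r)\big]\lesssim\int_{B_R\setminus B_r}\big(|\nabla v|^2+|\nabla u_0|^2+1\big)\,dx.
\]
Sending $\varepsilon\to 0$ and then $r\nearrow R$ (the right-hand side vanishes by absolute continuity of the integral since $v,u_0\in H^1(B_R)$) will finally give $\F_{\Lambda,0}(u_0,B_R)\leq\F_{\Lambda,0}(v,B_R)$.

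\emph{Main obstacle.} The key difficulty is that Lemma~\ref{lem:convergence_blowups} only provides strong $H^1$ convergence of $u_{r_j}$ to $u_0$ on proper subballs $B_r\Subset B_R$, which precludes using $v$ directly as a competitor for $u_{r_j}$ with matching boundary values on $\partial B_R$. The cutoff gluing described above introduces a boundary-layer error supported on $B_R\setminus B_r$, and the heart of the argument is to control this error uniformly in $j$ (via the same weak-convergence cancellations used in Step~4 of Lemma~\ref{lem:convergence_blowups}) and then to kill it by letting $r\nearrow R$; this is what makes the local minimality part substantially more delicate than the stationarity part, which follows from a straightforward rescaling.
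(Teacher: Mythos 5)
Your proof of (i) is exactly the paper's: rescale the stationarity identity of Lemma~\ref{lem:first_variation} to get $\delta\F_{\Lambda,r_j}(u_{r_j})[\xi]=0$ at every scale, then pass to the limit term by term using Lemmas~\ref{lem:convergence_parameters} and~\ref{lem:convergence_blowups}. For (ii) the paper simply defers to the second half of \cite[Lemma 6.3]{V}; your cutoff-gluing argument is precisely that standard argument, recycling the machinery of Steps~2--4 of the proof of Lemma~\ref{lem:convergence_blowups}, so the route is the same and the overall structure is correct.

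One step deserves more care than the phrase ``arguing exactly as in Step~4'' suggests. In Step~4 of Lemma~\ref{lem:convergence_blowups} the two functions being interpolated are $u_{r_j}$ and $u_0$, so the cross term $(u_{r_j}-u_0)\nabla\eta$ tends to $0$ strongly in $L^2(B_R)$ and every term containing it can be discarded before letting $j\to\infty$. In your competitor $v_j=\eta v+(1-\eta)u_{r_j}$ the corresponding term is $(v-u_{r_j})\nabla\eta$, which converges to $(v-u_0)\nabla\eta$, not to $0$; it therefore survives the limit in $j$ and contributes $\int_{B_R\setminus B_r}|v-u_0|^2|\nabla\eta|^2\,dx$ plus Cauchy--Schwarz cross terms to the boundary-layer error. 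Since $|\nabla\eta|\sim(R-r)^{-1}$, this is not directly dominated by $\int_{B_R\setminus B_r}(|\nabla v|^2+|\nabla u_0|^2+1)\,dx$ as you claim; you need the additional observation that $v-u_0\in H^1_0(B_R)$ vanishes on $\partial B_R$, so a Poincar\'e inequality in the annulus of width $R-r$ yields $\int_{B_R\setminus B_r}|v-u_0|^2|\nabla\eta|^2\,dx\lesssim\int_{B_R\setminus B_r}|\nabla(v-u_0)|^2\,dx$, which does tend to $0$ as $r\nearrow R$ by absolute continuity. With that insertion the limit $r\nearrow R$ closes the argument exactly as you describe; the remaining ingredients (reduction to $v\geq0$, the rescaled almost-minimality \eqref{eq:almost_minimality}, the $O(r_j)$ vanishing of the $F$-terms, and the crude bound $\Lambda\overline{q}\,|B_R\setminus B_r|$ on the volume discrepancy) are all correct.
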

\begin{proof} % 18-10
	Let us prove the stationary condition first. Recall that $\F_{\Lambda, 0}(u) 
		=
		\int_\Rn \nabla u \, A_0 \, \nabla u^{\miT} \, dx 
		+ \Lambda \Vol_{q_0} (\Omega_u),$
	where we note that $\Vol_{q_0}(\Omega_u)$ is just $q(0) \abs{\Omega_u}$. Using the expressions obtained in Lemma~\ref{lem:first_variation}, noting that $A_0$ and $q_0$ are constant functions, we obtain that for $\xi \in C^\infty_c(\Rn; \Rn)$,
	\begin{equation*}
		\delta \F_{\Lambda, 0}(u)[\xi]
		=
		\int_\Rn \Big( -2\nabla u \, D\xi \, A(0) \, \nabla u^{\miT}
		+ \nabla u \, A(0) \, \nabla u^{\miT} \div \xi \Big) \, dx
		+ \Lambda q(0) \int_{\Omega_u} \div \xi \, dx.
	\end{equation*}

	Now, recall that in Lemma~\ref{lem:first_variation} we obtained that $\delta \F_\Lambda(u)[\xi] = 0$ for every $\xi \in C^\infty_c(\Rn; \Rn)$. Rescaling that, we obtain $\delta \F_{\Lambda, r_j}(u_{r_j})[\xi] = 0$ for every $\xi \in C^\infty_c(\Rn; \Rn)$ ($\delta \F_{\Lambda, r_j}$ has an explicit expression as in Lemma \ref{lem:first_variation}, but with $A_{r_j}, F_{r_j}$ and $q_{r_j}$ replacing $A, F$ and $q$). Now, using all the convergences in Lemmas~\ref{lem:convergence_parameters} and \ref{lem:convergence_blowups}, we can take limits in each term of the explicit expression of $\delta \F_{\Lambda, r_j}(u_{r_j})[\xi] = 0$ to obtain $\delta \F_{\Lambda, 0} (u_0)[\xi] = 0$, for each $\xi \in C^\infty_c (\Rn; \Rn)$. 
	
	To prove the local minimality of $u_0$, we can now use Lemmas~\ref{lem:convergence_parameters} and \ref{lem:convergence_blowups} and simply follow the proof in the second half of \cite[Lemma 6.3]{V}:  everything now works directly in our setting. 
\end{proof}

%%%%%%%%%%%%%%%%%%%%%%%%%%%%%%%%%%%%%%%%%%%%%%%%%%%%%%

Our next objective is to show that blow-ups are 1-homogeneous. For this, we will follow the approach in \cite[Section 9]{V}, which depends on the Weiss monotonicity formula.
In this kind of free boundary problems, the Weiss function is typically monotone in $r$ (see e.g.~\cite[Prop. 9.9]{V}). Nevertheless, in our setting, the presence of $F(x, u)$ and the coefficients $A(x)$ make it more difficult to prove this monotonicity, and in fact we will actually only prove that the derivative of the Weiss function is bounded below (but possibly negative). Fortunately, we will show this  suffices to show that blow-up limits are 1-homogeneous.

\begin{lemma} \label{lem:1-homogeneous}
	Assume \eqref{hip:setting} and $0 \in \pom_u$. Then any blow-up limit $u_0$ obtained as in Lemma~\ref{lem:convergence_blowups} is 1-homogeneous.
\end{lemma}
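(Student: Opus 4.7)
The strategy is the classical Weiss approach, adapted to the fact that, in our setting, the presence of $F(x,u)$, $A(x)$ and $q(x)$ (non-constant in $x$) destroys exact monotonicity and only yields an ``almost monotonicity'' of the relevant Weiss-type functional. Define, for $r>0$,
\begin{equation*}
W(r,u):=\frac{1}{r^n}\int_{B_r}\nabla u\, A\,\nabla u^{\miT}\,dx+\frac{\Lambda}{r^n}\Vol_q(\Omega_u\cap B_r)-\frac{2}{r^n}\int_{B_r}F(x,u)\,dx-\frac{1}{r^{n+1}}\int_{\partial B_r}u^2\,d\mathcal{H}^{n-1}\,,
\end{equation*}
and the corresponding frozen-coefficient version for the blow-up limit $u_0$,
\begin{equation*}
W_0(r,u_0):=\frac{1}{r^n}\int_{B_r}\nabla u_0\,A(0)\,\nabla u_0^{\miT}\,dx+\frac{\Lambda q(0)}{r^n}\abs{\Omega_{u_0}\cap B_r}-\frac{1}{r^{n+1}}\int_{\partial B_r}u_0^2\,d\mathcal{H}^{n-1}\,.
\end{equation*}
By the scaling $u_r(x)=u(rx)/r$ one checks directly that $W(\rho r,u)=W(\rho,u_r)$ modulo terms of order $r$ coming from freezing $A$, $F$, $q$ at the origin; in particular $W(r,u)=W(1,u_r)$ up to an error $O(r)$ controlled through Lemma~\ref{lem:convergence_parameters} and the uniform Lipschitz bound~\eqref{eq:rescaling_bounds_u}.

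The first task is to establish almost monotonicity: differentiating $W(r,u)$ in $r$, applying the stationarity identity \eqref{eq:stationary} (or equivalently testing the equation $\cL u=F'(x,u)$ in $\Omega_u$ against the vector fields $x\,\eta(|x|/r)$), and using the Neumann-type condition encoded in Lemma~\ref{lem:first_variation}, I obtain the classical Rellich-type identity
\begin{equation*}
\frac{d}{dr}W(r,u)=\frac{2}{r^{n+2}}\int_{\partial B_r}\bigl((x\cdot\nabla u)-u\bigr)^2 A\,\text{(stuff)}\,d\mathcal{H}^{n-1}\,+\,\text{error}(r)\,,
\end{equation*}
where the squared term is nonnegative (it would give exact monotonicity in the frozen case) and $\text{error}(r)=O(1)$ comes from the derivatives of $A(x)$, $q(x)$ and from the term $\int F(x,u)\,dx$. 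Here one uses \eqref{hip:A_regular}, \eqref{hip:q_regular}, \eqref{hip:F_regular}, the Lipschitz bound $L$ on $u$, and the bound $|F(x,u)|\lesssim M_1 u$ that follows from \eqref{hip:zero}--\eqref{hip:F'_F''} and Proposition~\ref{prop:bounded}, together with $u(x)\lesssim L|x|$ near the origin (which is a free boundary point, by \eqref{eq:rescaling_bounds_u}). The upshot is that there is a constant $C$ such that $r\mapsto W(r,u)+Cr$ is nondecreasing for $r\in(0,r_*)$, hence the limit $W(0^+,u):=\lim_{r\to 0^+}W(r,u)$ exists.

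Next, along the blow-up sequence $r_j\to 0$, the $H^1_{\mathrm{loc}}$ convergence $u_{r_j}\to u_0$, together with the $L^1_{\mathrm{loc}}$ convergence $\mathbf{1}_{\Omega_{u_{r_j}}}\to\mathbf{1}_{\Omega_{u_0}}$ from Lemma~\ref{lem:convergence_blowups}, the uniform bounds from Lemma~\ref{lem:convergence_parameters}, and the fact that $F_r(\cdot,u_r)\to 0$ in $L^\infty_{\mathrm{loc}}$, let me pass to the limit in each term to conclude that for every fixed $s>0$,
\begin{equation*}
W_0(s,u_0)=\lim_{j\to\infty}W(s,u_{r_j})=\lim_{j\to\infty}W(sr_j,u)+O(r_j)=W(0^+,u)\,.
\end{equation*}
Hence $W_0(s,u_0)$ is constant in $s>0$.

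Finally, $u_0$ is a local minimizer of the constant-coefficient frozen functional $\F_{\Lambda,0}$ by Corollary~\ref{corol:blowup_minimizer}, and in particular $-\mathrm{div}(A(0)\nabla u_0)=0$ in $\Omega_{u_0}$. For such $u_0$, one has the exact Weiss identity
\begin{equation*}
\frac{d}{ds}W_0(s,u_0)=\frac{2}{s^{n+2}}\int_{\partial B_s}\bigl(x\cdot\nabla u_0-u_0\bigr)\,A(0)\,\bigl(x\cdot\nabla u_0-u_0\bigr)^{\miT}\,d\mathcal{H}^{n-1}\geq 0\,,
\end{equation*}
after applying $A(0)$-ellipticity \eqref{hip:A_elliptic_bounded}. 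Since $W_0(\cdot,u_0)$ is constant in $s$, the right-hand side vanishes identically; thus $x\cdot\nabla u_0=u_0$ almost everywhere, which is precisely the 1-homogeneity of $u_0$. The main obstacle is the first step—producing the almost monotonicity formula with a controllable error—because here the nonlinearity $F(x,u)$ interacts with the divergence-form operator $\cL$ and with the weight $q$ in $\Vol_q$, and verifying that all the arising error terms are genuinely integrable in $r$ near $0$ requires careful use of the Lipschitz bound $L$ and of the linear pointwise control $u(x)\lesssim L|x|$ at the free boundary point.
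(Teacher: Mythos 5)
Your overall strategy --- an almost-monotone Weiss-type functional whose derivative is bounded below by a constant, existence of the limit at $r=0^+$, constancy of the frozen Weiss functional along the blow-up sequence, and finally an exact identity forcing $x\cdot\nabla u_0=u_0$ --- is the same route the paper takes. However, there is a concrete gap in how you handle the constant matrix $A(0)$. Your final display asserts an exact Weiss identity on round spheres for solutions of $-\div(A(0)\nabla u_0)=0$, with $\frac{d}{ds}W_0(s,u_0)$ equal to a nonnegative integral of a ``square'' of $x\cdot\nabla u_0-u_0$ weighted by $A(0)$ (which, as written, is not even well formed, since $x\cdot\nabla u_0-u_0$ is a scalar). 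This identity is false for a general constant elliptic matrix: the Rellich--Pohozaev computation for $\div(A(0)\nabla u_0)=0$ on Euclidean balls produces on $\partial B_s$ the combination $2(\nabla u_0\cdot x)(\nabla u_0\,A(0)\,x^{\miT})-|x|^2\,\nabla u_0\,A(0)\,\nabla u_0^{\miT}$, which does not assemble into a perfect square of $x\cdot\nabla u_0-u_0$ unless $A(0)$ is a multiple of the identity. The same defect infects your almost-monotonicity claim for $W(r,u)$: the discrepancy between $\nabla u\cdot x$ and $\nabla u\,A(x)\,x^{\miT}$ is of size $O(1)$ (not $O(r)$), because $A(0)\neq I$ in general, so the ``nonnegative square term plus $O(1)$ error'' structure fails on round spheres. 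The paper sidesteps this by first changing variables $x\mapsto x\sqrt{A(0)}$ so that the frozen operator becomes the Laplacian, and only then running the Weiss computation; you would need to do the same, or replace Euclidean spheres by the $A(0)$-adapted ellipsoids throughout.

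Separately, the substance of the argument --- the almost-monotonicity estimate, which you flag as ``the main obstacle'' and leave indicated by ``(stuff)'' --- is precisely where the paper invests its effort: after the change of variables it proves the equipartition inequality $W_\Lambda(\widehat z_r)-W_\Lambda(\widehat u_r)\ge \frac1n\int_{\partial B_1}\abs{x\cdot\nabla\widehat u_r-\widehat u_r}^2\,d\mathcal H^{n-1}-Cr$, where $\widehat z_r$ is the $1$-homogeneous extension of $\widehat u_r$ from $\partial B_1$, by testing the stationarity identity against a radial vector field and estimating ten explicit error terms coming from $A$, $q$ and $F$ (each shown to be $O(r)$ via the Lipschitz bound and Lemma~\ref{lem:convergence_parameters}). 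Without carrying out this step, and without correcting the $A(0)$ issue above, your plan is a plausible outline rather than a proof.
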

\begin{proof}
	As commented before, we will rely on a Weiss monotonicity formula. However, the fact that our blow-up limits $u_0$ are minimizers of energies with $A(0)$ instead of the identity, would make us need to adapt the monotonicity formula to these constant matrices. Instead of that, we prefer to change coordinates from the beginning, to find the Laplacian in the limit.	
	
	\subsubsection*{Step 1: A convenient change of variables.} The fact that $A(0)$ is symmetric and elliptic (see \eqref{hip:A_symmetric} and \eqref{hip:A_elliptic_bounded}) allows us to find a (unique) symmetric matrix $\sqrt{A(0)}$ which is still elliptic and bounded (as in \eqref{hip:A_elliptic_bounded}), which constants depending on $\lambda$.
	
	With that in mind, define $\widehat{u}_0 (x) := u_0(x \sqrt{A(0)})$. Then, Corollary~\ref{corol:blowup_minimizer} implies that $\widehat{u}_0$ is a local minimizer of the energy
	\[
	\widehat{\F}_{\Lambda, 0}(v, D) 
	:=
	\int_D \abs{\nabla v}^2 \, dx 
	+ \Lambda q(0) \abs{\Omega_v \cap D}, 
	\qquad D \subset \Rn.
	\]	
	Indeed, let $R > 0$ and $\widehat{v} \in H^1(\Rn)$ with $\widehat{v}-\widehat{u}_0 \in H^1_0(B_R)$. Then, if we define $v (x) := \widehat{v}(x \sqrt{A(0)}^{-1})$ (so that $\widehat{v}(x) = v(x \sqrt{A(0)})$), we have $v - u_0 \in H^1_0(B_{\lambda^{-1/2} R})$ by ellipticity of $A(0)$. Therefore, by Corollary~\ref{corol:blowup_minimizer}, $\F_{\Lambda, 0}(u_0, B_{\lambda^{-1/2} R}) \leq \F_{\Lambda, 0}(v, B_{\lambda^{-1/2} R})$, that is,
	\begin{equation*}
		\int_\Rn \nabla u_0 \, A(0) \, \nabla u_0^{\miT} \, dx
		+ \Lambda q(0) \abs{\Omega_{u_0} \cap B_{\lambda^{1/2}R}}
		\leq 
		\int_\Rn \nabla v \, A(0) \, \nabla v^{\miT} \, dx
		+ \Lambda q(0) \abs{\Omega_v \cap B_{\lambda^{1/2}R}},
	\end{equation*}
	and the change of variables $x = y \sqrt{A(0)}$ yields (after canceling out the constant Jacobian)
	\begin{equation*}
		\int_\Rn \nabla \widehat{u}_0 \, \nabla \widehat{u}_0^{\miT} \, dy
		+ \Lambda q(0) \abs{\Omega_{\widehat{u}_0} \cap B_R}
		\leq 
		\int_\Rn \nabla \widehat{v} \, \nabla \widehat{v}^{\miT} \, dx
		+ \Lambda q(0) \abs{\Omega_v \cap B_R}, 
	\end{equation*}
	i.e. $\widehat{\F}_{\Lambda, 0}(\widehat{u}_0, B_R) \leq \widehat{\F}_{\Lambda, 0}(\widehat{v}, B_R)$.
	
	Furthermore, $\widehat{u}_0$ is a stationary point for $\widehat{\F}_{\Lambda, 0}$. 
	Take $\widehat{\xi} \in C^\infty_c(\Rn; \Rn)$, and define $\xi (x) := \sqrt{A(0)} \, \widehat{\xi} (x \sqrt{A(0)}^{-1}) \in C^\infty_c(\Rn; \Rn)$ (so that $\widehat{\xi} (x) = \sqrt{A(0)}^{-1} \xi (x \sqrt{A(0)})$). A change of variables and Corollary~\ref{corol:blowup_minimizer} show that indeed 
	\begin{multline} \label{eq:change_vars_stationary}
		\delta \widehat{\F}_{\Lambda, 0} (\widehat{u}_0)[\widehat{\xi}]
		=
		\int_\Rn \Big( -2 \nabla \widehat{u}(y) \, D\widehat{\xi}(y) \, \nabla \widehat{u}(y)^{\miT} \, dy 
		+ \nabla \widehat{u}(y) \, \nabla \widehat{u}(y)^{\miT}  \div \widehat{\xi}(y) \Big) \, dy
		\\ \qquad 
		+ \Lambda q(0) \int_\Rn \mathbf{1}_{\Omega_{\widehat{u}}}(y) \div \widehat{\xi}(y) \, dy
		=
		\abs{\text{det}(A(0))}^{-1/2} \delta \F_{\Lambda, 0} (u_0)[\xi]
		=
		0.
	\end{multline} 
	What we use in the change of variable is that
	\begin{equation*}
		\nabla \widehat{u}(y) = \nabla u(y \sqrt{A(0)}) \sqrt{A(0)}, 
		\qquad 
		\mathbf{1}_{\Omega_{\widehat{u}}}(y) = \mathbf{1}_{\Omega_u}(y \sqrt{A(0)}), 
	\end{equation*}
	\begin{equation*}
		D\widehat{\xi}(y)
		=
		\sqrt{A(0)}^{-1} (D\xi)(y \sqrt{A(0)}) \, \sqrt{A(0)}, 
		\qquad 
		\div \widehat{\xi} (y) 
		=
		(\div \xi)(y \sqrt{A(0)}).
	\end{equation*}

	To carry out the whole blow-up procedure later, we set
	\begin{equation*}
		\widehat{u}(x) := u(x \sqrt{A(0)}), 
		\qquad 
		\widehat{A}(x) := \sqrt{A(0)}^{-1} A(x\sqrt{A(0)}) \, \sqrt{A(0)}^{-1}, 
	\end{equation*}
	\begin{equation*}
		\widehat{\F}(x, u) := F(x\sqrt{A(0)}, u), 
		\qquad 
		\widehat{q}(x) := q(x\sqrt{A(0)}).
	\end{equation*}
	Then, Lemma~\ref{lem:first_variation} and a change of coordinates show that $\widehat{u}$ is stationary for 
	\[
	\widehat{\F}_{\Lambda}(v) := \int_\Rn \nabla v \, \widehat{A} \, \nabla v^{\miT} dx - 2 \int_\Rn \widehat{F}(x, v(x)) \, dx + \Lambda \Vol_{\widehat{q}}(\Omega_v).
	\]
	The change of variables is similar to the one in \eqref{eq:change_vars_stationary}, taking also into account that
	\[
	\nabla \widehat{F}(y, u)
	=
	(\nabla F)(y \sqrt{A(0)}, u) \sqrt{A(0)}, 
	\quad 
	\nabla \widehat{q}(y) 
	=
	(\nabla q)(y\sqrt{A(0)}) \sqrt{A(0)}.
	\]

	Finally, in a similar way, from Corollary~\ref{corol:EL2} we infer that $-\div(\widehat{A}\nabla \widehat{u}) = \widehat{F}'(x, \widehat{u}(x))$ in $\Omega_{\widehat{u}}$.
	
	\subsubsection*{Step 2: Equipartition of the energy.} % 24-10
	Define a modified Weiss function as
	\begin{equation*}
		W_\Lambda(v)
		:=
		\int_{B_1} \abs{\nabla v}^2 \, dx
		- \int_{\partial B_1} v^2 \, d\mathcal{H}^{n-1}
		+ \Lambda q(0) \abs{\Omega_v \cap B_1}.
	\end{equation*}
	Let $\widehat{z}_r(x) := \abs{x} \widehat{u}_r(x / \abs{x})$  be the 1-homogeneous extension of $\widehat{u}_r$ from $\partial B_1$ to $B_1$.  Our goal in this step is to show that there exists $C > 0$, uniform on \eqref{hip:setting}, such that it holds
	\begin{equation} \label{eq:equipartition}
		W_{\Lambda}(\widehat{z}_r) - W_\Lambda(\widehat{u}_r) 
		\geq 
		\frac1n \int_{\partial B_1} \abs{x \cdot \nabla \widehat{u}_r - \widehat{u}_r}^2 \, d\mathcal{H}^{n-1} 
		- Cr.
	\end{equation} % 7-10

	For this purpose, we follow \cite[Lemma 9.8]{V}, substantially extending it to introduce $A(x), q(x)$ and $F(x, u)$. First, testing the stationarity of $\widehat{u}$ with respect to $\widehat{\F}_\Lambda$ (see Lemma~\ref{lem:first_variation}) against the same vector field $\xi_\varepsilon$ as in \cite{V}, and letting $\varepsilon \to 0$, we obtain
	\begin{align} \label{eq:V9.9}
		0
		& =
		-2 \int_{B_r} \nabla \widehat{u} \, \widehat{A} \, \nabla \widehat{u}^{\miT} \, dx
		+ n \int_{B_r} \nabla \widehat{u} \, \widehat{A} \, \nabla \widehat{u}^{\miT} \, dx
		+ \int_{B_r} \nabla \widehat{u} \, (\nabla \widehat{A} \cdot x) \, \nabla \widehat{u}^{\miT} \, dx
		\nonumber
		\\ & \quad  
		- 2 \int_{B_r} \nabla \widehat{F}(x, \widehat{u}) \cdot x \, dx
		- 2n \int_{B_r} \widehat{F}(x, \widehat{u}) \, dx
		+ \Lambda \int_{B_r} \mathbf{1}_{\Omega_{\widehat{u}}} \nabla \widehat{q} \cdot x \, dx
		+ n \, \Lambda \int_{B_r} \mathbf{1}_{\Omega_{\widehat{u}}} \widehat{q} \, dx
		\nonumber 
		\\ & \quad 
		+ 2 r \int_{\partial B_r} (\nabla \widehat{u} \, x^{\miT})(\nabla \widehat{u} \, \widehat{A} \, x^{\miT}) \, d\mathcal{H}^{n-1}
		- r \int_{\partial B_r} \nabla \widehat{u} \, \widehat{A} \, \nabla \widehat{u}^{\miT} \, d\mathcal{H}^{n-1}
		\\ & \quad 
		+ 2r \int_{\partial B_r} \widehat{F}(x, \widehat{u}) \, dx 
		- \Lambda \, r \int_{\partial B_r} \mathbf{1}_{\Omega_{\widehat{u}}} \widehat{q} \, d\mathcal{H}^{n-1}.
		\nonumber
	\end{align}
Also, the fact that $\div(\widehat{A}\nabla \widehat{u}) = \widehat{F}'(x, \widehat{u})$ in $\Omega_{\widehat{u}}$ (see Step 1) yields
	\begin{multline*}
		\int_{B_r} \nabla \widehat{u} \, \widehat{A} \, \nabla \widehat{u}^{\miT} \, dx
		=
		\int_{B_r} \div(\widehat{u} \, \widehat{A} \, \nabla \widehat{u}) \, dx
		- \int_{B_r} \widehat{u} \, \div(\widehat{A} \, \nabla \widehat{u}) \, dx
		\\ =
		\int_{\partial B_r} \widehat{u} \, \nabla \widehat{u} \, \widehat{A} \, \frac{x^{\miT}}{\abs{x}} \, d\mathcal{H}^{n-1}
		+ 2 \int_{B_r} \widehat{u} \, \widehat{F}'(x, \widehat{u}) \, dx.
	\end{multline*}
	Inserting this computation into the first term of \eqref{eq:V9.9}, then rescaling everything to $B_1$, we obtain
	\begin{align} \label{eq:V9.9_rescaled}
		0
		& =
		2 \int_{B_1} \nabla \widehat{u}_r \, \widehat{F}'_r(x, \widehat{u}_r) \, dx
		+ n \int_{B_1} \nabla \widehat{u}_r \, \widehat{A}_r \, \nabla \widehat{u}_r^{\miT} \, dx
		+ \int_{B_1} \nabla \widehat{u}_r \, (\nabla \widehat{A}_r \cdot x) \, \nabla \widehat{u}_r^{\miT} \, dx
		\nonumber
		\\ & \quad  
		- 2 \int_{B_1} \nabla \widehat{F}_r(x, \widehat{u}_r) \cdot x \, dx
		- 2n \int_{B_1} \widehat{F}_r(x, \widehat{u}_r) \, dx
		+ \Lambda \int_{B_1} \mathbf{1}_{\Omega_{\widehat{u}_r}} \nabla \widehat{q}_r \cdot x \, dx
		+ n \Lambda \int_{B_1} \mathbf{1}_{\Omega_{\widehat{u}_r}} \widehat{q}_r \, dx
		\nonumber 
		\\ & \quad 
		- 2 \int_{\partial B_1} \widehat{u}_r \, \nabla \widehat{u}_r \, \widehat{A}_r \, x^{\miT} \, d\mathcal{H}^{n-1}
		+ 2 \int_{\partial B_1} (\nabla \widehat{u}_r \, x^{\miT}) \, (\nabla \widehat{u}_r \, \widehat{A}_r \, x^{\miT}) \, d\mathcal{H}^{n-1}
		\\ & \quad 
		- \int_{\partial B_1} \nabla \widehat{u}_r \, \widehat{A}_r \, \nabla \widehat{u}_r^{\miT} \, d\mathcal{H}^{n-1}
		+ 2 \int_{\partial B_1} \widehat{F}_r(x, \widehat{u}_r) \, dx 
		- \Lambda \int_{\partial B_1} \mathbf{1}_{\Omega_{\widehat{u}_r}} \widehat{q}_r \, d\mathcal{H}^{n-1}.
		\nonumber
	\end{align}
	
	With this, we find
		\begin{align} \label{eq:diff_Ws}
		W_{\Lambda}(\widehat{z}_r) &- W_\Lambda(\widehat{u}_r) 
		=
		\int_{B_1} \abs{\nabla \widehat{z}_r}^2 \, dx
		+ \Lambda \, q(0) \abs{\Omega_{\widehat{z}_r} \cap B_1}
		- \int_{B_1} \abs{\nabla \widehat{u}_r}^2 \, dx
		- \Lambda \, q(0) \abs{\Omega_{\widehat{u}_r} \cap B_1}
		\nonumber
		\\ & =
		\frac1n \int_{\partial B_1} \left( \abs{\nabla \widehat{u}_r}^2 - \abs{\nabla \widehat{u}_r \cdot x}^2 \right) \, d\mathcal{H}^{n-1}
		+ \frac1n \int_{\partial B_1} \widehat{u}_r^2 \, d\mathcal{H}^{n-1}
		\nonumber
		\\ & \qquad + \frac{\Lambda}{n} \, q(0) \, \mathcal{H}^{n-1}(\Omega_{\widehat{u}_r} \cap \partial B_1) 
		\nonumber
		 - \int_{B_1} \abs{\nabla \widehat{u}_r}^2 \, dx
		- \Lambda \, q(0) \abs{\Omega_{\widehat{u}_r} \cap B_1}
		\nonumber
		\\ & =
		\frac1n \int_{\partial B_1} \abs{\nabla \widehat{u}_r \cdot x - \widehat{u}_r}^2 \, d\mathcal{H}^{n-1}
		+ \frac1n \int_{\partial B_1} \left( \abs{\nabla \widehat{u}_r}^2 - 2 \abs{\nabla \widehat{u}_r \cdot x}^2 \right) \, d\mathcal{H}^{n-1}
		\nonumber
		\\ & \qquad + \frac2n \int_{\partial B_1} \widehat{u}_r \nabla \widehat{u}_r \cdot x \, d\mathcal{H}^{n-1}
		+ \frac{\Lambda}{n} \, q(0) \, \mathcal{H}^{n-1}(\Omega_{\widehat{u}_r} \cap \partial B_1) 
		\\ & \qquad 
		- \int_{B_1} \abs{\nabla \widehat{u}_r}^2 \, dx
		- \Lambda \, q(0) \abs{\Omega_{\widehat{u}_r} \cap B_1}
		\nonumber
		\\ & =
		\frac1n \int_{\partial B_1} \abs{\nabla \widehat{u}_r \cdot x - \widehat{u}_r}^2 \, d\mathcal{H}^{n-1}
		+ \sum_{j = 1}^{10} \text{Error}_j.
		\nonumber
	\end{align}
	Here we have used: in the first step, the fact that $\widehat{u}_r \equiv \widehat{z}_r$ on $\partial B_1$; in the second step, explicit expressions for the terms involving $\widehat{z}_r$ by integrations in polar coordinates (just as in \cite[(9.5)]{V}); in the third step, we have simply inserted and removed $\frac1n \int_{\partial B_1} \abs{\nabla \widehat{u}_r \cdot x - \widehat{u}_r}^2 \, d\mathcal{H}^{n-1}$. In the last step, we have introduced all the terms in \eqref{eq:V9.9_rescaled} (divided by $n$), and rearranged everything into the following error terms:
	\begin{align*}
		& \text{Error}_1 
		:=
		\frac{1}{n} \int_{\partial B_1} \left( \abs{\nabla \widehat{u}_r}^2 - \nabla \widehat{u}_r \, \widehat{A}_r \, \nabla \widehat{u}_r^{\miT} \right) \, d\mathcal{H}^{n-1}, 
		\\ & %%%%%%%%%%%%%
		\text{Error}_2
		:=
		\frac{-2}{n} \int_{\partial B_1} \left( \abs{\nabla \widehat{u}_r \cdot x}^2 - (\nabla \widehat{u}_r \, x^{\miT}) \, (\nabla \widehat{u}_r \, \widehat{A}_r \, x^{\miT}) \right) \, d\mathcal{H}^{n-1},
		\\ & %%%%%%%%%%%%%
		\text{Error}_3
		:=
		\frac{2}{n} \int_{\partial B_1} \left( \widehat{u}_r \nabla \widehat{u}_r \cdot x - \widehat{u}_r \, \nabla \widehat{u}_r \, \widehat{A}_r \, x^{\miT} \right) \, d\mathcal{H}^{n-1},
		\\ & %%%%%%%%%%%%%
		\text{Error}_4
		:=
		\frac{\Lambda}{n} \left( q(0) \, \mathcal{H}^{n-1}(\Omega_{\widehat{u}_r} \cap B_1) - \int_{\partial B_1} \mathbf{1}_{\Omega_{\widehat{u}_r}} \widehat{q}_r \, d\mathcal{H}^{n-1} \right),
		\\ & %%%%%%%%%%%%%
		\text{Error}_5 
		:=
		- \int_{B_1} \left( \abs{\nabla \widehat{u}_r}^2 - \nabla \widehat{u}_r \, \widehat{A}_r \, \nabla \widehat{u}_r^{\miT} \right) \, dx, 
		\quad
		\text{Error}_6
		:= \!
		- \Lambda \bigg( q(0) \abs{\Omega_{\widehat{u}_r} \cap B_1} - \! \int_{B_1} \!\!\! \mathbf{1}_{\Omega_{\widehat{u}_r}} \widehat{q}_r \, dx	\bigg)	,
		\\ & %%%%%%%%%%%%%
		\text{Error}_7
		:=
		\frac{1}{n} \int_{B_1} \nabla \widehat{u}_r \, (\nabla \widehat{A}_r \cdot x) \, \nabla \widehat{u}_r^{\miT} \, dx,
		\quad \qquad \;\;
		\text{Error}_8
		:=
		- \frac{2}{n} \int_{B_1} \nabla \widehat{F}_r(x, \widehat{u}_r) \cdot x \, dx,
		\\ & %%%%%%%%%%%%%
		\text{Error}_9
		:=
		- 2 \int_{B_1} \widehat{F}_r(x, \widehat{u}_r) \, dx,
		\quad \qquad \qquad \qquad \;\;
		\text{Error}_{10}
		:=
		\frac{\Lambda}{n} \int_{B_1} \mathbf{1}_{\Omega_{\widehat{u}_r}} \nabla \widehat{q}_r \cdot x \, dx.
	\end{align*}
	
	It is now easy to see that $\abs{\text{Error}_j} = O(r)$ for every $1 \leq j \leq 10$, using the estimates in \eqref{eq:rescaling_bounds_u} and Lemma~\ref{lem:convergence_parameters} (which clearly also hold for $\widehat{A}, \widehat{F}$ and $\widehat{q}$ because there has only been a linear change of variables by the matrix $\sqrt{A(0)}$, which is elliptic and bounded \eqref{hip:A_elliptic_bounded}). Moreover, $\widehat{u}_r$ are uniformly bounded in $H^1(B_1)$ and $L^\infty$ by \eqref{eq:rescaling_bounds_u}. More precisely,
	\begin{align*}
		&
		\abs{\text{Error}_1}, \; \abs{\text{Error}_2}, \; \abs{\text{Error}_3}, \; \abs{\text{Error}_5}
		\lesssim 
		\|I - \widehat{A}_r\|_\infty \, L^2
		=
		O(r), 
		\\ & %%%%%%%%%%%%%%%%%%
		\abs{\text{Error}_4}, \; \abs{\text{Error}_6}
		\lesssim 
		\norm{q(0) - \widehat{q}_r}_\infty 
		= O(r),
		\\ & %%%%%%%%%%%%%%%%%%
		\abs{\text{Error}_7} 
		\lesssim 
		r \, \|\nabla \widehat{A}\|_\infty \, L^2 
		=
		O(r),
		\qquad \; 
		\abs{\text{Error}_8} 
		\lesssim 
		r \, \|\nabla \widehat{F}\|_\infty 
		=
		O(r),		
		\\ & %%%%%%%%%%%%%%%%%%
		\abs{\text{Error}_9}
		\lesssim 
		\int_{B_1} u(rx) \, dx
		=
		O(r),
		\qquad \;
		\abs{\text{Error}_{10}} 
		\lesssim 
		r \norm{\nabla \widehat{q}}_\infty
		=
		O(r).
	\end{align*}
	Inserting all these estimates back in \eqref{eq:diff_Ws} yields \eqref{eq:equipartition}, as claimed.

	\subsubsection*{Step 3: Blow-up procedure.} % 27-5
	Take any sequence $u_{r_j}$ (with $r_j \to 0$) converging to a blow-up limit $v := u_0$ as in Lemma~\ref{lem:convergence_blowups}. This convergence takes place in $H^1(B_1)$, where we are assuming that $R > 1$ in Lemma~\ref{lem:convergence_blowups} for the ease of notation. 
	
	It is clear that if we define $\widehat{u}(x) := u(x\sqrt{A(0)})$ and denote  its rescalings  $\widehat{u}_r$, and if we take $\widehat{v}(x) := u_0(x \sqrt{A(0)}) = \widehat{u}_0(x)$ as in Step 1, the same convergences take place (because we have only made a linear change of variables).

	We claim that 
	\begin{equation} \label{claim:limit_W}
		\ell := \lim_{r \to 0} W_\Lambda( \widehat{u}_r) \in \R
	\end{equation} 
	exists.
	Indeed, $W_\Lambda(\widehat{u}_r) \geq 
		- \int_{\partial B_1} \widehat{u}_r(x)^2 \, d\mathcal{H}^{n-1}
		\gtrsim 
		- L^2$
	by \eqref{eq:rescaling_bounds_u}. Here, note that \cite[Lemmas 9.1 and 9.2]{V} show that $W_\Lambda(\widehat{u}_r)$ is continuous and a.e.\ differentiable.	
	Moreover, \cite[Lemma 9.2]{V} also gives an expression for its derivative, in terms of $\widehat{z}_r$ from Step 2, namely
	\begin{equation} \label{eq:derivative_Weiss}
		\frac{\partial}{\partial r} W_\Lambda(\widehat{u}_r)
		=
		\frac{n}{r} \Big(W_\Lambda(\widehat{z}_r) - W_\Lambda(\widehat{u}_r)\Big) 
		+ \frac1r \int_{\partial B_1} \abs{x \cdot \nabla \widehat{u}_r - \widehat{u}_r}^2 \, d\mathcal{H}^{n-1},
	\end{equation}
	so that using \eqref{eq:equipartition} we obtain 
	\begin{equation*}
		\frac{\partial}{\partial r} W_\Lambda(\widehat{u}_r)
		\geq 
		- C.
	\end{equation*}
	
	It is elementary that a continuous and a.e.\ differentiable function which is bounded below in $(0, 1)$ and whose derivative is also bounded below in $(0, 1)$, like $W_\Lambda(\widehat{u}_r)$, indeed has a finite limit at 0. (Roughly speaking, if the limit did not exist, the function should oscillate ``infinitely rapidly'', which is prevented the lower bound on the derivative.) Thus we obtain~\eqref{claim:limit_W}, as claimed. % 18-9

	From the existence of the limit of $W_\Lambda(\widehat{u}_r)$ at $r=0$, the 1-homogeneity of blow-up limits is  standard (see e.g. \cite[Lemma 9.10]{V}). Indeed, we have the  rescaling property
	\begin{equation} \label{claim:limit_W_rescaled}
		\lim_{j \to \infty} W_{\Lambda} (\widehat{u}_{sr_j})
		=
		W_\Lambda(\widehat{v}_s),
		\qquad 
		\text{ for every $s \in (0, 1)$}
	\end{equation}
	because, as $\widehat{u}_{r_j} \to \widehat{v}$ in $H^1(B_1)$,
		\begin{equation*}
		\int_{B_1} \abs{\nabla \widehat{u}_{sr_j}(x)}^2 dx 
		=
		\int_{B_1} \abs{\nabla \left(\frac{\widehat{u}_{r_j}(sx)}{s} \right)}^2 dx 
		\underset{j \to \infty}{\longrightarrow}
		\int_{B_1} \abs{\nabla \left(\frac{\widehat{v}(sx)}{s}\right)}^2 dx
		=
		\int_{B_1} \abs{\nabla \widehat{v}_s}^2 dx.
	\end{equation*}	
	The terms leading to $\int_{\partial B_1} \widehat{v}_s^2 \, d\mathcal{H}^{n-1}$ and $\abs{\Omega_{\widehat{v}_s} \cap B_1}$ can be handled similarly, using the different modes of convergence in Lemma~\ref{lem:convergence_blowups}.
	
	Then, \eqref{claim:limit_W} and \eqref{claim:limit_W_rescaled} yield
	\begin{equation*}
		\frac{\partial}{\partial s} W_\Lambda(\widehat{v}_s) = 0, 
		\qquad 
		s \in (0, 1).
	\end{equation*}
	Inserting this back in our formula \eqref{eq:derivative_Weiss} for the derivative of $W_\Lambda$, and noting that, as discussed in Step 1, $\widehat{v}$ is a minimizer of $\widehat{\F}_{\Lambda, 0}$ (which coincides with $W_\Lambda$ up to the term supported on $\partial B_1$), we infer that 
	\begin{equation*} % Antes usábamos [V, Prop. 9.9], pero no hace falta
		\int_{\partial B_1} \abs{x \cdot \nabla \widehat{v}_s - \widehat{v}_s}^2 \, d \mathcal{H}^{n-1} = 0,
		\qquad 
		s \in (0, 1).
	\end{equation*}
	It is well known that this implies the 1-homogeneity of $\widehat{v}$ (see e.g.~\cite[Lemma 9.3]{V}), which clearly implies the 1-homogeneity of $v := u_0$.
\end{proof}

%%%%%%%%%%%%%%%%%%%%%%%%%%%%%%%%%%%%%%%%%%%%%%%%%%%%%%%%%%%%%%%%

\subsection{Overdetermined boundary condition}

To establish the Neumann boundary condition on~$\partial\Omega_u$, we need to work a bit more with blow-ups.
First, we prove an easy auxiliary lemma, which again shows $F$ essentially disappears after the blow-up.

\begin{lemma}[PDE for blow-up limits] \label{lem:blowups_harmonic}
	Assume \eqref{hip:setting}. Let $u_0$ be a blow-up limit for $u$ at $0 \in \pom_u$. Then $\Omega_{u_0}$ is open and $-\div (A_0 \nabla u_0) = 0$ in $\Omega_{u_0}$.
\end{lemma}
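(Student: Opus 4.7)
The plan is to transport the PDE $\cL u = F'(x,u)$ on $\Omega_u$ (Corollary~\ref{corol:EL2}) through the rescaling $u_r(x) = u(rx)/r$ and pass to the limit $r_j \to 0$, exploiting the convergences already established in Lemmas~\ref{lem:convergence_parameters} and~\ref{lem:convergence_blowups}.

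First I would note that $\Omega_{u_0}$ is open: by \eqref{eq:rescaling_bounds_u} each $u_{r_j}$ is $L$-Lipschitz, so their uniform limit $u_0$ is likewise $L$-Lipschitz, hence continuous, and $\{u_0>0\}$ is open.

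Next, fix $\phi \in C^\infty_c(\Omega_{u_0})$. Since $\supp \phi$ is a compact subset of the open set $\Omega_{u_0}$, continuity gives $u_0 \geq c > 0$ on $\supp \phi$ for some $c>0$, and by the uniform convergence $u_{r_j} \to u_0$ it follows that $u_{r_j} \geq c/2$ on $\supp \phi$ once $j$ is large enough; in particular $\supp \phi \subset \Omega_{u_{r_j}}$. A direct change of variables $y = r_j x$ in the weak formulation $\int A \nabla u \cdot \nabla \psi \, dy = \int F'(y,u) \psi \, dy$ with $\psi(y) := \phi(y/r_j)$ yields (after dividing through by $r_j^{n-1}$, using $\nabla u_{r_j}(x)=\nabla u(r_j x)$):
\begin{equation*}
\int_{\Rn} \nabla \phi \, A_{r_j} \, \nabla u_{r_j}^{\miT} \, dx \;=\; r_j \int_{\Rn} F'(r_j x, \, r_j u_{r_j}(x)) \, \phi(x) \, dx.
\end{equation*}

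Finally I would pass to the limit $j \to \infty$. The right-hand side vanishes since $|F'| \leq M_1$ by \eqref{eq:def_M_1}, so it is bounded in absolute value by $r_j M_1 \|\phi\|_{L^1} \to 0$. On the left-hand side, $A_{r_j} \to A_0$ uniformly on $\supp \phi$ by Lemma~\ref{lem:convergence_parameters}, $\nabla u_{r_j} \rightharpoonup \nabla u_0$ in $L^2(\supp \phi)$ by Lemma~\ref{lem:convergence_blowups}, and $A_{r_j}$ is uniformly bounded by \eqref{hip:A_elliptic_bounded}; combining these, a standard argument (split $A_{r_j}=A_0 + (A_{r_j}-A_0)$, handle the first summand via weak convergence against $A_0 \nabla \phi^{\miT}\in L^2$ and the second via uniform smallness of $A_{r_j}-A_0$ together with the uniform $L^2$ bound on $\nabla u_{r_j}$) gives $\int \nabla \phi \, A_{r_j} \, \nabla u_{r_j}^{\miT} \, dx \to \int \nabla \phi \, A_0 \, \nabla u_0^{\miT} \, dx$. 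Hence $\int \nabla \phi \, A_0 \, \nabla u_0^{\miT} \, dx = 0$ for every $\phi\in C^\infty_c(\Omega_{u_0})$, which is the weak form of $-\div(A_0 \nabla u_0) = 0$ in $\Omega_{u_0}$. I do not anticipate a genuine obstacle: the only subtleties are the bookkeeping of the rescaling factors (the explicit $r_j$ prefactor on the right-hand side is exactly what kills the nonlinearity in the limit) and ensuring $\supp\phi\subset\Omega_{u_{r_j}}$ for large~$j$, both of which follow directly from the preceding lemmas.
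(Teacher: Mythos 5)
Your proof is correct, but it takes a different route from the paper's. The paper disposes of the PDE in one line: by Corollary~\ref{corol:blowup_minimizer}, the blow-up limit $u_0$ is a local minimizer of the limiting functional $\F_{\Lambda,0}$ (which has no $F$-term), so the equation $-\div(A_0\nabla u_0)=0$ in $\Omega_{u_0}$ follows by the same Euler--Lagrange argument as in Corollary~\ref{corol:EL2}. You instead pass to the limit directly in the rescaled weak formulation of $\cL u = F'(x,u)$, using the explicit $r_j$ prefactor to kill the nonlinearity and the convergences of Lemmas~\ref{lem:convergence_parameters} and~\ref{lem:convergence_blowups} to handle the left-hand side. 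Your computation of the rescaled identity is right, and the observation that $\supp\phi\subset\Omega_{u_{r_j}}$ for large $j$ (via uniform convergence and $u_0\ge c>0$ on the compact support) is exactly the point that makes the test function admissible. What your approach buys is independence from the local-minimality statement in Corollary~\ref{corol:blowup_minimizer} (the harder half of that corollary): you only need the known PDE for $u$, uniform convergence, and weak $H^1$ convergence of the gradients — in fact Lemma~\ref{lem:convergence_blowups} even gives strong $H^1$ convergence, so your limit passage on the left-hand side is easier than you make it. What the paper's route buys is brevity, since the minimality of $u_0$ has already been established for other purposes (the Weiss monotonicity and the classification of blow-ups). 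The treatment of openness of $\Omega_{u_0}$ is identical in both.
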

\begin{proof}
	Since $u_{r_j}$ are Lipschitz with constant $L$ (Proposition~\ref{prop:lipschitz}), by Arzelà-Ascoli and the uniform convergence in Lemma~\ref{lem:convergence_blowups}, $u_0$ is Lipschitz, too. Concretely, continuity implies that $\Omega_{u_0}$ is open. 
	In turn, the PDE follows as in Corollary~\ref{corol:EL2} from the fact that $u_0$ is a local minimizer of $\F_{\Lambda, 0}$ in $B_1$ (see Corollary~\ref{corol:blowup_minimizer}).
\end{proof}

Next, we prove a couple of auxiliary lemmas that will help us classify the blow-up limits.

\begin{lemma}[Non-degeneracy of blow-up limits] \label{lem:blowups_not_zero}
	Assume \eqref{hip:setting}. Let $u_0$ be a blow-up limit for $u$ at $0 \in \pom_u$. Then $u_0 \not \equiv 0$.
\end{lemma}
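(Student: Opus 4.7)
The plan is to derive this from the non-degeneracy estimate in Proposition~\ref{prop:non_degeneracy} by rescaling and passing to the limit. Since $0 \in \pom_u$ is a boundary point, every ball $B_\rho$ meets $\Omega_u$, so $u$ is not identically zero on any $B_{r/8}$. The contrapositive of Proposition~\ref{prop:non_degeneracy} therefore forces a uniform lower bound on the spherical averages of $u$ at the origin, which the rescaled functions $u_r$ inherit because they are scale-invariant under such averages.

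More concretely, first I would observe that by the contrapositive of Proposition~\ref{prop:non_degeneracy}, for every $0 < r < r_0$ one has
\begin{equation*}
\fint_{\partial B_r} u \, d\mathcal{H}^{n-1} > \kappa_0\, r.
\end{equation*}
Next, I would use the change of variables $y = rx$ (which transforms $\partial B_r$ into $\partial B_1$ with a Jacobian that cancels against the division by $r$ in the definition $u_r(x)=u(rx)/r$) to rewrite this as
\begin{equation*}
\fint_{\partial B_1} u_r \, d\mathcal{H}^{n-1} = \frac{1}{r}\fint_{\partial B_r} u \, d\mathcal{H}^{n-1} > \kappa_0,
\qquad 0<r<r_0.
\end{equation*}

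Finally, along the sequence $r_j \to 0$ producing the blow-up limit $u_0$ as in Lemma~\ref{lem:convergence_blowups}, the convergence $u_{r_j} \to u_0$ is uniform on $B_R \supset \partial B_1$, so the averages converge and I obtain
\begin{equation*}
\fint_{\partial B_1} u_0 \, d\mathcal{H}^{n-1} = \lim_{j \to \infty} \fint_{\partial B_1} u_{r_j} \, d\mathcal{H}^{n-1} \geq \kappa_0 > 0,
\end{equation*}
which immediately rules out $u_0 \equiv 0$. There is no real obstacle here: the non-degeneracy estimate is scale-invariant in precisely the right way, and uniform convergence on $\partial B_1$ (already granted by Lemma~\ref{lem:convergence_blowups}) is enough to preserve the strict positivity of the average in the limit.
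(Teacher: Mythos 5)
Your proof is correct and follows essentially the same route as the paper: both arguments rescale the non-degeneracy estimate of Proposition~\ref{prop:non_degeneracy} (applicable at $0\in\pom_u$ since $u$ is not identically zero in any ball around a free boundary point) and pass to the limit along the blow-up sequence. The only, immaterial, difference is the final step: you pass to the limit in the spherical average over $\partial B_1$ using the uniform convergence from Lemma~\ref{lem:convergence_blowups}, whereas the paper integrates the rescaled averages over all radii in $(0,1)$ to get a uniform lower bound on $\abs{\Omega_{u_{r_j}}\cap B_1}$ and then invokes the $L^1$ convergence of the indicator functions.
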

\begin{proof} % 27-5/4
		Since $0 \in \pom_u$, the non-degeneracy property (Proposition~\ref{prop:non_degeneracy}) 
	implies, for $r$ small enough, say $0 < r < r_0$, that $\fint_{\partial B_r} u \, d\mathcal{H}^{n-1} \geq \kappa_0 r$. It is easy to see, using a change of variables, that this implies (if $r_j$ is small enough) that $\fint_{\partial B_r} u_{r_j} \, d \mathcal{H}^{n-1} \geq \kappa_0 r$ for $r \in (0, 1)$. (Note that the constant does not change after the rescaling). Using also \eqref{eq:rescaling_bounds_u}, we can estimate 
	\begin{equation*}
		L \, |\Omega_{u_{r_j}} \cap B_1|
		\geq 
		\int_{B_1} u_{r_j} \,dx 
		=
		\int_0^1 \int_{\partial B_r} u_{r_j} \, d\mathcal{H}^{n-1} \, dr
		\gtrsim 
		\int_0^1 \kappa_0 r^n \, dr 
		\gtrsim 
		\kappa_0, 
	\end{equation*}
	so $|\Omega_{u_{r_j}} \cap B_1| \gtrsim 1$ uniformly on $j$. By the convergence of indicator functions in Lemma~\ref{lem:convergence_blowups}, we obtain $\abs{\Omega_{u_0} \cap B_1} \gtrsim 1$. The result follows.
\end{proof}

\begin{lemma}[Upper densities] \label{lem:density_upper_bound}
	Assume \eqref{hip:setting}. Then $\displaystyle \lim_{r \to 0} \dfrac{\abs{\Omega_u \cap B_r(x_0)}}{\abs{B_r}} < 1$ for any $x_0 \in \pom_u$.
\end{lemma}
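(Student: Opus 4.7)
The plan is to argue by contradiction using the blow-up analysis of the previous subsection. After translating so that $x_0 = 0$, assume for contradiction that there exists a sequence $r_j \to 0^+$ along which
\[
\frac{|\Omega_u \cap B_{r_j}|}{|B_{r_j}|} = |\Omega_{u_{r_j}} \cap B_1| / |B_1| \;\longrightarrow\; 1,
\]
where $u_{r}(x) := u(rx)/r$. Apply Lemma~\ref{lem:convergence_blowups} (say, with $R = 2$) to extract a subsequence, still denoted $r_j$, along which $u_{r_j} \to u_0$ in $H^1(B_1)$ and $\mathbf{1}_{\Omega_{u_{r_j}}} \to \mathbf{1}_{\Omega_{u_0}}$ in $L^1(B_1)$. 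The density assumption forces $|\Omega_{u_0} \cap B_1| = |B_1|$. By Lemma~\ref{lem:blowups_harmonic}, $\Omega_{u_0}$ is open, so $B_1 \setminus \Omega_{u_0}$ is an open set of zero measure, hence empty: $B_1 \subset \Omega_{u_0}$.

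Next I would exploit the 1-homogeneity from Lemma~\ref{lem:1-homogeneous}: for any unit vector $\omega$, the identity $u_0(\omega) = 2\, u_0(\omega/2)$ and the positivity on $B_1$ give $u_0(\omega) > 0$. Homogeneity then yields $\Omega_{u_0} = \R^n \setminus \{0\}$ (and $u_0(0) = 0$). Combining this with Lemma~\ref{lem:blowups_harmonic}, $u_0$ is a non-negative, Lipschitz (hence locally bounded), $1$-homogeneous weak solution of $-\div(A_0 \nabla u_0) = 0$ on $\R^n \setminus \{0\}$.

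The key technical step is to upgrade this equation to all of $\R^n$: since the single point $\{0\}$ has vanishing $H^1$-capacity (for $n \geq 2$) and $u_0$ is locally bounded, the standard removable singularity theorem for solutions of uniformly elliptic equations in divergence form gives $-\div(A_0 \nabla u_0) = 0$ weakly on all of $\R^n$. The main obstacle I anticipate is precisely the justification of this removability, and to make it clean I would perform the linear change of variables $\widehat{u}_0(y) := u_0(y \sqrt{A(0)})$ already used in the proof of Lemma~\ref{lem:1-homogeneous}; this produces a non-negative, $1$-homogeneous function which is harmonic on $\R^n \setminus \{0\}$ and bounded on compact sets, so by the classical removable singularity theorem for bounded harmonic functions, $\widehat{u}_0$ is harmonic on all of $\R^n$.

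To conclude, observe that $\widehat{u}_0$ has at most linear growth (from $1$-homogeneity and boundedness on $\partial B_1$), so by Liouville's theorem it is an affine function; the condition $\widehat{u}_0(0) = 0$ reduces it to $\widehat{u}_0(y) = a \cdot y$, and non-negativity on $\R^n$ forces $a = 0$. Thus $\widehat{u}_0 \equiv 0$, and consequently $u_0 \equiv 0$, directly contradicting Lemma~\ref{lem:blowups_not_zero}. This contradiction establishes the claim.
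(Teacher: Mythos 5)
Your overall strategy (blow-up, Liouville, contradiction with non-degeneracy) is genuinely different from the paper's, which instead runs a direct quantitative comparison: it uses the replacement $h$ from Lemma~\ref{lem:harmonic_replacement} together with the energy estimate \eqref{eq:AC_3.2_estimate} and the non-degeneracy bound $h\gtrsim \kappa_0 r$ on $B_{r/2}(x_0)$ to force $\abs{\{u=0\}\cap B_r(x_0)}\gtrsim \abs{B_r}$ at every scale. Unfortunately, your argument has a genuine gap at its very first substantive step. From $\abs{\Omega_{u_0}\cap B_1}=\abs{B_1}$ and the openness of $\Omega_{u_0}$ you conclude that ``$B_1\setminus\Omega_{u_0}$ is an open set of zero measure, hence empty.'' The complement of an open set is \emph{closed}, not open, and a closed null set need not be empty. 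This is not a cosmetic slip: the function $u_0(x)=\alpha x_n^+ + \beta x_n^-$ with $\alpha,\beta>0$ is nonnegative, Lipschitz, $1$-homogeneous, harmonic on its open positivity set, and its zero set $\{x_n=0\}$ is a nonempty null set. Excluding precisely this ``two-plane'' profile is the entire content of Lemma~\ref{lem:density_upper_bound} — it is exactly what the lemma is invoked for in Step 1 of the proof of Proposition~\ref{prop:overdetermined} — so your deduction assumes away the only nontrivial case. Everything downstream (positivity on all rays, removability of the origin, Liouville) rests on this false step.

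The argument is repairable, but only by adding a substantive ingredient you have not used: the local minimality of the blow-up $u_0$ for $\F_{\Lambda,0}$ from Corollary~\ref{corol:blowup_minimizer}. If $\{u_0=0\}\cap B_1$ were nonempty but of measure zero, replacing $u_0$ in a small ball by the solution of $\div(A_0\nabla w)=0$ with the same boundary data would strictly decrease the quadratic term (since $u_0$ is $A_0$-subharmonic across its null zero set and not $A_0$-harmonic there), while increasing $\Vol_{q_0}(\Omega_{u_0})$ by at most the measure of the zero set, which is zero; this contradicts minimality. Without an argument of this kind (or the paper's direct measure estimate), the proof does not go through. Note also that your soft compactness argument would at best give the pointwise statement, whereas the paper's computation yields a uniform gap $\abs{\Omega_u\cap B_r(x_0)}/\abs{B_r}\leq 1-c$ with $c>0$ depending only on the admissibility constants, which is the form actually needed in the $\varepsilon$-regularity machinery.
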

\begin{proof} % 27-5/6
	Using the function~$h$ from Lemma~\ref{lem:harmonic_replacement}, which is $\cL $-superharmonic (because $F' \geq 0$, see \eqref{hip:negative}), we can compute, for small $r$,
	\begin{equation*}
		h(x)
		\gtrsim 
		\fint_{\partial B_r(x_0)} h \, d\mathcal{H}^{n-1}
		\geq 
		\kappa_0 r, 
		\qquad x \in B_{r/2}(x_0).
	\end{equation*}
	Here we have used Lemma~\ref{lem:harnack} (applied to the function $-h(x) + \norm{u}_\infty$, which is non-negative and $\cL $-subharmonic) and Proposition~\ref{prop:non_degeneracy}. This estimate and \eqref{eq:AC_3.2_estimate} allow us to follow the proof of \cite[Lemma 5.1]{V}. Indeed, since by \eqref{eq:rescaling_bounds_u} it holds $u \leq L \varepsilon r$ in $B_{\varepsilon r}(x_0)$, we can compute (using $\varepsilon$ small enough so that $\varepsilon L \ll \kappa_0$)
	\begin{multline*}
		\abs{B_{\varepsilon r}}
		\lesssim 
		\frac1r \int_{B_{\varepsilon r}(x_0)} (h-u) \, dx
		\leq 
		\frac1r \int_{B_r(x_0)} (h-u) \, dx
		\\ \lesssim 
		\abs{B_r}^{1/2} \left( \int_{B_r(x_0)} \abs{\nabla (h-u)}^2 \, dx \right)^{1/2}
		\lesssim 
		\abs{B_r}^{1/2} \Big(\lambda^{-1} \overline{q} \, \Lambda \abs{\{u = 0\} \cap B_r(x_0)} \Big)^{1/2},
	\end{multline*} 
	where we have used Poincaré's inequality and Lemma~\ref{lem:harmonic_replacement}. The result then follows.
\end{proof}

We are ready to finish, obtaining the overdetermined condition as a consequence of a classification of the possible blow-up limits. In general, it will only hold in the viscosity sense:

\begin{definition}[Viscosity sense] \label{def:viscosity}
	Given $B: \Rn \to \R$, we say that $\nabla u(x) \, A(x) \, \nabla u(x)^{\miT} \leq B(x)$ in the viscosity sense at $x_0 \in \pom_u$ if whenever $\varphi \in C^\infty(\Rn)$ touches $u$ from below at $x_0$ (i.e. $\varphi(x_0) = u(x_0)$ and $\varphi \leq u$ in a neighborhood of $x_0$), we have $\nabla \varphi(x_0) \, A(x_0) \, \nabla \varphi(x_0)^{\miT} \leq B(x_0)$. In turn, $\nabla u(x) \, A(x) \, \nabla u(x)^{\miT} \geq B(x)$ is defined similarly. Thus, one says $\nabla u(x) \, A(x) \, \nabla u(x)^{\miT} = B(x)$ in the {\em viscosity sense}\/ at a point~$x_0\in\partial\Omega_u$ when both conditions hold.
\end{definition}

\begin{proposition}[Overdetermined boundary condition] \label{prop:overdetermined}
	Assume \eqref{hip:setting}. Then $$\nabla u(x) \, A(x) \, \nabla u(x)^{\miT} = \Lambda \, q(x)$$ in the viscosity sense for all  $x\in \pom_u$. 
	\end{proposition}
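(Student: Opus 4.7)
The plan is a blow-up argument that reduces the overdetermined boundary condition to a classical Alt--Caffarelli fact about one-homogeneous minimizers of the constant coefficient free boundary energy (cf.\ \cite{AC} and \cite[Prop.~7.2]{V}).

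For the inequality $\leq$, fix any $x_0 \in \pom_u$ (by translation one may assume $x_0 = 0$) and a test function $\varphi \in C^\infty(\Rn)$ touching $u$ from below at $0$, so that $\varphi(0) = u(0) = 0$ and $\varphi \leq u$ near $0$; writing $\beta := \nabla \varphi(0)$, the goal becomes $\beta A(0) \beta^{\miT} \leq \Lambda q(0)$. The case $\beta = 0$ is trivial, so assume $\beta \neq 0$. The first step would be to blow up: along a subsequence $r_j \downarrow 0$, Lemma~\ref{lem:convergence_blowups} produces a limit $u_0$ of the rescalings $u_{r_j}(x) := u(r_j x)/r_j$; by Lemma~\ref{lem:1-homogeneous}, Lemma~\ref{lem:blowups_not_zero} and Corollary~\ref{corol:blowup_minimizer}, $u_0$ is a nonzero one-homogeneous local minimizer of $\F_{\Lambda, 0}$. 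The rescaled $\varphi_{r_j}(x) := \varphi(r_j x)/r_j$ converges locally uniformly to $\ell(x) := \beta \cdot x$, and passing $\varphi_{r_j} \leq u_{r_j}$ to the limit yields $u_0(x) \geq \ell(x)_+$ on all of $\Rn$.

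Next one would straighten the operator via the linear substitution $y = x \sqrt{A(0)}^{-1}$ used in Step~1 of the proof of Lemma~\ref{lem:1-homogeneous}, setting $\widehat u_0(y) := u_0(y \sqrt{A(0)})$. Then $\widehat u_0$ is a nonzero one-homogeneous local minimizer of
\[
\widehat \F_{\Lambda, 0}(v) := \int_\Rn |\nabla v|^2\, dy + \Lambda q(0)\, |\Omega_v|,
\]
and $u_0 \geq \ell_+$ becomes $\widehat u_0(y) \geq (\widehat \beta \cdot y)_+$ with $\widehat \beta := \beta \sqrt{A(0)}$. Since $|\widehat \beta|^2 = \beta A(0) \beta^{\miT}$, the proof reduces to the following Alt--Caffarelli-type fact (cf.\ \cite[Prop.~7.2]{V}): any nontrivial one-homogeneous local minimizer $v$ of $w \mapsto \int |\nabla w|^2 + \Lambda q(0)\, |\{w > 0\}|$ satisfying $v(y) \geq \alpha\, (y \cdot e_n)_+$ on $\Rn$ must obey $\alpha^2 \leq \Lambda q(0)$. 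One would establish this by testing the minimality of $v$ against an explicit competitor built from the half-plane solution $h(y) := \sqrt{\Lambda q(0)}\, (y \cdot e_n)_+$: assuming $\alpha^2 > \Lambda q(0)$, a careful interpolation between $v$ on $\partial B_1$ and a suitable modification of $h$ on $B_{1/2}$ would yield a function of strictly smaller energy, contradicting minimality.

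The reverse inequality at a point where a smooth $\varphi$ touches $u$ from above (in the sense that $\varphi^+ \geq u$ near $x_0$ with $\varphi(x_0) = 0$) is entirely symmetric: the blow-up now yields the reverse bound $\widehat u_0(y) \leq (\widehat \beta \cdot y)_+$, and the companion half-plane minimality lemma, combined with the non-degeneracy from Proposition~\ref{prop:non_degeneracy}, would force $|\widehat \beta|^2 \geq \Lambda q(0)$. Together, these give the full viscosity identity at every $x_0 \in \pom_u$. The main obstacle is the Alt--Caffarelli-type minimality lemma underlying the central reduction: while the blow-up and straightening use only the machinery already built in Sections~\ref{sec:lip} and~\ref{sec:boundary}, carrying out the energy comparison against the half-plane solution is delicate, as the competitor must simultaneously respect the nonnegativity constraint, match $v$ on $\partial B_1$, and improve the energy by a precisely quantified amount.
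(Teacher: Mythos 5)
Your blow-up-and-straighten architecture is the same as the paper's, and the reductions you carry out (rescaling $\varphi$, passing $\varphi_{r_j}\le u_{r_j}$ to the limit to get $u_0\ge \ell_+$, the substitution $y\mapsto y\sqrt{A(0)}$ with $|\widehat\beta|^2=\beta A(0)\beta^{\miT}$) are all correct. The genuine gap is exactly where you locate it: the ``Alt--Caffarelli-type minimality lemma'' is left as a sketch, and as stated it hides the one step that cannot be done by a soft interpolation argument. Note that the symmetric two-plane profile $v=\alpha\,(y\cdot e_n)_++\alpha\,(y\cdot e_n)_-$ is $1$-homogeneous, harmonic where positive, \emph{stationary} for $\widehat\F_{\Lambda,0}$ (its zero set is Lebesgue-null, so the volume term contributes nothing to the first variation), and satisfies $v\ge\alpha(y\cdot e_n)_+$ for arbitrarily large $\alpha$. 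So your key lemma is false for stationary points and must use full local minimality in an essential way to rule this profile out; your proposed competitor ``interpolating between $v$ on $\partial B_1$ and a modification of $h$ on $B_{1/2}$'' does not obviously do so, and this exclusion is precisely the delicate part. The paper handles it with a separate measure-theoretic input, Lemma~\ref{lem:density_upper_bound} (the upper density of $\Omega_u$ at free boundary points is strictly below $1$), which your proposal never invokes.

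Beyond that, the two proofs diverge in how the slope is pinned down. The paper first \emph{classifies} the blow-up: since $\{x_n>0\}\subset\{\widehat u_0>0\}$ and $\widehat u_0$ is $1$-homogeneous and harmonic where positive, \cite[Lemma 9.16]{V} forces $\widehat u_0=\alpha x_n^++\beta x_n^-$, the density bound kills the second term, and then testing the \emph{stationarity} of $\widehat u_0$ against $\xi=(0,\dots,0,\xi_n)$ yields the exact identity $\alpha=\sqrt{\Lambda q(0)}$ -- no competitor construction and no inequality to chase; both the upper and lower viscosity bounds then follow from the single formula for $\widehat u_0$ by comparing $|\nabla\varphi(0)|$ with $|\nabla u_0(0)|$. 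Your route instead proves two one-sided energy-comparison lemmas (one for ``$v\ge\alpha(y\cdot e_n)_+$'', one for ``$v\le\alpha(y\cdot e)_+$ with $\{v>0\}$ in a half-space''). These statements are true for genuine local minimizers and are classical in the constant-coefficient theory, so the route is viable if you either import them from \cite{V} or supply the competitor construction together with the exclusion of the two-plane profile; as written, that central step is asserted rather than proved.
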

\begin{proof} % 27-5
	With the auxiliary results we have established, the proof parallels \cite[Prop. 9.18]{V}. Let us see the differences.  To classify blow-up limits in our context, it is more convenient to run the proof in different coordinates, so that  in the limit we encounter the Laplacian (instead of the constant coefficient PDE in Lemma~\ref{lem:blowups_harmonic}). 
	
	\subsubsection*{Step 1: Upper bound.} Assume that $0 \in \pom_u$ (as discussed along this section, we can do this without loss of generality). As in the proof of Lemma~\ref{lem:1-homogeneous}, define 
	\[
	\widehat{u}(x) := u(x \sqrt{A(0)}), 
	\qquad 
	\widehat{u}_0(x) := u_0(x \sqrt{A(0)}),
	\]
	so that the rescalings $\widehat{u}_{r_j}$ converge to $\widehat{u}_0$. It is easy to see that $-\Delta \widehat{u}_0 = 0$ in $\Omega_{\widehat{u}_0}$ (changing variables in Lemma~\ref{lem:blowups_harmonic}, or directly because $\widehat{u}_0$ is a local minimizer of $\widehat{\F}_{\Lambda, 0}$, see Step 1 of Lemma~\ref{lem:1-homogeneous}).	
	
	Let $\varphi \in C^\infty(\Rn)$ touch $u$ from below at $0$. Then, $\widehat{\varphi}(x) := \varphi(x \sqrt{A(0)})$ touches $\widehat u$ from below at $0$.
	If we rescale $\widehat \varphi_{r_j}(x) := r_j^{-1} \widehat \varphi(r_j x)$, it is clear that $\widehat \varphi_{r_j} \longrightarrow \widehat \varphi_0$ uniformly in $B_1$, as $j \to \infty$, perhaps up to a subsequence, because $\widehat \varphi$ is smooth. The smoothness of $\widehat \varphi$ implies that $\widehat \varphi_0(x) = \nabla \widehat \varphi(0) \cdot x$. Without loss of generality, using a rotation we may write $\widehat \varphi_0(x) = B \, x_n$ for $B := \abs{\nabla \widehat \varphi(0)}$. 
	Moreover, we can assume that $B > 0$, for otherwise $\abs{\nabla \varphi(0)} \leq \sqrt{\Lambda}$ is trivially satisfied.	 
	 Since $\widehat{u}_0 \geq \widehat \varphi_0$ because $\widehat \varphi$ touches $\widehat{u}$ from below, this implies that $\widehat{u}_0 > 0$ in $\{x_n > 0\}$. 
	 
	Thus $\widehat{u}_0$ is a 1-homogeneous (see Lemma~\ref{lem:1-homogeneous}) harmonic function in $\{x_n > 0\} \subset \{\widehat{u}_0 > 0\}$. Therefore, by \cite[Lemma 9.16]{V}, 
	\[
	\text{either } \quad  \widehat{u}_0(x) = \alpha x_n^+, 
	\quad \text{ or } \quad \widehat{u}_0(x) = \alpha x_n^+ + \beta x_n^-, 
	\qquad 
	\text{for some $\alpha, \beta > 0$}.
	\]
	But the latter possibility contradicts Lemma~\ref{lem:density_upper_bound}, so $\widehat{u}_0(x) = \alpha x_n^+$ for some $\alpha > 0$.
	Hence testing the stationarity of $\widehat{u}_0$ for $\widehat{F}_{\Lambda, 0}$ (see Step 1 in Lemma~\ref{lem:1-homogeneous}) against $\xi(x) := (0, \ldots, 0, \xi_n(x))$, we find:
	\begin{equation*}
		0
		=
		\delta \widehat\F_{\Lambda, 0} (\widehat u_0)[\xi]
		=
		\int_{\R^n_+} \left( -2\alpha^2 \partial_{x_n} \xi_n + \alpha^2 \div \xi + \Lambda q(0) \div \xi \right) dx
		= 
		\int_{\R^n_+} (-\alpha^2 + \Lambda q(0)) \partial_{x_n} \xi_n \, dx.
	\end{equation*}
	Since $\xi$ is arbitrary, this yields $\alpha = \sqrt{\Lambda q(0)}$. 
	
	Thus we have shown that $$\widehat{u}_0(x) = \sqrt{\Lambda q(0)} \left(\dfrac{\nabla \widehat \varphi(0)}{\abs{\nabla \widehat \varphi (0)}} \cdot x\right)_+,$$ which implies that $$u_0(x) = \dfrac{\sqrt{\Lambda q(0)}}{(\nabla \varphi(0) \, A(0) \, \nabla \varphi(0)^{\miT})^{1/2}} \, (\nabla \varphi(0) \cdot x)_+.$$
	Note that since $u_0 \geq \varphi_0$ and $ u_0(0) = 0 = \varphi_0(0)$, we have $$\abs{\nabla \varphi(0)} = \abs{\nabla \varphi_0(0)} \leq \abs{\nabla u_0(0)} = \sqrt{\Lambda q(0)} \dfrac{\abs{\nabla \varphi(0)}}{(\nabla \varphi(0) \, A(0) \, \nabla \varphi(0)^{\miT})^{1/2}},$$
	which finally gives $\nabla \varphi(0) \, A(0) \, \nabla \varphi(0)^{\miT} \leq \Lambda q(0)$.
		
	\subsubsection*{Step 2: Lower bound.} Similarly, let $\varphi \in C^\infty(\Rn)$ touch $u$ from above at $0 \in \pom_u$, and change coordinates again so that $\widehat \varphi$ touches $\widehat u$ from above at $0$. Doing again the blow-up procedure, we obtain blow-up limits $\widehat u_0$ and $\widehat \varphi_0$. By Lemma~\ref{lem:blowups_not_zero}, $\widehat u_0 \not \equiv 0$, whence $B := \abs{\nabla \widehat \varphi(0)} > 0$ because $\widehat \varphi$ touches $\widehat u$ from above. Therefore, since $\widehat u_0 \leq \widehat \varphi_0$, it holds $\{\widehat u_0 > 0\} \subset \{\widehat \varphi_0 > 0\} \subset \{ x \in \Rn : \nabla \varphi(0) \cdot x > 0 \}$, where we note that the last set is simply a (possibly rotated) half-space. Thus, since $\widehat u_0$ is 1-homogeneous and harmonic, \cite[Lemma 9.15]{V} implies that $\widehat{u}_0(x) = \alpha \, \Big(\dfrac{\nabla \widehat \varphi(0)}{\abs{\nabla \widehat \varphi(0)}} \cdot x\Big)_+$ for some $\alpha > 0$. Arguing as in Step 1, we obtain $\alpha = \sqrt{\Lambda q(0)}$, and later $\nabla \varphi(0) \, A(0) \, \nabla \varphi(0)^{\miT} \geq \Lambda \, q(0)$.	
\end{proof}

%%%%%%%%%%%%%%%%%%%%%%%%%%%%%%%%%%%%%%%%%%%%%%%%%%%%%%%%%%%%%%%%

\subsection{Decomposition and properties of the free boundary}

As is customary in elliptic free boundary problems, the boundary $\pom_u$ will consist of a part which is regular, and a singular part that is small in some sense (in our setting, in the sense of Hausdorff dimension). 

To make this precise, we will only need minor modifications to the classical approach, as presented  in \cite[Sections 6--10]{V}. This is because the improvement of flatness for rather general PDE (the essential step when showing regularity of the free boundary) was already developed by De Silva, Ferrari and Salsa in \cite{DFS2}, extending the strategy of De Silva in~\cite{DS}. 

We start by the definition of the regular and singular parts of the boundary. We have to make a modification: as already seen in Proposition~\ref{prop:overdetermined}, we expect the blow-ups limits after the change of variables to have the shape $\widehat{u}_0(x) = (x \cdot \widehat \nu)_+$ for some unit vector $\widehat \nu \in \partial B_1$. Therefore, before the change of variables, these blow-up limits should be $u_0(x) = \widehat{u}_0(x\sqrt{A(0)}^{-1}) = (x \cdot \sqrt{A(0)}^{-1}\widehat \nu)_+ =: (x \cdot \nu)_+$, where now $\abs{\nu} \in [\lambda^{1/2}, \lambda^{-1/2}]$ by \eqref{hip:A_elliptic_bounded}. Since the distortion is always uniformly controlled (depending on the constant $\lambda$), the subsequent arguments will work without major modifications.

\begin{definition}[Regular and singular parts of the boundary]
	We define the {\em regular part}\/ of the boundary, $\Reg(\pom_u)$, as the set of points $x_0 \in \pom_u$ such that there exists some blow-up limit $u_0$ at $x_0$ taking the form  $$u_0(x) = h_{\nu, x_0}(x) := \sqrt{\Lambda q(x_0)} \, ((x-x_0) \cdot \nu)_+$$ for some vector $\nu \in \Rn$ with $\abs{\nu} \in [\lambda^{1/2}, \lambda^{-1/2}]$. The {\em singular part}\/ of the boundary is $\Sing(\pom_u) := \pom_u \setminus \Reg(\pom_u)$.
\end{definition}

The smoothness of the regular part is proven using robust techniques (see e.g.~\cite[Corollary 8.2]{V}) that need essentially no modifications in our setting:

\begin{proposition}[Regularity of $\Reg(\pom_u)$] \label{prop:regular}
	Assume \eqref{hip:setting}. Then, there exists $r_0 > 0$ so that, for every $x_0 \in \Reg(\pom_u)$, $\pom_u \cap B_{r_0}(x_0)$ is a $C^{1, \alpha}$ regular manifold, for some $\alpha > 0$. Furthermore, if $F, A$ and $q$ are smooth, so is $\pom_u$ in a (possibly smaller) ball around $x_0$.
\end{proposition}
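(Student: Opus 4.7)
The plan is to follow the classical \emph{flatness implies $C^{1,\alpha}$} paradigm of Alt--Caffarelli and De Silva, in the variable-coefficient version worked out by De Silva--Ferrari--Salsa in \cite{DFS2}. The strategy has three stages: (i) translate the existence of a half-plane blow-up limit at a regular point into a quantitative flatness estimate at some small but definite scale; (ii) iterate an improvement-of-flatness result to upgrade this into $C^{1,\alpha}$ regularity of a normal vector to $\pom_u$; (iii) bootstrap to higher regularity once the coefficients are smooth.

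For stage (i), fix $x_0 \in \Reg(\pom_u)$ and perform the change of variables $\widehat{u}(x):=u(x_0+x\sqrt{A(x_0)})$ used in the proof of Lemma~\ref{lem:1-homogeneous}, so that blow-up limits of $\widehat u$ at $0$ are harmonic. By the definition of $\Reg(\pom_u)$, at least one blow-up limit of $u$ at $x_0$ is $\sqrt{\Lambda q(x_0)}\, ((x-x_0)\cdot\nu)_+$; transporting through $\sqrt{A(x_0)}$, we get for $\widehat u$ a half-plane blow-up $\alpha(x\cdot\widehat\nu)_+$ with $\alpha=\sqrt{\Lambda q(x_0)}>0$ and $|\widehat\nu|=1$. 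Lemma~\ref{lem:convergence_blowups} provides $H^1$ and uniform convergence of the rescalings $\widehat u_{r_j}$ to this half-plane solution, so given any $\varepsilon>0$ we can fix some $r=r(\varepsilon,x_0)>0$ small enough that
\[
\bigl| \widehat u_r(x) - \alpha(x\cdot \widehat\nu)_+ \bigr| \leq \varepsilon \qquad \text{in } B_1,
\]
i.e.\ $\widehat u_r$ is $\varepsilon$-flat in $B_1$ in the sense of \cite{DFS2}.

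For stage (ii), I would invoke the improvement-of-flatness theorem of De Silva--Ferrari--Salsa, which applies to viscosity solutions of free boundary problems of the form $\cL v=g(x,v)$ in the positivity set with Neumann condition $\nabla v\,A\,\nabla v^{\miT}=Q(x)$ on the free boundary, provided $A\in C^{0,\alpha}$, $Q\in C^{0,\alpha}$ is strictly positive, and $g$ is bounded. In our case $A\in C^{1,1}$ and $q\in C^{1,1}$ by \eqref{hip:A_regular} and \eqref{hip:q_regular}; $Q=\Lambda q$ is bounded away from $0$ since $\Lambda>0$ and \eqref{hip:q_positive} holds; the right-hand side $F'(x,u)$ is bounded by $M_1$ (see \eqref{eq:def_M_1}); the asymptotic minimality~\eqref{eq:almost_minimality} together with Proposition~\ref{prop:lipschitz} and Proposition~\ref{prop:non_degeneracy} guarantees that $u$ is a viscosity solution in the sense required; and the overdetermined condition on $\pom_u$ holds in the viscosity sense by Proposition~\ref{prop:overdetermined}. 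Under these hypotheses, \cite{DFS2} produces a dichotomy: if a solution is $\varepsilon$-flat in $B_1$, it becomes $(\varepsilon/2)$-flat in $B_\rho$ (after tilting the approximating half-plane) for some universal $\rho\in(0,1)$ and $\varepsilon\le\varepsilon_0$. Iterating this dichotomy at $x_0$, applied to the $\widehat u$-problem and then transported back through $\sqrt{A(x_0)}$, gives a geometrically convergent sequence of unit normals and hence shows that $\pom_u$ is a $C^{1,\alpha}$ graph in $B_{r_0}(x_0)$ for a uniform $r_0>0$.

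The main obstacle is verifying cleanly that the hypotheses of the DFS improvement-of-flatness really apply to our minimizer: in particular, that the asymptotic minimality \eqref{eq:almost_minimality} (rather than pure minimality) together with the viscosity overdetermined condition of Proposition~\ref{prop:overdetermined} is enough to run the comparison arguments underlying \cite{DFS2}. In the traditional Alt--Caffarelli setting this is carried out in \cite[Sections 6--8]{V}; the modifications needed here are local and analogous to those performed in Lemmas~\ref{lem:harmonic_replacement} and~\ref{lem:almost_minimality}, where the presence of $F(x,u)$ and $q(x)$ generates only lower-order perturbations at the scales of interest. For stage (iii), once $\pom_u\in C^{1,\alpha}$, I would flatten the free boundary with a $C^{1,\alpha}$ diffeomorphism and view the system
\[
\cL u = F'(x,u) \text{ in } \{u>0\},\qquad u=0 \text{ and } \nabla u\,A\,\nabla u^{\miT}=\Lambda q \text{ on } \pom_u
\]
as a nonlinear oblique-derivative problem for $u$ on a one-sided smooth domain, to which the hodograph/Kinderlehrer--Nirenberg--Spruck technique applies to bootstrap: each gain of one derivative on $A$, $q$, $F$ yields one more derivative on $\pom_u$, and smooth data gives $C^\infty$ boundary.
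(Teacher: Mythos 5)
Your overall strategy is the same as the paper's: reduce to the flatness--implies--$C^{1,\alpha}$ theorem of De Silva--Ferrari--Salsa \cite[Theorem 1.4]{DFS2} (using that $\cL u = F'(x,u)\in L^\infty$ in $\Omega_u$ and that the overdetermined condition holds in the viscosity sense by Proposition~\ref{prop:overdetermined}), and then bootstrap to higher regularity. Your worry about whether ``asymptotic minimality'' suffices is moot: once Proposition~\ref{prop:overdetermined} is in hand, $u$ is a genuine viscosity solution of the free boundary problem and \cite{DFS2} applies as a black box; minimality is no longer needed at that stage. However, there are two places where your write-up has a genuine gap.

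First, in stage (i) you deduce only that $\lvert \widehat u_r - \alpha(x\cdot\widehat\nu)_+\rvert\le\varepsilon$ in $B_1$. This gives the upper inclusion $\{\widehat u_r=0\}\cap B_1\subset\{x\cdot\widehat\nu\le C\varepsilon\}$, but it does \emph{not} give the lower inclusion $\{x\cdot\widehat\nu\le -C\varepsilon\}\subset\{\widehat u_r=0\}$, which is part of the flatness hypothesis in \cite{DFS2}: $L^\infty$-smallness of $\widehat u_r$ on the negative side only says $\widehat u_r\le\varepsilon$ there, not that it vanishes. The paper closes this by upgrading the convergence of Lemma~\ref{lem:convergence_blowups} to local Hausdorff convergence of the closed supports $\overline{\Omega_{u_{r_j}}}\to\overline{\Omega_{u_0}}$ (via \cite[Lemma 6.5]{V}), which uses precisely the Lipschitz bound and the non-degeneracy of Proposition~\ref{prop:non_degeneracy} and Lemma~\ref{lem:blowups_not_zero}. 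You cite non-degeneracy only as a hypothesis-checking ingredient, but it is in fact needed for this specific geometric step.

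Second, in stage (iii) you propose to apply the hodograph/Kinderlehrer--Nirenberg technique directly after obtaining $C^{1,\alpha}$ regularity. The classical result \cite[Theorem 2]{KN} requires $u\in C^2(\overline{\Omega_u}\cap B_r)$ to set up the partial hodograph--Legendre transform, and at that point you only have $u\in C^{1,\alpha}$ up to the boundary. The paper is explicit about this obstruction and routes around it: it first uses the boundary pointwise regularity theory of Lian--Zhang \cite[Theorem 1.31]{LZ} (whose key step, \cite[Theorem 1.24]{LZ}, handles the oblique-derivative boundary condition arising from the overdetermined condition) to reach $u\in C^{2,\widetilde\alpha}$ and $\pom_u\in C^{2,\widetilde\alpha}$, and only then invokes Kinderlehrer--Nirenberg to conclude smoothness. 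Your mention of the ``nonlinear oblique-derivative problem'' is in the right direction, but as written the jump from $C^{1,\alpha}$ to the hodograph transform is not justified.
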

\begin{proof} % 27-5/7
	Up to a translation and dilation (with constants depending on $\Lambda$ and $\underline{q}, \overline{q}$), we may assume $x_0 = 0$ and $u_0(x) = (x \cdot \nu)_+$ for some $\nu \in B_{\lambda^{-1/2}} \setminus B_{\lambda^{1/2}}$. Then, given $\delta > 0$, by Lemma~\ref{lem:convergence_blowups}, it holds $\| u_{r_j} - u_0\|_{L^\infty (B_1)} < \delta$ for $j$ large enough, from where it easily follows that $\{u_{r_j} = 0\} \cap B_1 \subset \{x \cdot \nu \leq \delta\} \cap B_1$.
	
	To obtain, $\{x \cdot \nu \leq -\delta\} \cap B_1 \subset \{u_{r_j} = 0\} \cap B_1$, we first use \cite[Lemma 6.5]{V} to obtain that $\overline{\Omega_{u_{r_j}}} \to \overline{\Omega_{u_0}} = \{x \cdot \nu \geq 0\}$ locally Hausdorff in $B_2$. This can be applied here because of the convergence in Lemma~\ref{lem:convergence_blowups}, the Lipschitz continuity in Proposition~\ref{prop:lipschitz}, and the non-degeneracy in Proposition~\ref{prop:non_degeneracy} and Lemma~\ref{lem:blowups_not_zero}. Then, pick $x \in \{x \cdot \nu \leq -\delta\} \cap B_1$, which implies that $\dist(x, \overline{\Omega_{u_0}}) \geq \delta \abs{\nu} \geq \delta \lambda^{1/2}$. Combining this with the fact that, for $j$ large enough, $\dist(\overline{\Omega_{u_{r_j}}}, \overline{\Omega_{u_0}}) \leq \delta \lambda^{1/2}/2$ by the Hausdorff convergence, we trivially have $\dist(x, \overline{\Omega_{u_{r_j}}}) \geq \delta \lambda^{1/2}/2$, whence $u_{r_j}(x) = 0$. 
	
	Overall, we have obtained that, for any $\delta > 0$, if we consider $j \gg 1$,
	\begin{equation*}
		\{ x \cdot \nu \leq -\delta \} \cap B_1 
		\; \subset \;  
		\{u_{r_j} = 0\} \cap B_1 
		\; \subset \;  
		\{ x \cdot \nu \leq \delta \} \cap B_1.
	\end{equation*}
	A simple rescaling yields
	\begin{equation*}
		\{ x \cdot \nu \leq -\delta r_j \} \cap B_{r_j}
		\; \subset \; 
		\{u = 0\} \cap B_{r_j}
		\; \subset \; 
		\{ x \cdot \nu \leq \delta r_j \} \cap B_{r_j}.
	\end{equation*}
	Hence, denoting $\widehat{\nu} := \nu / \abs{\nu} \in \partial B_1$, and recalling that $\abs{\nu} \in [\lambda^{1/2}, \lambda^{-1/2}]$, this yields
	\begin{equation*}
		\{ x \cdot \widehat \nu \leq -\delta \lambda^{-1/2} r_j \} \cap B_{r_j}
		\; \subset \; 
		\{u = 0\} \cap B_{r_j}
		\; \subset \; 
		\{ x \cdot \widehat \nu \leq \delta \lambda^{-1/2}  r_j \} \cap B_{r_j}.
	\end{equation*}
	
	Since $\cL  u = F'(x, u(x)) \in L^\infty(\Omega_u\cap B_1)$ (because it is controlled by $M_1$) and we have also shown the overdetermined boundary condition $|\nabla u \sqrt{A}|^2 = \Lambda q$ in a viscosity sense in Lemma~\ref{prop:overdetermined}, applying \cite[Th. 1.4]{DFS2}, we obtain the desired $C^{1, \alpha}$ regularity in $B_{r_j/2}$ for $j \gg 1$. 
	
	More precisely, one should divide by $\Lambda q(x)$ to obtain the equations 
	\begin{equation*} % 18-11
		\begin{cases}
			- \div \left( \dfrac{A(x)}{\Lambda q(x)} \nabla u(x) \right) 
			=
			\dfrac{F'(x, u(x))}{\Lambda q(x)} - \dfrac{\nabla u(x) A(x) \nabla q(x)^{\miT}}{\Lambda q(x)^2} \quad \text{ in $\Omega_u \cap B_1$,} \\ 
			\nabla u(x) \dfrac{A(x)}{\Lambda q(x)} \nabla u(x)^{\miT} 
			=
			1 
			\quad \text{ on $\partial \Omega_u \cap B_1$},
		\end{cases}
	\end{equation*}
	where the right-hand-side of the first equation is uniformly bounded by \eqref{hip:negative}, \eqref{eq:def_M_1}, \eqref{hip:q_regular}, \eqref{hip:q_positive}, \eqref{hip:A_elliptic_bounded} and Proposition~\ref{prop:lipschitz}; and where the coefficients $A(x) / (\Lambda q(x))$ are Lipschitz, symmetric, elliptic and bounded because of \eqref{hip:A_regular}, \eqref{hip:A_symmetric}, \eqref{hip:A_elliptic_bounded}, \eqref{hip:q_regular} and \eqref{hip:q_positive}. \cite[Theorem 1.4]{DFS2} applies directly to this problem. 
	
	Lastly, let us derive higher regularity of $\Reg(\pom_u)$ whenever $F, A$ and $q$ are smooth. We would like to simply invoke the classical Kinderlehrer--Nirenberg self-improvement result \cite[Theorem 2]{KN}, but that requires $u \in C^2(\overline{\Omega_u} \cap B_{r})$ for some small $r$, and we have not yet established that. Therefore, we prefer to follow another approach. 
	
	Start by using elliptic regularity to derive that $u \in C^{1, \alpha}(\overline{\Omega_u} \cap B_{r_0/2})$ (see e.g. \cite[Corollary 8.36]{GT}). This allows us to follow the first part of the proof of \cite[Theorem 1.31]{LZ} to show that $u \in C^{2, \widetilde{\alpha}}(\overline{\Omega_u} \cap B_r)$ and $\pom_u \cap B_r \in C^{2, \widetilde{\alpha}}$ for some $\widetilde{\alpha} > 0$, if $r > 0$ is small enough. Indeed, adapting the proof of \cite[Theorem 1.31]{LZ} to our setting is possible because it is essentially a repeated application of \cite[Theorem 1.24]{LZ}, which is very versatile and adapts to our PDE, as explained in \cite[Remark 1.30]{LZ}. Note also that the oblique derivative condition over the boundary in \cite[Theorem 1.24]{LZ} is just our overdetermined boundary condition.
	
	Now that $\pom_u \cap B_r$ is $C^{2, \widetilde{\alpha}}$, classical elliptic regularity yields $u \in C^{2, \widetilde{\alpha}}(\overline{\Omega_u} \cap B_{r/2})$ (see e.g. \cite[Theorem 6.19]{GT}). And then we can invoke \cite[Theorem 2]{KN} to infer that $\pom_u$ is actually smooth in a neighborhood of the origin, hence finishing the proof. (Alternatively, one could have continued adapting the proof in \cite[Theorem 1.31]{LZ} by iterating \cite[Theorem 1.29]{LZ}, which is again possible in our setting thanks to \cite[Remark 1.30]{LZ}).
\end{proof}

We are now ready to derive the classical bounds for the Hausdorff dimension of the singular set of the free boundary:

\begin{proposition}[Dimension of the singular set] \label{prop:singular}
	Assume \eqref{hip:setting}. Then there exists $n^* \in \{5, 6, 7\}$ such that
	\begin{itemize}
		\item if $n < n^*$, then $\Sing(\pom_u)$ is empty,
		\item if $n = n^*$, then $\Sing(\pom_u)$ is a locally finite set,
		\item if $n > n^*$, then $\dim_{\mathcal{H}} (\Sing(\pom_u)) \leq n - n^*$.
	\end{itemize}
\end{proposition}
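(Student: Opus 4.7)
The plan is to reduce the problem to the classical Alt--Caffarelli setting and then apply Federer's dimension reduction. First, I would fix $x_0 \in \Sing(\pom_u)$ and work in the local coordinates $\widehat{u}(x):=u(x_0 + x\sqrt{A(x_0)})$, as already used in the proofs of Lemma~\ref{lem:1-homogeneous} and Proposition~\ref{prop:regular}. Under this change of variables, the blow-up analysis from Section~\ref{sec:boundary} shows that every blow-up limit $\widehat{u}_0$ at the origin is a $1$-homogeneous, non-trivial (Lemma~\ref{lem:blowups_not_zero}) local minimizer of the constant-coefficient one-phase Alt--Caffarelli functional
\[
\widehat{\F}_{\Lambda,0}(v,D):=\int_D|\nabla v|^2\,dx+\Lambda q(x_0)\,|\Omega_v\cap D|,
\]
and that the mass-density upper bound of Lemma~\ref{lem:density_upper_bound} is preserved. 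Since the point $x_0$ was singular, at least one such blow-up $\widehat{u}_0$ is \emph{not} of the form $\alpha(x\cdot\nu)_+$ with $|\nu|=1$, i.e.\ the origin is a singular free boundary point for $\widehat{u}_0$.

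Next, I would introduce the critical dimension
\[
n^*:=\min\left\{n\in\N\,:\ \exists\ \text{a $1$-homogeneous non-trivial minimizer of }\widehat{\F}_{1,0}\text{ in }\R^n\text{ with a singular point at }0\right\},
\]
agreeing to set $n^*=+\infty$ if no such minimizer exists in any dimension. This is exactly the dimensional threshold studied for the classical Alt--Caffarelli problem, and by the results of Caffarelli--Jerison--Kenig, Jerison--Savin and De Silva--Jerison (see the discussion in \cite[Sections~10.6--10.9]{V} and \cite[Section~1.4]{V}), one knows $n^*\in\{5,6,7\}$.

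The heart of the argument is then Federer's dimension reduction scheme, which I would carry out along the lines of~\cite[Chapter~10]{V}. The ingredients we have already assembled are exactly the ones this scheme requires: compactness of the family of minimizers under blow-up (Lemma~\ref{lem:convergence_blowups}), local Hausdorff convergence of the supports (as used in the proof of Proposition~\ref{prop:regular}), uniform Lipschitz continuity (Proposition~\ref{prop:lipschitz}), non-degeneracy (Proposition~\ref{prop:non_degeneracy}), and the classification of blow-ups as $1$-homogeneous global minimizers of $\widehat{\F}_{\Lambda,0}$ together with the characterization that flat blow-ups correspond to regular points (Proposition~\ref{prop:regular}). The standard argument then runs as follows: if $x_0 \in \Sing(\pom_u)$ and $y_0 \neq 0$ is a second singular point of a blow-up $\widehat{u}_0$ at $x_0$, one blows up $\widehat{u}_0$ at $y_0$ to obtain a new $1$-homogeneous minimizer that is also translation-invariant along the line through $0$ and $y_0$; this reduces the ambient dimension by one and drops us into the classical problem in $\R^{n-1}$. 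Iterating this ``splitting off of a line'' yields, via the Federer covering argument (e.g.\ \cite[Theorem~10.13]{V}), that $\dim_{\mathcal{H}}(\Sing(\pom_u))\leq n-n^*$ for $n>n^*$, that $\Sing(\pom_u)$ is discrete (hence locally finite) when $n=n^*$, and that $\Sing(\pom_u)=\emptyset$ when $n<n^*$.

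The main obstacle I anticipate is checking that the dimension reduction machinery, which in~\cite{V} is formulated for genuine minimizers of the constant-coefficient functional, is applicable to our setting where the minimizer only satisfies the asymptotic minimality property~\eqref{eq:almost_minimality} and where the coefficients $A, q$ and the lower-order term $F$ are variable. The point, however, is that all the reduction steps take place \emph{after} passing to a blow-up, and we have already shown in Corollary~\ref{corol:blowup_minimizer} and Lemma~\ref{lem:blowups_harmonic} that blow-up limits are bona fide local minimizers of the constant-coefficient functional $\F_{\Lambda,0}$ (equivalently, $\widehat{\F}_{\Lambda,0}$ after the linear change of variables), and that the coefficients freeze to constants and $F$ vanishes in the limit. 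Consequently, the reduction argument takes place entirely within the classical one-phase framework, and no genuinely new estimate is needed beyond the ones established in Sections~\ref{sec:first_variation}--\ref{sec:boundary}.
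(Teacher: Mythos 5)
Your proposal is correct and follows essentially the same route as the paper: both reduce the statement to the abstract dimension-reduction machinery of \cite[Prop.~10.13]{V} by verifying its hypotheses via the non-degeneracy (Proposition~\ref{prop:non_degeneracy}), the compactness and $1$-homogeneity of blow-ups (Lemmas~\ref{lem:convergence_blowups} and~\ref{lem:1-homogeneous}), the fact that blow-up limits are genuine minimizers of the constant-coefficient functional with $F$ vanishing in the limit (Corollary~\ref{corol:blowup_minimizer}), and the $\varepsilon$-regularity established in Proposition~\ref{prop:regular}. The only cosmetic difference is that you normalize $A(x_0)$ to the identity by a linear change of variables at each singular point, whereas the paper keeps the original coordinates and instead relaxes the uniform $\varepsilon$-regularity property to admit blow-ups $\sqrt{\Lambda q(x_0)}\,((x-x_0)\cdot\nu)_+$ with $\abs{\nu}\in[\lambda^{1/2},\lambda^{-1/2}]$; both devices handle the same issue.
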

\begin{proof} % 27-5/9
	The result will follows from the abstract result in \cite[Prop. 10.13]{V} after we verify that the hypotheses of that result hold in our setting. 
	
	The non-degeneracy was shown in Proposition~\ref{prop:non_degeneracy}. Then, we proved the convergence of blow-ups in Lemma~\ref{lem:convergence_blowups}. Moreover, the homogeneity and minimality of blow-up limits (with respect to $\F_{\Lambda, 0}$, exactly the same energy functional as in \cite{V} because $F$ terms disappeared in the limit) was obtained in Lemmas~\ref{lem:convergence_blowups} and \ref{lem:1-homogeneous}. Lastly, we have verified the uniform $\varepsilon$-regularity property in the proof of Proposition~\ref{prop:regular}. 
	
	The definition of the uniform $\varepsilon$-regularity property must be  modified slightly with respect to that in \cite{V}: the shape of our blow-up limits is given by $h_{\nu, x_0}(x) = \sqrt{\Lambda q(x_0)} ((x-x_0) \cdot \nu)_+$ for $\nu \in B_{\lambda^{1/2}, \lambda^{-1/2}}$; whereas in \cite{V}, the vector should lie exactly on $\partial B_1$, and $q(x_0)$ is not present. In any case, since these modifications are uniformly controlled (with constants depending on $\lambda, \underline{q}$ and $\overline{q}$), the program in \cite[Prop. 10.13]{V} carries over to the present case almost verbatim. Indeed, our new definition of uniform $\varepsilon$-regularity is sufficient to run the proof of \cite[Lemma 10.7]{V}, which is the main ingredient that needs to be adapted in \cite[Prop. 10.13]{V} to work in our setting.
	
	Also, the fact that \cite[Prop. 10.13]{V} is only stated for bounded sets is not a problem, since we could just truncate to $B_j$ and then consider the union with $j \in \N$. Recall that we showed that $\Omega_u$ is bounded in Proposition~\ref{prop:Omega_bounded}.
\end{proof}

\section{Proof of Theorem~\ref{th:main}} \label{sec:proof}

We are now ready to prove our main result, Theorem~\ref{th:main}. Fix $m > 0$ and admissible $F(x, u), A(x)$ and $q(x)$, satisfying the periodicity hypothesis~\eqref{hip:periodic}  (see Definition~\ref{def:admissible}). 

Let us fix some $R \gg 1$ to be determined later. Now take a cutoff $\varphi_R \in C^\infty_c(\Rn)$ satisfying 
\begin{equation*}
	0 \leq \varphi_R \leq 1 \text{ in $\Rn$}, 
	\quad 
	\varphi_R \equiv 1 \text{ in $B_R$}, 
	\quad 
	\varphi_R \equiv 0 \text{ in $\Rn \setminus B_{2R}$}.
\end{equation*}
We then define the smoothly truncated energy
\begin{equation*}
	\F_{0, R} (v, D) 
	:=
	\int_D \abs{\nabla v}^2 dx
	- 2 \int_D F_R(x, v) \, dx, 
	\quad D \subset \Rn,
\end{equation*}
with $F_R(x, v) := F(x, v) \, \varphi_R(x)$.
As usual, we will omit $D$ when $D = \Rn$.
Since $\varphi_R$ only depends on $x$, is bounded between 0 and 1, and is smooth, it is clear that $F_R$ is admissible\footnote{
To verify \eqref{hip:sol_not_zero} with $F_R$, note the following: since \eqref{hip:sol_not_zero} holds for $F$, by density, there exists $u_0 \in C^\infty_c(\Rn)$ with $\F_0 (u_0) < 0$. Then, taking $R$ large enough so that $\supp u_0 \subset B_{R/10}$, \eqref{hip:sol_not_zero} is satisfied with $F_R$.
}, and in fact fulfills \eqref{hip:F_quadratic} and \eqref{hip:F'_F''} with the same constants as $F$.
In what follows, we shall make sure that no constant depends on $R$, but only on the constants appearing in the admissibility of $F$ (equivalently, $F_R$), $A$ and $q$, and of course $m$ and $n$.

\subsubsection*{Step 1. Existence of minimizers of $F_{0, R}$ in $\K_{\leq m}$.} 
First let us note that Faber-Krahn's inequality provides a Poincaré inequality in $\K_{\leq m}$. Indeed, given any $u \in \K_{\leq m}$, the symmetrization $u^*$ of $u$ satisfies $u^* \in H^1_0(B^{\widetilde{m}})$ (see \eqref{eq:mtilde}), so that we obtain
\begin{equation} \label{eq:poincare}
	\int_\Rn u^2 \, dx 
	=
	\int_{B^\tm} (u^*)^2 \, dx 
	\leq 
	\lambda_1(B^\tm)^{-1} \int_{B^\tm} \abs{\nabla u^*}^2 \, dx 
	\leq 
	\lambda_1(B^\tm)^{-1} \int_\Rn \abs{\nabla u}^2 \, dx.
\end{equation}

Having obtained this Poincaré inequality, the existence of minimizers of $\F_{0, R}$ in $\K_{\leq m}$ follows from standard arguments. 
First, the fact that $u \in \K_{\leq m}$ (and \eqref{eq:mtilde}), \eqref{hip:zero}, \eqref{hip:F_quadratic}, \eqref{hip:A_elliptic_bounded}, and \eqref{eq:poincare} imply
\begin{align}\notag
	\F_{0, R}(u)
	&\geq 
	\lambda \int_\Rn \!\! \abs{\nabla u}^2 \, dx 
	\,-\, 2 N \abs{\{u > 0\}} 
	\,-\, 2 b \int_\Rn u^2 \, dx\\
	&\geq
	\left(\lambda- \frac{2b}{\lambda_1(B^m)}\right) \int_\Rn \!\! \abs{\nabla u}^2 dx 
	\,-\, 2 N \frac{m}{\underline{q}},\label{E.mystar}
\end{align}
so that the smallness of $b$ in \eqref{hip:F_quadratic} gives $\F_{0, R}(u) \geq -2Nm/\underline{q}$ for every $u \in \K_{\leq m}$. That is, $\F_{0, R}$ is bounded below uniformly, with constants independent of $R$. 

This allows us to take a minimizing sequence $u_j \in \K_{\leq m}$ (i.e. $\F_{0, R}(u_j) \longrightarrow \inf_{\K_{\leq m}} \F_{0, R}$ as $j \to +\infty$). Concretely, testing with the constant zero function, we obtain that $\F_{0, R}(u_j) \leq 1$ for every $j$ (up to taking a subsequence). This and~\eqref{E.mystar} ensure that as long as $b < \lambda \lambda_1(B^m)/2$ we have the uniform bound
\begin{equation} \label{eq:bound_norm_gradient}
	\int_\Rn \abs{\nabla u_j}^2 dx \leq \left( 1 + 2N \frac{m}{\underline{q}} \right) \left(\lambda - \frac{2b}{\lambda_1(B^m)}\right)^{-1}.
\end{equation}

Thus, by the Poincaré inequality in \eqref{eq:poincare}, $u_j$ are also  bounded in $L^2(\Rn)$, with uniform constants. Therefore, by the Banach--Alaoglu compactness theorem, there exists $u_0 \in H^1(\Rn)$ such that $u_j \rightharpoonup u_0$ in $H^1(\Rn)$ (up to a subsequence). Therefore, by the Rellich--Kondrachov theorem in the bounded set $B_{2R}$, we get (up to a subsequence) $u_j \to u_0$ in $L^2(B_{2R})$, so (up to a further subsequence) we can also get pointwise a.e. convergence $u_j \to u_0$ in $B_{2R}$. This convergence, along with the smoothness of $F_R$ (recall \eqref{hip:F_regular}) and the fact that $\supp F_R \subset B_{2R}$, lower semi-continuity of weak convergence (recall \eqref{hip:A_regular}) and the Dominated Convergence Theorem% The argument to apply DCT is nontrivial: given $\varepsilon > 0$, we can find $E_\varepsilon$ open of measure $\leq \varepsilon$ such that $u_j$ converges pointwise to $u_0$ outside $E_\varepsilon$. By compactness of $B_{2R} \setminus E_\varepsilon$, the convergence is uniform, so $u_j \leq u_0 + 1$ discarding some initial $j$. Since $u_0$ is integrable, we can use DCT in $B_{2R} \setminus E_\varepsilon$. Now we have to control the error terms in $E_\varepsilon$: since $F_R \leq F \lesssim 1 + u^2$, we need to estimate $\lim \int_{E_\varepsilon} (1+u_j^2) dx$ and the same for $u_0$. For the first one, we use Hölder: $\int_{E_\varepsilon} u_j^2 dx \leq C(\varepsilon) (\int u_j^{2^*} dx)^{2/2^*} \leq C(\varepsilon)$, where the latter is similar to \eqref{eq:poincare}: \| u_j \|_{2^*} = (always) \| u_j^* \|_{2^*, B^m} \lesssim (Sobolev embedding) \| \nabla u_j^* \|_{2, B^m} \leq (Polya-Szegö) \| \nabla u_j \|_2 \leq C$. The same works for the term $\int_{E_\varepsilon} (1+u_0^2) dx$.
, imply 
\begin{multline} \label{eq:lsc}
	\F_{0, R}(u_0)
	=
	\int_\Rn \abs{\nabla u_0}^2 \, dx 
	- 2 \int_{B_{2R}} F_R(x, u_0) \, dx 
	\\ \leq 
	\liminf_{j \to +\infty} \int_\Rn \abs{\nabla u_j}^2 \, dx 
	- \lim_{j \to +\infty} \; 2 \int_{B_{2R}} F_R(x, u_j) \, dx 
	=
	\liminf_{j \to +\infty} \F_{0, R} (u_j),
\end{multline}
whence $u_0$ minimizes $\F_{0, R}$. Moreover, by Mazur's theorem on weak convergences and convexity,  $u_0 \geq 0$ in $\Rn$. 

Lastly, to see that $\Vol_q(\Omega_{u_0}) \leq m$, it suffices (by the Monotone Convergence Theorem) to check that $\Vol_q(\Omega_{u_0} \cap B_r) \leq m$ for every $r > 0$. Given one such $r > 0$, using standard compactness results as above we obtain that $u_j \to u_0$ a.e. in $B_r$ (up to a subsequence). This clearly implies that $\mathbf{1}_{\{u_0 > 0\}}(x) \leq \liminf_{j \to +\infty} \mathbf{1}_{\{u_j > 0\}}(x)$ a.e. in $B_r$. Integrating, this yields $\Vol_q(\Omega_{u_0} \cap B_r) \leq m$ because $u_j \in \K_{\leq m}$. This finishes the proof that $u_0 \in \K_{\leq m}$, so $u_0$ is a minimizer of $\F_{0, R}$ in $\K_{\leq m}$.

\subsubsection*{Step 2. Properties of the minimizer of $\F_{0, R}$.} 
The fact that $u_0$ has the properties listed in the conclusions of Theorem~\ref{th:main} is  a consequence of all the previous sections, because \eqref{hip:setting} is because (using $u_0, F_R$ and $\F_{0, R}$ in place of $u, F$ and $\F_0$) with constants that do not depend on $R$, as discussed right before Step 1.

Indeed, in Lemma~\ref{lem:saturation} we proved that $u_0 \in \K_{=m}$, in Proposition~\ref{prop:lipschitz} we showed that $u_0$ is Lipschitz and $\{u_0 > 0\}$ is open, and in Proposition~\ref{prop:Omega_bounded} we obtained that $\{u_0 > 0\}$ is also bounded. We also proved the equations~\eqref{eq:bvp} in Corollary~\ref{corol:EL2} (in the first equation we have $F_R$ on the right-hand-side instead of $F$) and Proposition~\ref{prop:overdetermined}. And lastly, the properties of the regular and singular parts of the boundary follow from Propositions~\ref{prop:regular} and~\ref{prop:singular}.

To finish, we need to show that $u_0$ solves the equation \eqref{eq:bvp} with $F$ in the right-hand-side instead of $F_R$, and also that the support of $u_0$ is uniformly bounded. We shall next show that this follows using the periodicity assumption~\eqref{hip:periodic}. 

\subsubsection*{Step 3. Boundedness of supports, and transference to the original $F$.} 
Recall that we showed in Proposition~\ref{prop:Omega_bounded} that $\Omega_{u_0}$ has a finite number $N'\leq N$ of enlarged connected components (see Definition~\ref{def:ecc}), and each of these enlarged connected components has diameter controlled by $D$. The important point here is that $N$ and $D$ only depend on $m, n$ and the admissibility constants of $F_R, A$ and $q$. And since the admissibility constants of $F_R$ and $F$ are the same, $N$ and $D$ are independent of $R$. 

Now label all the enlarged connected components of $\Omega_{u_0}$ as $\ECC(V_1), \ldots, \ECC(V_{N'})$. Using the $x$-periodicity of $F$, $A$ and~$q$, we will translate them towards the origin, without increasing the energy or changing the $q$-volume of the support. Writing $T$ for the length of the period in \eqref{hip:periodic}, we may assume that $D \geq 10T$. Concretely, given $W_j := \ECC(V_j)$, consider a translation of $W_j$ by a whole number of periods in each direction (say by the vector $Tv_j$ with $v_j \in \Z^n$) so that $\widetilde{W}_j := W_j - Tv_j \subset (2D j, 0, \ldots, 0) + [0, 2D]^n$ (we can do this because $D \geq 10T$, see Figure~\ref{fig:periodic}). This way, it is clear that $\widetilde{W}_j \subset [0, 2DN']^n \subset B_R$ if we choose $R$ large enough. 

After translating, we have a function $\widetilde{u}_0\in H^1(\Rn)$ which satisfies $\widetilde{u}_0(x) := u_0(x + Tv_j)$ on each set $\widetilde{W}_j$. The point is that $\F_{0, R}(\widetilde{u}_0, \widetilde{W}_j) \leq \F_{0, R}(u_0, W_j)$. To see this, note that, of course, the Dirichlet energies are the same because $A$ is periodic;  for the other terms, using the periodicity of $F$,
\begin{multline*}
	\int_{\widetilde{W}_j} F_R(x, \widetilde{u}_0(x)) \, dx 
	=
	\int_{W_j} F_R(x-Tv_j, u_0(x)) \, dx
	=
	\int_{W_j} F(x-Tv_j, u_0(x)) \, \varphi_R(x-Tv_j) \, dx
	\\ =
	\int_{W_j} F(x, u_0(x)) \, \varphi_R(x-Tv_j) \, dx
	\geq 
	\int_{W_j} F(x, u_0(x)) \, \varphi_R(x) \, dx.
\end{multline*}
In the last step we have used the fact that $F \geq 0$ (see \eqref{hip:zero} and \eqref{hip:negative}) and $\varphi_R(x - Tv_j) = 1 \geq \varphi_R(x)$ for $x \in W_j$ (indeed, in such case, $x - Tv_j \in \widetilde{W}_j \subset B_R$). The analysis of the $q$-volume is analogous, because $q$ is periodic.

\begin{figure}[t!]
	\centering 
	\resizebox{0.5\textwidth}{!}{
		\begin{tikzpicture}[scale=1]
			\draw[thin, lightgray] (0, 13.547) rectangle (13.547, 13.547);
			\draw[thin, lightgray] (0, 12.982) rectangle (13.547, 12.982);
			\draw[thin, lightgray] (0, 12.418) rectangle (13.547, 12.418);
			\draw[thin, lightgray] (0, 11.853) rectangle (13.547, 11.853);
			\draw[thin, lightgray] (0, 11.289) rectangle (13.547, 11.289);
			\draw[thin, lightgray] (0, 10.724) rectangle (13.547, 10.724);
			\draw[thin, lightgray] (0, 10.16) rectangle (13.547, 10.16);
			\draw[thin, lightgray] (0, 9.596) rectangle (13.547, 9.596);
			\draw[thin, lightgray] (0, 9.031) rectangle (13.547, 9.031);
			\draw[thin, lightgray] (0, 8.467) rectangle (13.547, 8.467);
			\draw[thin, lightgray] (0, 7.902) rectangle (13.547, 7.902);
			\draw[thin, lightgray] (0, 7.338) rectangle (13.547, 7.338);
			\draw[thin, lightgray] (0, 6.773) rectangle (13.547, 6.773);
			\draw[thin, lightgray] (0, 6.209) rectangle (13.547, 6.209);
			\draw[thin, lightgray] (0, 5.644) rectangle (13.547, 5.644);
			\draw[thin, lightgray] (0, 5.08) rectangle (13.547, 5.08);
			\draw[thin, lightgray] (0, 4.516) rectangle (13.547, 4.516);
			\draw[thin, lightgray] (0, 3.951) rectangle (13.547, 3.951);
			\draw[thin, lightgray] (0, 3.387) rectangle (13.547, 3.387);
			\draw[thin, lightgray] (0, 2.822) rectangle (13.547, 2.822);
			\draw[thin, lightgray] (0, 2.258) rectangle (13.547, 2.258);
			\draw[thin, lightgray] (0, 1.693) rectangle (13.547, 1.693);
			\draw[thin, lightgray] (0, 1.129) rectangle (13.547, 1.129);
			\draw[thin, lightgray] (0, 0.564) rectangle (13.547, 0.564);
			\draw[thin, lightgray] (0, 0) rectangle (13.547, 0);
			\draw[thin, lightgray] (0, 13.547) rectangle (0, 0);
			\draw[thin, lightgray] (0.564, 13.547) rectangle (0.564, 0);
			\draw[thin, lightgray] (1.129, 13.547) rectangle (1.129, 0);
			\draw[thin, lightgray] (1.693, 13.547) rectangle (1.693, 0);
			\draw[thin, lightgray] (2.258, 13.547) rectangle (2.258, 0);
			\draw[thin, lightgray] (2.822, 13.547) rectangle (2.822, 0);
			\draw[thin, lightgray] (3.387, 13.547) rectangle (3.387, 0);
			\draw[thin, lightgray] (3.951, 13.547) rectangle (3.951, 0);
			\draw[thin, lightgray] (4.516, 13.547) rectangle (4.516, 0);
			\draw[thin, lightgray] (5.08, 13.547) rectangle (5.08, 0);
			\draw[thin, lightgray] (5.644, 13.547) rectangle (5.644, 0);
			\draw[thin, lightgray] (6.209, 13.547) rectangle (6.209, 0);
			\draw[thin, lightgray] (6.773, 13.547) rectangle (6.773, 0);
			\draw[thin, lightgray] (7.338, 13.547) rectangle (7.338, 0);
			\draw[thin, lightgray] (7.902, 13.547) rectangle (7.902, 0);
			\draw[thin, lightgray] (8.467, 13.547) rectangle (8.467, 0);
			\draw[thin, lightgray] (9.031, 13.547) rectangle (9.031, 0);
			\draw[thin, lightgray] (9.596, 13.547) rectangle (9.596, 0);
			\draw[thin, lightgray] (10.16, 13.547) rectangle (10.16, 0);
			\draw[thin, lightgray] (10.724, 13.547) rectangle (10.724, 0);
			\draw[thin, lightgray] (11.289, 13.547) rectangle (11.289, 0);
			\draw[thin, lightgray] (11.853, 13.547) rectangle (11.853, 0);
			\draw[thin, lightgray] (12.418, 13.547) rectangle (12.418, 0);
			\draw[thin, lightgray] (12.982, 13.547) rectangle (12.982, 0);
			\draw[thin, lightgray] (13.547, 13.547) rectangle (13.547, 0);
			\draw[thin] (4.516, 5.08) rectangle (6.773, 7.338);
			\draw[thin] (6.773, 5.08) rectangle (9.031, 7.338);
			\draw[thin] (9.031, 5.08) rectangle (11.289, 7.338);
			\filldraw[thin, fill=lightblue] 
			(1.445, 11.507)
			.. controls (1.35, 11.607) and (1.25, 11.724) .. (1.268, 11.86)
			.. controls (1.286, 11.996) and (1.422, 12.15) .. (1.562, 12.199)
			.. controls (1.703, 12.249) and (1.848, 12.195) .. (1.952, 12.245)
			.. controls (2.056, 12.294) and (2.119, 12.448) .. (2.192, 12.58)
			.. controls (2.264, 12.711) and (2.345, 12.819) .. (2.422, 12.788)
			.. controls (2.499, 12.756) and (2.572, 12.584) .. (2.572, 12.426)
			.. controls (2.572, 12.267) and (2.499, 12.122) .. (2.413, 12.055)
			.. controls (2.327, 11.987) and (2.228, 11.996) .. (2.192, 11.919)
			.. controls (2.155, 11.842) and (2.183, 11.679) .. (2.291, 11.629)
			.. controls (2.4, 11.579) and (2.59, 11.643) .. (2.739, 11.629)
			.. controls (2.889, 11.616) and (2.997, 11.525) .. (2.938, 11.43)
			.. controls (2.88, 11.335) and (2.653, 11.235) .. (2.468, 11.181)
			.. controls (2.282, 11.127) and (2.137, 11.118) .. (2.02, 11.136)
			.. controls (1.902, 11.154) and (1.811, 11.199) .. (1.721, 11.263)
			.. controls (1.63, 11.326) and (1.54, 11.407) .. cycle;
			\filldraw[thin, fill=lightblue] 
			(4.831, 5.863)
			.. controls (4.736, 5.962) and (4.637, 6.08) .. (4.655, 6.216)
			.. controls (4.673, 6.351) and (4.809, 6.505) .. (4.949, 6.555)
			.. controls (5.089, 6.605) and (5.234, 6.55) .. (5.338, 6.6)
			.. controls (5.442, 6.65) and (5.506, 6.804) .. (5.578, 6.935)
			.. controls (5.651, 7.066) and (5.732, 7.175) .. (5.809, 7.143)
			.. controls (5.886, 7.112) and (5.958, 6.94) .. (5.958, 6.781)
			.. controls (5.958, 6.623) and (5.886, 6.478) .. (5.8, 6.41)
			.. controls (5.714, 6.342) and (5.614, 6.351) .. (5.578, 6.274)
			.. controls (5.542, 6.197) and (5.569, 6.034) .. (5.678, 5.985)
			.. controls (5.786, 5.935) and (5.977, 5.998) .. (6.126, 5.985)
			.. controls (6.275, 5.971) and (6.384, 5.881) .. (6.325, 5.786)
			.. controls (6.266, 5.691) and (6.04, 5.591) .. (5.854, 5.537)
			.. controls (5.669, 5.482) and (5.524, 5.473) .. (5.406, 5.491)
			.. controls (5.289, 5.509) and (5.198, 5.555) .. (5.108, 5.618)
			.. controls (5.017, 5.681) and (4.927, 5.763) .. cycle;
			\filldraw[thin, fill=lightblue] 
			(6.852, 1.832)
			.. controls (6.68, 1.601) and (6.493, 1.236) .. (6.538, 0.984)
			.. controls (6.584, 0.732) and (6.863, 0.592) .. (7.115, 0.67)
			.. controls (7.367, 0.748) and (7.592, 1.043) .. (7.713, 1.386)
			.. controls (7.833, 1.729) and (7.849, 2.121) .. (7.769, 2.284)
			.. controls (7.689, 2.448) and (7.512, 2.384) .. (7.424, 2.341)
			.. controls (7.336, 2.299) and (7.337, 2.278) .. (7.259, 2.219)
			.. controls (7.181, 2.16) and (7.025, 2.063) .. cycle;
			\filldraw[thin, fill=lightblue] 
			(7.417, 6.347)
			.. controls (7.245, 6.116) and (7.057, 5.752) .. (7.103, 5.499)
			.. controls (7.148, 5.247) and (7.427, 5.108) .. (7.679, 5.186)
			.. controls (7.931, 5.263) and (8.156, 5.558) .. (8.277, 5.902)
			.. controls (8.398, 6.245) and (8.414, 6.636) .. (8.333, 6.8)
			.. controls (8.253, 6.964) and (8.076, 6.899) .. (7.988, 6.857)
			.. controls (7.9, 6.814) and (7.901, 6.794) .. (7.823, 6.735)
			.. controls (7.746, 6.676) and (7.589, 6.578) .. cycle;
			\filldraw[thin, fill=lightblue] 
			(12.165, 10.3)
			.. controls (12.068, 10.329) and (11.977, 10.324) .. (11.848, 10.276)
			.. controls (11.72, 10.228) and (11.553, 10.136) .. (11.486, 10.016)
			.. controls (11.419, 9.895) and (11.452, 9.745) .. (11.583, 9.739)
			.. controls (11.714, 9.734) and (11.945, 9.874) .. (12.06, 9.868)
			.. controls (12.176, 9.863) and (12.176, 9.713) .. (12.103, 9.605)
			.. controls (12.031, 9.498) and (11.886, 9.434) .. (11.867, 9.348)
			.. controls (11.848, 9.262) and (11.956, 9.155) .. (12.074, 9.15)
			.. controls (12.192, 9.144) and (12.32, 9.241) .. (12.361, 9.404)
			.. controls (12.401, 9.568) and (12.353, 9.798) .. (12.39, 9.852)
			.. controls (12.428, 9.906) and (12.551, 9.782) .. (12.648, 9.675)
			.. controls (12.744, 9.568) and (12.814, 9.477) .. (12.859, 9.493)
			.. controls (12.905, 9.509) and (12.926, 9.632) .. (12.878, 9.734)
			.. controls (12.83, 9.836) and (12.712, 9.916) .. (12.701, 9.997)
			.. controls (12.69, 10.077) and (12.787, 10.158) .. (12.867, 10.241)
			.. controls (12.948, 10.324) and (13.012, 10.41) .. (12.988, 10.464)
			.. controls (12.964, 10.517) and (12.851, 10.539) .. (12.768, 10.434)
			.. controls (12.685, 10.329) and (12.631, 10.099) .. (12.543, 10.016)
			.. controls (12.454, 9.933) and (12.331, 9.997) .. (12.245, 10.037)
			.. controls (12.16, 10.077) and (12.111, 10.093) .. (12.168, 10.102)
			.. controls (12.224, 10.11) and (12.385, 10.11) .. (12.457, 10.112)
			.. controls (12.53, 10.115) and (12.513, 10.12) .. (12.5, 10.126)
			.. controls (12.487, 10.131) and (12.476, 10.136) .. (12.42, 10.171)
			.. controls (12.363, 10.206) and (12.261, 10.27) .. cycle;
			\filldraw[thin, fill=lightblue] 
			(9.907, 6.349)
			.. controls (9.811, 6.378) and (9.719, 6.373) .. (9.591, 6.325)
			.. controls (9.462, 6.276) and (9.296, 6.185) .. (9.229, 6.065)
			.. controls (9.162, 5.944) and (9.194, 5.794) .. (9.325, 5.788)
			.. controls (9.457, 5.783) and (9.687, 5.922) .. (9.803, 5.917)
			.. controls (9.918, 5.912) and (9.918, 5.762) .. (9.845, 5.654)
			.. controls (9.773, 5.547) and (9.628, 5.483) .. (9.609, 5.397)
			.. controls (9.591, 5.311) and (9.698, 5.204) .. (9.816, 5.198)
			.. controls (9.934, 5.193) and (10.063, 5.29) .. (10.103, 5.453)
			.. controls (10.143, 5.617) and (10.095, 5.847) .. (10.132, 5.901)
			.. controls (10.17, 5.955) and (10.293, 5.831) .. (10.39, 5.724)
			.. controls (10.486, 5.617) and (10.556, 5.526) .. (10.602, 5.542)
			.. controls (10.647, 5.558) and (10.669, 5.681) .. (10.62, 5.783)
			.. controls (10.572, 5.885) and (10.454, 5.965) .. (10.443, 6.046)
			.. controls (10.433, 6.126) and (10.529, 6.207) .. (10.61, 6.29)
			.. controls (10.69, 6.373) and (10.754, 6.459) .. (10.73, 6.512)
			.. controls (10.706, 6.566) and (10.594, 6.587) .. (10.51, 6.483)
			.. controls (10.427, 6.378) and (10.374, 6.148) .. (10.285, 6.065)
			.. controls (10.197, 5.981) and (10.073, 6.046) .. (9.988, 6.086)
			.. controls (9.902, 6.126) and (9.853, 6.142) .. (9.91, 6.15)
			.. controls (9.966, 6.158) and (10.127, 6.158) .. (10.199, 6.161)
			.. controls (10.272, 6.164) and (10.256, 6.169) .. (10.242, 6.175)
			.. controls (10.229, 6.18) and (10.218, 6.185) .. (10.162, 6.22)
			.. controls (10.106, 6.255) and (10.004, 6.319) .. cycle;
			\filldraw[draw=red, fat, ->, fill=violet] (2.474, 11.027) -- (5.241, 6.683);
			\filldraw[draw=red, fat, ->, fill=violet] (11.806, 9.546) -- (9.987, 6.489);
			\filldraw[draw=red, fat, ->, fill=violet] (7.542, 2.499) -- (7.735, 5.074);
			\filldraw[thin, <->, fill=violet] (4.516, 4.516) -- (6.773, 4.516);
			\filldraw[thin, <->, fill=violet] (6.773, 4.516) -- (9.031, 4.516);
			\filldraw[thin, <->, fill=violet] (9.031, 4.516) -- (11.289, 4.516);
			\node[anchor=center, font=\LARGE] at (5.75, 4.2) {$2D$};
			\node[anchor=center, font=\LARGE] at (8.245, 4.2) {$2D$};
			\node[anchor=center, font=\LARGE] at (10.238, 4.2) {$2D$};
			\filldraw[thin, fill=lightblue] 
			(2.547, 11.72)
			.. controls (2.5, 11.75) and (2.476, 11.81) .. (2.5, 11.858)
			.. controls (2.525, 11.905) and (2.599, 11.942) .. (2.676, 11.953)
			.. controls (2.753, 11.965) and (2.833, 11.952) .. (2.865, 11.973)
			.. controls (2.897, 11.995) and (2.882, 12.05) .. (2.903, 12.083)
			.. controls (2.923, 12.115) and (2.98, 12.124) .. (3.022, 12.092)
			.. controls (3.063, 12.061) and (3.089, 11.988) .. (3.093, 11.899)
			.. controls (3.097, 11.81) and (3.079, 11.704) .. (3.032, 11.662)
			.. controls (2.985, 11.619) and (2.91, 11.641) .. (2.855, 11.657)
			.. controls (2.8, 11.673) and (2.765, 11.682) .. (2.714, 11.686)
			.. controls (2.662, 11.691) and (2.593, 11.691) .. cycle;
			\filldraw[thin, fill=lightblue] 
			(5.963, 6.072)
			.. controls (5.916, 6.102) and (5.891, 6.161) .. (5.916, 6.209)
			.. controls (5.941, 6.257) and (6.014, 6.293) .. (6.091, 6.305)
			.. controls (6.168, 6.316) and (6.248, 6.303) .. (6.281, 6.325)
			.. controls (6.313, 6.346) and (6.298, 6.402) .. (6.318, 6.434)
			.. controls (6.339, 6.466) and (6.396, 6.475) .. (6.437, 6.444)
			.. controls (6.479, 6.412) and (6.505, 6.34) .. (6.509, 6.25)
			.. controls (6.512, 6.161) and (6.494, 6.055) .. (6.448, 6.013)
			.. controls (6.401, 5.971) and (6.326, 5.993) .. (6.271, 6.008)
			.. controls (6.216, 6.024) and (6.181, 6.033) .. (6.129, 6.037)
			.. controls (6.078, 6.042) and (6.009, 6.042) .. cycle;
			\draw[thin, dashed] (9.953, 0.015) .. controls (10.405, 0.547) and (10.524, 0.711) .. (10.606, 0.825) .. controls (10.687, 0.939) and (10.731, 1.005) .. (10.768, 1.063) .. controls (10.806, 1.121) and (10.839, 1.173) .. (10.876, 1.235) .. controls (10.913, 1.297) and (10.956, 1.37) .. (11.015, 1.48) .. controls (11.075, 1.589) and (11.151, 1.736) .. (11.228, 1.9) .. controls (11.305, 2.064) and (11.383, 2.246) .. (11.444, 2.399) .. controls (11.505, 2.551) and (11.548, 2.675) .. (11.597, 2.832) .. controls (11.645, 2.989) and (11.698, 3.18) .. (11.738, 3.339) .. controls (11.778, 3.498) and (11.805, 3.626) .. (11.831, 3.782) .. controls (11.858, 3.938) and (11.885, 4.123) .. (11.904, 4.311) .. controls (11.923, 4.499) and (11.936, 4.689) .. (11.941, 4.883) .. controls (11.946, 5.077) and (11.943, 5.276) .. (11.938, 5.428) .. controls (11.933, 5.579) and (11.925, 5.682) .. (11.912, 5.808) .. controls (11.898, 5.935) and (11.879, 6.086) .. (11.85, 6.259) .. controls (11.821, 6.431) and (11.782, 6.626) .. (11.736, 6.819) .. controls (11.689, 7.012) and (11.635, 7.203) .. (11.569, 7.4) .. controls (11.504, 7.597) and (11.427, 7.799) .. (11.342, 7.999) .. controls (11.256, 8.198) and (11.162, 8.394) .. (11.071, 8.566) .. controls (10.981, 8.737) and (10.896, 8.884) .. (10.838, 8.981) .. controls (10.78, 9.078) and (10.749, 9.125) .. (10.66, 9.25) .. controls (10.57, 9.374) and (10.422, 9.575) .. (10.252, 9.78) .. controls (10.082, 9.984) and (9.891, 10.192) .. (9.753, 10.335) .. controls (9.615, 10.479) and (9.53, 10.557) .. (9.368, 10.69) .. controls (9.205, 10.824) and (8.964, 11.011) .. (8.812, 11.126) .. controls (8.661, 11.24) and (8.597, 11.282) .. (8.518, 11.332) .. controls (8.438, 11.383) and (8.341, 11.441) .. (8.225, 11.507) .. controls (8.11, 11.573) and (7.975, 11.645) .. (7.791, 11.732) .. controls (7.607, 11.819) and (7.373, 11.92) .. (7.191, 11.994) .. controls (7.009, 12.067) and (6.878, 12.112) .. (6.739, 12.155) .. controls (6.599, 12.198) and (6.45, 12.24) .. (6.29, 12.278) .. controls (6.13, 12.317) and (5.959, 12.352) .. (5.787, 12.382) .. controls (5.615, 12.411) and (5.441, 12.435) .. (5.235, 12.452) .. controls (5.029, 12.47) and (4.79, 12.482) .. (4.561, 12.484) .. controls (4.332, 12.485) and (4.113, 12.476) .. (3.918, 12.461) .. controls (3.724, 12.446) and (3.553, 12.425) .. (3.358, 12.392) .. controls (3.163, 12.359) and (2.943, 12.314) .. (2.733, 12.262) .. controls (2.522, 12.209) and (2.32, 12.149) .. (2.146, 12.091) .. controls (1.971, 12.033) and (1.823, 11.977) .. (1.677, 11.916) .. controls (1.532, 11.855) and (1.387, 11.789) .. (1.279, 11.737) .. controls (1.171, 11.686) and (1.1, 11.649) .. (1.017, 11.604) .. controls (0.933, 11.559) and (0.839, 11.505) .. (0.709, 11.426) .. controls (0.578, 11.347) and (0.413, 11.243) .. (0, 10.946);
			\node[anchor=center, font=\huge] at (10.057, 10.963) {$B_R$};
		\end{tikzpicture}
	}
	\caption{Example of how one can move connected components of $\Omega_u$ to fixed cubes of length $K$ in a region around the origin, under the periodicity assumption \eqref{hip:periodic}.}
	\label{fig:periodic}
\end{figure}
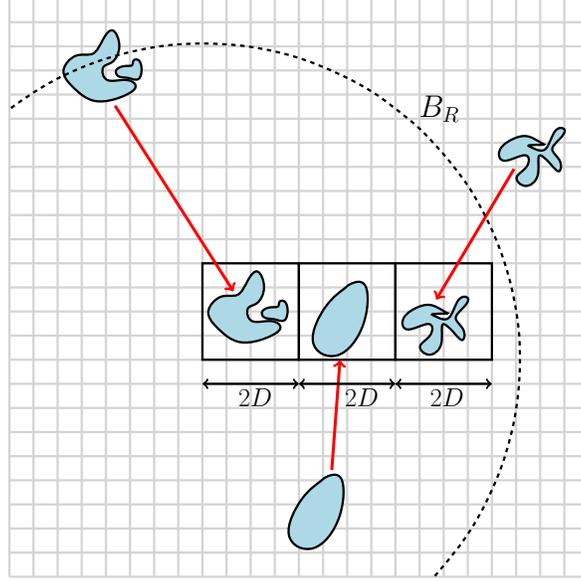

Therefore, after translating (one by one) all the $N'$ enlarged connected components of $\Omega_{u_0}$, we infer that $\Omega_{\widetilde{u}_0} \subset B_R$ if $R$ is large enough. Moreover, recalling that $u_0$ is a minimizer of $\F_{0, R}$, so is $\widetilde{u}_0$. With large~$R$, we have that $F_R \equiv F$ in $\Omega_{\widetilde{u}_0}$, whence $\widetilde{u}_0$ actually solves the PDE in \eqref{eq:bvp} with~$F$ (because it solves it with $F_R$, as we discussed in Step 2). Therefore, $u:=\widetilde{u}_0$ has all the properties listed in Theorem~\ref{th:main}, and the theorem follows. 

\section{Overdetermined problems on manifolds}
\label{S.manifolds}

We shall next sketch why the approach developed in this paper works equally well for semilinear equations on compact manifolds, with only minor modifications.

Throughout this section, let $M$ be a compact connected manifold of class~$C^\infty$, endowed with a smooth Riemannian metric~$g$. We consider the following analog of~\eqref{eq:fbp}:
\begin{equation} \label{E.manifoldeq}
	\begin{cases}
		-\Delta_g v = f(x, v) \quad &\text{ in } \Omega\subset M, \\
		v = 0 \quad \text{and}\quad  |\nabla_g u|^2_g = c\,{q(x)} &\text{ on } \pom.\end{cases}
\end{equation}
Here $\Delta_g$ and~$\nabla_g$ respectively denote the Laplace--Beltrami operator and the covariant derivative on the Riemannian manifold $(M,g)$. To control the size of~$\Omega$, we will impose the condition
\begin{equation}\label{E.volumeg}
	\Vol_{q,g}(\Omega) := \int_\Omega q\, d\mu_g=m\,,
\end{equation}
where $d\mu_g$ denotes the Riemannian measure. 

Our main result for this overdetermined problem is the following. Here the definition of ``admissible'' is essentially as in the case of overdetermined problems on~$\R^n$, with some slight modifications in the constants involved that we will address later.

\begin{theorem}
	Let $M$ be a smooth, compact and connected Riemannian manifold of dimension $n\leq 4$. For any constant $
	0<m<\Vol_{q,g}(M)$ and for any pair of smooth admissible functions~$q:M\to(0,\infty)$ and~$f:M\times[0,\infty)\to\R$, there exists an open set $\Omega\subset M$ with smooth boundary satisfying the $q$-volume constraint~\eqref{E.volumeg} and a constant $c>0$ for which the overdetermined boundary value problem~\eqref{E.manifoldeq} admits a positive solution $v\in C^\infty(\overline{\Omega})$.
\end{theorem}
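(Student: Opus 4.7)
The plan is to mirror the proof of Theorem~\ref{th:main} via the variational formulation
\[
\F_0(u) := \int_M |\nabla_g u|^2_g \, d\mu_g - 2 \int_M F(x, u) \, d\mu_g, \qquad F(x, u) := \int_0^u f(x, t)\, dt,
\]
to be minimized over $\K_{\leq m} := \{u \in H^1(M) : u \geq 0, \; \Vol_{q,g}(\{u>0\}) \leq m\}$. The crucial observation is that, precisely because $M$ is compact, the technical difficulties that drive the Euclidean proof—uniform bounds on $\|u\|_{L^\infty}$ and on the diameter of $\Omega_u$, the periodicity assumption \eqref{hip:periodic}, and the truncation procedure in Section~\ref{sec:proof}—either disappear or become trivial. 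In particular no analogue of Section~\ref{sec:compact_support} is required, since $\Omega_u \subset M$ is automatically bounded.

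Existence of a minimizer is obtained by the direct method exactly as in Step~1 of Section~\ref{sec:proof}. The assumption $m < \Vol_{q,g}(M)$ guarantees, via a Faber-Krahn-type argument on $(M, g)$, a uniform Poincaré inequality
\[
\int_M u^2 \, d\mu_g \leq C(M, g, m) \int_M |\nabla_g u|^2_g \, d\mu_g \qquad \forall \, u \in \K_{\leq m},
\]
since any such $u$ vanishes on a set of Riemannian volume at least $\Vol_g(M) - m/\underline{q} > 0$. Combined with the admissibility condition \eqref{hip:F_quadratic} (with $b$ sufficiently small with respect to this Poincaré constant), this gives coercivity of $\F_0$, while the compactness of the embedding $H^1(M) \hookrightarrow L^2(M)$—automatic on a compact manifold—closes the direct method; \eqref{hip:sol_not_zero} ensures the minimizer is nontrivial.

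For the regularity analysis—the content of Sections~\ref{sec:basic}--\ref{sec:boundary}—I would work in local charts. In coordinates, the Dirichlet integral $\int |\nabla_g u|^2_g \, d\mu_g$ becomes $\int \nabla u \, A(x) \, \nabla u^{\miT} dx$ with the admissible matrix $A(x) := \sqrt{|g(x)|}\, g^{ij}(x)$, and the weight becomes $\widetilde q(x) := q(x) \sqrt{|g(x)|}$, which is still admissible in the sense of Definition~\ref{def:admissible}. Hence every local result—the Euler-Lagrange equation (Lemma~\ref{lem:EL}), boundedness of $u$ (now trivial by elliptic regularity on compact $M$), saturation of the volume constraint (Lemma~\ref{lem:saturation}), almost-minimality of the penalized energy and positivity of the Lagrange multiplier $\Lambda$ (Sections~\ref{sec:first_variation}--\ref{sec:lip}), Lipschitz continuity and non-degeneracy (Propositions~\ref{prop:lipschitz}--\ref{prop:non_degeneracy}), the blow-up analysis, the 1-homogeneity of blow-up limits and the viscosity overdetermined condition $|\nabla_g u|^2_g = \Lambda \, q(x)$ on $\partial\Omega_u$ (Section~\ref{sec:boundary})—carries over chart-by-chart without modification.

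Finally, Propositions~\ref{prop:regular} and~\ref{prop:singular} apply locally to decompose $\partial\Omega_u$ into a $C^{1,\alpha}$ regular part, on which the Neumann condition holds classically, and a singular part of Hausdorff dimension at most $n-5$, which is empty when $n \leq 4$. Setting $\Omega := \Omega_u$, $v := u|_\Omega$ and $c := \Lambda > 0$ then yields a positive $C^\infty(\overline{\Omega})$ solution of the overdetermined problem~\eqref{E.manifoldeq} with $\Vol_{q,g}(\Omega) = m$. The only step that genuinely requires an argument tailored to the manifold setting, rather than just a translation of the Euclidean proof into local charts, is the Faber-Krahn/Poincaré step above, but this is a standard property of compact Riemannian manifolds and is where the hypothesis $m < \Vol_{q,g}(M)$ enters.
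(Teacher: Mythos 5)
Your proposal follows essentially the same route as the paper: minimize the same functional over $\K_{\leq m}$, observe that the hard Euclidean steps (uniform $L^\infty$ and diameter bounds, the periodicity assumption, the truncation by $\varphi_R$) trivialize on a compact manifold, and transfer the local regularity theory chart by chart via $A^{ij}=\sqrt{|g|}\,g^{ij}$ and the weight $q\sqrt{|g|}$. The one place where you genuinely diverge is the Poincar\'e inequality on $\K_{\leq m}$, which is also the only step the paper singles out as requiring a manifold-specific argument. Since one cannot symmetrize on $M$, the paper invokes Lamboley--Sicbaldi's existence theorem for Faber--Krahn minimizers among quasi-open sets of prescribed $q$-volume, obtaining an eigenvalue-type constant $\tilde\lambda(m)$ against which the admissibility constant $b$ in \eqref{hip:F_quadratic} is calibrated. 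You instead argue that every $u\in\K_{\leq m}$ vanishes on a set of uniformly positive Riemannian volume and deduce a Poincar\'e constant; this works (e.g.\ by a compactness/contradiction argument using Rellich on $M$: a sequence with vanishing Dirichlet energy and unit $L^2$ norm converges to a nonzero constant, incompatible with a zero set of volume $\geq\delta$), and is a legitimate, more elementary alternative, at the price of a non-explicit constant --- you still need $b$ small relative to it, as you note. Two small corrections: your volume bound should read $|\{u=0\}|_g\geq(\Vol_{q,g}(M)-m)/\overline q>0$ rather than $\Vol_g(M)-m/\underline q$, which can be negative when $\overline q/\underline q$ is large; and the $L^\infty$ bound for minimizers is not quite ``trivial by elliptic regularity'' --- the paper runs a short bootstrap through Newtonian potentials in each chart, since the right-hand side $F'(x,u)$ is a priori only in $L^2$ --- though this is indeed a standard step.
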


As before, this result follows from analogous result about minimizers of the functional
\begin{equation*}
	\F_0(u)
	:=
	\int_M |\nabla_g u|_g^2  \, d\mu_g - 2 \int_M F(x, u) \, d\mu_g
	\end{equation*}
	on the space
	\begin{equation*}
	\K_{\leq m} := \big\{ u \in H^1(M) : \; u \geq 0, \;\; \Vol_{q,q}(\{u>0\}) \leq m \big\}.
\end{equation*}
We still use the notation $F(x,v):=\int_0^t f(x,t)\, dt$.
Specifically, we prove the following result:

\begin{theorem} \label{th:manifolds}
	Let $M$ be a smooth, compact and connected Riemannian manifold of dimension $n$.
	Let $0 < m < \Vol_{q,g}(M)$ and assume that $F(x, u)$ and $q(x)$ are admissible. Then there exists a Lipschitz continuous minimizer $u$ of $\F_0$ in $\K_{\leq m}$,  which actually belongs to $\K_{=m}$. Moreover, $\{u > 0\}$ is open and 
	\begin{equation*}
		\begin{cases}
			-\Delta_g u = f(x, u) \quad & \text{in } \{u > 0\}, \\
			 |\nabla_g u|_g^2 = c\, {q} \quad & \text{on } \partial \{u > 0\} \text{ in the viscosity sense}
		\end{cases}
	\end{equation*}
	for some constant $c > 0$.
	Furthermore, the free boundary can be decomposed as a disjoint union 
	$$\partial \{u > 0\} = \Reg(\{u > 0\}) \cup \Sing(\{u > 0\}),$$
	where:
	\begin{enumerate}
		\item $\Reg(\{u > 0\})$ is a $C^{1, \alpha}$ submanifold of dimension $n-1$ for some $\alpha > 0$, open within $\partial \{u > 0\}$, and $|\nabla_g u|_g^2 = c\, q$ holds pointwise on this set. If $f$ and~$q$ are smooth, $\Reg(\{u > 0\})$ is smooth as well.
	\item $\Sing(\{u > 0\})$ is a closed set within $\partial \{u >0\}$ of Hausdorff dimension at most $n-5$. Moreover $\Sing(\{u > 0\})$ is empty if $n \leq 4$, and consists at most of a countable set of points if $n = 5$.
	\end{enumerate}
\end{theorem}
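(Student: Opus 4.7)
The strategy is to transfer the whole machinery developed in Sections~\ref{sec:basic}--\ref{sec:boundary} to the manifold setting. The key observation is that all the \emph{local} results, which constitute the technical core of the paper, carry over essentially verbatim: indeed, in any chart $(U,\varphi)$ of $M$, the Laplace--Beltrami operator reads
\[
\Delta_g v = \tfrac{1}{\sqrt{\det g}}\,\partial_i\!\bigl(\sqrt{\det g}\; g^{ij}\partial_j v\bigr),
\]
so, modulo multiplying through by $\sqrt{\det g}$ (which is smooth and bounded away from $0$ and $\infty$ since $M$ is compact), we are dealing with a divergence-form operator $-\div(A\nabla \cdot)$ with $A=(g^{ij})$ symmetric, Lipschitz, and uniformly elliptic in the sense of~\eqref{hip:A_elliptic_bounded}. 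Likewise, the Riemannian measure $d\mu_g = \sqrt{\det g}\,dx$ gets absorbed into the admissible weight $q$ in each chart. Therefore, working in a finite atlas, every local estimate (Harnack, mean-value, harmonic replacement, Lipschitz bound, non-degeneracy, Weiss-type monotonicity, blow-up classification, regularity of the free boundary) transfers with the same proofs, once one interprets balls as geodesic balls of sufficiently small radius so that charts are well defined.

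The global part of the argument is in fact \emph{simpler} than in the Euclidean case, and this is where the compactness of $M$ plays its crucial role. First, the direct method in the calculus of variations produces a minimizer of $\F_0$ in $\K_{\leq m}$ without any truncation: a minimizing sequence $\{u_j\}\subset H^1(M)$ is uniformly bounded thanks to a Faber--Krahn--type inequality valid on $M$. Concretely, since $m<\Vol_{q,g}(M)$ and $q$ is bounded away from zero, there is $\widetilde m<\Vol_g(M)$ such that every $u\in \K_{\leq m}$ satisfies $\Vol_g(\{u>0\})\leq \widetilde m$, and a standard compactness/contradiction argument yields a uniform constant $c_*>0$ with
\[
\int_M u^2\,d\mu_g \;\leq\; c_*\int_M |\nabla_g u|_g^2\,d\mu_g
\quad\text{for all } u\in H^1(M)\text{ with } \Vol_g(\{u>0\})\leq \widetilde m.
\]
Combined with the admissibility assumption~\eqref{hip:F_quadratic} (provided we require $b$ to be small relative to $c_*^{-1}$, the analogue of $\lambda\lambda_1(B^m)/2$), this produces the bound that replaces~\eqref{E.mystar} and gives the existence of a minimizer $u\in\K_{\leq m}$ exactly as in Step~1 of Section~\ref{sec:proof}. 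Weak lower semicontinuity, together with the strong $L^2$-convergence from the Rellich--Kondrachov theorem on the compact manifold $M$, handles the energy and the $q$-volume constraint.

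Once the minimizer is in hand, the second, far more serious, difficulty of the Euclidean proof---namely controlling the size of $\{u>0\}$ by a uniform bound on the diameter (Section~\ref{sec:compact_support}, which required the periodicity assumption~\eqref{hip:periodic})---\emph{disappears entirely}, since $M$ itself is compact. Consequently, there is no need to invoke any analogue of~\eqref{hip:periodic}, no need to pass to a truncated energy $\F_{0,R}$, and no need for the translation argument of Step~3 of Section~\ref{sec:proof}. The Euler--Lagrange analysis (Lemma~\ref{lem:EL} and Corollary~\ref{corol:EL2}), saturation of the measure constraint (Lemma~\ref{lem:saturation}), introduction of a Lagrange multiplier $\Lambda>0$ and the almost-minimality of $\F_\Lambda$ at small scales (Section~\ref{sec:first_variation}), Lipschitz continuity and non-degeneracy (Section~\ref{sec:lip}), and the blow-up/Weiss monotonicity analysis leading to the viscosity overdetermined condition $|\nabla_g u|_g^2=c\,q$ on $\partial\{u>0\}$ (Section~\ref{sec:boundary}), all carry over by localizing in charts. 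The decomposition $\partial\{u>0\}=\Reg\cup\Sing$, the $C^{1,\alpha}$ (and higher) regularity of $\Reg$ via the De Silva--Ferrari--Salsa improvement of flatness, and the Hausdorff dimension bound on $\Sing$ via Federer's dimension-reduction argument, are local statements and follow identically.

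The only point requiring genuine (though routine) adaptation is the correct identification of the admissibility constants on $M$: the ellipticity constant $\lambda$ comes from the bounds on the inverse metric $g^{-1}$ in a finite atlas, the upper and lower bounds on $q$ and the Lipschitz norm of $f$ are intrinsic, and the constant $b$ in~\eqref{hip:F_quadratic} must be taken smaller than $c_*^{-1}/2$ where $c_*$ is the Faber--Krahn constant above. The principal obstacle to verify, but one which is standard, is precisely this Faber--Krahn-type inequality on $(M,g)$ for sets of $g$-volume bounded away from $\Vol_g(M)$; once this is in place, everything else in Sections~\ref{sec:basic}--\ref{sec:boundary} applies mutatis mutandis, and Theorem~\ref{th:manifolds} follows.
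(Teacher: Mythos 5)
Your proposal is correct and follows the same overall route as the paper: localize in a finite atlas so that $-\Delta_g$ becomes an admissible divergence-form operator, observe that the local machinery of Sections~\ref{sec:basic}--\ref{sec:boundary} transfers, and note that compactness of $M$ removes both the truncation $\F_{0,R}$ and the periodicity/translation argument of Step~3 of Section~\ref{sec:proof}. The one place where you genuinely diverge is the substitute for the Faber--Krahn/Poincar\'e inequality~\eqref{eq:poincare}: the paper invokes the Lamboley--Sicbaldi existence result for first-eigenvalue minimizers among quasi-open subsets of $M$ of prescribed $q$-volume (together with the quasi-openness of $\{u>0\}$ from \cite{HKM}), whereas you propose a direct compactness/contradiction argument on $M$. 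Your argument is valid and more elementary --- a minimizing sequence with vanishing Dirichlet energy converges strongly in $L^2$ to a constant, and Fatou applied to the indicator functions forces that constant to vanish --- but note that it requires $u\geq 0$ (or replacing $\{u>0\}$ by $\{u\neq 0\}$): as stated, a negative constant violates your inequality while trivially satisfying $\Vol_g(\{u>0\})\leq\widetilde m$. Since all competitors lie in $\K_{\leq m}$ this is harmless. Two further small points: the $L^\infty$ bound of Proposition~\ref{prop:bounded} is the one result in the ``local'' sections whose proof is not actually local (it uses the global fundamental solution and the Sobolev embedding on $\Rn$), so it needs a separate chart-by-chart bootstrap rather than being swept into ``every local estimate transfers''; and the paper keeps $\sqrt{\det g}$ inside the coefficient matrix ($A^{ij}=\sqrt{|g|}\,g^{ij}$) rather than absorbing it into $q$, but your bookkeeping is equivalent.
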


Let us now sketch the few modifications that are needed to establish this result. 

First, note that the results in Sections~\ref{sec:assumptions}, \ref{sec:basic}, \ref{sec:lip} and \ref{sec:boundary}, except for Proposition~\ref{prop:bounded}, are all of local nature. As the manifold is compact, we can cover it with finitely many charts of (for instance) local normal coordinates. In each chart, which covers a small enough geodesic ball, Equation~\eqref{E.manifoldeq} reads as
\[
-\sum_{i,j}\partial_{x_i} (\sqrt{\abs{g}} g^{ij} \partial_{x_j} u)=\sqrt{\abs{g}}\, f(x,v)\,,
\]
and the Riemannian measure is $d\mu_g=\sqrt{\abs{g}}\, dx$. Here $g^{ij}$ is the inverse matrix of the Riemannian metric, and $\abs{g}$ denotes the determinant of the metric, in these coordinates. So locally, this equation fits within the framework of~\eqref{eq:fbp} with the admissible matrix $A^{ij}:=\sqrt{\abs{g}} g^{ij}$. Therefore, all the results of local nature still hold in manifolds, on small enough geodesic balls.

It is also not hard to check that, although the arguments in Section~\ref{sec:compact_support} are not of local nature, they also hold on a compact manifold. Similarly, the proofs in Section~\ref{sec:first_variation} carry over verbatim to compact manifolds, essentially because they only use general ideas of the calculus of variations. The only point which we want to comment on is the assumption that the exterior of $B$ is connected in Lemma~\ref{lem:almost_minimality}, as can be seen in \cite[Prop. 11.10]{V}: it is not hard to see that manifolds do not introduce any complications here because, just as in the case of~$\R^n$, the complement of a small enough ball in a manifold is always connected (it is diffeomorphic to~$M$ minus a point). 

Regarding Proposition~\ref{prop:bounded}, one should simply work in small balls. We can actually provide an alternative proof that shows boundedness in any small ball, which by compactness of $M$ gives the result we want. Let us sketch the argument, again  considering balls $B$ that are smaller than the injectivity radius. Then, we can work in local coordinates, considering an admissible coefficient matrix $A(x)$. Since we know $u \in H^1(M)$, necessarily $u \in L^2(B)$. As in \eqref{eq:f_u}, we infer $f(x) := F'(x, u(x)) \in L^2(B)$. If we define $w_{-\Delta} (x) := \int_B \Gamma_{-\Delta}(x, y) f(y) \, dy$, we obtain (if $n \geq 3$; otherwise we can simply follow Case 2 in Proposition~\ref{prop:bounded}) $w_{-\Delta} \in L^{2^*}(B)$ by \cite[Lemma 7.12]{GT}. By the comparability of fundamental solutions discussed in Proposition~\ref{prop:bounded}, it also holds $w (x) := \int_B \Gamma_{\cL}(x, y) f(y) \, dy \in L^{2^*}(B)$. And then, using Lemma~\ref{lem:harnack} as in Proposition~\ref{prop:bounded} (which is local, so it works here too) we infer $u \in L^{2^*}(B)$. Repeating this self-improvement process we will eventually reach sufficiently large exponents to use \cite[Lemma 7.12]{GT} to obtain $w_{-\Delta} \in L^\infty(B)$, which implies $u \in L^\infty(B)$ as before. This completes the proof because, as mentioned above, $M$ is covered by finitely many balls of this kind.

Lastly, to adapt Section~\ref{sec:proof}, we first note that we do not need to introduce the truncation function $\varphi_R$ to ensure that no mass is lost in the limit  in \eqref{eq:lsc}: this follows because, as $M$ is compact,  one can simply use Rellich--Kondrachov on the whole manifold. Therefore, we may work directly with $F$ the whole time, and Step 3 is not needed. 

The only change is in the derivation of \eqref{eq:poincare}. Instead of using symmetrization, on the compact Riemannian manifold, we need to use the existence of sets which minimize the first eigenvalue $\lambda_1$ among those which are quasi-open and have prescribed volume $m$, which is ensured by \cite[Th. 1.1]{LS} (their proof works without any modification with our modified volume $\Vol_{q,g}$ under the hypotheses \eqref{hip:q_regular} and \eqref{hip:q_positive}). To put it differently, for each~$m\in(0,\Vol_{q,g}(M))$ there exists some $\tilde\lambda(m)>0$ such that
\[
\int_{D}|\nabla_gw|_g^2\, d\mu_g\geq \tilde\lambda(m)\, \int_D w^2\, d\mu_g
\]
for any quasi-open set~$D$ with $\Vol_{q,g}(D)=m$ and any $w\in H^1_0(D)$.

With this in hand, since for any $u \in H^1$ and we have that $\{u > 0\}$ is quasi-open (more precisely, there is a representative in the equivalence class of $u$ in $H^1$ for which this holds, by \cite[Th. 4.4]{HKM}), we may apply \cite[Th. 1.1]{LS} to derive \eqref{eq:poincare}. In the subsequent arguments, the only change we need on the admissibility assumptions is that the constant~$b$ in \eqref{hip:F_quadratic} must be smaller than $\tilde\lambda(m)/2$ times the ellipticity constant of the coefficient matrix induced by the metric $g$ in the normal coordinates, as explained above.
The other arguments carry over directly, completing the proof of Theorem~\ref{th:manifolds}.

%%%%%%%%%%%%%%%%%%%%%%%%%%%%%%%%%%%%%%%%%%%%%%%%%%%%%%%%%%%%%%
%%%%%%%%%%%%%%%%%%%%%%%%%%%%%%%%%%%%%%%%%%%%%%%%%%%%%%%%%%%%%%
%%%%%%%%%%%%%%%%%%%%%%%%%%%%%%%%%%%%%%%%%%%%%%%%%%%%%%%%%%%%%%
%%%%%%%%%%%%%%%%%%%%%%%%%%%%%%%%%%%%%%%%%%%%%%%%%%%%%%%%%%%%%%
%%%%%%%%%%%%%%%%%%%%%%%%%%%%%%%%%%%%%%%%%%%%%%%%%%%%%%%%%%%%%%
%%%%%%%%%%%%%%%%%%%%%%%%%%%%%%%%%%%%%%%%%%%%%%%%%%%%%%%%%%%%%%
%%%%%%%%%%%%%%%%%%%%%%%%%%%%%%%%%%%%%%%%%%%%%%%%%%%%%%%%%%%%%%
%%%%%%%%%%%%%%%%%%%%%%%%%%%%%%%%%%%%%%%%%%%%%%%%%%%%%%%%%%%%%%
%%%%%%%%%%%%%%%%%%%%%%%%%%%%%%%%%%%%%%%%%%%%%%%%%%%%%%%%%%%%%%

%\newpage 

\section*{Acknowledgments}

P.H.-P.\ thanks Joaquín Domínguez-de-Tena for very helpful discussions. The authors thank Fausto Ferrari for pointing out the reference \cite{DFS2}. The authors are also indebted to Xavier Tolsa for his comments and corrections regarding the proof of Proposition~\ref{prop:bounded}. This work has received funding from the European Research Council (ERC) under the European Union's Horizon 2020 research and innovation programme through the grant agreement~862342 (A.E.). It is also partially supported by the MCIN/AEI grants CEX2023-001347-S, RED2022-134301-T and PID2022-136795NB-I00 (A.E.); and 10.13039/501100011033 (P.H.-P.). X.R.-O. was supported by the European Union under the ERC Consolidator Grant No 101123223 (SSNSD), the AEI project PID2021-125021NAI00 funded by MICIU/AEI/10.13039/501100011033 and by FEDER (Spain), the AEI-DFG project PCI2024-155066-2, the AGAUR Grant 2021 SGR 00087 (Catalunya), the AEI Grant RED2022-134784-T funded by MCIN/AEI/10.13039/501100011033 (Spain), and the AEI Maria de Maeztu Program for Centers and Units of Excellence in R\&D CEX2020-001084-M.

\appendix

\section{Non-periodic admissible nonlinearities do not have minimizers with uniformly bounded supports}	
\label{S.appendix}

In this Appendix, we show that some key estimates can fail when the   admissibility and periodicity conditions do not hold. 

Concerning admissibility, the following elementary calculation shows that quadratic growth of $F$ on $u$, as in \eqref{hip:F_quadratic}, is indeed the threshold to be able to find minimizers of $\F_0$. A minor variation of this argument applies to the case $F(x, u) = c u^p$ for any $p > 2$ and any $c>0$. So from this point of view, \eqref{hip:F_quadratic} is sharp.

\begin{proposition}
	Let $F(x, u) := bu^2$ with some constant $b > \lambda_1(B^m)/2$. Suppose that $A:=I$ and $q:=1$. Then 
	\[
	\inf_{\varphi\in\K_{\leq m}}\F_0(\varphi)=-\infty.
	\]
\end{proposition}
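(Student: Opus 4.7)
The plan is to exhibit a one-parameter family of admissible test functions along which $\F_0$ tends to $-\infty$. The natural candidate, given the sharp threshold $\lambda_1(B^m)/2$, is the first Dirichlet eigenfunction of the Laplacian on a ball of volume $m$, rescaled by arbitrarily large factors.

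\textbf{Step 1: Construction of the test family.} Let $\varphi_1 \in H^1_0(B^m)$ denote the first Dirichlet eigenfunction of $-\Delta$ on $B^m$, normalized so that $\varphi_1 > 0$ inside $B^m$ and $\|\varphi_1\|_{L^2(B^m)} = 1$. Extend $\varphi_1$ by zero to all of $\R^n$; the resulting function lies in $H^1(\R^n)$. Since $\{\varphi_1 > 0\} = B^m$ up to a null set and $q \equiv 1$, one has $\Vol_q(\{\varphi_1 > 0\}) = m$. For any $t > 0$, the support of $t\varphi_1$ is the same, so $t\varphi_1 \in \K_{\leq m}$.

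\textbf{Step 2: Energy computation.} Using $-\Delta \varphi_1 = \lambda_1(B^m)\varphi_1$ in $B^m$ and integration by parts,
\begin{equation*}
\int_{\R^n} |\nabla(t\varphi_1)|^2\, dx = t^2 \int_{B^m} |\nabla \varphi_1|^2\, dx = t^2\, \lambda_1(B^m).
\end{equation*}
On the other hand, $\int_{\R^n} F(x, t\varphi_1)\, dx = b\, t^2 \int_{B^m} \varphi_1^2\, dx = b\, t^2$. Therefore
\begin{equation*}
\F_0(t\varphi_1) = t^2 \bigl(\lambda_1(B^m) - 2b\bigr).
\end{equation*}

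\textbf{Step 3: Conclusion.} The hypothesis $b > \lambda_1(B^m)/2$ gives $\lambda_1(B^m) - 2b < 0$, so $\F_0(t\varphi_1) \to -\infty$ as $t \to +\infty$. Since each $t\varphi_1$ lies in $\K_{\leq m}$, this shows $\inf_{\K_{\leq m}} \F_0 = -\infty$.

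There is essentially no technical obstacle here: the argument is just the sharp version of the Poincar\'e-Rayleigh estimate used in Step~1 of Section~\ref{sec:proof} to bound $\F_0$ from below. The Faber--Krahn/Poincar\'e inequality shows that $\lambda_1(B^m)$ is optimal among sets of $q$-volume $m$, so beyond the threshold $2b = \lambda_1(B^m)$ the quadratic term overpowers the Dirichlet term for the eigenfunction itself, and scaling does the rest. A minor remark: the same proof applies verbatim with $F(x,u) = c u^p$ for any $p > 2$ and $c > 0$, since $\F_0(t\varphi_1) = t^2\lambda_1(B^m) - 2c\, t^p \int \varphi_1^p$ is dominated by the negative term of order $t^p$ as $t \to \infty$, confirming that the quadratic bound \eqref{hip:F_quadratic} is sharp.
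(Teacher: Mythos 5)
Your proof is correct and follows essentially the same route as the paper: both take the first Dirichlet eigenfunction $\varphi_1$ of $B^m$, extended by zero, note that $\F_0(t\varphi_1)=t^2(\lambda_1(B^m)-2b)\int\varphi_1^2\,dx<0$ scales quadratically in $t$, and let $t\to+\infty$. The only (cosmetic) difference is your $L^2$-normalization of $\varphi_1$; your closing remark on $F=cu^p$, $p>2$, also matches the paper's observation.
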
	

\begin{proof}
Let $\varphi_1\in H^1_0(B^m)\subset H^1(\Rn)$ the first Dirichlet eigenfunction of the Laplacian in the ball $B^m$. Then, it is easy to compute that $\F_0(\varphi_1) = (\lambda_1(B^m) - 2b) \int_{B^m} \varphi_1^2 \, dx < 0$. Therefore, for any $\tau>0$, $$\F_0(\tau \varphi_1) = \tau^2 \F_0(\varphi_1) ,$$ 
which tends to $-\infty$ as $\tau \to +\infty$. 
\end{proof}

The following proposition, which is less elementary, shows that if~$F$ is not periodic, there is no hope for minimizers to have uniformly bounded supports:

\begin{proposition}
	There are admissible $A, F, q$, and $m > 0$, for which $\diam(\Omega_u)$ is arbitrarily large. More specifically, $\diam(\Omega_u)$ cannot be controlled uniformly with respect to $m, n$ and the admissibility constants of $A, F, q$.
\end{proposition}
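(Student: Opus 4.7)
\medskip

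\emph{Approach.} The uniform bound on the diameter of each enlarged connected component obtained in Proposition~\ref{prop:Omega_bounded} does \emph{not} bound the number of these components nor their mutual distance, so the diameter of $\Omega_u$ itself is controlled only via Step~3 of Section~\ref{sec:proof}, where $x$-periodicity is used to translate widely separated components into a fixed bounded window without changing the energy or the $q$-volume. To build a counterexample, the plan is to use the weight $q$ to \emph{pin} the support of the minimizer at two widely separated locations, thereby destroying the translation argument from Section~\ref{sec:proof}.

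\medskip

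\emph{Construction.} Fix $D>0$ and a universal constant $Q\geq 10$ (independent of $D$). In any dimension $n\geq 1$ set $A\equiv I$, $x_1:=0$, $x_2:=De_1$, and $I_j:=B_1(x_j)$. Choose $q\in C^\infty(\Rn)$ with $q\equiv 1$ on $I_1\cup I_2$ and $q\equiv Q$ outside slightly enlarged sets $I_1^+\cup I_2^+$, smoothly interpolating in thin transition layers so that $\underline q=1$ and $\bar q=Q$. Take $F(x,u):=\eta(x)g(u)$, where $\eta\in C^\infty(\Rn)$ equals $1$ on $I_1\cup I_2$ and vanishes outside $I_1^+\cup I_2^+$, and $g:[0,\infty)\to[0,1]$ is a smooth nondecreasing function with $g(0)=0$, $g(u)=u$ for $u\leq 1/2$, and $g(u)=1$ for $u\geq 1$. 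Finally set $m:=2|B_1|$. Then $A$, $F$, $q$ satisfy Definition~\ref{def:admissible} with all admissibility constants independent of $D$.

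\medskip

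\emph{Existence and volume saturation.} Existence of a minimizer $u\in\K_{\leq m}$ follows from the direct method exactly as in Step~1 of Section~\ref{sec:proof}: the Faber--Krahn inequality gives a Poincar\'e bound on $\K_{\leq m}$, hence an $H^1$-bound for minimizing sequences. The role played by $x$-periodicity in Step~3 of Section~\ref{sec:proof} (preventing mass from escaping to infinity) is replaced here by the weight $q$: since $\Vol_q(\Omega_v)\leq m$ and $q\equiv Q$ outside the bounded set $I_1^+\cup I_2^+$, any $v\in\K_{\leq m}$ satisfies $|\{v>0\}\setminus(I_1^+\cup I_2^+)|\leq m/Q$, which rules out escape to infinity of minimizing sequences. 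Lemma~\ref{lem:saturation} then gives $\Vol_q(\Omega_u)=m=2|B_1|$, and since $\Vol_q(I_1)=\Vol_q(I_2)=|B_1|$ the volume budget is exactly enough to fill both wells simultaneously and nothing more.

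\medskip

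\emph{Energy comparison and conclusion.} Consider the explicit two-well competitor
\[
u_\ast(x):=\sum_{j=1,2}\frac{(1-|x-x_j|^2)_+}{2n}\in H^1(\Rn),
\]
which has $\Omega_{u_\ast}=I_1\cup I_2$ (so $\Vol_q(\Omega_{u_\ast})=m$) and, since $u_\ast\leq 1/(2n)\leq 1/2$ forces $g(u_\ast)=u_\ast$, solves $-\Delta u_\ast=\eta=F'(\cdot,u_\ast)$ in the two wells. Integration by parts gives
\[
\F_0(u_\ast)=-\int u_\ast=-\frac{2|B_1|}{n(n+2)}.
\]
Suppose for contradiction that $\Omega_u$ were disjoint from, say, $I_2^+$. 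Then the constraint $\Vol_q(\Omega_u)=m$ forces the support to extend beyond $I_1$ by a thickness $\delta\sim 1/(nQ)$ into the $q\equiv Q$ region; solving the radial Euler--Lagrange equation $-\Delta v=\eta g'(v)$ in this extended ball and integrating as above yields the single-well minimum value $-|B_1|/(n(n+2))-|B_1|/(n^2 Q)$, which for $Q\geq 4$ is strictly larger than $-2|B_1|/(n(n+2))$. This contradicts minimality of $u$, so $\Omega_u$ must intersect both $I_1^+$ and $I_2^+$, giving $\diam(\Omega_u)\geq D-2$. Since $D$ is arbitrary with all admissibility constants fixed, the diameter of $\Omega_u$ cannot be uniformly controlled. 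The main technical difficulty is making this last comparison quantitative: one must rigorously justify the lower bound $-|B_1|/(n(n+2))-O(1/Q)$ for any configuration supported in $I_1^+$ alone, which in the sketch above is handled by the explicit ODE computation, but can alternatively be derived from a Poincar\'e-type argument showing that the Dirichlet cost of any extension of measure $\sim 1/Q$ into the $q\equiv Q$ region dominates any extra $F$-gain.
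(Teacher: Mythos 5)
Your construction is genuinely different from the paper's. The paper keeps $q\equiv 1$ and uses the position-dependent \emph{linear} nonlinearity $F(x,u)=\varphi(x)u$ with two radially decreasing bumps and $m$ large; there the contradiction comes from comparing the torsion energy of one ball of volume $m$ (only half of which enjoys the full $F$-gain) against two balls of volume $m/2$ sitting entirely inside the bumps, after a symmetrization step showing that $\Omega_u$ must consist of at most two balls. You instead fix $m=2|B_1|$ and use the weight $q$ to cap each well's cheap capacity at exactly $|B_1|$, with a bounded truncated-linear nonlinearity. This buys a cleaner comparison: the two-well competitor's energy is exactly twice the single-well optimum up to $O(1/Q)$, so you avoid the delicate dimension-dependent comparison between $\int w_{B^m}$ and $2\int w_{B^{m/2}}$ that the paper's route relies on. Your existence discussion is also essentially right, though the operative point is that your $F$ is compactly supported (so the argument of Step 1 of Section~\ref{sec:proof} applies verbatim), not the behaviour of $q$ at infinity.

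The genuine gap is the step you yourself flag: the lower bound $\F_0(u)\geq -|B_1|/(n(n+2))-O(1/Q)$ under the assumption $\Omega_u\cap I_2^+=\emptyset$. Your primary route --- solving the radial Euler--Lagrange equation on the extended ball $B_{1+\delta}(x_1)$ and integrating --- does not give this: the minimizer need not be radial nor supported in a ball (it could, e.g., place the excess $q$-volume in tentacles or in components away from both wells), and computing the energy of one particular profile only bounds the restricted infimum from \emph{above}. To close this you need either (i) a symmetrization argument around $x_1$ (as in the paper's own Step 1, which requires $\eta$ and $q$ to be radially monotone near $x_1$ and a bathtub-type inequality for $\Vol_q$), or (ii) the comparison you mention in passing: from $\cL u=F'(x,u)$ one gets $\int|\nabla u|^2=\int \eta\, g'(u)\,u$, hence $\F_0(u)=-\int\eta\,(2g(u)-u g'(u))=-\int\eta\,u$ provided $u$ stays in the linear regime of $g$; then $u\leq w_{\Omega_u}$ by the maximum principle (since $-\Delta u\leq 1$) and the Saint--Venant inequality bounds $\int w_{\Omega_u}$ by the torsion energy of the ball of the same Lebesgue measure, which is at most $|B_1|(1+O(\epsilon_0+1/Q))$ by the $q$-volume constraint. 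Note that route (ii) forces you to enlarge the linear regime of $g$ (e.g.\ $g(u)=u$ for $u\leq C(n)$ with $C(n)$ an a priori $L^\infty$ bound for $u$, rather than $u\leq 1/2$), since in low dimensions the torsion function of a domain of measure $\leq 3|B_1|$ can exceed $1/2$; this only changes the admissibility constants by dimensional factors and does not affect the argument. With either of these in place, your proof is complete.
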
 % 20-9
\begin{proof}
	Let us set $A:=I$ and $q:=1$, and assume $F$ is only non-zero in two balls $B^m(x_1), B^m(x_2)$, which are separated by a distance $R \gg 1$ still unspecified. Concretely, set $F(x, u) := \varphi(x) \, u $ for  
	\begin{equation*}
		\varphi(x) :=
		\begin{cases}
			\phi(x - x_1) \quad &\text{for $x \in B^m(x_1)$}, \\ 
			\phi(x - x_2) \quad & \text{for $x \in B^m(x_2)$}, \\ 
			0 & \text{elsewhere},
		\end{cases}
		\qquad \text{where } 
		\begin{cases}
			\phi \in C^\infty_c(B^m), \text{ radially decreasing,}  \\
			\phi \equiv 1 \text{ in $B^{m/2}$}, \\
			0 < \phi < 1 \text{ in $B^m \setminus B^{m/2}$}.
		\end{cases}  
	\end{equation*}
	The reader should actually think that $\phi$ is very close to 0 in $B^m \setminus B^{m/2}$.
	 
	\subsubsection*{Step 1: $\Omega_u$ consists of one or two separated balls.}	 
	Let us consider the associated minimizer $u$ of $\F_0$. Given any connected component $V$ of $\Omega_u$ intersecting $B^m(x_1)$, if we define $\mathcal{J}_V := \big\{ \mathbf{j} \in \Z^n : V \cap (\mathbf{j} + [0, 1)^n) \neq \emptyset \big\}$, following the proof of Proposition~\ref{prop:Omega_bounded} we obtain $\abs{\mathcal{J}_V} \leq C$ for some uniform $C$. Therefore, the connectedness of $V$ implies that $\diam(V) \leq C$, i.e. $V \subset B_C(x_1)$. 
	
	Of course, the same happens with any other connected component of $\Omega_u$ intersecting $B^m(x_1)$, and with those intersecting $B^m(x_2)$. It is also easy to check that every connected component of $\Omega_u$ must intersect $B^m(x_1)$ or $B^m(x_2)$, because  otherwise it would only increase $\F_0(u)$ (since $F$ is supported in $B^m(x_1) \cup B^m(x_2)$), and $u$ is a minimizer of $\F_0$. Therefore, we have shown that $\Omega_u \subset B_C(x_1) \cup B_C(x_2)$. Thus, if we choose $R \gg C$, these two balls are disjoint and the support of $u$ is separated clearly in two parts. By an easy symmetrization argument inside each of these two balls (recall that $\phi$ is radially decreasing), we deduce that (see  Figure~\ref{fig:separated})
	\[
	\Omega_u = B^a(x_1) \cup B^{m-a}(x_2) \quad 
	\text{for some $0 \leq a \leq m$.}
	\]
	Our goal is to show that it must be $0 < a < m$, so that $\diam(\Omega_u) \geq R$, whence the arbitrariness of $R$ would give the result.
	
	\begin{figure}[t!]
		\centering 
		\resizebox{0.9\textwidth}{!}{
			\begin{tikzpicture}[scale=1]
				\draw[blue, thin] 
				(2.117, 2.399) circle[radius=2.117];
				\draw[blue, thin] 
				(13.264, 2.399) circle[radius=2.117];
				\filldraw[thin, fill=lightblue, fill opacity=0.5] 
				(2.117, 2.399) circle[radius=0.847];
				\filldraw[thin, fill=lightblue, fill opacity=0.5] 
				(13.264, 2.399) circle[radius=0.847];
				\draw[thin] 
				(2.117, 2.399) circle[radius=0.423];
				\fill[red, opacity=0.7] 
				(2.117, 2.399) circle[radius=0.32];
				\draw[thin, <->] (2.117, 0) -- (13.264, 0);
				\node[circle, fill, inner sep=1pt] at (2.117, 2.399) {};
				\node[anchor=center,font=\small] at (2.17, 2.528) {$x_1$};
				\node[anchor=center] at (5.535, 1.592) {$B\textsuperscript{$m$/2}(x_1)$};
				\node[anchor=center] at (2.121, 3.447) {$B\textsuperscript{$m$}(x_1)$};
				\node[anchor=center] at (4.476, 3.785) {$B_C(x_1)$};
				\node[anchor=center] at (5.364, 1.03) {$B\textsuperscript{$a$}(x_1)$};
				\draw[->] (4.755, 1.605) -- (2.576, 2.258);
				\draw[->] (4.777, 1.054) -- (2.284, 2.205);
				\node[anchor=center, font=\Large] at (7.686, 0.308) {$R$};
				\node[anchor=center] at (10.794, 3.753) {$B_C(x_2)$};
				\node[anchor=center] at (13.278, 3.506) {$B\textsuperscript{$m$}(x_2)$};
				\node[anchor=center] at (9.721, 1.583) {$B\textsuperscript{$m$/2}(x_2)$};
				\node[anchor=center] at (9.721, 1.022) {$B\textsuperscript{$m$--$a$}(x_2)$};
				\draw[thin, ->] (10.521, 1.016) -- (12.836, 2.019);
				\fill[red, opacity=0.7] 
				(13.264, 2.399) circle[radius=0.564];
				\node[anchor=center] at (13.307, 2.579) {$x_2$};
				\node[circle, fill, inner sep=1pt] at (13.264, 2.399) {};
				\draw[thin] 
				(13.264, 2.399) circle[radius=0.423];
				\draw[thin, ->] (10.521, 1.573) -- (12.851, 2.276);
			\end{tikzpicture}
		}
		\caption{We can force $\Omega_u$ to accumulate in two regions which are far apart.}
		\label{fig:separated}
	\end{figure}
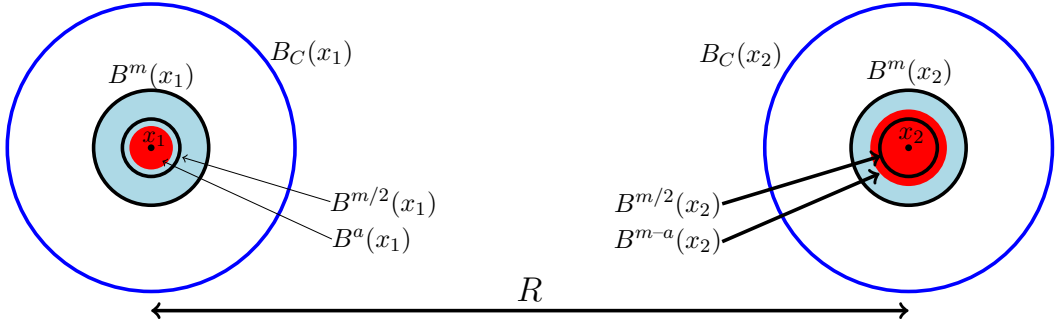
	
	\subsubsection*{Step 2: Energy if there is only one ball.}	
	For the sake of contradiction, let us suppose that $a = 0$. For simplicity of the notation along this step, set $x_2 = 0$. By Corollary~\ref{corol:EL2}, $-\Delta u = \phi$ in $B^m$. And the maximum principle informs us that $u \leq w$, where $-\Delta w = 1$ in $B^m$ and $\{w>0\} = B^m$. Therefore, testing the equation of $u$ against $u$ itself, we obtain 
	\[
	\F_0(u)
	=
	- \int_{B^m} \phi u \, dx
	\geq 
	- \int_{B^m} \phi w \, dx
	=
	- \int_{B^{m/2}} w \, dx - \int_{B^m \setminus B^{m/2}} \phi w \, dx.
	\]
	Since the last term can be made arbitrarily small using appropriate choices of $\phi$, let us focus on the first one. In fact, it is easy to check that $w(x) = -\frac{\abs{x}^2}{2n} + \frac{\rho^2}{2n}$, where $\rho > 0$ is defined so that $\abs{B_{\rho}} = \abs{B^m}$. Similarly, find $r > 0$ so that $\abs{B_r} = \abs{B^{m/2}}$. Then, by an easy integration,
	\begin{equation*}
		\int_{B^{m/2}} w \, dx
		=
		- \frac{\mathcal{H}^{n-1}(\partial B_1)}{2n} \frac{r^{n+2}}{n+2} 
		+ \frac{\rho}{2n} \frac{m}{2}.
	\end{equation*}
	
	\subsubsection*{Step 3: A better candidate.}
	On the other hand, if we concentrate the mass around both $x_1$ and $x_2$, we can obtain smaller energies. Indeed, take $v\in H^1(\Rn)$ satisfying $\Delta v = -1 $ in $B^{m/2}(x_1) \cup B^{m/2}(x_2)$, with $\{v>0\} = B^{m/2}(x_1) \cup B^{m/2}(x_2)$. Then, by a similar calculation as above,
	\begin{equation*}
		\F_0(v) 
		=
		2 \left( - \frac{\mathcal{H}^{n-1}(\partial B_1)}{2n} \frac{r^{n+2}}{n+2} 
		+ \frac{r}{2n} \frac{m}{2} \right).
	\end{equation*}
	
	Noting that $\rho = 2^{1/n} r$ and that $m \approx r^n$, it is easy to check that if $r$ is large enough (equivalently, if $m$ is large enough) it holds
	\begin{equation*}
		\F_0(v) 
		=
		- \frac{\mathcal{H}^{n-1}(\partial B_1)}{2n(n+2)} 2 r^{n+2}
		+ \frac{m}{4n} 2r
		< 
		- \frac{\mathcal{H}^{n-1}(\partial B_1)}{2n(n+2)} r^{n+2}
		+ \frac{m}{4n} 2^{1/n} r
		=
		\int_{B^{m/2}} w \, dx.
	\end{equation*}
	Since these estimates are independent on the choice of $\phi$ and we can make this choice so that $\F_0(u)$ is as close to $\int_{B^{m/2}} w \, dx$ as we wish, we obtain the desired contradiction because for these $\phi$, we have $\F_0(v) < \F_0(u)$.
\end{proof}

\end{document}